\documentclass[3p,sort&compress]{elsarticle}
\journal{Journal of Computational Physics (Elsevier)}

%\usepackage[activate={true,nocompatibility},final,tracking=true,kerning=true,spacing=true,factor=1100]{microtype}

%\microtypecontext{spacing=nonfrench}

\usepackage{framed,multirow}
\usepackage{latexsym}
\usepackage{amsmath}
\usepackage{amssymb}
\usepackage{amsthm}
\usepackage{mathtools}
\usepackage{pifont}
\usepackage{enumitem}
\usepackage{booktabs}
\usepackage{subcaption}
\usepackage[dvipsnames]{xcolor}
\usepackage{yfonts}
\usepackage[scr=boondoxo,scrscaled=1.05]{mathalfa}
\usepackage{appendix}
\usepackage{enumitem}
\usepackage{url}
\usepackage{ifthen}
\usepackage{tensor}
\usepackage[normalem]{ulem}

\theoremstyle{plain}
\newtheorem{theorem}{Theorem}

\theoremstyle{definition}
\newtheorem{definition}{Definition}
\newtheorem{example}{Example}

\theoremstyle{remark}
\newtheorem{remark}{Remark}

\bibliographystyle{elsarticle-num}

\makeatletter
\def\ps@pprintTitle{%
     \let\@oddhead\@empty
     \let\@evenhead\@empty
     \def\@oddfoot
       {\hbox to \textwidth%
        {\ifnopreprintline\relax\else
        \@myfooterfont%
         \ifx\@elsarticlemyfooteralign\@elsarticlemyfooteraligncenter%
           \hfil\@elsarticlemyfooter\hfil%
         \else%
         \ifx\@elsarticlemyfooteralign\@elsarticlemyfooteralignleft%
           \@elsarticlemyfooter\hfill{}%
         \else%
         \ifx\@elsarticlemyfooteralign\@elsarticlemyfooteralignright%
           {}\hfill\@elsarticlemyfooter%
         \else%
               {\bfseries Updated version} of preprint submitted to \ifx\@journal\@empty%
                 Elsevier%
            \else\@journal\fi\hfill\@date\fi%
         \fi%
         \fi%
         \fi%
         }
       }%
     \let\@evenfoot\@oddfoot}
\makeatother

\begin{document}

\begin{frontmatter}
\title{Flow-driven spectral chaos (FSC) method for simulating long-time dynamics of arbitrary-order non-linear stochastic dynamical systems}

\author[1]{Hugo Esquivel}
\ead{hesquive@purdue.edu}

\author[1]{Arun Prakash}
\ead{aprakas@purdue.edu}

\author[2]{Guang Lin\fnref{fn2}}
\ead{guanglin@purdue.edu}

\address[1]{Lyles School of Civil Engineering, Purdue University, 550 W Stadium Ave, West Lafayette IN 47907, USA}
\address[2]{Department of Mathematics, School of Mechanical Engineering, Purdue University, 150 N University St, West Lafayette IN 47907, USA}

\fntext[fn2]{Department of Statistics, Department of Earth, Atmospheric, and Planetary Sciences (by courtesy), Purdue University.}

\begin{abstract}
Uncertainty quantification techniques such as the time-dependent generalized polynomial chaos (TD-gPC) use an adaptive orthogonal basis to better represent the stochastic part of the solution space (aka random function space) in time.
However, because the random function space is constructed using tensor products, TD-gPC-based methods are known to suffer from the curse of dimensionality.
In this paper, we introduce a new numerical method called the \emph{flow-driven spectral chaos} (FSC) which overcomes this curse of dimensionality at the random-function-space level.
The proposed method is not only computationally more efficient than existing TD-gPC-based methods but is also far more accurate.
The FSC method uses the concept of \emph{enriched stochastic flow maps} to track the evolution of a finite-dimensional random function space efficiently in time.
To transfer the probability information from one random function space to another, two approaches are developed and studied herein.
In the first approach, the probability information is transferred in the mean-square sense, whereas in the second approach the transfer is done \emph{exactly} using a new theorem that was developed for this purpose.
The FSC method can quantify uncertainties with high fidelity, especially for the long-time response of stochastic dynamical systems governed by ODEs of arbitrary order.
Six representative numerical examples, including a nonlinear problem (the Van-der-Pol oscillator), are presented to demonstrate the performance of the FSC method and corroborate the claims of its superior numerical properties.
Finally, a parametric, high-dimensional stochastic problem is used to demonstrate that when the FSC method is used in conjunction with Monte Carlo integration, the curse of dimensionality can be overcome altogether.
\end{abstract}

\begin{keyword}
uncertainty quantification; long-time integration; stochastic flow map; (nonlinear) stochastic dynamical systems; flow-driven spectral chaos (FSC); TD-gPC.
\end{keyword}

\end{frontmatter}

\section*{Note}

This is an updated version of the journal article.
Errata can be found on the last page of this document.

\section*{Highlights}

\begin{itemize}\setlength\itemsep{0em}
\item FSC can propagate and quantify with high fidelity the long-time response of SODEs.
\item FSC is partially insensitive to the curse of dimensionality.
\item FSC is computationally more efficient than TD-gPC for the same accuracy level.
\item FSC can transfer the probability information exactly at every instant of time.
\item FSC can be used in the resolution of high-dimensional stochastic dynamical systems.
\end{itemize}

\section{Introduction}

The need for quantifying uncertainties for real-world applications arises in different fields, such as in physics, engineering, economics, sociology, etc.
In structural engineering, for example, the source of random variability can arise from: material properties, imperfections in geometry, loading scenarios, boundary conditions, etc.
Once this random variability is identified, it can be characterized mathematically using random variables, stochastic processes or, more generally, random fields in space and time.
Various methods for solving stochastic differential equations have been proposed to date, among which we mention: Monte Carlo-based methods \cite{kroese2013handbook,fishman2013monte,rubinstein2016simulation}, collocation-based methods \cite{babuvska2007stochastic,nobile2008sparse,foo2008multi,foo2010multi,teckentrup2015multilevel}, perturbation-based methods \cite{collins1969eigenvalue,liu1986probabilistic,liu1986random,kleiber1992stochastic}, operator-based methods \cite{shinozuka1988response,yamazaki1988neumann,deodatis1991weightedI,deodatis1991weightedII}, and spectral-based methods \cite{xiu2010numerical,le2010spectral,sullivan2015introduction}.
This work is based on the spectral approach, for which we give a short historical overview below.

The \emph{polynomial chaos} (PC), as originally introduced by Wiener in 1938 \cite{wiener1938homogeneous} and then further extended in \cite{wiener1943discrete,wiener1958nonlinear}, is a spectral-based method to model stochastic processes with (independent) Gaussian random variables.
Roughly speaking, the method uses Hermite polynomials as the underlying basis to expand a stochastic process in the space of random functions, and it is considered to be an extension of the theory of nonlinear functionals developed by Volterra in 1913 \cite{volterra1913leccons} for stochastic systems.
Such an expansion is known to be convergent in the mean-square sense for stochastic processes with a finite second moment, thanks to the Cameron-Martin theorem \cite{cameron1947orthogonal}.
Therefore, these processes are also termed \emph{second-order stochastic processes} in the literature.
Even though the PC method was applied to solve different stochastic problems at the time, it was later recognized that it suffered from non-uniform convergence for systems with non-Gaussian random variables.
Lucor et al.~\cite{lucor2001spectral} demonstrated that, under Wiener's framework of Hermite functionals, the convergence rate of systems subjected to Gaussian input is exponential but substantially slower otherwise \cite{xiu2002wiener,field2003new}.
Nonetheless, steady progress was made between the 1950s and 1980s towards generalizing Wiener's ideas for systems with non-Gaussian inputs (e.g.~\cite{ito1951multiple,ogura1972orthogonal,segall1976orthogonal,schetzen1981nonlinear,engel1982multiple}).

In the early 1990s, Ghanem and Spanos \cite{ghanem1990polynomial,ghanem1991stochastic} developed a method in the context of stochastic finite elements.
The method essentially uses Wiener's theory on polynomial chaos to decompose a second-order stochastic process into deterministic and non-deterministic parts.
The non-deterministic part of the process can then be treated as an element of a Hilbert space, and thus, be approximated by its Galerkin projection onto a subspace spanned by a finite number of Hermite polynomials.
Because the subspace still needs to be spanned by Hermite polynomials, this method is only capable of achieving exponential convergence for stochastic systems involving Gaussian random variables.
Yet, the method was successfully applied by several researchers in the branch of continuum mechanics, including solid and fluid mechanics, in problems displaying random variability in their definition (e.g.~\cite{ghanem1993stochastic,ghanem1999stochastic,knio2001stochastic,le2002stochastic}).

In 2002, Xiu and Karniadakis \cite{xiu2002wiener} introduced the \emph{generalized polynomial chaos} (gPC) method to overcome the issue of convergence rate of the PC method.
By employing an orthogonal basis from the Askey family---but concordant with the measure defined in the probability space---, they showed that a process expanded with such a basis leads to exponential convergence to the solution.
Thus, in the years that followed, the gPC method was demonstrated to be capable of solving a wider number of stochastic problems found in practice (e.g.~\cite{xiu2003modeling,asokan2005using,knio2006uncertainty,najm2009uncertainty}).
The method, however, was later found not to be suitable for problems that feature strong nonlinear dependencies over the probability space as time progresses.
For example, for long-time integration of stochastic dynamical systems, the gPC method fails to capture the probability moments accurately because the probability distribution of the solution changes significantly with time.
In 2005, Wan and Karniadakis \cite{wan2005adaptive} developed the \emph{multi-element generalized polynomial chaos} (ME-gPC) method to account for these nonlinear dependencies in time, such as the ability to handle stochastic discontinuities and long-time response of stochastic dynamical systems on-the-fly.
The key idea of ME-gPC is to adaptively decompose the random space into elements until a pre-specified threshold for the relative error in variance is reached.
Then, a stochastic spectral expansion is used on each random element to push the system's state forward in time.
This process is repeated every time the threshold is exceeded during the simulation.
The ME-gPC method and its variants (e.g.~\cite{wan2006beyond,wan2006multi,foo2008multi,ma2009adaptive,foo2010multi,jakeman2013minimal}) have been proved to be capable of solving numerous problems in engineering and sciences (e.g.~\cite{wan2006long,agarwal2009domain,kewlani2009multi,prempraneerach2010uncertainty,oladyshkin2011concept,kewlani2012polynomial}).

The dynamically orthogonal PC (DO-PC) is another approach used for uncertainty quantification.
It was formulated by Sapsis and Lermusiaux \cite{sapsis2009dynamically} in 2009 to study the response of continuous stochastic dynamical systems more effectively.
In this approach, the time rate of change of the spatio-temporal function space is ensured to be kept orthogonal to itself as the simulation proceeds.
This condition, called the dynamically orthogonal (DO) condition, is enforced at every time step to derive an exact, closed set of evolution equations in time.
With additional restrictions on the form of the solution representation, the DO-PC approach can recover both the POD (Proper Orthogonal Decomposition) method \cite{papoulis1965random,holmes2012turbulence} and the gPC method.
Since its inception, the DO-PC has undergone further modifications and extensions to broaden its range of applications (e.g.~\cite{choi2013convergence,ueckermann2013numerical,cheng2013dynamicallyI,cheng2013dynamicallyII}).
An error analysis for the DO-PC method can be found in \cite{musharbash2015error}.

In 2010, the time-dependent gPC (TD-gPC) method was proposed by Gerritsma et al.~\cite{gerritsma2010time} to address the issue of long-time integration in the gPC method.
This was motivated by the fact that the probability distribution of the solution changes with time, which in turn requires that the random basis (of the solution space) is frequently updated during the simulation to ensure that the mean-square error is kept orthogonal to the discretized random function space.
To keep the computational cost low, the random basis is adaptively updated whenever a preset threshold value is met during the simulation.
Whenever this threshold value is met, a new set of orthogonal polynomials is generated from the monomials of the system’s state for use in subsequent time steps of the simulation.
Heuveline and Schick \cite{heuveline2014hybrid} modified the TD-gPC method (mTD-gPC) to account for stochastic dynamical systems governed by second-order ODEs, and in doing so they also improved the accuracy of the method.
In mTD-gPC, the stochastic part of the solution space is spanned (at the \emph{reset times}) by performing a full tensor product between an evolving random function space (that depends upon the evolution of the system’s state) and the original random function space (which is spanned according to the gPC method).
However, since both TD-gPC and mTD-gPC use tensor products to construct a suitable random basis, they both suffer from the curse of dimensionality because the number of basis vectors in both these approaches grows considerably fast with the dimensionality of the probability space (and sometimes this growth may be exponential if not addressed well). 
Heuveline and Schick \cite{heuveline2014hybrid} also developed a multi-element version of the mTD-gPC method called the \emph{hybrid generalized polynomial chaos} as a means to keep the dimensionality of the random function space relatively low on each random element.

More recently, Luchtenburg et al.~\cite{luchtenburg2014long} developed a method for long-time uncertainty propagation in dynamical systems.
The method consists of approximating the intermediate short-time flow maps by spectral polynomial bases, so that the system's long-time flow map is constructed via a flow map composition.
These short-time flow maps are represented by low-degree polynomial bases to account for the stretching and folding effect caused by the evolution of the system’s state in phase space.
Ozen and Bal \cite{ozen2016dynamical} introduced the dynamical gPC (DgPC) method to quantify uncertainties in the long-time response of stochastic dynamical systems.
The method uses a generalization of the PCE (Polynomial Chaos Expansion) framework to construct a set of orthogonal polynomials from measures that evolve dynamically in time.
They demonstrated that results obtained with DgPC compare well with other standard methods such as Monte Carlo.
However, the method has limited applicability for large stochastic dynamical systems.

In this paper, a novel method called the \emph{flow-driven spectral chaos} (FSC) is proposed to capture the long-time response of stochastic dynamical systems.
The FSC method uses the concept of \emph{enriched} stochastic flow maps to track the evolution of a finite-dimensional random function space efficiently in time.
In this approach, the enriched stochastic flow map of the system is by definition a flow map that pushes forward the first few time derivatives of the solution (including the solution itself) in an augmented random phase space.
Unlike mTD-gPC (or gPC), the number of basis vectors needed to construct the orthogonal bases in FSC does not grow with the dimensionality of the probability space.
Therefore, the FSC method does not suffer from the curse of dimensionality at the random-function-space level.
However, as with all spectral-based methods, it does suffer from the curse of dimensionality at the random-space level, because the number of quadrature points needed to compute the inner products accurately can grow exponentially with the dimensionality of the probability space.
Nevertheless, as we show in Section \ref{sec2StoPro5Dim}, when the FSC method is used in conjunction with Monte Carlo integration to compute the inner products, the curse of dimensionality can be eliminated altogether.
Thus, the FSC method presents a major advance over gPC-based methods since for the same level of accuracy in the solution it is computationally far more efficient.

This paper is organized as follows.
Section \ref{sec2SetNot} introduces the setting and notation used in this manuscript, and then a quick overview of the standard gPC method is provided.
Section \ref{sec2GraSch} discusses the Gram-Schmidt process for random function spaces and also outlines a \emph{new theorem} that has been developed for orthogonalizing a sequence of independent random functions.
This theorem is then utilized in the FSC scheme (Section \ref{sec2FSCmet}) to transfer the probability information of the system's state \emph{exactly} at the current time of the simulation.
Section \ref{sec2StoFloMap} reviews the concept of stochastic flow map, followed by the definition of \emph{enriched} stochastic flow map in Section \ref{sec2EnrStoFloMap}.
In Section \ref{sec2FSCmet} we describe the proposed FSC method in detail using two different approaches (FSC-1 and FSC-2) for the transfer of the probability information.
Six numerical examples are then presented in Section \ref{sec2NumExa}, followed by a discussion of the numerical results in Section \ref{sec2DisNumRes}.
In Section \ref{sec2StoPro5Dim} we solve a parametric, high-dimensional stochastic problem to demonstrate (from a numerical standpoint) that using the FSC method, in conjunction with Monte Carlo integration to compute the inner products, it is possible to overcome the curse of dimensionality at both the random-function-space level and the random-space level---thus eliminating it altogether.
In Appendices \ref{appsec2DerVanderPolOsc} and \ref{appsec2StoPro5Dim} we present in detail the discretization of the two random function spaces needed to simulate the stochasticity of a Van-der-Pol oscillator and the system described in Section \ref{sec2StoPro5Dim} using the spectral approach.
Finally, Appendix \ref{appsec2TimComAnaThe1} presents a comparison between the time-complexity analyses of our new theorem and the traditional Gram-Schmidt process in order to assess the computational cost of both approaches algebraically.

\section{Setting and notation}\label{sec2SetNot}

\paragraph{Spaces} The spaces that we use in this work are defined below.

\begin{definition}[Temporal space]\label{sec2SetNotDef1}
Let the topological space $(\mathfrak{T},\mathcal{O})$ be called \emph{temporal space}, where $\mathfrak{T}=[0,T]$ is a closed interval representing the temporal domain of the system, $T$ is a positive real number symbolizing the duration of the simulation, and $\mathcal{O}=\mathcal{O}_\mathbb{R}\cap\mathfrak{T}$ is the topology on $\mathfrak{T}$ with $\mathcal{O}_\mathbb{R}$ denoting the standard topology over $\mathbb{R}$.
\end{definition}

\begin{remark}
Although this temporal space can be specialized further to be a Hilbert space, in this manuscript we only need the topological structure of it to assist Definition \ref{sec2SetNotDef3} in regard to continuity of functions in time.
In simple terms, this temporal space defines the time interval of interest for running the stochastic simulations.
\end{remark}

\begin{definition}[Random space]\label{sec2SetNotDef2}
Let $(\Omega,\boldsymbol{\Omega},\lambda)$ be a (complete) \emph{probability space}, where $\Omega$ is the sample space, $\boldsymbol{\Omega}\subset 2^\Omega$ is the $\sigma$-algebra on $\Omega$ (aka the collection of events), and $\lambda:\boldsymbol{\Omega}\to[0,1]$ is the probability measure on $\boldsymbol{\Omega}$.
Let $\xi:(\Omega,\boldsymbol{\Omega})\to(\mathbb{R}^d,\mathcal{B}_{\mathbb{R}^d})$ be a measurable function (aka random variable) given by $\xi=\xi(\omega)$, with $\mathcal{B}_{\mathbb{R}^d}$ denoting the Borel $\sigma$-algebra over $\mathbb{R}^d$.
Furthermore, let the measure space $(\Xi,\boldsymbol{\Xi},\mu)$ be called \emph{random space}, where $\Xi=\xi(\Omega)\subset\mathbb{R}^d$ is a set representing the random domain of the system, $\boldsymbol{\Xi}=\mathcal{B}_{\mathbb{R}^d}\cap\Xi$ is the $\sigma$-algebra on $\Xi$, and $\mu:\boldsymbol{\Xi}\to[0,1]$ is the probability measure on $\boldsymbol{\Xi}$ defined by the pushforward of $\lambda$ by $\xi$, that is $\mu=\xi_*(\lambda)$.
Here $d$ symbolizes the dimensionality of the random space.
\end{definition}

\begin{remark}
In addition to the standard definition of a `probability space', we define a `random space' in Definition \ref{sec2SetNotDef2} to address cases where the probability space may be abstract.
The random variable $\xi$ relates these two spaces and aids in computation.
\end{remark}

From these two definitions it is clear that more structure can be added to these spaces; for example, a metric, a norm, an inner product, etc.
However, we opt not to do so herein to keep the above definitions as simple as possible, and more importantly, because they are not needed in this manuscript.

\begin{definition}[Temporal function space]\label{sec2SetNotDef3}
Let $\mathscr{T}(n)=C^n(\mathfrak{T},\mathcal{O};\mathbb{R})$ be a continuous $n$-differentiable function space.
This \emph{temporal function space} is the space of all functions $f:(\mathfrak{T},\mathcal{O})\to(\mathbb{R},\mathcal{O}_\mathbb{R})$ that have continuous first $n$ derivatives on $(\mathfrak{T},\mathcal{O})$.
\end{definition}

\begin{remark}
The temporal function space is thus defined to indicate the level of differentiability that some temporal functions need to possess in Sections \ref{sec2StoFloMap} and \ref{sec2EnrStoFloMap}---especially those concerning with the state of the dynamical system under consideration.
\end{remark}

\begin{definition}[Random function space]\label{sec2SetNotDef4}
Let $\mathscr{Z}=(L^2(\Xi,\boldsymbol{\Xi},\mu;\mathbb{R}),\langle\,\cdot\,,\cdot\,\rangle)$ be a Lebesgue square-integrable space equipped with its standard inner product $\langle\,\cdot\,,\cdot\,\rangle:L^2(\Xi,\boldsymbol{\Xi},\mu;\mathbb{R})\times L^2(\Xi,\boldsymbol{\Xi},\mu;\mathbb{R})\to\mathbb{R}$ given by $\langle f,g\rangle=\int fg\,\mathrm{d}\mu$.
This \emph{random function space} (aka RFS in this manuscript) is the space of all (equivalence classes of) measurable functions $f:(\Xi,\boldsymbol{\Xi})\to(\mathbb{R},\mathcal{B}_\mathbb{R})$ that are square-integrable with respect to $\mu$.
This space is known to form a Hilbert space because it is complete under the metric induced by the inner product.
In addition, let $\{\Psi_j:(\Xi,\boldsymbol{\Xi})\to(\mathbb{R},\mathcal{B}_\mathbb{R})\}_{j=0}^\infty$ be a complete orthogonal basis in $\mathscr{Z}$, such that $\Psi_0(\xi)=1$ for all $\xi\in\Xi$.
\end{definition}

\begin{remark}
In the literature, the `random function space' just defined is also called `random space' to simplify the terminology of the space.
However, in this work, a distinction between the two spaces is needed.
We claim that the FSC method \emph{does not} suffer from the curse of dimensionality at the \emph{random-function-space} level, because the number of basis vectors that we use to span $\mathscr{Z}$ does not depend upon the dimensionality of the random space---in contrast to other spectral methods such as gPC, TD-gPC, etc.~which use tensor products to construct $\mathscr{Z}$.
However, as with all spectral methods the FSC method \emph{does} suffer from the curse of dimensionality at the \emph{random-space} level, because we still have the issue that the higher the dimensionality of the random space is, the more difficult is to compute the inner products accurately.
We emphasize, however, that this is still an open area of research and that there are several numerical techniques available in the literature that deal with this issue, e.g.~\cite{davis2007methods,novak1996high,bungartz2004sparse,leobacher2014introduction}.
In Section \ref{sec2StoPro5Dim}, for example, we show that Monte Carlo integration can be used to address the curse of dimensionality at the random-space level, and that together with the FSC method, it can eliminate the curse of dimensionality of the proposed spectral approach at both random levels.
\end{remark}

From Definition \ref{sec2SetNotDef4} it follows that any function $f\in\mathscr{Z}$ can be represented in a Fourier series of the form:
\begin{equation*}
f=\sum_{j=0}^\infty f^j\Psi_j,
\end{equation*}
where $f^j$ denotes the $j$-th coefficient of the series with the superscript not denoting an exponentiation.

Moreover, let $\Upsilon_{ij}=\langle\Psi_i,\Psi_j\rangle$ be the $(i,j)$-th component of the inner-product tensor associated with the chosen orthogonal basis in $\mathscr{Z}$.
Then, because of the orthogonality property of the basis and the selection of the first basis vector to be identically equal to one ($\Psi_0\equiv 1$), one obtains:
\begin{equation*}
\Upsilon_{ij}=\langle\Psi_i,\Psi_i\rangle\,\delta_{ij}=
\begin{cases}
1 & \text{for $i=j=0$}\\
\langle\Psi_i,\Psi_i\rangle & \text{for $i=j$ with $i,j>0$}\\
0 & \text{otherwise.}
\end{cases}
\end{equation*}
Following the notation and conventions of multilinear and tensor algebra, we note that $\boldsymbol{\Upsilon}=\Upsilon_{ij}\,\Psi^i\otimes\Psi^j:\mathscr{Z}^2\to\mathbb{R}$ is a symmetric tensor of type $(0,2)$ given by $\boldsymbol{\Upsilon}[f,g]=\Upsilon_{ij}f^ig^j$, where $\Psi^i:\mathscr{Z}\to\mathbb{R}$ is the $i$-th dual basis vector in $\mathscr{Z}'$ defined by
\begin{equation*}
\Psi^i[h]:=[\Psi^i, h]=\frac{\langle\Psi_i,h\rangle}{\langle\Psi_i,\Psi_i\rangle}\equiv h^i.
\end{equation*}
Here $[\,\cdot\,,\cdot\,]:\mathscr{Z}'\times\mathscr{Z}\to\mathbb{R}$ represents the dual pairing between $\mathscr{Z}$ and $\mathscr{Z}'$ satisfying the property: $[\Psi^i,\Psi_j]=\delta\indices{^i_j}$ with $\delta\indices{^i_j}$ denoting the Kronecker delta.
The second equality follows from the Riesz representation theorem \cite{rudin1987real}, which means that the map $\Psi^i\mapsto\Psi_i/\langle\Psi_i,\Psi_i\rangle$ is an isometric isomorphism between $\mathscr{Z}'$ and $\mathscr{Z}$.

\begin{definition}[Solution space and root space]\label{sec2SetNotDef5}
Let $\mathscr{U}=\mathscr{T}(n)\otimes\mathscr{Z}$ and $\mathscr{V}=\mathscr{T}(0)\otimes\mathscr{Z}$ be the \emph{solution space} and the \emph{root space} of the system, respectively.
Then, as a result of these definitions, we have: $\mathscr{Z}\subset\mathscr{U}\subset\mathscr{V}$.
\end{definition}

\begin{remark}
These two definitions are used below to relate the solution space and the root space via the partial differential operator $\mathcal{L}$.
They are also used in the manuscript to simplify the notation of these spaces.
\end{remark}

Throughout this paper, we assume that the components of the $d$-tuple random variable $\xi=(\xi^1,\ldots,\xi^d)$ are mutually independent and that the random domain $\Xi$ is a hypercube of $d$ dimensions obtained by performing a $d$-fold Cartesian product of intervals $\bar{\Xi}_i:=\xi^i(\Omega)$.
Letting $\mu^i(\mathrm{d}\xi^i)=:\mathrm{d}\mu^i$ denote the probability measure of $\mathrm{d}\xi^i$ around $\xi^i\in\bar{\Xi}_i$, one can then define the measure in $\mathscr{Z}$ by
\begin{equation*}
\mu=\bigotimes_{i=1}^d\mu^i,\quad\text{or equivalently,}\quad\mathrm{d}\mu\equiv\mu(\mathrm{d}\xi)=\prod_{i=1}^d\mu^i(\mathrm{d}\xi^i)\equiv\mathrm{d}\mu^1\cdots\mathrm{d}\mu^d.
\end{equation*}

\paragraph{Problem statement}

In this work, we consider the following stochastic problem (assumed well-posed).

Find the real-valued stochastic process $u:\mathfrak{T}\times\Xi\to\mathbb{R}$ in $\mathscr{U}$, such that ($\mu$-a.e.):
\begin{subequations}\label{eq2SetNot1000}
\begin{align}
\mathcal{L}[u]=f&\qquad\text{on $\mathfrak{T}\times\Xi$}\label{eq2SetNot1000a}\\
\big\{\mathcal{B}_k[u](0,\cdot\,)=b_k\big\}_{k=1}^n&\qquad\text{on $\{0\}\times\Xi$},\label{eq2SetNot1000b}
\end{align}
\end{subequations}
where $\mathcal{L}:\mathscr{U}\to\mathscr{V}$ is a partial differential operator of order $(n,0)$, $\mathcal{B}_k[\,\cdot\,](0,\cdot\,):\mathscr{U}\to\mathscr{Z}$ is a partial differential operator of order $(n-1,0)$ that upon differentiation evaluates the resulting function at $t=0$, $f:\mathfrak{T}\times\Xi\to\mathbb{R}$ is a function in $\mathscr{V}$ given by $f=f(t,\xi)$, and $b_k:\Xi\to\mathbb{R}$ is a function in $\mathscr{Z}$ given by $b_k=b_k(\xi)$.

The operators $\mathcal{L}$ and $\mathcal{B}_k$ take differentiations only in time and can be, in general, nonlinear.
For the case when $(n,d)=(2,3)$ and $\mathcal{L}$ and $\mathcal{B}_k$ are linear operators, we get: $\xi=(\xi^1,\xi^2,\xi^3)$ and
\begin{gather*}
\mathcal{L}[u](t,\xi)=a_2(t,\xi)\,\ddot{u}(t,\xi)+a_1(t,\xi)\,\dot{u}(t,\xi)+a_0(t,\xi)\,u(t,\xi)\\
\mathcal{B}_1[u](0,\xi)=b_{11}(\xi)\,\dot{u}(0,\xi)+b_{10}(\xi)\,u(0,\xi)\,\,\\
\mathcal{B}_2[u](0,\xi)=b_{21}(\xi)\,\dot{u}(0,\xi)+b_{20}(\xi)\,u(0,\xi),
\end{gather*}
where $a_0,a_1,a_2\in\mathscr{V}$ with $a_2\neq0$, and $b_{10},b_{11},b_{20},b_{21}\in\mathscr{Z}$ such that $b_{10}b_{21}-b_{11}b_{20}\neq 0$.
Observe that in these expressions, $\dot{u}:=\partial_t u$ and $\ddot{u}:=\partial^2_t u$ denote the first and second partial derivatives of $u$ with respect to time.
This example also shows that, for the more general case, the stochasticity of the system can enter via the operators $\mathcal{L}$ and $\mathcal{B}_k$, and the source functions $f$ and $b_k$.

Because $u$ is already assumed to be an element of $\mathscr{U}$ in \eqref{eq2SetNot1000}, the stochastic systems that we are interested in are those whose underlying process is of second-order only.
A stochastic process $u$ is said to be of second-order if its second moment is finite, or equivalently, if $u(t,\cdot\,)\in\mathscr{Z}$ for all $t\in\mathfrak{T}$.

In this sense, since $u\in\mathscr{U}$, it can be represented by the Fourier series:
\begin{equation}\label{eq2SetNot1020}
u(t,\xi)=\sum_{j=0}^\infty u^j(t)\,\Psi_j(\xi),
\end{equation}
where $u^j$ is a temporal function in $\mathscr{T}(n)$ denoting the $j$-th random mode of $u$.
This series, usually referred to as \emph{stochastic spectral expansion} in the literature \cite{le2010spectral,sullivan2015introduction}, will be used herein as the \emph{solution representation} of the underlying process to seek.

It is worth mentioning that if we demand $u$ to be sufficiently smooth in the solution space, especially in $\mathscr{Z}$, the expansion given by \eqref{eq2SetNot1020} will lead to exponential convergence to the solution, since $\{\Psi_j\}_{j=0}^\infty$ is an orthogonal basis with respect to the probability measure $\mu$ in $\mathscr{Z}$.
This particular selection of the basis for the underlying process is known as the \emph{optimal basis}, and it can be obtained by using any orthogonalization technique such as the Gram-Schmidt process \cite{cheney2010linear}.

A system governed by \eqref{eq2SetNot1000} can also be expressed in modeling notation as
\begin{equation}\label{eq2SetNot1000star}
y=\boldsymbol{\mathcal{M}}[u][x]\quad\text{subject to initial condition}\quad \boldsymbol{\mathcal{I}}[u],\tag{\ref{eq2SetNot1000}*}\\
\end{equation}
where $\boldsymbol{\mathcal{M}}[u]:\mathscr{V}^r\to\mathscr{V}^s$ represents the mathematical model of the system defined by \eqref{eq2SetNot1000a}, $x=(x_1,\ldots,x_r):\mathfrak{T}\times\Xi\to\mathbb{R}^r$ is the $r$-tuple input of $\boldsymbol{\mathcal{M}}[u]$, and $y=(y_1,\ldots,y_s):\mathfrak{T}\times\Xi\to\mathbb{R}^s$ is the $s$-tuple output of $\boldsymbol{\mathcal{M}}[u]$ (aka the $s$-tuple observable in physics or the $s$-tuple response in engineering).
In addition, $\boldsymbol{\mathcal{I}}[u]$ represents the initial condition for $\boldsymbol{\mathcal{M}}[u]$ which is given by \eqref{eq2SetNot1000b}.
The objective of this mathematical model is to propagate and quantify the effects of input uncertainty $x$ on system's output $y$.
Note that here $x$ is to be understood as the model's input and not necessarily as the system's input.
Therefore, the components of $x$ might not only include the source function $f$ in \eqref{eq2SetNot1000} but also the coefficients of operator $\mathcal{L}$.

\paragraph{Discretization of random function space (standard gPC method)}

For this work, let us simply consider a $p$-discretization of the random function space $\mathscr{Z}$ as follows.
Let $\mathscr{Z}^{[P]}=\text{span}\{\Psi_j\}_{j=0}^P$ be a finite subspace of $\mathscr{Z}$ with $P+1\in\mathbb{N}_1$ denoting the dimensionality of the subspace, and let $u^{[P]}(t,\cdot\,)$ be an element of $\mathscr{Z}^{[P]}$.

Then, from \eqref{eq2SetNot1020} it follows that:
\begin{equation}\label{eq2SetNot1030}
u(t,\xi)\approx u^{[P]}(t,\xi)=\sum_{j=0}^P u^j(t)\,\Psi_j(\xi),
\end{equation}
provided that $\{\Psi_j\}_{j=0}^\infty$ is well-graded to carry out the approximation of $u$ this way.

If $d$ denotes the dimensionality of the random space, and $p$ is the maximal order polynomial in $\{\Psi_j\}_{j=0}^P$, then the total number of terms that we obtain after expanding \eqref{eq2SetNot1030} can be determined as
\begin{equation}\label{eq2SetNot1030A}
P+1=\binom{d+p}{p}=\frac{(d+p)!}{d!p!}.
\end{equation}
This expression shows that the total number of terms used in \eqref{eq2SetNot1030} grows combinatorially fast as a function of $d$ and $p$, and thus, it suffers to some extent from the \emph{curse of dimensionality}.
In practice, the usefulness of representing the solution with such a construction (i.e.~by means of a \emph{total-order tensor product}) is limited for problems where $d$ and $p$ are less than 10 or so \cite{sullivan2015introduction}.
For higher-dimensional spaces, more general \emph{sparse tensor products} can be utilized to help alleviate better the curse of dimensionality, e.g.~by means of \emph{Smolyak-based tensor products}.
However, for low dimensional spaces, \emph{full tensor products} can still be used whenever $d$ is 2 or 3.
In full tensor products, the total number of terms increases exponentially fast as a function of $d$ and $p$.
That is, $P+1=(p+1)^d$.

\begin{remark}\label{rmk2paper100}
An orthogonal basis in $\mathscr{Z}^{[P]}$ can be constructed as products of univariate orthogonal polynomials in the following way.
Let $\{\Psi_j^{(i)}:\bar{\Xi}_i\to\mathbb{R}\}_{j=0}^\infty$ be an orthogonal basis with respect to $\mu^i$, where $i\in\{1,2,\ldots,d\}$.
These bases are usually chosen to be univariate polynomials along the $i$-th dimension satisfying the condition that $\Psi_0^{(i)}(\xi^i)=1$ for all $\xi^i\in\bar{\Xi}_i$.
Then, one defines:
\begin{equation}\label{eq2SetNot1032}
\Psi_j\equiv\Psi_{\pi(k)}:=\bigotimes_{i=1}^d \Psi_{k_i}^{(i)}=\Psi_{k_1}^{(1)}\otimes\cdots\otimes\Psi_{k_d}^{(d)},
\end{equation}
where $k=(k_1,\ldots,k_d)\in\mathbb{N}_0^d$ is a multi-index with $|k|=k_1+\cdots+k_d$, and $\Psi_{\pi(k)}$ is a function given by
\begin{equation*}
\Psi_{\pi(k)}(\xi)=\prod_{i=1}^d\Psi_{k_i}^{(i)}(\xi^i)=\Psi_{k_1}^{(1)}(\xi^1)\,\cdots\,\Psi_{k_d}^{(d)}(\xi^d).
\end{equation*}
In these expressions, $\pi:\mathbb{N}_0^d\to\mathbb{N}_0$ is an ordering map that sorts the elements in ascending order based on the multi-index degree $|k|$, followed by a reverse-lexicographic ordering for those elements that share the same multi-index degree.
In case of resorting to a \emph{total-order tensor product}, the condition $|k|\leq p$ is enforced in \eqref{eq2SetNot1032} to make $p$ (which is always taken less than $\max |k|$) be the maximal order polynomial in $\{\Psi_j\}_{j=0}^P$.
\end{remark}

\begin{remark}\label{note2paper200}
In Section \ref{sec2FSCmet} we will see that in our FSC method the basis $\{\Psi_j\}_{j=0}^P$ is not constructed by performing a tensor product like in Remark \ref{rmk2paper100}, but by recurring instead to the time derivatives of the solution itself.
This is in contrast to the standard TD-gPC method \cite{gerritsma2010time}, which uses tensor products to construct a basis based on the monomials of the solution, and for which it is known suffers from the curse of dimensionality.
\end{remark}

For notational convenience, expansion \eqref{eq2SetNot1030} will simply be written hereafter as
\begin{equation}\label{eq2SetNot1031}
u(t,\xi)=u^j(t)\,\Psi_j(\xi),
\end{equation}
where a summation sign is implied over the repeated index $j$, and $j\in\{0,1,\ldots,P\}$ unless indicated otherwise.
Observe that the superscript $^{[P]}$ in $u$ was dropped to avoid unnecessary complexity in notation.

Substituting \eqref{eq2SetNot1031} into \eqref{eq2SetNot1000} gives
\begin{subequations}\label{eq2SetNot1040}
\begin{align}
\mathcal{L}[u^j\Psi_j]=f&\qquad\text{on $\mathfrak{T}\times\Xi$}\label{eq2SetNot1040a}\\
\big\{\mathcal{B}_k[u^j\Psi_j](0,\cdot\,)=b_k\big\}_{k=1}^n&\qquad\text{on $\{0\}\times\Xi$}.\label{eq2SetNot1040b}
\end{align}
\end{subequations}

Projecting \eqref{eq2SetNot1040} onto $\mathscr{Z}^{[P]}$ yields a system of $P+1$ ordinary differential equations of order $n$ in the variable $t$, where the unknowns are the random modes $u^j=u^j(t)$ and their first $n-1$ time derivatives:
\begin{subequations}\label{eq2SetNot1050}
\begin{align}
\Psi^i\big[\mathcal{L}[u^j\Psi_j]\big]=\Psi^i[f]&\qquad\text{on $\mathfrak{T}$}\label{eq2SetNot1050a}\\
\big\{\Psi^i\big[\mathcal{B}_k[u^j\Psi_j](0,\cdot\,)\big]=\Psi^i[b_k]\big\}_{k=1}^n&\qquad\text{on $\{0\}$} \label{eq2SetNot1050b}
\end{align}
\end{subequations}
with $i,j\in\{0,1,\ldots,P\}$.
This is the so-called \emph{orthogonal projection} of \eqref{eq2SetNot1040} onto $\mathscr{Z}^{[P]}$, and it ensures that the mean-square error resulting from the finite representation of $u$ using \eqref{eq2SetNot1030} is orthogonal to $\mathscr{Z}^{[P]}$ \cite{ghanem1991stochastic}.

A closer look at \eqref{eq2SetNot1050} indicates that the system of equations that we are dealing with at this point is no longer `stochastic' but `deterministic' since the randomness of the stochastic system has effectively been absorbed by the application of the dual vectors $\{\Psi^i\in\mathscr{Z}'\}_{i=0}^P$.
In other words, system \eqref{eq2SetNot1050} does not depend on the tuple $(t,\xi)$ but only on $t$ at this stage of the analysis.

\paragraph{Discretization of temporal function space}

Because \eqref{eq2SetNot1050} is a system of ordinary differential equations with initial conditions, any suitable time integration method can be used to find its solution at discrete times; giving therefore rise to an $(h,p)$-discretization for $\mathscr{T}(n)$ in general.

\paragraph{Probability moments}

In probability theory, the real-valued expectation, $\mathbf{E}:\mathscr{Z}\to\mathbb{R}$, is a linear map that outputs the expected value of a real-valued random variable, and it is given by:
\begin{equation*}
\mathbf{E}[f]=\int f\,\mathrm{d}\mu.
\end{equation*}
In contrast, the real-valued covariance, $\mathrm{Cov}:\mathscr{Z}^2\to\mathbb{R}$, is a symmetric, bilinear map that measures the joint variability of two real-valued random variables.
It is defined by:
\begin{equation*}
\mathrm{Cov}[f,g]=\mathbf{E}\big[\big(f-\mathbf{E}[f]\big)\big(g-\mathbf{E}[g]\big)\big].
\end{equation*}
These two maps can be used as the building block to construct other maps, such as the variance of $f$ which is defined as $\mathrm{Var}[f]=\mathrm{Cov}[f,f]$.
Higher probability moments (e.g.~skewness, kurtosis, etc.) are not considered in this work.
However, we do so for the sake of brevity and without loss of generality, chiefly because higher probability moments are not guaranteed to exist for second-order stochastic processes.

Now, let $z=y_k$ be the $k$-th component of output $y=\boldsymbol{\mathcal{M}}[u][x]$ (from \eqref{eq2SetNot1000star}).
If $z\in\mathscr{V}$, then it can be expanded with a polynomial chaos similar to the one set forth in \eqref{eq2SetNot1030} to obtain: 
\begin{equation*}
z(t,\xi)\approx z^{[P]}(t,\xi)=\sum_{j=0}^P z^j(t)\,\Psi_j(\xi)\equiv z^j(t)\,\Psi_j(\xi),
\end{equation*}
where $P$ does not need to be the same as in \eqref{eq2SetNot1030}, and the $j$-th random mode of $z$ is given by:
\begin{equation*}
z^j(t)=\frac{\langle\Psi_j,z(t,\,\cdot\,)\rangle}{\langle\Psi_j,\Psi_j\rangle}.
\end{equation*}
This representation of $z$ will allow us to compute the probability moments of interest with minimal computational effort, as demonstrated below.

The expectation of $z$, $\mathbf{E}[z]:\mathfrak{T}\to\mathbb{R}$, is easy to compute and it is given by the first random mode of $z$:
\begin{equation}\label{eq2SetNot3100}
\mathbf{E}[z](t):=\int z(t,\cdot\,)\,\mathrm{d}\mu=z^j(t)\int\Psi_j\,\mathrm{d}\mu=z^j(t)\,\langle\Psi_j,\Psi_0\rangle=z^0(t).
\end{equation}

The autocovariance of $z$, $\mathrm{Cov}[z,z]:\mathfrak{T}^2\to\mathbb{R}$, is defined as:
\begin{align*}
\mathrm{Cov}[z,z](t,s)&:=\mathbf{E}\big[\big(z(t,\cdot\,)-\mathbf{E}[z](t)\big)\big(z(s,\cdot\,)-\mathbf{E}[z](s)\big)\big]&\\
&=\mathbf{E}\big[\big(z^j(t)\,\Psi_j-z^0(t)\big)\big(z^k(s)\,\Psi_k-z^0(s)\big)\big]&\text{with $j,k\in\{0,1,\ldots,P\}$}\\
&=\mathbf{E}\big[z^j(t)\,z^k(s)\,\Psi_j\Psi_k\big], &\text{with $j,k\in\{1,2,\ldots,P\}$}
\end{align*}
and thus, upon further simplification we get
\begin{equation*}
\mathrm{Cov}[z,z](t,s)=\sum_{j=1}^P\sum_{k=1}^P z^j(t)\,z^k(s)\int\Psi_j\Psi_k\,\mathrm{d}\mu=
\sum_{j=1}^P\sum_{k=1}^P z^j(t)\,z^k(s)\,\langle\Psi_j,\Psi_k\rangle=
\sum_{j=1}^P\Upsilon_{jj}\,z^j(t)\,z^j(s).
\end{equation*}

The variance of $z$, $\mathrm{Var}[z]:\mathfrak{T}\to\mathbb{R}_0^+$, is nothing but:
\begin{equation}\label{eq2SetNot3300}
\mathrm{Var}[z](t):=\mathrm{Cov}[z,z](t,t)=
\sum_{j=1}^P\Upsilon_{jj}\,z^j(t)\,z^j(t).
\end{equation}

\paragraph{Probability distributions}

In this paper, we employ four different probability distributions to characterize the stochasticity in the systems defined in Sections \ref{sec2NumExa} and \ref{sec2StoPro5Dim}, namely: uniform, beta, gamma and normal.
Because the measures associated with these distributions are absolutely continuous with respect to the Lebesgue measure, they possess probability density functions, $f:\Xi\to\mathbb{R}^+_0$, given by:
\begin{gather*}
\mathrm{Uniform}\sim f(\xi)=\frac{1}{b-a}\text{ on $\Xi=[a,b]$},\quad
\mathrm{Beta}(\alpha,\beta)\sim f(\xi)=\frac{(\xi-a)^{\alpha-1}\,(b-\xi)^{\beta-1}}{(b-a)^{\alpha+\beta-1}\,\mathrm{B}(\alpha,\beta)}\text{ on $\Xi=[a,b]$}\\
\mathrm{Gamma}(\alpha,\beta)\sim f(\xi)=\frac{\beta^\alpha}{\Gamma(\alpha)}(\xi-a)^{\alpha-1}\exp(-\beta\,(\xi-a))\text{ on $\Xi=[a,\infty)$},\quad\text{and}\quad\\
\mathrm{Normal}(\mu,\sigma^2)\sim f(\xi)=\frac{1}{\sigma\sqrt{2\pi}}\exp\!\left[-\frac{1}{2}\!\left(\frac{\xi-\mu}{\sigma}\right)^{\!2}\right]\text{ on $\Xi=\mathbb{R}$}.
\end{gather*}

\paragraph{Numerical integration}

The numerical integration of an inner product can be carried out at least in two different ways: using \emph{grid-based integration} \cite{davis2007methods,novak1996high,bungartz2004sparse} or \emph{Monte Carlo-based integration} \cite{leobacher2014introduction}.
The difference between the two lies in how the quadrature points are chosen from the domain of the integral.
In the latter, the quadrature points are randomly sampled from the domain to seek an approximate evaluation of the integral, whereas in the former the quadrature points are selected to be the intersecting points of some predefined regular grid.
It is well-known that when this grid is the Gaussian grid associated with the measure, the grid-based integration method produces the most accurate approximation of the integral.

In grid-based integration, we can either use \emph{full grids} or \emph{sparse grids} to perform the numerical evaluation of the integral.
Using one or the other will depend on the level of accuracy we want to achieve and the computational cost we are willing to pay to estimate the numerical value of the integral.
Popular sparse grids based on the work by S.A.~Smolyak \cite{smolyak1963quadrature} deal with the curse of dimensionality well.
However, in situations where the dimensionality of the integral domain is high, Monte Carlo is usually the preferred integration technique since the convergence rate to the sought integral is dimension independent.

For the numerical examples presented in Section \ref{sec2NumExa}, we use the Gaussian quadrature rule based on full grids to approximate the inner products.
The reason behind this choice is that in those numerical examples, the dimensionality of the random space is at most 2.
For this low-dimensional random space, we can heedlessly define a full grid in the random domain to estimate the integrals with high accuracy.
Consequently, the aforementioned inner products are computed with the following expression:
\begin{equation*}
\langle f,g\rangle:=\int fg\,\mathrm{d}\mu\approx\mathcal{Q}^{[Q]}[fg]:=\sum_{i=1}^{Q} f(\xi_i)\,g(\xi_i)\,w_i,
\end{equation*}
where $w_i\in\mathbb{R}^+$ is the quadrature weight associated with the Gaussian quadrature point $\xi_i\in\Xi$, and $Q\in\mathbb{N}_1$ denotes the number of quadrature points involved in approximating the evaluation of the inner product.
Here the quadrature points are selected from the Gaussian grid associated with the measure $\mu$.

\begin{remark}
We note that when $fg$ is a sufficiently smooth integrand, we have:
\begin{equation*}
\mathcal{Q}^{[Q]}[fg]\to\int fg\,\mathrm{d}\mu\quad\text{as}\quad Q\to\infty,
\end{equation*}
and if $fg$ is a polynomial, then there exists a $Q\in\mathbb{N}_1$ such that $\mathcal{Q}^{[Q+j]}[fg]$ evaluates the integral exactly for all $j\in\mathbb{N}_0$.
\end{remark}

Moreover, we use Monte Carlo integration to approximate the inner products that emerge from solving the parametric, high-dimensional stochastic problem described in Section \ref{sec2StoPro5Dim}.
In this case we choose Monte Carlo integration because the dimensionality of the random space is up to 10.

\section{The Gram-Schmidt process for random function spaces}\label{sec2GraSch}

Suppose that we have a non-orthogonal basis in $\mathscr{Z}$ given by $\{\Phi_j\}_{j=0}^\infty$.
The objective of the \emph{Gram-Schmidt process} is to use this basis to construct an orthogonal basis in the same space with the recursive formula:
\begin{equation}\label{eq2GraSch1200}
\Psi_j:=\Phi_j-\sum_{k=0}^{j-1}\frac{\langle\Phi_j,\Psi_k\rangle}{\langle\Psi_k,\Psi_k\rangle}\Psi_k\quad\forall j\in\mathbb{N}_0.
\end{equation}

Recall that in Section \ref{sec2SetNot} we prescribed the condition that the first basis vector is identically equal to one.
This condition gives rise to the following theorem which is valid for any square-integrable function space defined on a probability space.
We point out that the benefit of using this theorem is that if both the expectation vector and the covariance matrix of the non-orthogonal basis are known beforehand, the orthogonalization process can (in general) be performed faster than the traditional Gram-Schmidt process.\footnote{Please see Appendix \ref{appsec2TimComAnaThe1} for a time-complexity analysis for Theorem \ref{thm2paper100} and the traditional Gram-Schmidt process.}
This is, for example, a typical situation in the area of stochastic modeling where the probability information of the stochastic input is---as often as not---known beforehand, and thence one might be interested in constructing a stochastic-input space based on the available information.
This need is fulfilled by Theorem \ref{thm2paper100}.

\begin{theorem}\label{thm2paper100}
Let $\mathscr{Z}$ be a random function space, and let $\{\Phi_j\}_{j=1}^\infty$ be an ordered set of linearly independent functions in $\mathscr{Z}$ such that the constant functions are not in the set.
Then, $\{\Psi_j\}_{j=0}^\infty$ is an orthogonal basis in $\mathscr{Z}$ given by:
\begin{equation}\label{eq2GraSch1300}
\Psi_j:=\Phi_j-\mathbf{E}[\Phi_j]\,\Psi_0-\sum_{k=1}^{j-1}\frac{\det\triangle_k(j)}{\det\square_k}\Psi_k\quad\text{with}\quad \Psi_0\equiv 1,
\end{equation}
where $\square_k\in\mathscr{M}(k\times k,\mathbb{R})$ is the covariance matrix for the first $k$ elements of $\{\Phi_j\}_{j=1}^\infty$:
\begin{equation*}
\square_k=
\begin{bmatrix}
\mathrm{Cov}[\Phi_1,\Phi_1] & \cdots & \mathrm{Cov}[\Phi_1,\Phi_k]\\
\vdots & \ddots & \vdots\\
\mathrm{Cov}[\Phi_k,\Phi_1] & \cdots & \mathrm{Cov}[\Phi_k,\Phi_k]
\end{bmatrix},
\end{equation*}
and $\triangle_k:\{k+1,k+2,\ldots\}\to\mathscr{M}(k\times k,\mathbb{R})$ is a map defined by
\begin{equation*}
\triangle_k(j)=
\begin{bmatrix}
\mathrm{Cov}[\Phi_1,\Phi_1] & \cdots & \mathrm{Cov}[\Phi_1,\Phi_k]\\
\vdots & \ddots & \vdots\\
\mathrm{Cov}[\Phi_{k-1},\Phi_1] & \cdots & \mathrm{Cov}[\Phi_{k-1},\Phi_k]\\
\mathrm{Cov}[\Phi_j,\Phi_1] & \cdots & \mathrm{Cov}[\Phi_j,\Phi_k]
\end{bmatrix}
\end{equation*}
with $\triangle_1(j)=\mathrm{Cov}[\Phi_j,\Phi_1]$ and
\begin{equation*}
\triangle_2(j)=
\begin{bmatrix}
\mathrm{Cov}[\Phi_1,\Phi_1] & \mathrm{Cov}[\Phi_1,\Phi_2]\\
\mathrm{Cov}[\Phi_j,\Phi_1] & \mathrm{Cov}[\Phi_j,\Phi_2]
\end{bmatrix}.
\end{equation*}
In these expressions $k\in\mathbb{N}_1$.
\end{theorem}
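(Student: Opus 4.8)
The plan is to recognize \eqref{eq2GraSch1300} as a repackaging of the ordinary Gram--Schmidt recursion \eqref{eq2GraSch1200} that exploits two features peculiar to a \emph{random} function space: the distinguished unit vector $\Psi_0\equiv 1$ (legitimate because $\langle\Psi_0,\Psi_0\rangle=\int 1\,\mathrm{d}\mu=1$), and the identity $\mathrm{Cov}[\Phi_i,\Phi_j]=\langle\Phi_i,\Phi_j\rangle-\mathbf{E}[\Phi_i]\,\mathbf{E}[\Phi_j]$. First I would reduce the claim to an ordinary Gram--Schmidt statement about the \emph{centered} functions $\tilde\Phi_j:=\Phi_j-\mathbf{E}[\Phi_j]$. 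Since $\langle\Psi_0,\Psi_0\rangle=1$, the $\Psi_0$-coefficient that \eqref{eq2GraSch1200} assigns to $\Phi_j$ is exactly $\langle\Phi_j,\Psi_0\rangle=\mathbf{E}[\Phi_j]$, which accounts for the term $\mathbf{E}[\Phi_j]\,\Psi_0$ in \eqref{eq2GraSch1300}. Because $\Phi_j-\tilde\Phi_j\in\mathrm{span}\{\Psi_0\}$ and $\Psi_0$ precedes every $\Phi_j$ in the ordering, applying \eqref{eq2GraSch1200} to $(\Psi_0,\Phi_1,\Phi_2,\dots)$ yields the same orthogonal sequence as applying it to $(\Psi_0,\tilde\Phi_1,\tilde\Phi_2,\dots)$; and since $\langle\tilde\Phi_j,\Psi_0\rangle=0$, for $j\geq 1$ that sequence is simply the Gram--Schmidt orthogonalization of $(\tilde\Phi_1,\tilde\Phi_2,\dots)$. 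So, using also $\langle\tilde\Phi_j,\Psi_k\rangle=\langle\Phi_j,\Psi_k\rangle$ for $k\geq1$, the theorem reduces to showing that the Gram--Schmidt coefficient $\langle\tilde\Phi_j,\Psi_k\rangle/\langle\Psi_k,\Psi_k\rangle$ equals $\det\triangle_k(j)/\det\square_k$ for $1\leq k<j$.

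Second, I would prove the classical Gram-determinant representation of $\Psi_k$ in terms of the centered functions. Writing $\square_m$ for the Gram matrix of $\tilde\Phi_1,\dots,\tilde\Phi_m$ (which, by the covariance identity, is exactly the matrix $\square_m$ of the statement), introduce the ``formal determinant'' $D_k$ obtained from $\square_k$ by replacing its last column with the column of functions $(\tilde\Phi_1,\dots,\tilde\Phi_k)^{\mathsf T}$. Laplace expansion along that last column gives $D_k=(\det\square_{k-1})\,\tilde\Phi_k+(\text{a linear combination of }\tilde\Phi_1,\dots,\tilde\Phi_{k-1})$, while pairing $D_k$ with $\tilde\Phi_i$ for $i<k$ produces a numerical determinant with two equal columns, so $\langle\tilde\Phi_i,D_k\rangle=0$. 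Hence $D_k=(\det\square_{k-1})\,\Psi_k$, where $\det\square_{k-1}\neq 0$: the hypothesis, read as ``$\{\Psi_0\}\cup\{\Phi_j\}_{j\geq1}$ is linearly independent'', forces $\tilde\Phi_1,\dots,\tilde\Phi_{k-1}$ to be linearly independent, so $\square_{k-1}$ is positive definite (the base case $\det\square_0:=1$, $D_1=\tilde\Phi_1=\Psi_1$ being trivial).

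Third, I would read off the coefficient by pairing $D_k$ against $\tilde\Phi_j$. For any $j\geq k$, $\langle\tilde\Phi_j,D_k\rangle$ equals the determinant of the matrix obtained from $\square_k$ by replacing its last column with $(\mathrm{Cov}[\Phi_j,\Phi_1],\dots,\mathrm{Cov}[\Phi_j,\Phi_k])^{\mathsf T}$; since $\square_k$ is symmetric and a matrix and its transpose have equal determinant, this is precisely $\det\triangle_k(j)$. Specializing to $j=k$ gives $\langle\tilde\Phi_k,D_k\rangle=\det\square_k$, so $\langle\Psi_k,\Psi_k\rangle=\langle\tilde\Phi_k,\Psi_k\rangle=\langle\tilde\Phi_k,D_k\rangle/\det\square_{k-1}=\det\square_k/\det\square_{k-1}$ (the first equality because $\Psi_k-\tilde\Phi_k\perp\Psi_k$), while for $k<j$, $\langle\tilde\Phi_j,\Psi_k\rangle=\det\triangle_k(j)/\det\square_{k-1}$; dividing yields $\det\triangle_k(j)/\det\square_k$, as needed. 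Orthogonality of $\{\Psi_j\}$ and the span identity $\mathrm{span}\{\Psi_0,\dots,\Psi_N\}=\mathrm{span}\{\Psi_0,\Phi_1,\dots,\Phi_N\}$ are then immediate from the construction, and completeness of $\{\Psi_j\}$ in $\mathscr{Z}$ is inherited from that of $\{\Psi_0\}\cup\{\Phi_j\}$. I expect the only real obstacles to be bookkeeping ones: justifying carefully that the centering step leaves the Gram--Schmidt output unchanged, and keeping the row-versus-column placement in $\triangle_k(j)$ consistent via the symmetry of $\square_k$; the determinant identity in the second and third stages is the classical one and transfers verbatim. A purely inductive alternative — strong induction on $j$, checking directly that the $\Psi_j$ of \eqref{eq2GraSch1300} is orthogonal to $\Psi_0,\dots,\Psi_{j-1}$ — collapses to the same coefficient identity, so I would still establish it through the $D_k$ computation above.
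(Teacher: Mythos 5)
Your proposal is correct, and while it shares the paper's overall architecture (reduce \eqref{eq2GraSch1300} to the Gram--Schmidt recursion \eqref{eq2GraSch1200} and then identify the two coefficient ratios $\langle\Psi_k,\Psi_k\rangle=\det\square_k/\det\square_{k-1}$ and $\langle\Phi_j,\Psi_k\rangle=\det\triangle_k(j)/\det\square_{k-1}$), it proves the decisive step by a genuinely different and more complete device. The paper's argument is explicitly a sketch: it verifies $j\in\{0,1,2,3\}$ by hand, exhibits the $k=2$ instances of the two formulas, and then asserts that the general case ``can be derived algebraically using mathematical induction'' without carrying out that induction. You instead pass to the centered functions $\tilde\Phi_j=\Phi_j-\mathbf{E}[\Phi_j]$ (correctly observing that this leaves the Gram--Schmidt output unchanged because $\Psi_0$ precedes every $\Phi_j$, and that $\mathrm{Cov}[\Phi_i,\Phi_j]=\langle\tilde\Phi_i,\tilde\Phi_j\rangle$ makes $\square_k$ their Gram matrix), and then invoke the classical formal-determinant representation $D_k=(\det\square_{k-1})\,\Psi_k$, from which both formulas fall out in one stroke by pairing $D_k$ against $\tilde\Phi_j$; your handling of the row-versus-column placement in $\triangle_k(j)$ via symmetry and $\det A=\det A^{\mathsf T}$ is exactly right. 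What your route buys is a closed, non-inductive proof of the identities the paper leaves as an exercise, plus an explicit justification that $\det\square_{k-1}\neq 0$ via positive definiteness. Two honest caveats, which apply equally to the paper's version: the hypothesis as literally stated (``constant functions are not in the set'') is weaker than what both arguments actually use, namely linear independence of $\{\Psi_0\}\cup\{\Phi_j\}_{j\geq 1}$ (a difference of two non-constant $\Phi_j$'s could still be constant), and the claim that $\{\Psi_j\}_{j=0}^\infty$ is a \emph{basis} of $\mathscr{Z}$ additionally requires totality of $\{\Psi_0\}\cup\{\Phi_j\}$, which neither the theorem's hypotheses nor the paper's proof supplies; you flag both points appropriately rather than papering over them.
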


\begin{proof}[Proof sketch]
The proof of this theorem follows from the Gram-Schmidt process applied to the set $\{\Phi_j\}_{j=1}^\infty$.
To begin with, let us consider first the cases when $j\in\{0,1,2,3\}$.
Expression \eqref{eq2GraSch1300} already tells us that for $j=0$, $\Psi_0\equiv1$.
It is for this reason that all constant functions are excluded from $\{\Phi_j\}_{j=1}^\infty$.
For $j\in\{1,2,3\}$, we proceed as follows.
\begin{enumerate}
\item[\ding{111}] {\bfseries When $j=1$.}\quad It is easy to see that \eqref{eq2GraSch1200} yields
\begin{equation}\label{eq2GraSch1310}
\Psi_1=\Phi_1-\frac{\langle\Phi_1,\Psi_0\rangle}{\langle \Psi_0,\Psi_0\rangle}\Psi_0=\Phi_1-\mathbf{E}[\Phi_1]\,\Psi_0,
\end{equation}
because $\langle\Phi_1,\Psi_0\rangle=\mathbf{E}[\Phi_1]$ and $\langle\Psi_0,\Psi_0\rangle=1$.

\item[\ding{111}] {\bfseries When $j=2$.}\quad From \eqref{eq2GraSch1200} we get
\begin{equation}\label{eq2GraSch1311}
\Psi_2=\Phi_2-\frac{\langle\Phi_2,\Psi_0\rangle}{\langle \Psi_0,\Psi_0\rangle}\Psi_0-\frac{\langle\Phi_2,\Psi_1\rangle}{\langle \Psi_1,\Psi_1\rangle}\Psi_1.
\end{equation}
Replacing \eqref{eq2GraSch1310} into \eqref{eq2GraSch1311} gives
\begin{equation*}
\Psi_2=\Phi_2-\mathbf{E}[\Phi_2]\,\Psi_0-\frac{\mathrm{Cov}[\Phi_1,\Phi_2]}{\mathrm{Cov}[\Phi_1,\Phi_1]}\Psi_1=\Phi_2-\mathbf{E}[\Phi_2]\,\Psi_0-\frac{\det\triangle_1(2)}{\det\square_1}\Psi_1,
\end{equation*}
after noting that $\langle\Phi_2,\Phi_1-\mathbf{E}[\Phi_1]\,\Psi_0\rangle$ simplifies to $\mathrm{Cov}[\Phi_1,\Phi_2]$, and $\langle\Phi_1-\mathbf{E}[\Phi_1]\,\Psi_0,\Phi_1-\mathbf{E}[\Phi_1]\,\Psi_0\rangle$ is nothing but $\mathrm{Cov}[\Phi_1,\Phi_1]$.

\item[\ding{111}] {\bfseries When $j=3$.}\quad In a similar fashion, \eqref{eq2GraSch1200} yields
\begin{multline*}
\Psi_3=\Phi_3-\mathbf{E}[\Phi_3]\,\Psi_0-\frac{\mathrm{Cov}[\Phi_1,\Phi_3]}{\mathrm{Cov}[\Phi_1,\Phi_1]}\Psi_1-\frac{\mathrm{Cov}[\Phi_1,\Phi_1]\,\mathrm{Cov}[\Phi_3,\Phi_2]-\mathrm{Cov}[\Phi_1,\Phi_2]\,\mathrm{Cov}[\Phi_3,\Phi_1]}{\mathrm{Cov}[\Phi_1,\Phi_1]\,\mathrm{Cov}[\Phi_2,\Phi_2]-\mathrm{Cov}[\Phi_1,\Phi_2]\,\mathrm{Cov}[\Phi_2,\Phi_1]}\Psi_2\\[1ex]
=\Phi_3-\mathbf{E}[\Phi_3]\,\Psi_0-\frac{\det\triangle_1(3)}{\det\square_1}\Psi_1-\frac{\det\triangle_2(3)}{\det\square_2}\Psi_2.
\end{multline*}
\end{enumerate}

The following formulas can be derived algebraically using mathematical induction.
For brevity, we only provide an insight into how this can be done.

\begin{enumerate}
\item[\ding{111}] {\bfseries Formula for $\langle\Psi_k,\Psi_k\rangle$.}\quad Consider, for example, the case when $k=2$:
\begin{equation}\label{eq2GraSch1320}
\langle\Psi_2,\Psi_2\rangle=\mathbf{E}[(\Psi_2)^2]=\mathbf{E}\!\left[\left(\Phi_2-\mathbf{E}[\Phi_2]\,\Psi_0-\frac{\mathrm{Cov}[\Phi_1,\Phi_2]}{\mathrm{Cov}[\Phi_1,\Phi_1]}\Psi_1\right)^{\!2}\,\right].
\end{equation}
Substituting \eqref{eq2GraSch1310} into \eqref{eq2GraSch1320} yields
\begin{equation*}
\langle\Psi_2,\Psi_2\rangle=\frac{\mathrm{Cov}[\Phi_1,\Phi_1]\,\mathrm{Cov}[\Phi_2,\Phi_2]-\mathrm{Cov}[\Phi_1,\Phi_2]\,\mathrm{Cov}[\Phi_2,\Phi_1]}{\mathrm{Cov}[\Phi_1,\Phi_1]}=\frac{\det\square_2}{\det\square_1}.
\end{equation*}
In general, it is possible to show that the formula for $\langle\Psi_k,\Psi_k\rangle$ is given by:
\begin{equation}\label{eq2GraSch1325}
\langle\Psi_k,\Psi_k\rangle=\frac{\det\square_k}{\det\square_{k-1}}\quad\forall k\in\mathbb{N}_1,
\end{equation}
where we have set: $\det\square_0=1$.

\item[\ding{111}] {\bfseries Formula for $\langle\Phi_j,\Psi_k\rangle$.}\quad As before, let us consider the case when $k=2$ for the sake of illustration:
\begin{equation}\label{eq2GraSch1330}
\langle\Phi_j,\Psi_2\rangle=\left\langle\Phi_j,\Phi_2-\mathbf{E}[\Phi_2]\,\Psi_0-\frac{\mathrm{Cov}[\Phi_1,\Phi_2]}{\mathrm{Cov}[\Phi_1,\Phi_1]}\Psi_1\right\rangle.
\end{equation}
Replacing \eqref{eq2GraSch1310} into \eqref{eq2GraSch1330} gives
\begin{equation*}
\langle\Phi_j,\Psi_2\rangle=\frac{\mathrm{Cov}[\Phi_1,\Phi_1]\,\mathrm{Cov}[\Phi_j,\Phi_2]-\mathrm{Cov}[\Phi_1,\Phi_2]\,\mathrm{Cov}[\Phi_j,\Phi_1]}{\mathrm{Cov}[\Phi_1,\Phi_1]}=\frac{\det\triangle_2(j)}{\det\square_1}.
\end{equation*}
In general, the formula for $\langle\Phi_j,\Psi_k\rangle$ is given by:
\begin{equation}\label{eq2GraSch1335}
\langle\Phi_j,\Psi_k\rangle=\frac{\det\triangle_k(j)}{\det\square_{k-1}}\quad\forall k\in\mathbb{N}_1,
\end{equation}
where once again we have set: $\det\square_0=1$.
Here, $j\in\{k+1,k+2,\ldots\}$ by definition of $\triangle_k$.
\end{enumerate}

Therefore, substituting \eqref{eq2GraSch1325} and \eqref{eq2GraSch1335} into \eqref{eq2GraSch1200} yields expression \eqref{eq2GraSch1300}.
\end{proof}

This theorem will be implemented in Section \ref{sec2FSCmet} to transfer the probability information exactly at the current time of the simulation.

\begin{example}\label{ex2paper100}
Let us consider a one-dimensional random space with domain $\Xi=[-1,1]$ and measure $\mu$ given by $\mathrm{d}\mu(\xi)=\tfrac{3}{2}\xi^2\,\mathrm{d}\xi$, and suppose that we are interested in finding an orthogonal basis in one of the infinitely many $\mathscr{Z}^{[5]}$ spaces that we can construct by applying Theorem \ref{thm2paper100} over a set of 5 monomials in the variable $\xi$.

To this end, let $\{\Phi_j(\xi)=\xi^j\}_{j=1}^{5}$ be this set.
Then, its expectation vector and covariance matrix are:
\begin{equation*}
\big[\mathbf{E}[\Phi_j]\big]=
\begin{bmatrix}
0\\[1ex]
\tfrac{3}{5}\\[1ex]
0\\[1ex]
\tfrac{3}{7}\\[1ex]
0	
\end{bmatrix},\qquad
\big[\mathrm{Cov}[\Phi_i,\Phi_j]\big]=\begin{bmatrix}
\mathrm{Cov}[\Phi_1,\Phi_1] & \cdots & \mathrm{Cov}[\Phi_1,\Phi_5]\\
\vdots & \ddots & \vdots\\
\mathrm{Cov}[\Phi_5,\Phi_1] & \cdots & \mathrm{Cov}[\Phi_5,\Phi_5]
\end{bmatrix}=
\begin{bmatrix}
\tfrac{3}{5} & 0 & \tfrac{3}{7} & 0 & \tfrac{1}{3}\\[1ex]
0 & \tfrac{12}{175} & 0 & \tfrac{8}{105} & 0\\[1ex]
\tfrac{3}{7} & 0 & \tfrac{1}{3} & 0 & \tfrac{3}{11} \\[1ex]
0 & \tfrac{8}{105} & 0 & \tfrac{48}{539} & 0 \\[1ex]
\tfrac{1}{3} & 0 & \tfrac{3}{11} & 0 & \tfrac{3}{13}
\end{bmatrix}.
\end{equation*}
From \eqref{eq2GraSch1300} it follows that an orthogonal basis in the chosen $\mathscr{Z}^{[5]}$ space is given by:
\begin{gather*}
\Psi_0(\xi)=1,\quad\Psi_1(\xi)=\xi,\quad\Psi_2(\xi)=\xi^2-\tfrac{3}{5},\quad\Psi_3(\xi)=\xi^3-\tfrac{5}{7}\xi,\\
\Psi_4(\xi)=\xi^4-\tfrac{10}{9}\xi^2+\tfrac{5}{21},\quad\Psi_5(\xi)=\xi^5-\tfrac{14}{11}\xi^3+\tfrac{35}{99}\xi.
\end{gather*}

By way of illustration, the determinant ratios that appear in \eqref{eq2GraSch1300} were computed with expressions such as this:
\begin{equation*}
\frac{\det\triangle_2(4)}{\det\square_2}=
\frac{\vphantom{\rule[-2.75ex]{1ex}{5ex}}%
\begin{vmatrix}
\mathrm{Cov}[\Phi_1,\Phi_1] & \mathrm{Cov}[\Phi_1,\Phi_2]\\
\mathrm{Cov}[\Phi_4,\Phi_1] & \mathrm{Cov}[\Phi_4,\Phi_2]
\end{vmatrix}}%
{\vphantom{\rule{1ex}{3.9ex}}%
\begin{vmatrix}
\mathrm{Cov}[\Phi_1,\Phi_1] & \mathrm{Cov}[\Phi_1,\Phi_2]\\
\mathrm{Cov}[\Phi_2,\Phi_1] & \mathrm{Cov}[\Phi_2,\Phi_2]
\end{vmatrix}}=
\frac{\vphantom{\rule[-3.4ex]{1ex}{5ex}}%
\begin{vmatrix}
\tfrac{3}{5} & 0 \\[1ex]
0 & \tfrac{8}{105}
\end{vmatrix}}%
{\vphantom{\rule{1ex}{4.5ex}}%
\begin{vmatrix}
\tfrac{3}{5} & 0\\[1ex]
0 & \tfrac{12}{175}
\end{vmatrix}}=\frac{10}{9}.
\end{equation*}
\end{example}

\section{Stochastic flow map}\label{sec2StoFloMap}

Provided sufficient regularity, a stochastic system governed by \eqref{eq2SetNot1000} can be expressed in explicit form as
\begin{subequations}\label{eq2StoFloMap1000}
\begin{align}
\partial_t^n u(t,\xi)=\mathscr{f}(t,\xi,s(t,\xi)) & \qquad\text{on $\mathfrak{T}\times\Xi$}\label{eq2StoFloMap1000a}\\
\big\{\partial_t^{k-1} u(0,\xi)=c_k(\xi)\big\}_{k=1}^n&\qquad\text{on $\{0\}\times\Xi$},\label{eq2StoFloMap1000b}
\end{align}
\end{subequations}
where $s=(u,\partial_t u,\ldots,\partial_t^{n-1}u)\in\prod_{j=1}^n\mathscr{T}({n-j+1})\otimes\mathscr{Z}$ is the configuration state of the system over $\mathfrak{T}\times\Xi$, $\mathscr{f}:\mathfrak{T}\times\Xi\times\mathbb{R}^n\to\mathbb{R}$ is a noisy, non-autonomous function (which can also be regarded as a function in $\mathscr{V}$) concordant with \eqref{eq2SetNot1000a}, and $c_k:\Xi\to\mathbb{R}$ is a function in $\mathscr{Z}$ concordant with \eqref{eq2SetNot1000b}.

If the solution is analytic on $\mathfrak{T}$ for all $\xi\in\Xi$, then it can be represented by the Taylor series:
\begin{equation*}
u(t_i+h,\xi)=\sum_{j=0}^\infty \frac{h^j}{j!}\partial_t^j u(t_i,\xi),
\end{equation*}
where $h:=t-t_i$ is the time-step size used for the simulation around $t_i$ (once $t$ is fixed), and $t_i\in\mathfrak{T}$ is the time instant of the simulation.
Below we use this representation to define a \emph{local} stochastic flow map for the state of the system under consideration.
We notice, however, that to do so, we need to assume that $u\in\mathscr{T}(n+M-1)\otimes\mathscr{Z}\subset\mathscr{U}$, where $M\in\mathbb{N}_1$ denotes the order of the flow map we want to implement.
For most problems encountered in physics and engineering, this requirement does not represent a major drawback if $M$ is taken relatively small.

\begin{description}
\item[First-order ODE]\hspace{1ex} Specializing \eqref{eq2StoFloMap1000} for a first-order ODE ($n=1$) yields
\begin{subequations}\label{eq2StoFloMap1030}
\begin{align}
\partial_t u(t,\xi)=\mathscr{f}(t,\xi,u(t,\xi))&\qquad\text{on $\mathfrak{T}\times\Xi$}\label{eq2StoFloMap1030a}\\
u(0,\xi)=c(\xi) &\qquad\text{on $\{0\}\times\Xi$}.\label{eq2StoFloMap1030b}
\end{align}
\end{subequations}
Differentiating \eqref{eq2StoFloMap1030a} with respect to time three times gives:
\begin{subequations}\label{eq2StoFloMap1035}
\begin{gather}
\partial_t^2 u:=\mathrm{D}_t\mathscr{f}=\partial_t\mathscr{f}+\partial_u\mathscr{f}\,\partial_t u\\
\partial_t^3 u:=\mathrm{D}^2_t\mathscr{f}=\partial_t^2\mathscr{f}+2\,\partial_{tu}^2\mathscr{f}\,\partial_t u+\partial_u\mathscr{f}\,\partial_t^2u+\mathscr{g}_3\\
\partial_t^4 u:=\mathrm{D}^3_t\mathscr{f}=\partial_t^3\mathscr{f}+3\,\partial_{ttu}^3\mathscr{f}\,\partial_t u+3\,\partial_{tu}^2\mathscr{f}\,\partial_t^2 u+\partial_u\mathscr{f}\,\partial_t^3 u+\mathscr{g}_4,
\end{gather}
\end{subequations}
where $\mathscr{g}_3=\partial_u^2\mathscr{f}\,\big(\partial_t u\big)^{\!2}$, and $\mathscr{g}_4=3\,\partial_{tuu}^3\mathscr{f}\,\big(\partial_t u\big)^{\!2}+\partial_u^3\mathscr{f}\,\big(\partial_t u\big)^{\!3}+3\,\partial_u^2\mathscr{f}\,\partial_t^2 u\,\partial_t u$.

A stochastic flow map of order 4, $\varphi(4):\mathbb{R}\times\mathscr{Z}\to\mathscr{Z}$, can then be given by:
\begin{equation*}
\varphi(4)(h,s(t_i,\cdot\,)):=u(t_i+h,\cdot\,)-O(h^5)= \sum_{j=0}^4 \frac{h^j}{j!}\partial_t^j u(t_i,\cdot\,),
\end{equation*}
where the time derivatives of $u$ at $t=t_i$ are computed with \eqref{eq2StoFloMap1030a} and \eqref{eq2StoFloMap1035}.

\item[Second-order ODE]\hspace{1ex} Likewise, specializing \eqref{eq2StoFloMap1000} for a second-order ODE ($n=2$) yields
\begin{subequations}\label{eq2StoFloMap1040}
\begin{align}
\partial_t^2 u(t,\xi)=\mathscr{f}(t,\xi,u(t,\xi),\dot{u}(t,\xi))&\qquad\text{on $\mathfrak{T}\times\Xi$}\label{eq2StoFloMap1040a}\\
\big\{u(0,\xi)=c_1(\xi),\, \dot{u}(0,\xi)=c_2(\xi)\big\} &\qquad\text{on $\{0\}\times\Xi$},\label{eq2StoFloMap1040b}
\end{align}
\end{subequations}
where $\dot{u}:=\partial_t u$.
Differentiating \eqref{eq2StoFloMap1040a} with respect to time three times gives:
\begin{subequations}\label{eq2StoFloMap1045}
\begin{gather}
\partial_t^3 u:=\mathrm{D}_t\mathscr{f}=\partial_t\mathscr{f}+\partial_u\mathscr{f}\,\partial_t u+\partial_{\dot{u}}\mathscr{f}\,\partial_t^2 u\\
\partial_t^4 u:=\mathrm{D}^2_t\mathscr{f}=\partial_t^2\mathscr{f}+2\,\partial_{tu}^2\mathscr{f}\,\partial_t{u}+\big(2\,\partial_{t\dot{u}}^2\mathscr{f}+\partial_{u}\mathscr{f}\big)\,\partial_t^2 u+\partial_{\dot{u}}\mathscr{f}\,\partial_t^3 u+\mathscr{h}_4\\
\partial_t^5 u:=\mathrm{D}^3_t\mathscr{f}=
\partial_t^3\mathscr{f}
+3\,\partial_{ttu}^3\mathscr{f}\,\partial_t u
+3\,\big(\partial_{tt\dot{u}}^3\mathscr{f}+\partial_{tu}^2\mathscr{f}\big)\,\partial_t^2 u
+\big(3\,\partial_{t\dot{u}}^2\mathscr{f}+\partial_u\mathscr{f}\big)\,\partial_t^3 u
+\partial_{\dot{u}}\mathscr{f}\,\partial_t^4 u
+\mathscr{h}_5,
\end{gather}
\end{subequations}
where $\mathscr{h}_4=\partial_u^2\mathscr{f}\,\big(\partial_t u\big)^{\!2}+2\,\partial_{u\dot{u}}^2\mathscr{f}\,\partial_t u\,\partial_t^2 u+\partial_{\dot{u}}^2\mathscr{f}\,\big(\partial_t^2 u\big)^{\!2}$, and 
\begin{multline*}
\mathscr{h}_5=
3\,\partial_{tuu}^3\mathscr{f}\,\big(\partial_t u\big)^{\!2}
+3\,\big(\partial_{t\dot{u}\dot{u}}^3\mathscr{f}+\partial_{u\dot{u}}^2\mathscr{f}\big)\big(\partial_t^2 u\big)^{\!2}
+\partial_u^3\mathscr{f}\,\big(\partial_t u\big)^{\!3}
+\partial_{\dot{u}}^3\mathscr{f}\,\big(\partial_t^2 u\big)^{\!3}\\
+3\,\big(2\,\partial_{tu\dot{u}}^3\mathscr{f}+\partial_u^2\mathscr{f}\big)\,\partial_t u\,\partial_t^2 u
+3\,\partial_{u\dot{u}}^2\mathscr{f}\,\partial_t u\,\partial_t^3 u
+3\,\partial_{\dot{u}}^2\mathscr{f}\,\partial_t^2 u\,\partial_t^3 u\\
+3\,\partial_{uu\dot{u}}^3\mathscr{f}\,\big(\partial_t u\big)^{\!2}\,\partial_t^2 u
+3\,\partial_{u\dot{u}\dot{u}}^3\mathscr{f}\,\partial_t u\,\big(\partial_t^2 u\big)^{\!2}.
\end{multline*}

A stochastic flow map of order 4, $\varphi(4):\mathbb{R}\times\mathscr{Z}^2\to\mathscr{Z}^2$, can then be defined as:
\begin{equation}\label{eq2StoFloMap1050}
\varphi(4)(h,s(t_i,\cdot\,)):=\big(u(t_i+h,\cdot\,),\dot{u}(t_i+h,\cdot\,)\big)-O(h^5)
=\bigg(\sum_{j=0}^4\frac{h^j}{j!}\partial_t^j u(t_i,\cdot\,),\,\sum_{j=0}^4\frac{h^j}{j!}\partial_t^{j+1} u(t_i,\cdot\,)\bigg),
\end{equation}
where the second and higher time derivatives of $u$ at $t=t_i$ are computed with \eqref{eq2StoFloMap1040a} and \eqref{eq2StoFloMap1045}.
\end{description}

\begin{remark}
In these expressions we observe that if \eqref{eq2StoFloMap1000} is a linear ODE, then $\mathscr{g}_3=\mathscr{g}_4=\mathscr{h}_4=\mathscr{h}_5\equiv 0$.
If in addition it is autonomous, expressions \eqref{eq2StoFloMap1035} and \eqref{eq2StoFloMap1045} reduce, respectively, to:
\begin{gather}
\partial_t^2 u=\partial_u\mathscr{f}\,\partial_t u,\quad\partial_t^3 u=\partial_u\mathscr{f}\,\partial_t^2 u\quad\text{and}\quad\partial_t^4 u=\partial_u\mathscr{f}\,\partial_t^3 u.\tag{\ref{eq2StoFloMap1035}*}\\
\partial_t^3 u=\partial_u\mathscr{f}\,\partial_t u+\partial_{\dot{u}}\mathscr{f}\,\partial_t^2 u,\quad\partial_t^4 u=\partial_u\mathscr{f}\,\partial_t^2u+\partial_{\dot{u}}\mathscr{f}\,\partial_t^3u\quad\text{and}\quad\partial_t^5u=\partial_u\mathscr{f}\,\partial_t^3u+\partial_{\dot{u}}\mathscr{f}\,\partial_t^4u.\tag{\ref{eq2StoFloMap1045}*}
\end{gather}
\end{remark}

\begin{description}
\item[High-order ODE]\hspace{1ex} In general, a stochastic flow map of order $M$, $\varphi(M):\mathbb{R}\times\mathscr{Z}^n\to\mathscr{Z}^n$, can be taken as:
\begin{equation}\label{eq2StoFloMap1080}
\varphi(M)(h,s(t_i,\cdot\,)):=\big(u(t_i+h,\cdot\,),\ldots,\partial_t^{k-1} u(t_i+h,\cdot\,),\ldots,\partial_t^{n-1} u(t_i+h,\cdot\,)\big)-O(h^{M+1}),
\end{equation}
where its $k$-th component, $\varphi^k(M):\mathbb{R}\times\mathscr{Z}^n\to\mathscr{Z}$, is given by:
\begin{equation*}\label{eq2StoFloMap1085}
\varphi^k(M)(h,s(t_i,\cdot\,)):=\partial_t^{k-1}u(t_i+h,\cdot\,)-O(h^{M+1})=\sum_{j=0}^M\frac{h^j}{j!}\partial_t^{j+k-1} u(t_i,\cdot\,)
\end{equation*}
with $k\in\{1,2,\ldots,n\}$.

In expression \eqref{eq2StoFloMap1080}, the $n$-th time derivative of $u$ at $t=t_i$ is computed with \eqref{eq2StoFloMap1000a}, and if $M=4$, then the next time derivatives of $u$ are given by:
\begin{subequations}\label{eq2StoFloMap1090}
\begin{gather}
\partial_t^{n+1} u:=\frac{\mathrm{d}\mathscr{f}}{\mathrm{d}t}=\frac{\partial\mathscr{f}}{\partial t}+\frac{\partial\mathscr{f}}{\partial s^k}\partial_t^k u,\\
\partial_t^{n+2} u:=\frac{\mathrm{d}^2\mathscr{f}}{\mathrm{d}t^2}=\frac{\partial^2\mathscr{f}}{\partial t^2}+2\frac{\partial^2\mathscr{f}}{\partial t\partial s^k}\partial_t^k u+\frac{\partial\mathscr{f}}{\partial s^k}\partial_t^{k+1}u+\frac{\partial^2\mathscr{f}}{\partial s^k\partial s^l}\partial_t^k u\,\partial_t^l u,
\end{gather}
\end{subequations}
and
\begin{multline}
\partial_t^{n+3} u:=\frac{\mathrm{d}^3\mathscr{f}}{\mathrm{d}t^3}=\frac{\partial^3\mathscr{f}}{\partial t^3}+3\frac{\partial^3\mathscr{f}}{\partial t^2\partial s^k}\partial_t^k u+3\frac{\partial^2\mathscr{f}}{\partial t\partial s^k}\partial_t^{k+1} u+\frac{\partial\mathscr{f}}{\partial s^k}\partial_t^{k+2}u\\
+3\frac{\partial^3\mathscr{f}}{\partial t\partial s^k\partial s^l}\partial_t^k u\,\partial_t^l u+3\frac{\partial^2\mathscr{f}}{\partial s^k\partial s^l}\partial_t^{k+1} u\,\partial_t^l u+\frac{\partial^3\mathscr{f}}{\partial s^k\partial s^l\partial s^m}\partial_t^k u\,\partial_t^l u\,\partial_t^m u,\tag{\ref{eq2StoFloMap1090}c}
\end{multline}
where $s^k=\partial_t^{k-1}u$ is the $k$-th component of $s$, and a summation sign is implied over every repeated index $k,l,m\in\{1,2,\ldots,n\}$.

For an autonomous, $n$-th-order linear ODE, the first $M-1$ time derivatives of \eqref{eq2StoFloMap1000a} reduce to:
\begin{equation*}
\partial_t^{n+m}u=\sum_{k=1}^n \frac{\partial\mathscr{f}}{\partial s^k}\,\partial_t^{m+k-1}u\quad\forall m\in\{1,2,\ldots,M-1\}.
\end{equation*}
\end{description}

It is worth noting that the goal of a (local) stochastic flow map is to push the state of the system one-time step forward in $\mathscr{Z}^n$ (i.e.~in the \emph{random phase space} of the system\footnote{Note that $\mathscr{Z}^n$ is the $n$-fold cartesian product of $\mathscr{Z}$, where $n$ denotes the order of the system's governing ODE with respect to time.}), provided that the time-step size used is greater than zero.
For example, Fig.~\ref{fig2StoFloMap1200} depicts the case of a system governed by a second-order ODE whose state motion in $\mathscr{Z}^2$ starts at $t=0$ and ends at $t=T$; pretty much in the same way a two-dimensional, time-dependent stochastic input would evolve in $\mathscr{Z}^2$ if its map were known beforehand.

\begin{remark}\label{rmk2paper300}
In practice, it is easier to compute the time derivatives of $\mathscr{f}$ approximately, by using any of the standard numerical methods available in the literature, such as the central difference method.
When the central difference method is, say, used to compute approximately the first time derivative of $\mathscr{f}$ at $t=t_i$, we get:
\begin{equation*}
\frac{\mathrm{d}\mathscr{f}}{\mathrm{d}t}(t_i,\cdot\,,s(t_i,\cdot\,))\approx\frac{\mathscr{f}(t_i+h,\cdot\,,s(t_i+h,\cdot\,))-\mathscr{f}(t_i-h,\cdot\,,s(t_i-h,\cdot\,))}{2h}
\end{equation*}
for some small $h\in\mathbb{R}\setminus\{0\}$.
Therefore, the condition that we made earlier that $u\in\mathscr{T}(n+M-1)\otimes\mathscr{Z}$ now drops to its natural condition that $u\in\mathscr{U}$.
\end{remark}

\begin{figure}
\centering
\includegraphics[]{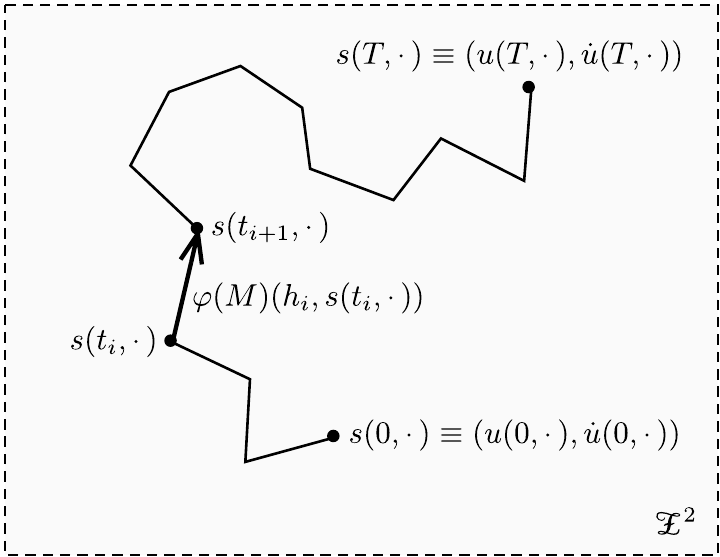}
\caption{Evolution of a second-order dynamical system via a stochastic flow map of order $M$ (with $h_i>0$)}
\label{fig2StoFloMap1200}
\end{figure}

\begin{example}\label{ex2paper200}
Consider a specialization of the stochastic system given by \eqref{eq2SetNot1000} for the case when $n=2$, and without loss of generality, suppose that $\mathcal{L}$, $\mathcal{B}_1$ and $\mathcal{B}_2$ are linear operators.
Then, under this setting, one obtains:
\begin{subequations}\label{eq2StoFloMap7000}
\begin{align}
a_2(t,\xi)\,\ddot{u}(t,\xi)+a_1(t,\xi)\,\dot{u}(t,\xi)+a_0(t,\xi)\,u(t,\xi)=f(t,\xi)&\qquad\text{on $\mathfrak{T}\times\Xi$}\label{eq2StoFloMap7000a}\\
\left\{\!\!\begin{array}{c}
b_{11}(\xi)\,\dot{u}(0,\xi)+b_{10}(\xi)\,u(0,\xi)=b_1(\xi)\\
b_{21}(\xi)\,\dot{u}(0,\xi)+b_{20}(\xi)\,u(0,\xi)=b_2(\xi)
\end{array}\!\!\right\}&\qquad\text{on $\{0\}\times\Xi$},\label{eq2StoFloMap7000b}
\end{align}
\end{subequations}
where the dimensionality of the random space, $d$, can be assumed to be any natural number.

Provided sufficient regularity, an explicit form of \eqref{eq2StoFloMap7000} can be established by:
\begin{subequations}\label{eq2StoFloMap7010}
\begin{align}
\partial_t^2 u(t,\xi):=\mathscr{f}(t,\xi,u(t,\xi),\dot{u}(t,\xi))=\bar{f}(t,\xi)+\bar{a}_0(t,\xi)\,u(t,\xi)+\bar{a}_1(t,\xi)\,\dot{u}(t,\xi)&\qquad\text{on $\mathfrak{T}\times\Xi$}\label{eq2StoFloMap7010a}\\
\big\{u(0,\xi)=\bar{b}_1(\xi),\,\dot{u}(0,\xi)=\bar{b}_2(\xi)\big\}&\qquad\text{on $\{0\}\times\Xi$},\label{eq2StoFloMap7010b}
\end{align}
\end{subequations}
where $\bar{f}=f/a_2$, $\bar{a}_0=-a_0/a_2$ and $\bar{a}_1=-a_1/a_2$ are elements of $\mathscr{V}$, and $\bar{b}_1=(b_1b_{21}-b_2b_{11})/(b_{10}b_{21}-b_{11}b_{20})$ and $\bar{b}_2=(b_2b_{10}-b_1b_{20})/(b_{10}b_{21}-b_{11}b_{20})$ are elements of $\mathscr{Z}$.

This is, of course, a non-autonomous stochastic system because $\mathscr{f}$ depends explicitly on the time variable $t$, and what is more, it features a time-dependent stochastic input $x=(x_1,x_2,x_3)\in\mathscr{V}^3$ given by:
\begin{equation*}
x_1(t,\xi)=\bar{f}(t,\xi),\quad x_2(t,\xi)=\bar{a}_0(t,\xi),\quad x_3(t,\xi)=\bar{a}_1(t,\xi).
\end{equation*}

A stochastic flow map, $\varphi(4)=(\varphi^1(4),\varphi^2(4))$, for the system in hand can then be specified with \eqref{eq2StoFloMap1050} to produce:
\begin{subequations}\label{eq2StoFloMap7020}
\begin{gather}
\varphi^1(4)(h,s(t_i,\cdot\,))=u(t_i,\cdot\,)+h\,\dot{u}(t_i,\cdot\,)+\tfrac{1}{2}h^2\,\partial_t^2u(t_i,\cdot\,)+\tfrac{1}{6}h^3\,\partial_t^3u(t_i,\cdot\,)+\tfrac{1}{24}h^4\,\partial_t^4u(t_i,\cdot\,)\\
\varphi^2(4)(h,s(t_i,\cdot\,))=\dot{u}(t_i,\cdot\,)+h\,\partial_t^2u(t_i,\cdot\,)+\tfrac{1}{2}h^2\,\partial_t^3u(t_i,\cdot\,)+\tfrac{1}{6}h^3\,\partial_t^4u(t_i,\cdot\,)+\tfrac{1}{24}h^4\,\partial_t^5u(t_i,\cdot\,),
\end{gather}
\end{subequations}
where the second time derivative $\partial_t^2 u(t_i,\cdot\,)$ is computed with \eqref{eq2StoFloMap7010a}, and the next time derivatives $\{\partial_t^j u(t_i,\cdot\,)\}_{j=3}^5$ with \eqref{eq2StoFloMap1045} from where one obtains: $\mathscr{h}_4=\mathscr{h}_5\equiv 0$ and
\begin{subequations}\label{eq2StoFloMap7025}
\begin{gather}
\partial_t\mathscr{f}=\partial_t\bar{f}+\partial_t\bar{a}_0\,u+\partial_t\bar{a}_1\,\dot{u},\quad
\partial_t^2\mathscr{f}=\partial_t^2\bar{f}+\partial_t^2\bar{a}_0\,u+\partial_t^2\bar{a}_1\,\dot{u},\quad
\partial_t^3\mathscr{f}=\partial_t^3\bar{f}+\partial_t^3\bar{a}_0\,u+\partial_t^3\bar{a}_1\,\dot{u},\\
\partial_u\mathscr{f}=\bar{a}_0,\quad\partial_{tu}^2\mathscr{f}=\partial_t\bar{a}_0,\quad\partial_{ttu}^3\mathscr{f}=\partial_t^2\bar{a}_0,\\
\partial_{\dot{u}}\mathscr{f}=\bar{a}_1,\quad\partial_{t\dot{u}}^2\mathscr{f}=\partial_t\bar{a}_1,\quad\partial_{tt\dot{u}}^3\mathscr{f}=\partial_t^2\bar{a}_1.
\end{gather}
\end{subequations}
\end{example}

\section{Enriched stochastic flow map}\label{sec2EnrStoFloMap}

In the enriched version of the stochastic flow map we are not only concerned with pushing the state of the system one-time step forward in $\mathscr{Z}^n$, but also $\mathscr{f}$ (as displayed in Eq.~\eqref{eq2StoFloMap1000a}) and its first $M-1$ time derivatives.
Because of this, we define the \emph{enriched stochastic flow map} of order $M$, $\hat{\varphi}(M):\mathbb{R}\times\mathscr{Z}^{n+M}\to\mathscr{Z}^{n+M}$, such that its $k$-th component is given by:
\begin{equation*}
\hat{\varphi}^k(M)(h,\hat{s}(t_i,\cdot\,))=:\hat{s}^k(t_i+h,\cdot\,)=%
\begin{cases}
\varphi^k(M)(h,s(t_i,\cdot\,))& \text{for $k\in\{1,2,\ldots,n\}$}\\
\mathrm{D}_t^{k-n-1}\mathscr{f}(t_i+h,\cdot\,,s(t_i+h,\cdot\,)) & \text{otherwise}
\end{cases}
\end{equation*}
with $k\in\{1,2,\ldots,n+M\}$.
Here $\hat{s}=(u,\partial_t u,\ldots,\partial_t^{n+M-1}u)\in\prod_{j=1}^{n+M}\mathscr{T}(n+M-j)\otimes\mathscr{Z}$ is called the \emph{enriched configuration state} of the system over $\mathfrak{T}\times\Xi$.
Observe that $\hat{s}^{n+1}:=\partial_t^n u=\mathscr{f}$ is defined by \eqref{eq2StoFloMap1000a}, and that if $M=4$, then $\{\hat{s}^k:=\partial_t^{k-1} u=\mathrm{D}_t^{k-n-1}\mathscr{f}\}_{k=n+2}^{n+4}$ is given by the expressions prescribed in \eqref{eq2StoFloMap1090}.

\begin{example}\label{ex2paper300}
Consider the stochastic system presented in Example \ref{ex2paper200}. 
The associated enriched stochastic flow map for that system, $\hat{\varphi}(4)$, is found to be:
\begin{subequations}
\begin{gather}
\hat{\varphi}^1(4)(h,s(t_i,\cdot\,)):=\varphi^1(4)(h,s(t_i,\cdot\,))=s^1(t_i+h,\cdot\,)=u(t_i+h,\cdot\,)-O(h^5),\\
\hat{\varphi}^2(4)(h,s(t_i,\cdot\,)):=\varphi^2(4)(h,s(t_i,\cdot\,))=s^2(t_i+h,\cdot\,)=\dot{u}(t_i+h,\cdot\,)-O(h^5),\\
\hat{\varphi}^3(4):=\mathscr{f}=\partial_t^2 u,\quad \hat{\varphi}^4(4):=\mathrm{D}_t\mathscr{f}=\partial_t^3 u,\quad \hat{\varphi}^5(4):=\mathrm{D}_t^2\mathscr{f}=\partial_t^4 u\quad\text{and}\quad\hat{\varphi}^6(4):=\mathrm{D}_t^3\mathscr{f}=\partial_t^5 u,
\end{gather}
\end{subequations}
where $\hat{\varphi}^1(4)$ and $\hat{\varphi}^2(4)$ are computed with \eqref{eq2StoFloMap7020}, $\hat{\varphi}^3(4)$ with \eqref{eq2StoFloMap7010a}, and $\{\hat{\varphi}^k(4)\}_{k=4}^6$ with \eqref{eq2StoFloMap1045}.
In Example \ref{ex2paper200} we found that from \eqref{eq2StoFloMap1045} one obtains $\mathscr{h}_4=\mathscr{h}_5\equiv 0$ and \eqref{eq2StoFloMap7025}.
\end{example}

\section{Flow-driven spectral chaos (FSC) method}\label{sec2FSCmet}

As already mentioned in the introduction, the FSC method uses the concept of enriched stochastic flow maps to track the evolution of the stochastic part of the solution space efficiently in time.
In Section \ref{sec2StoFloMap}, we assumed that the solution of the system governed by \eqref{eq2StoFloMap1000} was analytic on the temporal domain.
This assumption implies that $u$ can be represented by a Taylor series centered at $t=t_i\in\mathfrak{T}$ for all $\xi\in\Xi$:
\begin{equation}\label{eq2FSCMet1000}
u(t,\xi)=\sum_{j=0}^\infty \frac{(t-t_i)^j}{j!}\partial_t^j u(t_i,\xi),
\end{equation}
where $(t-t_i)^j/j!$ is nothing but a temporal function in $\mathscr{T}$, and $\partial_t^j u(t_i,\xi)$ is a random function in $\mathscr{Z}$.
From this it follows that if $\{\partial_t^j u(t_i,\xi)\}_{j=0}^\infty$ is orthogonalized with respect to the measure in $\mathscr{Z}$, one can write expression \eqref{eq2FSCMet1000} in the following way:
\begin{equation}
u(t,\xi)=\sum_{j=0}^\infty u^j(t)\,\Psi_j(\xi).\tag{\ref{eq2SetNot1020}}
\end{equation}

For a system driven by a stochastic flow map of order $M$, it is apparent that the infinitely many basis vectors in $\{\partial_t^j u(t_i,\cdot\,)\}_{j=0}^\infty$ do not all need to be orthogonalized, but only the $n+M$ components of the enriched stochastic flow map: $\{\hat{\varphi}^k(M)(0,\hat{s}(t_i,\cdot\,))\equiv \partial_t^{k-1} u(t_i,\cdot\,)\}_{k=1}^{n+M}$.
In this sense, the order of the stochastic flow map determines the maximum number of basis vectors to use in the simulation.
This explains why the FSC method does not suffer from the curse of dimensionality at the random-function-space level, even when the probability space is high-dimensional.

However, to reduce the numerical cost associated with the orthogonalization of $n+M$ basis vectors as indicated above, one can choose to start the FSC analysis by considering first the lowest value for $M$, i.e.~$M=1$.
Then, if more accurate results are needed, the $M$-value can be incremented progressively, provided that it does not exceed the order of the stochastic flow map we are targeting.
Hence, in the FSC method the discretization of the random function space $\mathscr{Z}^{[P]}$ is bounded by $n+1\leq P\leq n+M$ as long as we presume that the system is driven by a stochastic flow map of order $M$.

\begin{remark}
We note that once this $M$ is fixed to solve the problem in hand numerically, the probability information of the system's state can be pushed exactly in time if the discretization level of $\mathscr{Z}$ is such that $P=n+M$.
However, as we will see in Section \ref{sec2DisNumRes}, accurate results are also achievable for a lower discretization level of $\mathscr{Z}$, and thus, we do not always need to run our simulations with a relatively high value of $P$.
\end{remark}

\paragraph{FSC scheme}

Let us consider the stochastic system given by \eqref{eq2SetNot1000}.
Let $\{\mathfrak{T}_i\}_{i=0}^{N-1}$ be a partition of the temporal domain, where $\mathfrak{T}_i\neq\emptyset$ represents the $i$-th interval of the partition, and define $s_{.i}=s|_{\mathrm{cl}(\mathfrak{T}_i)\times\Xi}$ to be the restriction of $s$ to $\mathfrak{R}_i:=\mathrm{cl}(\mathfrak{T}_i)\times\Xi$.
(Please see Fig.~\ref{fig2StochasticFlowMap}, and recall that $s$ represents the configuration state of the system over $\mathfrak{T}\times\Xi$.)
Then, if the system is driven by a stochastic flow map of order $M$, proceed as below.

\begin{figure}
\centering
\includegraphics[]{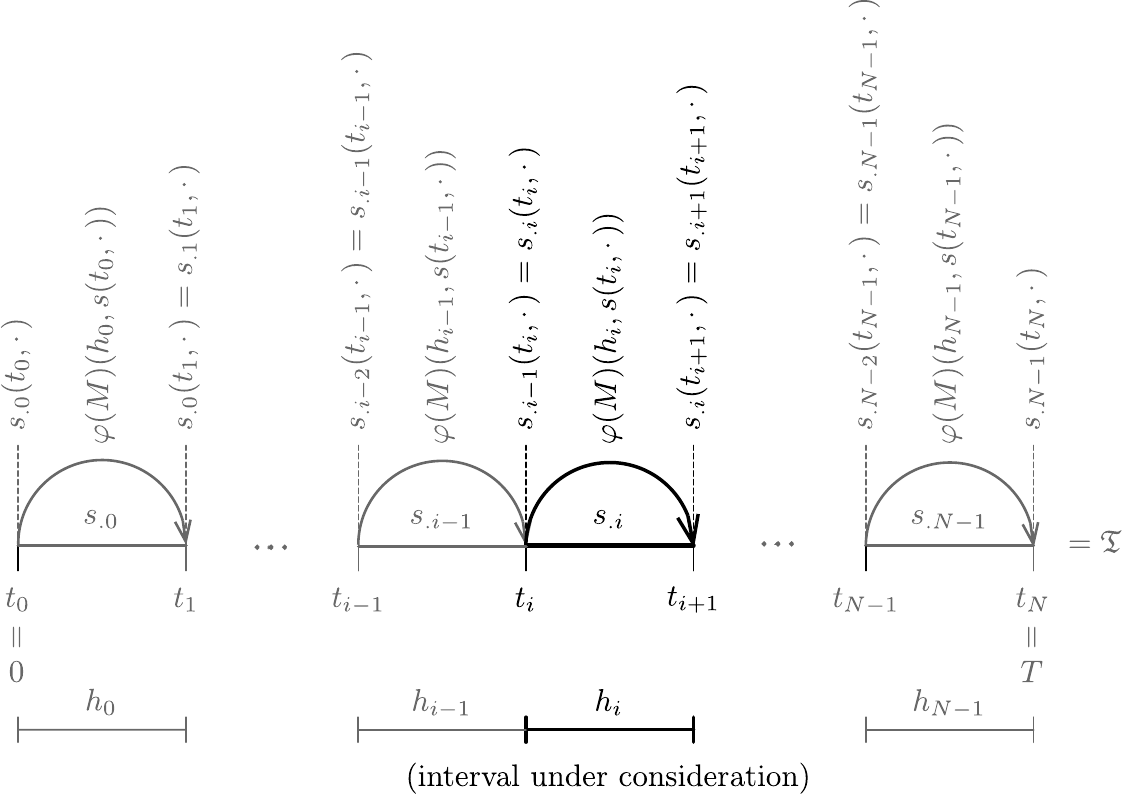}
\caption{Evolution of a dynamical system via a stochastic flow map of order $M$ (with $h_i>0$)}
\label{fig2StochasticFlowMap}
\end{figure}

\begin{enumerate}
\item Loop across the temporal domain from $i=0$ to $i=N-1$.
\begin{enumerate}
\item Define a solution representation for the configuration state $s_{.i}$ in the following way.
\begin{itemize}
\item Take $\Phi_{0.i}\equiv 1$ and $\{\Phi_{j.i}:=\hat{\varphi}^j(M)(0,\hat{s}(t_i,\cdot\,))\}_{j=1}^P$ to be an ordered set of linearly independent functions in $\mathscr{Z}$ with $n+1\leq P\leq n+M$.
Observe that $\hat{\varphi}(M)(0,\hat{s}(t_i,\cdot\,))\equiv\hat{s}_{.i}(t_i,\cdot\,)=\hat{s}_{.i-1}(t_i,\cdot\,)$ for $i\geq1$.
However, if $i=0$, then $\hat{\varphi}(M)(0,\hat{s}(t_0,\cdot\,))\equiv \hat{s}(0,\cdot\,)$.
(\emph{Note:} When the initial conditions over $\mathfrak{R}_i$ are linearly dependent, please see Remark \ref{rmk2paper500}.)
It is worth mentioning that, for computational efficiency, we have chosen $h_i=0$ for the definition of $\{\Phi_{j.i}\}_{j=1}^P$, but in principle $h_i$ is any number between 0 and the length of $\mathfrak{T}_i$.
For higher accuracy, $h_i$ is recommended to be taken as the distance between $t_i$ and the midpoint in $\mathfrak{T}_i$.
This recommendation is especially important when the length of $\mathfrak{T}_i$ is relatively large, such as in multi-time-step simulations.
\item Orthogonalize the set $\{\Phi_{j.i}\}_{j=0}^P$ using the Gram-Schmidt process \cite{cheney2010linear}, so that the resulting set $\{\Psi_{j.i}\}_{j=0}^P$ is an orthogonal basis in $\mathscr{Z}$.
That is, for $j\in\{0,1,\ldots,P\}$:
\begin{equation*}
\Psi_{j.i}:=\Phi_{j.i}-\sum_{k=0}^{j-1}\frac{\langle\Phi_{j.i},\Psi_{k.i}\rangle}{\langle\Psi_{k.i},\Psi_{k.i}\rangle}\Psi_{k.i}.
\end{equation*}
(Equivalently, Theorem \ref{thm2paper100} can be employed in this step.)
\item Define $\mathscr{Z}_i^{[P]}=\mathrm{span}\{\Psi_{j.i}\}_{j=0}^P$ to be a $p$-discretization of $\mathscr{Z}$ over the region $\mathfrak{R}_i$.
Since $\mathscr{Z}_i^{[P]}$ is an evolving function space, expansion \eqref{eq2SetNot1030} is now to be read as:
\begin{equation}\label{eq2FSCMet3020}
u_{.i}(t,\xi)\approx u^{[P]}_{.i}(t,\xi)=\sum_{j=0}^P u{^j}_{\!.i}(t)\,\Psi_{j.i}(\xi)\equiv u{^j}_{\!.i}(t)\,\Psi_{j.i}(\xi).\tag{\ref{eq2SetNot1030}*}
\end{equation}
From this it follows that the $l$-th component of the configuration state, $s{^l}_{\!.i}$, can be computed by taking the $(l-1)$-th time derivative of this representation.
Here $l\in\{1,2,\ldots,n\}$.
\end{itemize}
\item Transfer the random modes of $s_{.i-1}=(u_{.i-1},\partial_t u_{.i-1},\ldots,\partial_t^{n-1} u_{.i-1})$ to $s_{.i}=(u_{.i},\partial_t u_{.i},\ldots,\partial_t^{n-1} u_{.i})$ at $t=t_i$, given that $i\geq1$.
To this end, any of the following two approaches can be adopted.
\begin{description}
\item[FSC-1.] This approach transfers the probability information in the mean-square sense, by ensuring that the equalities shown below hold in the mean-square sense for each of the components of $s_{.i}$ and $s_{.i-1}$ at $t=t_i$ (summation sign implied only over repeated index $k$):
\begin{equation}
s{^l}_{\!.i}(t_i,\xi)=s{^l}_{\!.i-1}(t_i,\xi)\quad\iff\quad (s^l){^k}_{\!.i}(t_i)\,\Psi_{k.i}(\xi)=(s^l){^k}_{\!.i-1}(t_i)\,\Psi_{k.i-1}(\xi).\label{eq2FSCMet3030}
\end{equation}
Projecting \eqref{eq2FSCMet3030} onto $\mathscr{Z}_i^{[P]}$ gives the random modes of each of the components of $s_{.i}$ at $t=t_i$:
\begin{equation}\label{eq2FSCMet3040}
(s^l){^j}_{\!.i}(t_i)=\sum_{k=0}^P\frac{\langle\Psi_{j.i},\Psi_{k.i-1}\rangle}{\langle\Psi_{j.i},\Psi_{j.i}\rangle} (s^l){^k}_{\!.i-1}(t_i),\tag{\ref{eq2SetNot1050b}*}
\end{equation}
where $l\in\{1,2,\ldots,n\}$ and $j\in\{0,1,\ldots,P\}$. 
This approach was first introduced by Heuveline and Schick in \cite{heuveline2014hybrid}.
\item[FSC-2.] This approach transfers the probability information exactly.
In order to do so, Theorem \ref{thm2paper100} is implemented to obtain the random modes of each of the components of $s_{.i}$ at $t=t_i$.
Thus, from \eqref{eq2GraSch1300} it follows that:
\begin{equation*}
\Phi_{l.i}=\mathbf{E}[\Phi_{l.i}]\,\Psi_{0.i}+\sum_{j=1}^{l-1}\frac{\det\triangle_j(l)}{\det\square_j}\Psi_{j.i}+\Psi_{l.i}\quad\text{with}\quad \Psi_{0.i}\equiv 1,
\end{equation*}
which after taking $\{\Phi_{l.i}:=\varphi^l(M)(0,s(t_i,\cdot\,))\equiv s{^l}_{\!.i}(t_i,\cdot\,)=s{^l}_{\!.i-1}(t_i,\cdot\,)\}_{l=1}^n$ yields
\begin{equation}\label{eq2FSCMet3045}
(s^l){^j}_{\!.i}(t_i)=
\begin{cases}
\mathbf{E}[s{^l}_{\!.i-1}(t_i,\cdot\,)] & \text{for $j=0$}\\[1.25ex]
\displaystyle\frac{\det\triangle_j(l)}{\det\square_j} & \text{for $0<j<l$}\\[1.25ex]
1 & \text{for $j=l$}\\
0 & \text{otherwise,}
\end{cases}\tag{\ref{eq2SetNot1050b}*}
\end{equation}
where $l\in\{1,2,\ldots,n\}$ and $j\in\{0,1,\ldots,P\}$.
This approach was first introduced by Gerritsma et al.~in \cite{gerritsma2010time}, but it had not been generalized for high-order stochastic ODEs until now.
\end{description}
If $i=0$, the initial conditions are computed with \eqref{eq2SetNot1050b} directly.
\item Substitute \eqref{eq2FSCMet3020} into \eqref{eq2SetNot1000} to obtain \eqref{eq2SetNot1040}.
\item Project \eqref{eq2SetNot1040a} onto $\mathscr{Z}_i^{[P]}$ to obtain \eqref{eq2SetNot1050a} subject to \eqref{eq2FSCMet3040}.
Note that if $i=0$, \eqref{eq2SetNot1050a} is subject to \eqref{eq2SetNot1050b}.
\item Integrate \eqref{eq2SetNot1050} over time, as long as a suitable time integration method has been selected for solving the resulting system of equations.
This step requires to find the random modes of each of the components of the configuration state $s_{.i}$ at $t=t_{i+1}$; that is, $\{(s^l){^j}_{.i}(t_{i+1})\}_{l=1,j=0}^{n,P}$.
\item Compute both the mean and the variance of each of the components of output $y=\boldsymbol{\mathcal{M}}[u][x]$ over $\mathfrak{R}_i$, by recurring to the formulas prescribed by \eqref{eq2SetNot3100} and \eqref{eq2SetNot3300}.
\end{enumerate}
\item Post-process results.
\end{enumerate}

\begin{remark}\label{rmk2paper500}
When the initial conditions are linearly dependent over the region $\mathfrak{R}_i$, any of the following two approaches can be used:
\begin{itemize}
\item If at the start of the simulation the initial conditions over $\mathfrak{R}_1$ are deterministic, the first $n$ vectors in $\{\Phi_{j.0}:=\hat{\varphi}^j(M)(0,\hat{s}(t_0,\cdot\,))\}_{j=1}^P$ are required to be removed from the set, for they are linearly dependent to $\Phi_{0.0}\equiv 1$.
However, if more generally the initial conditions are linearly dependent over $\mathfrak{R}_i$, one can find a linear map $A$ such that its image produces a set of linearly independent vectors for use in the definition of $\Phi_{j.i}$.
That is, if the rank of $A$ is $r+1$, then $A:\mathscr{Z}^{n+1}\to\mathscr{Z}^{r+1}$ is a linear map given by:
\begin{equation*}
\big(1,\varphi(M)(0,s(t_i,\cdot\,))\big)\mapsto (b^0\equiv1,b^1,\ldots,b^r)=A\big(1,\varphi(M)(0,s(t_i,\cdot\,))\big).
\end{equation*}
For example, if $n=4$ and $r=2$, the map $A:\mathscr{Z}^5\to\mathscr{Z}^3$ can be given in matrix form by:
\begin{equation*}
\begin{bmatrix}
b^0 \\
b^1 \\
b^2
\end{bmatrix}=
\begin{bmatrix}
1 & 0 & 0 & 0 & 0 \\
0 & A^1_1 & A^1_2 & A^1_3 & A^1_4 \\
0 & A^2_1 & A^2_2 & A^2_3 & A^2_4
\end{bmatrix}\!
\begin{bmatrix}
1 \\
\varphi^1(M)(0,s(t_i,\cdot\,)) \\
\varphi^2(M)(0,s(t_i,\cdot\,)) \\
\varphi^3(M)(0,s(t_i,\cdot\,)) \\
\varphi^4(M)(0,s(t_i,\cdot\,))
\end{bmatrix}
\end{equation*}
with $A^j_k\in\mathbb{R}$ and such that $\mathrm{rank}(A)=r+1=3$. In general, this means that the ordered set needed in step 1(a) reduces to:
\begin{equation*}
\{\Phi_{j.i}\}_{j=0}^{P-n+r}\equiv
\{b^k\}_{k=0}^r\cup\{\hat{\varphi}^k(M)(0,\hat{s}(t_i,\cdot\,))\}_{k=n+1}^P.
\end{equation*}
\item As an alternative, the gPC method can be employed to advance the state of the system one-time step forward in the simulation, and then switch over FSC once this is done.
The disadvantage of this alternative is that it only works at early times of the simulation.
\end{itemize}
\end{remark}

\begin{remark}\label{rmk2paper600}
A stopping condition can be defined within the FSC scheme to help reduce the computational cost associated with the creation of a new random function space in the simulation.
However, a major drawback of incorporating a stopping condition within the scheme is that, if not chosen well, it can lead to significant degradation of the solution over time.
Stopping conditions such as those addressed by Gerritsma et al.~\cite{gerritsma2010time} were implemented in this work, but led to high errors in the response at the end of the simulations.
This is the primary reason why we did not provide one within the FSC scheme.
\end{remark}

\begin{example}\label{ex2paper400}
Consider Example \ref{ex2paper200} one more time.
If {\bf FSC-1} is used to transfer the probability information of the system's state at $t=t_i$, we get the following expressions for the random modes of $s_{.i}=(u_{.i},\dot{u}_{.i})$:
\begin{equation*}
u{^j}_{\!.i}(t_i)=\sum_{k=0}^P\frac{\langle\Psi_{j.i},\Psi_{k.i-1}\rangle}{\langle\Psi_{j.i},\Psi_{j.i}\rangle} u{^k}_{\!.i-1}(t_i)\quad\text{and}\quad 
\dot{u}{^j}_{\!.i}(t_i)=\sum_{k=0}^P\frac{\langle\Psi_{j.i},\Psi_{k.i-1}\rangle}{\langle\Psi_{j.i},\Psi_{j.i}\rangle} \dot{u}{^k}_{\!.i-1}(t_i)
\end{equation*}
with $j\in\{0,1,\ldots,P\}$.
However, if {\bf FSC-2} is employed instead, the probability information is transferred with the following expressions.
For the random modes of $u_{.i}$, the expressions are:
\begin{equation*}
u{^j}_{\!.i}(t_i)=
\begin{cases}
\mathbf{E}[u_{.i-1}(t_i,\cdot\,)] & \text{for $j=0$}\\
1 & \text{for $j=1$}\\
0 & \text{for $2\leq j\leq P$,}
\end{cases}
\end{equation*}
and those corresponding to the random modes of $\dot{u}_{.i}$ are:
\begin{equation*}
\dot{u}{^j}_{\!.i}(t_i)=
\begin{cases}
\mathbf{E}[\dot{u}_{.i-1}(t_i,\cdot\,)] & \text{for $j=0$}\\[1ex]
\displaystyle\frac{\mathrm{Cov}[u_{.i-1}(t_i,\cdot\,),\dot{u}_{.i-1}(t_i,\cdot\,)]}{\mathrm{Cov}[u_{.i-1}(t_i,\cdot\,),u_{.i-1}(t_i,\cdot\,)]} & \text{for $j=1$}\\[1.25ex]
1 & \text{for $j=2$}\\
0 & \text{for $3\leq j\leq P$.}
\end{cases}
\end{equation*}
From this example, it is easy to see why the probability information is transferred faster if {\bf FSC-2} is used.
\end{example}

\section{Numerical examples}\label{sec2NumExa}

To better investigate the performance of the FSC method in the resolution of problems involving stochastic input data, six problems are selected and described in this section.

\subsection[Problem 1 (linear system)]{Problem 1: A linear system governed by a 1st-order stochastic ODE}\label{sec2NumExa10}

We consider the problem of a falling body under stochastic air resistance as follows.

Find the velocity of the falling body $v:\mathfrak{T}\times\Xi\to\mathbb{R}$ in $\mathscr{U}$, such that ($\mu$-a.e.):
\begin{align*}
m\dot{v}+kv=mg&\qquad\text{on $\mathfrak{T}\times\Xi$}\\
v(0,\cdot\,)=\mathscr{v}&\qquad\text{on $\{0\}\times\Xi$},
\end{align*}
where $m=4$ kg is the mass of the falling body, $k:\Xi\to\mathbb{R}^+$ is the air resistance given by $k(\xi)=\xi$, $g=9.81$ m/s$^2$ is the gravity acceleration, $\mathscr{v}=50$ m/s is the initial condition of the falling body, and $\mathfrak{T}=[0,150]$ s.
Here $\dot{v}:=\partial_t v$ denotes the acceleration of the falling body.

Two probability distributions are considered for this system.
The first distribution is a \emph{uniform distribution}, $\mathrm{Uniform}\sim\xi\in\Xi=[a,b]$, and the second distribution is a \emph{beta distribution}, $\mathrm{Beta}(\alpha,\beta)\sim\xi\in\Xi=[a,b]$.
The parameters for both distributions are: $(a,b)=(1,2)$ kg/s and $(\alpha,\beta)=(2,5)$.

\subsection[Problem 2 (linear system)]{Problem 2: A linear system governed by a 2nd-order stochastic ODE with one random variable}\label{sec2NumExa20}

We consider the problem of a single-degree-of-freedom system with stochastic stiffness under free vibration.

Find the displacement of the system $u:\mathfrak{T}\times\Xi\to\mathbb{R}$ in $\mathscr{U}$, such that ($\mu$-a.e.):
\begin{align*}
m\ddot{u}+ku=0&\qquad\text{on $\mathfrak{T}\times\Xi$}\\
\big\{u(0,\cdot\,)=\mathscr{u},\,\dot{u}(0,\cdot\,)=\mathscr{v}\big\}&\qquad\text{on $\{0\}\times\Xi$},
\end{align*}
where $m=100$ kg is the mass of the system, $k:\Xi\to\mathbb{R}^+$ is the stiffness of the system given by $k(\xi)=\xi$, $\mathscr{u}=0.05$ m and $\mathscr{v}=0.20$ m/s are the initial conditions of the system, and $\mathfrak{T}=[0,150]$ s.
Note that $\dot{u}:=\partial_t u$ and $\ddot{u}:=\partial_t^2 u$ denote the velocity and acceleration of the system, respectively.

Three probability distributions are considered for this system.
The first distribution is a \emph{uniform distribution}, $\mathrm{Uniform}\sim\xi\in\Xi=[a,b]$, the second distribution is a \emph{beta distribution}, $\mathrm{Beta}(\alpha_1,\beta_1)\sim\xi\in\Xi=[a,b]$, and the third distribution is a \emph{gamma distribution}, $\mathrm{Gamma}(\alpha_2,\beta_2)\sim\xi\in\Xi=[a,\infty)$.
The parameters for these three distributions are selected to be: $(a,b)=(340,460)$ N/m and $(\alpha_1,\beta_1,\alpha_2,\beta_2)=(2,5,10,0.1)$.

\subsection[Problem 3 (linear system)]{Problem 3: A linear system governed by a 2nd-order stochastic ODE with two random variables}\label{sec2NumExa30}

In this section we consider the single-degree-of-freedom system previously defined, but in addition to having a stochastic stiffness for the system we also have a stochastic mass.
In this setting, the mass of the system, $m:\Xi\to\mathbb{R}^+$, and the stiffness of the system, $k:\Xi\to\mathbb{R}^+$, are given by $m(\xi)=\xi^1$ and $k(\xi)=\xi^2$, respectively, with 1 and 2 not denoting an exponentiation.
We note that $\xi=(\xi^1,\xi^2)$ is a 2-tuple random variable, and thus, $\Xi$ is a two-dimensional random domain.

For this system, two probability distributions are explored.
The first distribution is a \emph{uniform-uniform distribution}, $\mathrm{Uniform}\otimes\mathrm{Uniform}\sim\xi\in\Xi=[a_1,b_1]\times[a_2,b_2]$, and the second distribution is a \emph{uniform-beta distribution}, $\mathrm{Uniform}\otimes\mathrm{Beta}(\alpha,\beta)\sim\xi\in\Xi=[a_1,b_1]\times[a_2,b_2]$.
The parameters for both distributions are taken as: $(a_1,b_1)=(85,115)$ kg, $(a_2,b_2)=(340,460)$ N/m and $(\alpha,\beta)=(2,5)$.

\subsection[Problem 4 (linear system)]{Problem 4: A linear system governed by a 3rd-order stochastic ODE}\label{sec2NumExa40}

Here we consider the problem of a linear mechanical system governed by a 3rd-order stochastic ODE.

Find the displacement of the system $u:\mathfrak{T}\times\Xi\to\mathbb{R}$ in $\mathscr{U}$, such that ($\mu$-a.e.):
\begin{align*}
\partial_t^3u+\tfrac{1}{2}\,\partial_t^2u+k\,\partial_tu+u=0&\qquad\text{on $\mathfrak{T}\times\Xi$}\\
\big\{u(0,\cdot\,)=\mathscr{u},\,\partial_t u(0,\cdot\,)=\mathscr{v},\,\partial_t^2 u(0,\cdot\,)=\mathscr{a}\big\}&\qquad\text{on $\{0\}\times\Xi$},
\end{align*}
where $k:\Xi\to\mathbb{R}$ is a mechanical parameter given by $k(\xi)=\xi$; $\mathscr{u}=1$ m, $\mathscr{v}=-1$ m/s and $\mathscr{a}=2$ m/s$^2$ are the initial conditions of the system; and $\mathfrak{T}=[0,150]$ s.

Three probability distributions are investigated for this system.
The first distribution is a \emph{uniform distribution}, $\mathrm{Uniform}\sim\xi\in\Xi=[a,b]$, the second distribution is a \emph{beta distribution}, $\mathrm{Beta}(\alpha,\beta)\sim\xi\in\Xi=[a,b]$, and the third distribution is a \emph{normal distribution}, $\mathrm{Normal}(\mu,\sigma^2)\sim\xi\in\Xi=\mathbb{R}$.
The parameters for these three distributions are: $(a,b)=(2,3)$ N/m, $(\mu,\sigma)=(2.5,0.125)$ N/m and $(\alpha,\beta)=(2,5)$.

\subsection[Problem 5 (linear system)]{Problem 5: A linear system governed by a 4th-order stochastic ODE}\label{sec2NumExa50}

Next, we consider the problem of a linear mechanical system governed by a 4th-order stochastic ODE.

Find the displacement of the system $u:\mathfrak{T}\times\Xi\to\mathbb{R}$ in $\mathscr{U}$, such that ($\mu$-a.e.):
\begin{align*}
\partial_t^4u+k\,\partial_t^2u+u=0&\qquad\text{on $\mathfrak{T}\times\Xi$}\\
\big\{u(0,\cdot\,)=\mathscr{u},\,\partial_t u(0,\cdot\,)=\mathscr{v},\,\partial_t^2 u(0,\cdot\,)=\mathscr{a},\,\partial_t^3 u(0,\cdot\,)=\mathscr{j}\big\}&\qquad\text{on $\{0\}\times\Xi$},
\end{align*}
where $k:\Xi\to\mathbb{R}$ is a mechanical parameter given by $k(\xi)=\xi$; $\mathscr{u}=1$ m, $\mathscr{v}=-1$ m/s, $\mathscr{a}=2$ m/s$^2$ and $\mathscr{j}=-3$ m/s$^3$ are the initial conditions of the system; and $\mathfrak{T}=[0,150]$ s.

The same three probability distributions mentioned in Problem 4 are considered here, but the parameters of the distributions now take the following values: $(a,b)=(3,5)$ kg, $(\mu,\sigma)=(4,0.2)$ kg and $(\alpha,\beta)=(2,5)$.

\subsection[Problem 6 (nonlinear system)]{Problem 6: A nonlinear system governed by a 2nd-order stochastic ODE}\label{sec2NumExa60}

In this last section, we study the stochastic behavior of a Van-der-Pol oscillator.
Because this oscillator is highly nonlinear over the temporal-random space, we use it herein as a toy problem to test the performance of the FSC method more thoroughly.
For reference, the spectral discretization of this problem is derived in detail in Appendix \ref{appsec2DerVanderPolOsc}.
The problem can be stated as follows.

Find the displacement of the oscillator $u:\mathfrak{T}\times\Xi\to\mathbb{R}$ in $\mathscr{U}$, such that ($\mu$-a.e.):
\begin{subequations}\label{eq2NumExa500}
\begin{align}
m\ddot{u}-(1-\rho u^2)\,c\dot{u}+ku=0&\qquad\text{on $\mathfrak{T}\times\Xi$}\label{eq2NumExa500a}\\
\big\{u(0,\cdot\,)=\mathscr{u},\,\dot{u}(0,\cdot\,)=\mathscr{v}\big\}&\qquad\text{on $\{0\}\times\Xi$},\label{eq2NumExa500b}
\end{align}
\end{subequations}
where $m=100$ kg is the mass of the oscillator, $c$ is a \emph{uniformly-distributed} random variable representing the strength of the damping and defined in $[150,450]$ kg/s, $\rho=150$ m$^{-2}$ is the contributing factor to the nonlinearity of the oscillator, $k=400$ N/m is the stiffness of the oscillator, $\mathscr{u}\sim\mathrm{Beta}(2,5)$ is a \emph{beta-distributed} random variable defined in $[0.05,0.25]$ m, $\mathscr{v}=2\mathscr{u}-0.10$ is another \emph{beta-distributed} random variable expressed in m/s, and $\mathfrak{T}=[0,150]$ s.
Observe that $\dot{u}:=\partial_t u$ and $\ddot{u}:=\partial_t^2 u$ denote the velocity and acceleration of the oscillator, respectively.

\section{Discussion on numerical results}\label{sec2DisNumRes}

In this section, we demonstrate and compare the performance of the FSC scheme using the two approaches developed in Section \ref{sec2FSCmet} for the transfer of the probability information, namely FSC-1 and FSC-2.
We do this for each of the problems described in Section \ref{sec2NumExa}, followed by a computational-cost comparison between FSC and mTD-gPC \cite{heuveline2014hybrid} for Problem 2 of Section \ref{sec2NumExa20}.

The local error, $\epsilon:\mathscr{T}\to\mathscr{T}$, and the global error, $\epsilon_G:\mathscr{T}\to\mathbb{R}$, are defined with these expressions:
\begin{gather*}
\epsilon[f](t)=|f(t)-f_\mathrm{exact}(t)|\\
\epsilon_G[f]=\frac{1}{T}\int_\mathfrak{T}|f(t)-f_\mathrm{exact}(t)|\,\mathrm{d}t\approx\frac{\Delta t}{T}\sum_{i=0}^N|f(t_i)-f_\mathrm{exact}(t_i)|,
\end{gather*}
where $\Delta t$ is the time-step size used for the simulation, $t_i\in\mathfrak{T}$ is the time instant of the simulation, and $N$ denotes the number of time steps employed in the simulation (with $t_0=0$ and $t_N=N\,\Delta t=T$).

In this work we use the Runge-Kutta method \cite{butcher1987numerical} of fourth-order (aka RK4 method) to push the state of the system forward in time.
The time-step size used is $\Delta t=0.001$ s (unless indicated otherwise), which means that $N=150\,000$ time steps are used in the simulations\footnote{For numerical reasons, if in the plots an asterisk is displayed next to a probability distribution, it means that the simulation was conducted for the first 100 seconds only.
In such cases, $N=100\,000$ for a time-step size of 0.001 seconds.}.
The time-step size is taken this small to attenuate as much as possible the errors coming from the discretization of $\mathscr{T}(n)$.
As pointed out in Remark \ref{rmk2paper600}, we update the stochastic part of the solution space at every time step in an attempt to curtail the degradation of the solution over time.
Finally, the gPC method (with $P=6$) is implemented for the first second of the simulation to ensure that the stochasticity of the system's state is well developed for the analysis with FSC or mTD-gPC.

To evaluate the inner products numerically, we use the following quadrature rules on each random axis:
\begin{gather*}
\mathrm{Uniform}\sim\text{Gauss-Legendre(100 points)},\quad\mathrm{Beta}\sim\text{Gauss-Jacobi(80 points)},\\
\mathrm{Gamma}\sim\text{Gauss-Laguerre(140 points)}\quad\text{and}\quad\mathrm{Normal}\sim\text{Gauss-Hermite(110 points).}
\end{gather*}

All problems are run in MATLAB R2016b \cite{matlabr2016b} on a 2017 MacBook Pro with quad-core \emph{3.1 GHz Intel Core i7} processor (hyper-threading technology enabled), \emph{16 GB 2133 MHz LPDDR3} memory and \emph{1 TB PCI-Express SSD} storage (APFS-formatted), running macOS Mojave (version 10.14.6).

\subsection{Numerical results for the five linear systems}

Figs.~\ref{fig2System1_Uniform_FSC_Vel_6} to \ref{fig2System4_Uniform_FSC_Jerk_9} show the evolution of the mean and the variance of one of the system's responses using FSC-2 and the exact solution\footnote{To obtain both the `exact' mean and the `exact' variance, MATLAB's Symbolic Math Toolbox \cite{matlabr2016b} is first used to find the response of interest analytically (e.g.~the solution of the stochastic ODE).
Then, the {\tt vpaintegral} is called with {\tt RelTol} set equal to $10^{-16}$ to compute the mean and the variance of the response numerically at every time instant of the simulation.} for sake of comparison.
In particular, Fig.~\ref{fig2System1_Uniform_FSC_Vel_6} shows the evolution of the system's velocity for Problem 1, Figs.~\ref{fig2System21_Uniform_FSC_Disp_7} to \ref{fig2System3_Uniform_FSC_Disp_8} the evolution of the system's displacement for Problems 2 to 4, and Fig.~\ref{fig2System4_Uniform_FSC_Jerk_9} the evolution of the system's jerk for Problem 5.
As observed, responses obtained with both the FSC-1 and FSC-2 methods approach the exact solution with high fidelity. 
We emphasize that these responses are obtained by using only a few number of basis vectors with $P$ set equal to $4+n$ after the first second of the simulation.
(Recall that $n$ denotes the order of the governing ODE with respect to time.)

% SYSTEM-1 [1/3]
\begin{figure}
\centering
\begin{subfigure}[b]{0.495\textwidth}
\includegraphics[width=\textwidth]{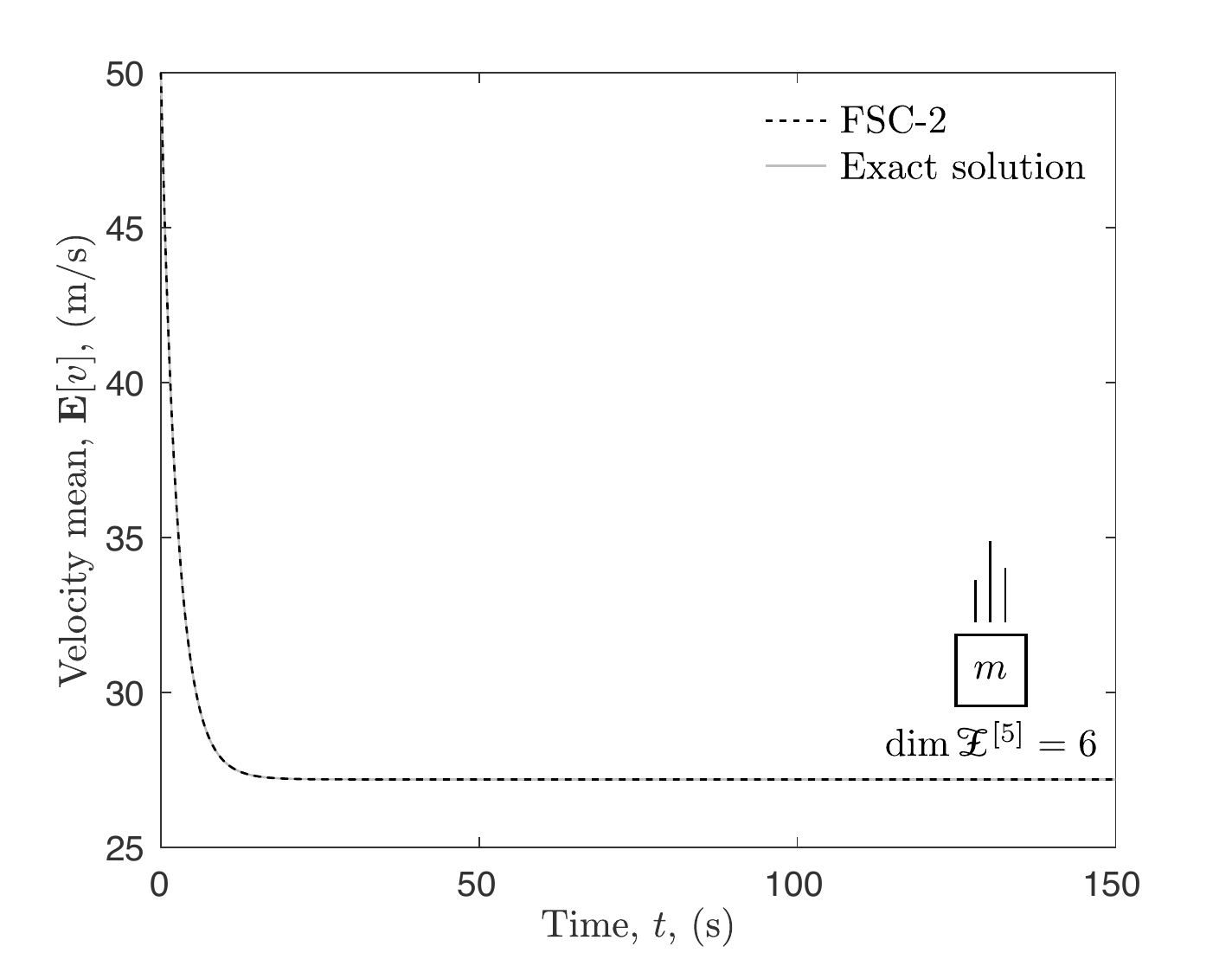}
\caption{Mean}
\label{fig2System1_Uniform_FSC_Vel_Mean_6}
\end{subfigure}\hfill
\begin{subfigure}[b]{0.495\textwidth}
\includegraphics[width=\textwidth]{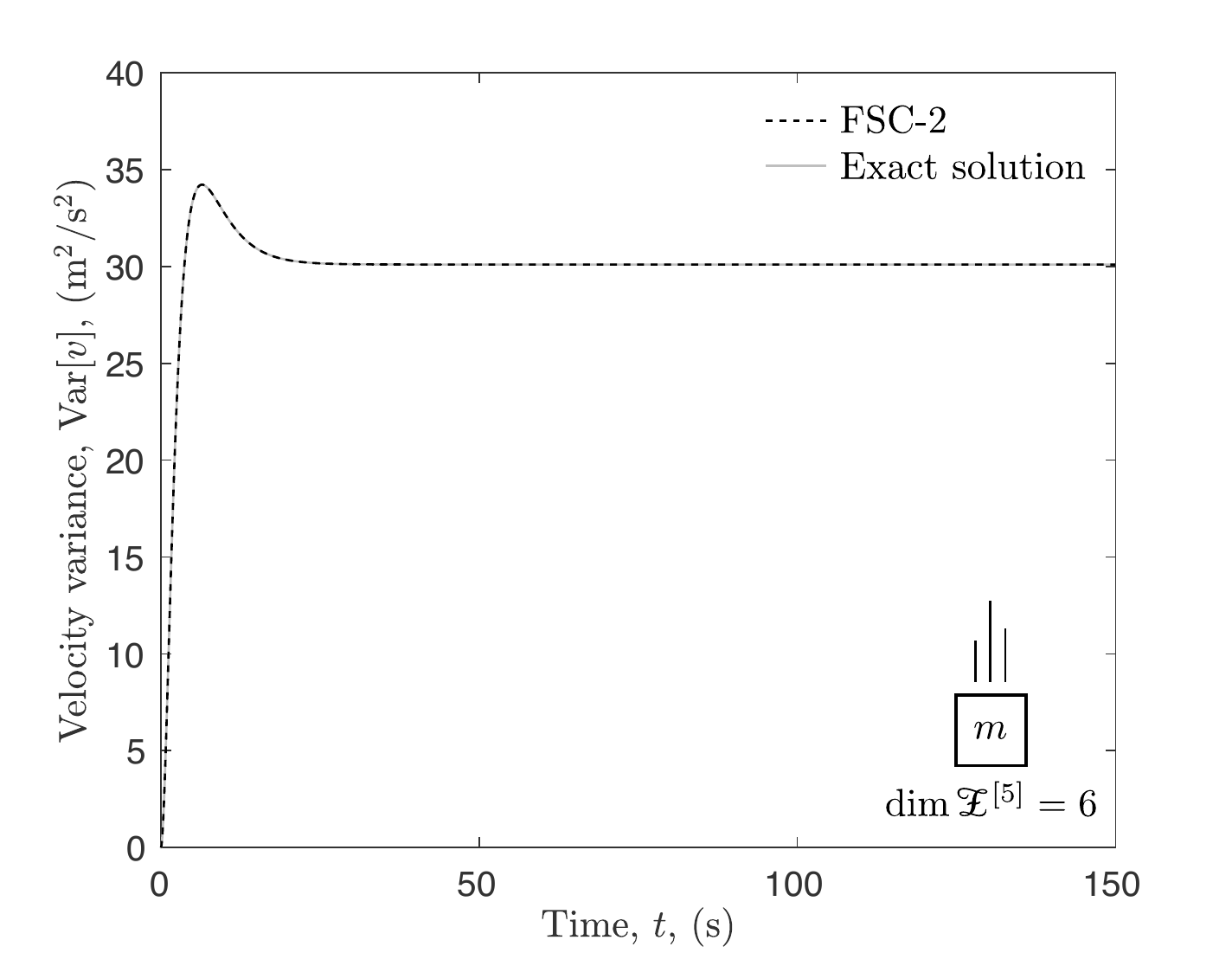}
\caption{Variance}
\label{fig2System1_Uniform_FSC_Vel_Var_6}
\end{subfigure}
\caption{\emph{Problem 1} --- Evolution of $\mathbf{E}[v]$ and $\mathrm{Var}[v]$ for the case when the $p$-discretization level of RFS is $\mathscr{Z}^{[5]}$ and $\mu\sim\mathrm{Uniform}$}
\label{fig2System1_Uniform_FSC_Vel_6}
\end{figure}

% SYSTEM-21 [1/6]
\begin{figure}
\centering
\begin{subfigure}[b]{0.495\textwidth}
\includegraphics[width=\textwidth]{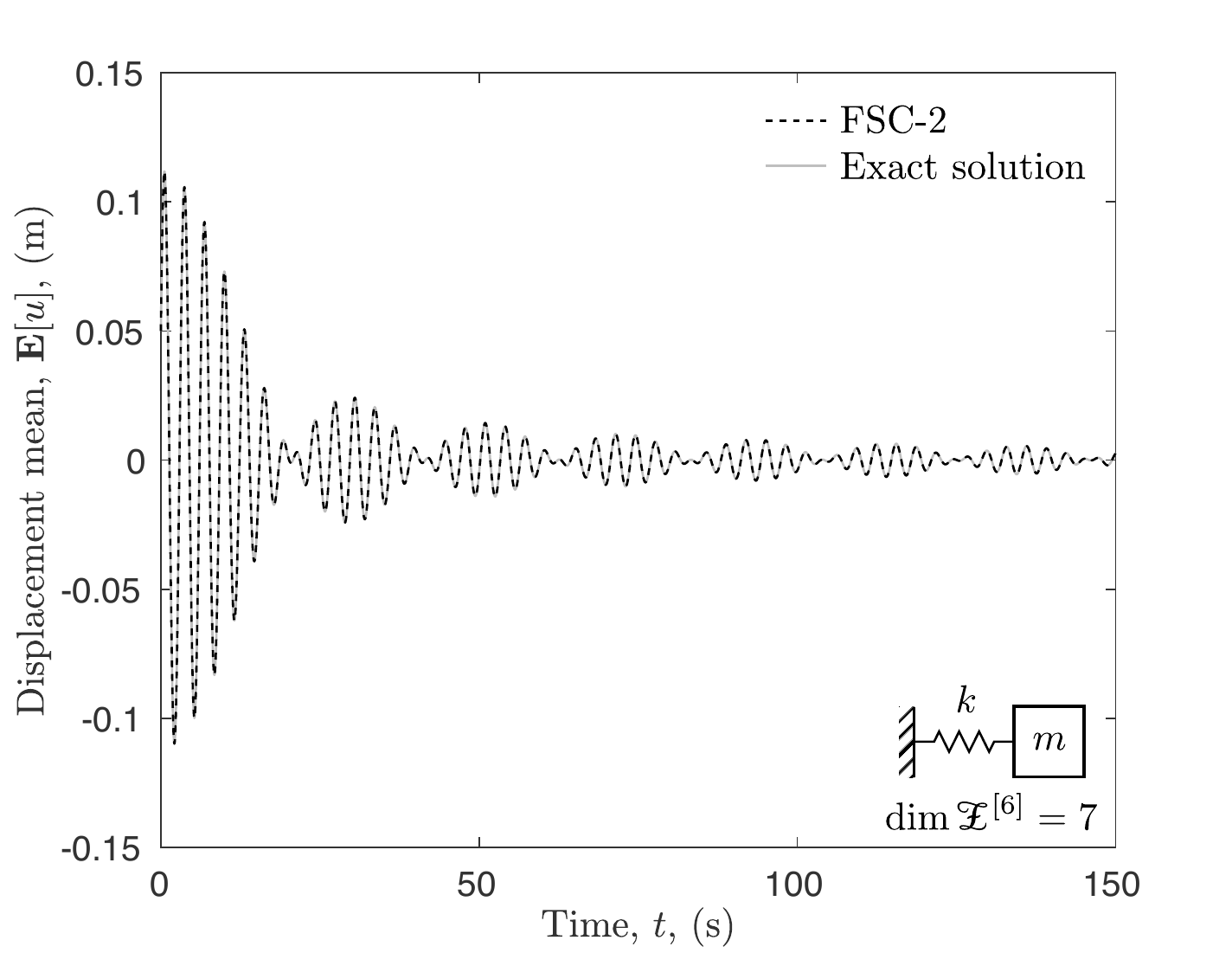}
\caption{Mean}
\label{fig2System21_Uniform_FSC_Disp_Mean_7}
\end{subfigure}\hfill
\begin{subfigure}[b]{0.495\textwidth}
\includegraphics[width=\textwidth]{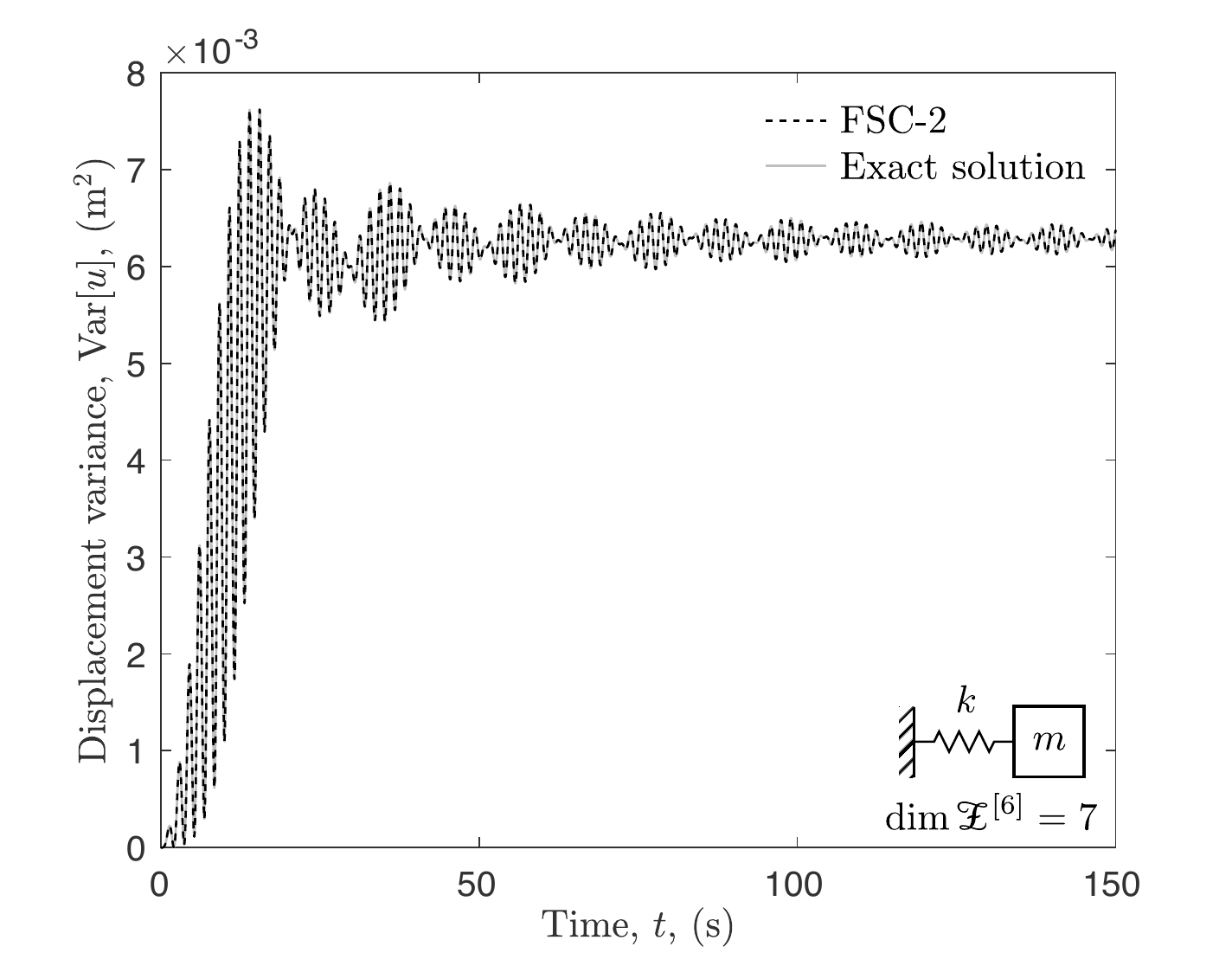}
\caption{Variance}
\label{fig2System21_Uniform_FSC_Disp_Var_7}
\end{subfigure}
\caption{\emph{Problem 2} --- Evolution of $\mathbf{E}[u]$ and $\mathrm{Var}[u]$ for the case when the $p$-discretization level of RFS is $\mathscr{Z}^{[6]}$ and $\mu\sim\mathrm{Uniform}$}
\label{fig2System21_Uniform_FSC_Disp_7}
\end{figure}

% SYSTEM-22 [1/3]
\begin{figure}
\centering
\begin{subfigure}[b]{0.495\textwidth}
\includegraphics[width=\textwidth]{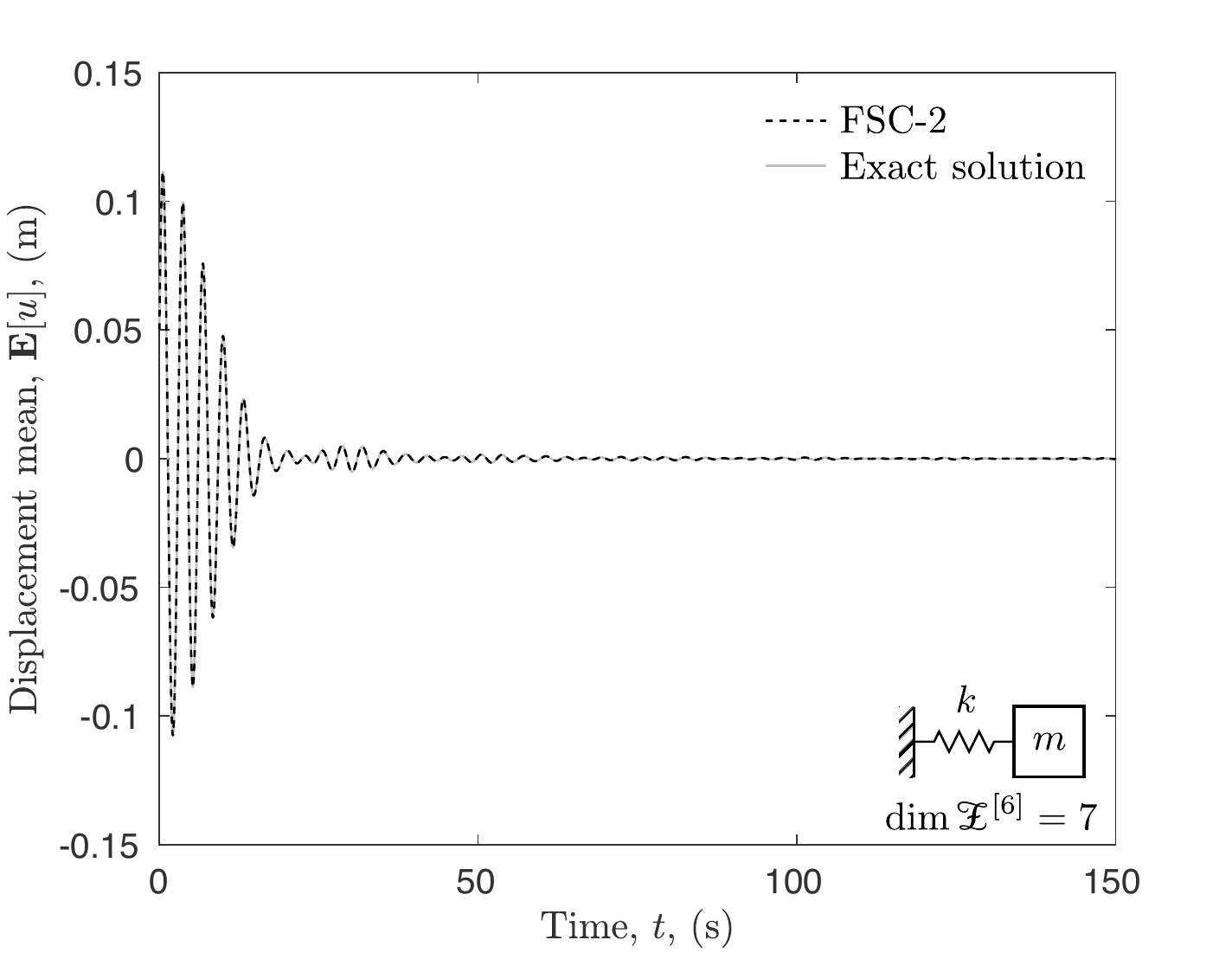}
\caption{Mean}
\label{fig2System22_UniformUniform_FSC_Disp_Mean_7}
\end{subfigure}\hfill
\begin{subfigure}[b]{0.495\textwidth}
\includegraphics[width=\textwidth]{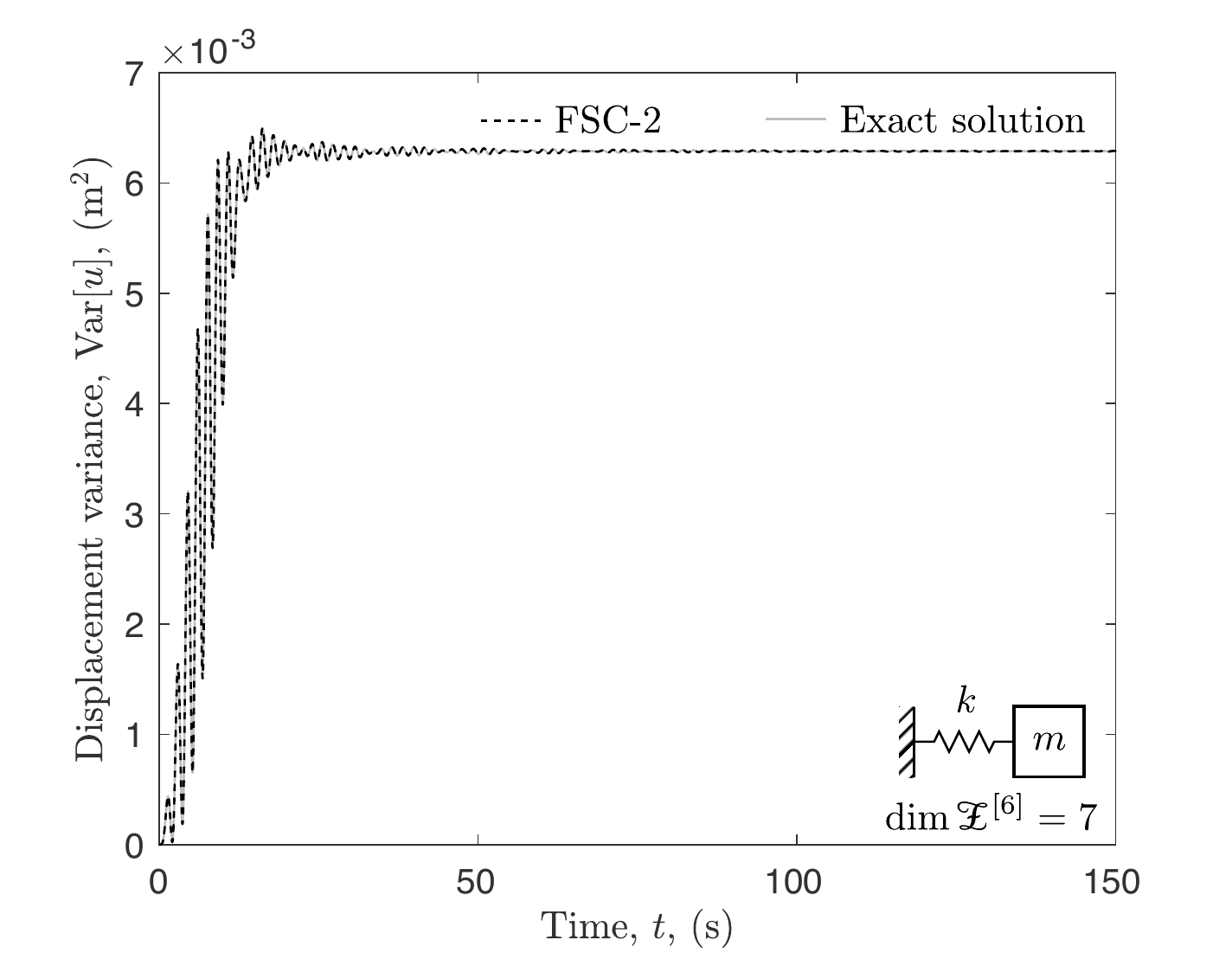}
\caption{Variance}
\label{fig2System22_UniformUniform_FSC_Disp_Var_7}
\end{subfigure}
\caption{\emph{Problem 3} --- Evolution of $\mathbf{E}[u]$ and $\mathrm{Var}[u]$ for the case when the $p$-discretization level of RFS is $\mathscr{Z}^{[6]}$ and $\mu\sim\mathrm{Uniform}\otimes\mathrm{Uniform}$}
\label{fig2System22_UniformUniform_FSC_Disp_7}
\end{figure}

% SYSTEM-3 [1/2]
\begin{figure}
\centering
\begin{subfigure}[b]{0.495\textwidth}
\includegraphics[width=\textwidth]{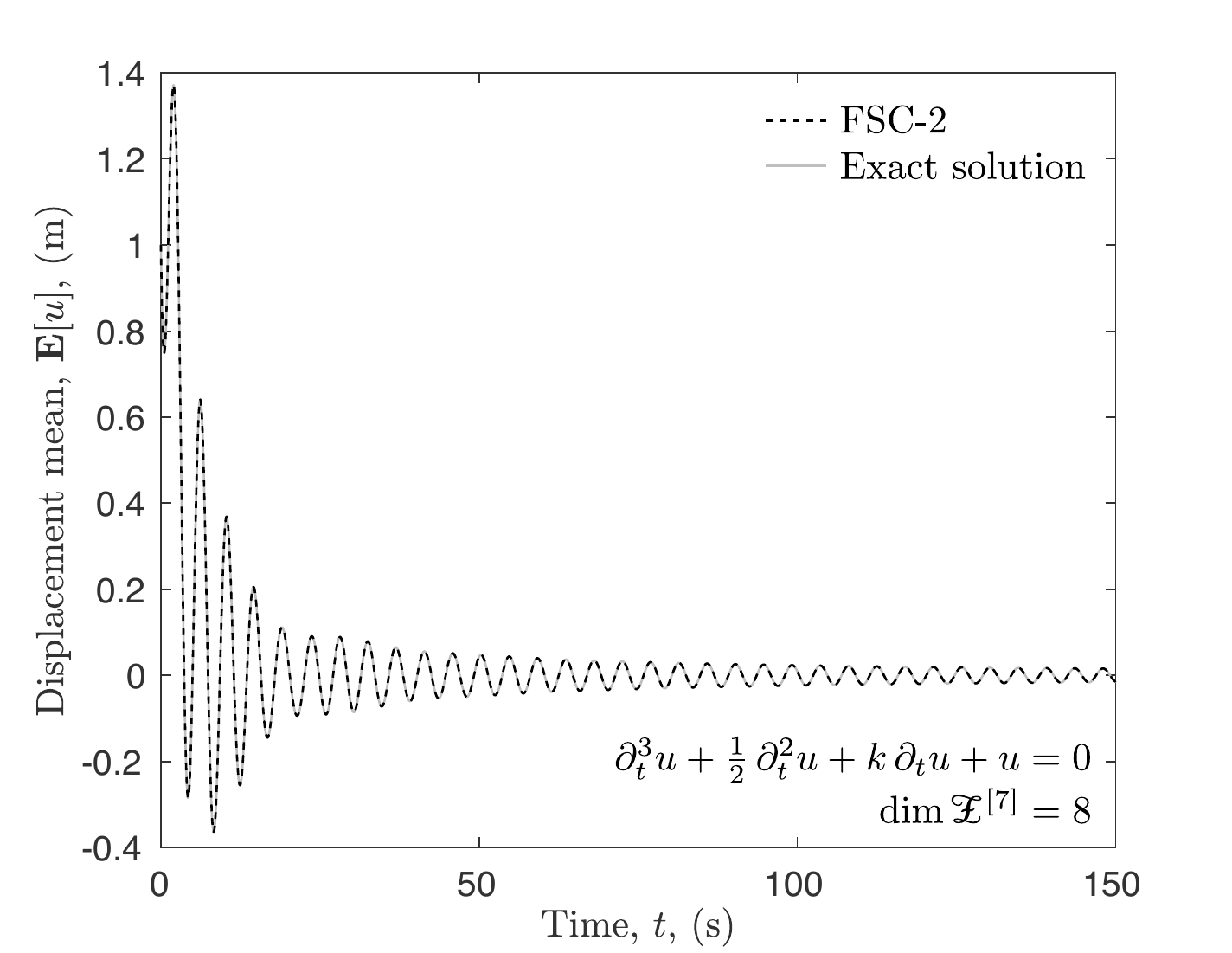}
\caption{Mean}
\label{fig2System3_Uniform_FSC_Disp_Mean_8}
\end{subfigure}\hfill
\begin{subfigure}[b]{0.495\textwidth}
\includegraphics[width=\textwidth]{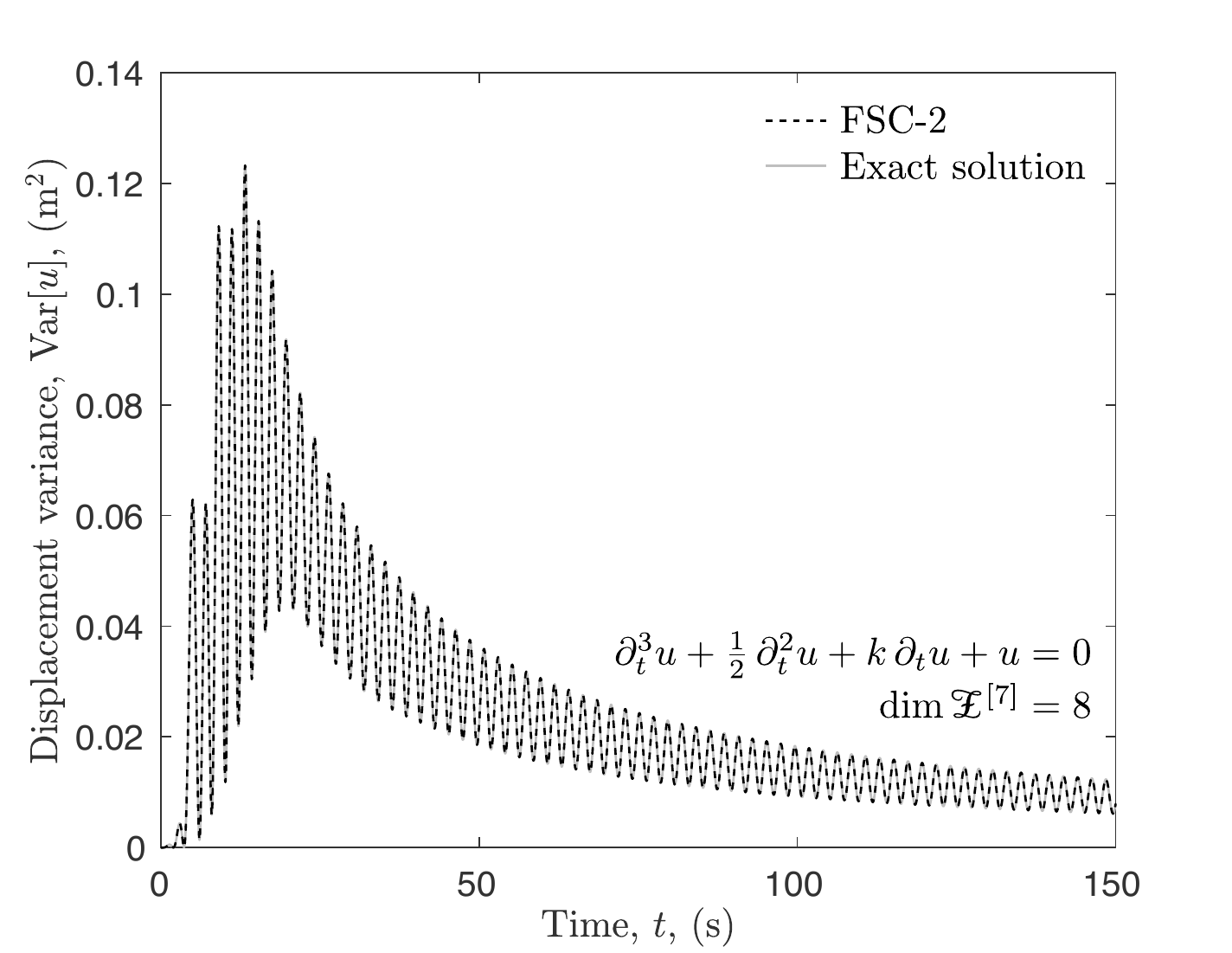}
\caption{Variance}
\label{fig2System3_Uniform_FSC_Disp_Var_8}
\end{subfigure}
\caption{\emph{Problem 4} --- Evolution of $\mathbf{E}[u]$ and $\mathrm{Var}[u]$ for the case when the $p$-discretization level of RFS is $\mathscr{Z}^{[7]}$ and $\mu\sim\mathrm{Uniform}$}
\label{fig2System3_Uniform_FSC_Disp_8}
\end{figure}

% SYSTEM-4 [1/2]
\begin{figure}
\centering
\begin{subfigure}[b]{0.495\textwidth}
\includegraphics[width=\textwidth]{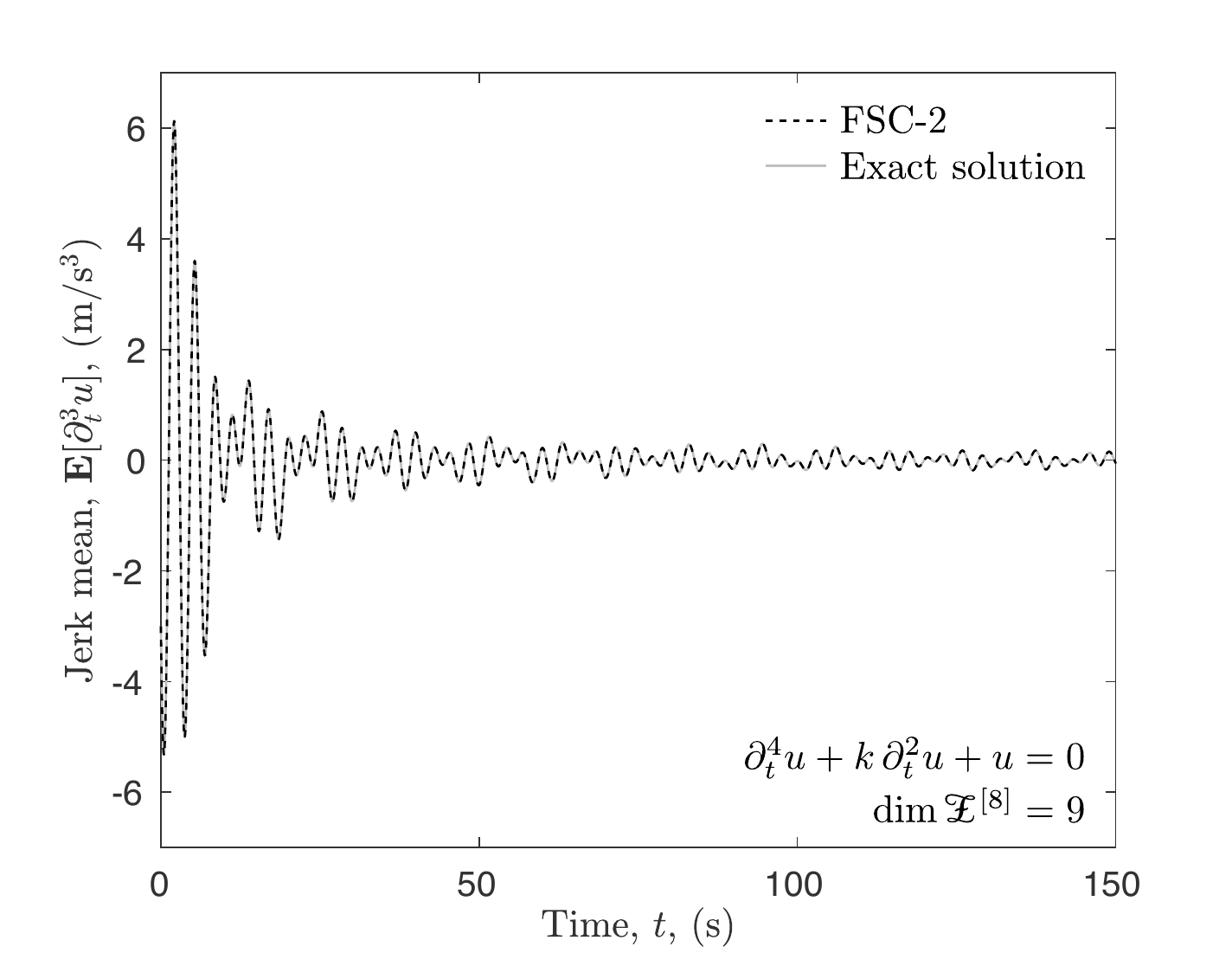}
\caption{Mean}
\label{fig2System4_Uniform_FSC_Jerk_Mean_9}
\end{subfigure}\hfill
\begin{subfigure}[b]{0.495\textwidth}
\includegraphics[width=\textwidth]{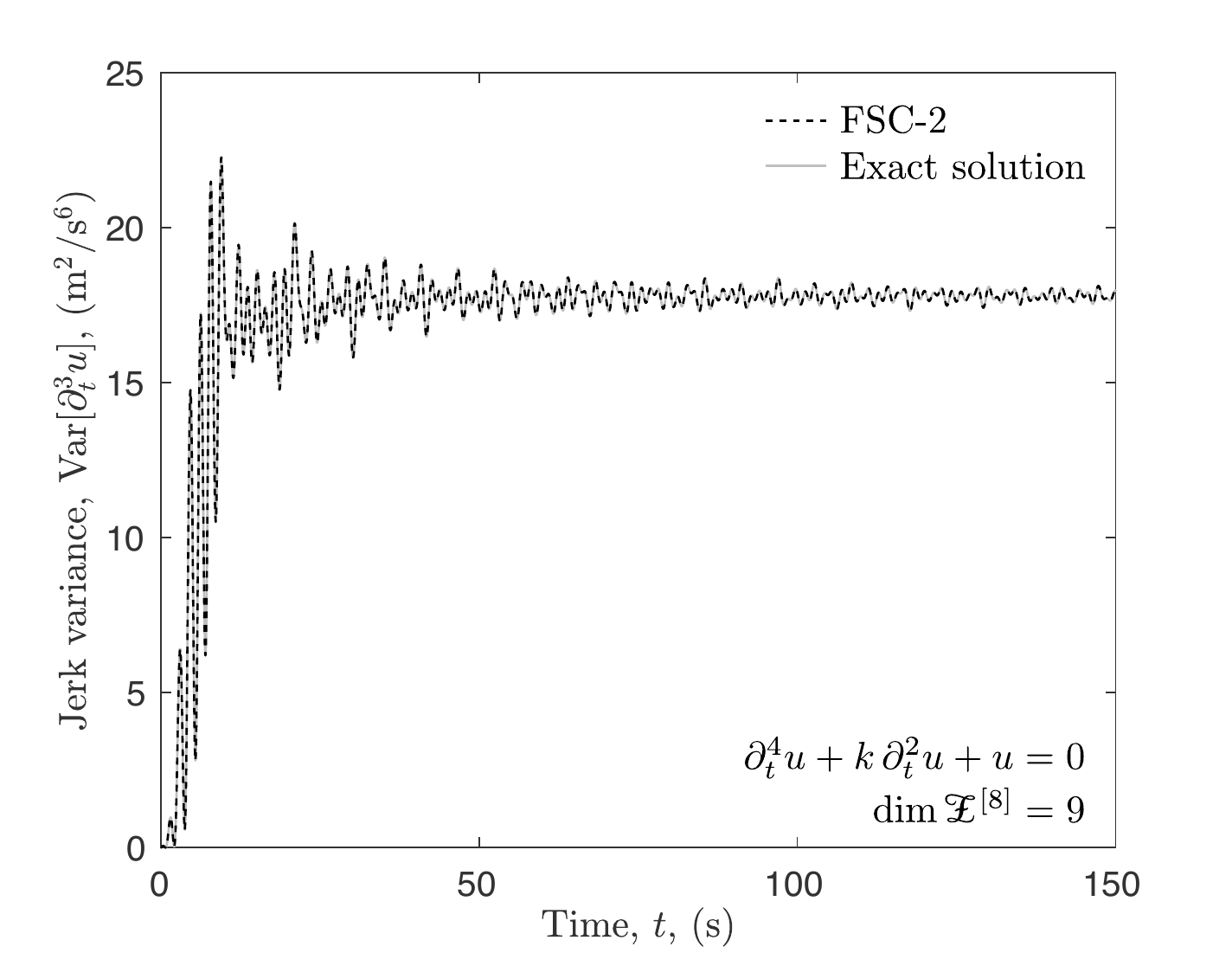}
\caption{Variance}
\label{fig2System4_Uniform_FSC_Jerk_Var_9}
\end{subfigure}
\caption{\emph{Problem 5} --- Evolution of $\mathbf{E}[\partial_t^3u]$ and $\mathrm{Var}[\partial_t^3u]$ for the case when the $p$-discretization level of RFS is $\mathscr{Z}^{[8]}$ and $\mu\sim\mathrm{Uniform}$}
\label{fig2System4_Uniform_FSC_Jerk_9}
\end{figure}

Figs.~\ref{fig2System1_Uniform_FSC_Vel_Error} to \ref{fig2System22_UniformUniform_FSC_Disp_Error} present the local errors in mean and variance of each of the responses mentioned above but only for Problems 1 and 2 for sake of brevity.
The errors are depicted for both FSC-1 and FSC-2.
To compare, we also include the case when $P=n$ even though the FSC scheme requires that $P$ is taken at least equal to $n+1$.
We do so to test the implications of spanning the RFS with the state variables of the system only.
The cases when $P=n+2$ and $P=n+4$ are also provided for the sake of comparison.
From these figures, it is apparent that as the number of basis vectors increases, so does the accuracy of the results.
In particular, when the FSC-1 approach is used, the following observation can be made.
By increasing the number of basis vectors from $n+1$ to $n+3$, the accuracy of the results improves significantly by 6 orders of magnitude.
However, when the number of basis vectors is increased from $n+3$ to $n+5$, the results either do not improve noticeably or worsen a bit (as in Fig.~\ref{fig2System22_UniformUniform_FSC_Disp_Error}).
This is in contrast to the FSC-2 approach.
When FSC-2 is used, the accuracy of the results improves not only significantly but also consistently as the number of basis vectors increases.
The figures also indicate that FSC-2 can achieve in general a higher level of accuracy than FSC-1 as time progresses in the simulation.
However, we do notice that whenever $P=n$, no difference between the two approaches can be discerned.

% SYSTEM-1 [2/3]
\begin{figure}
\centering
\begin{subfigure}[b]{0.495\textwidth}
\includegraphics[width=\textwidth]{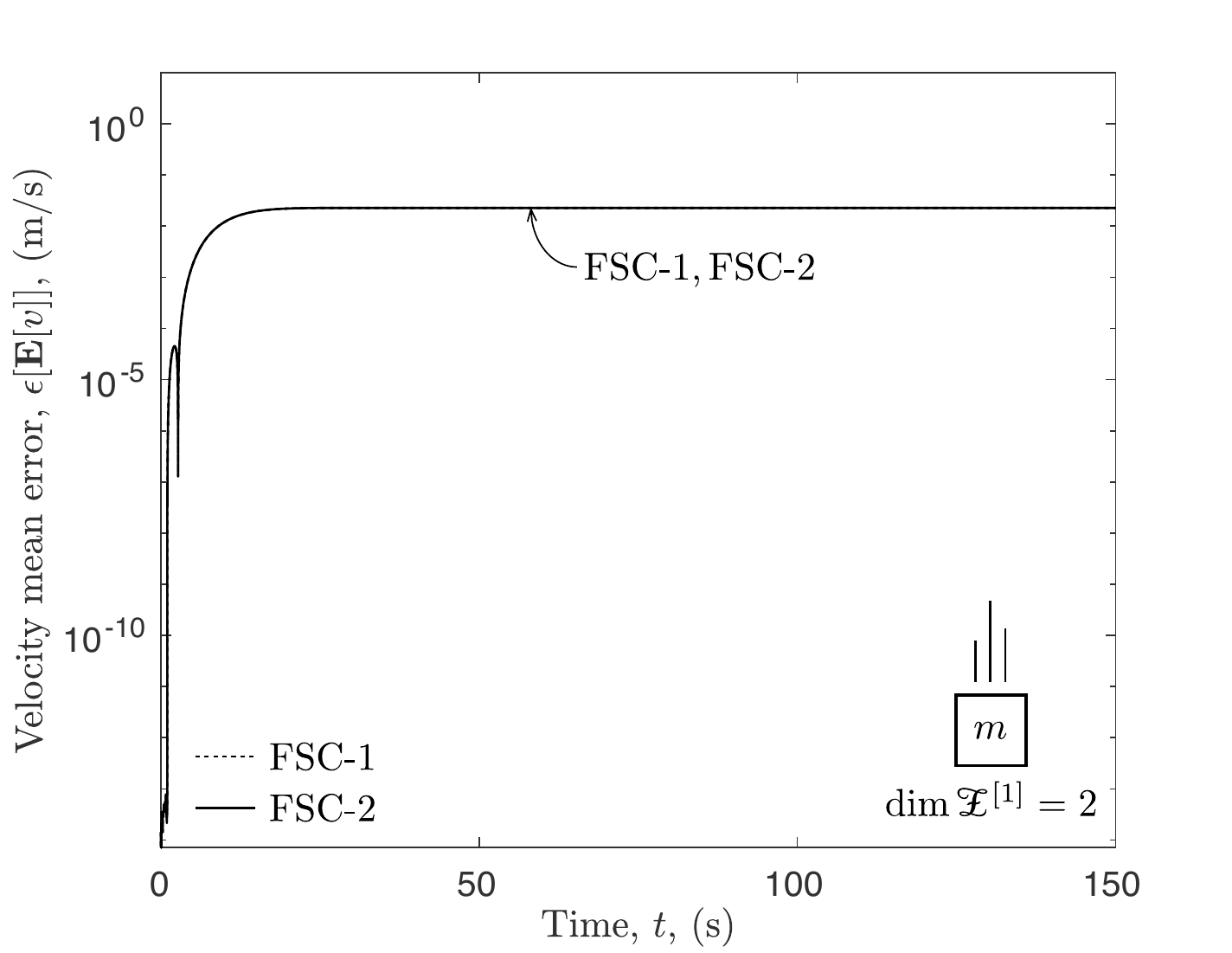}
\caption{Mean error for $\mathscr{Z}^{[1]}$}
\label{fig2System1_Uniform_FSC_Vel_Mean_2_Error}
\end{subfigure}\hfill
\begin{subfigure}[b]{0.495\textwidth}
\includegraphics[width=\textwidth]{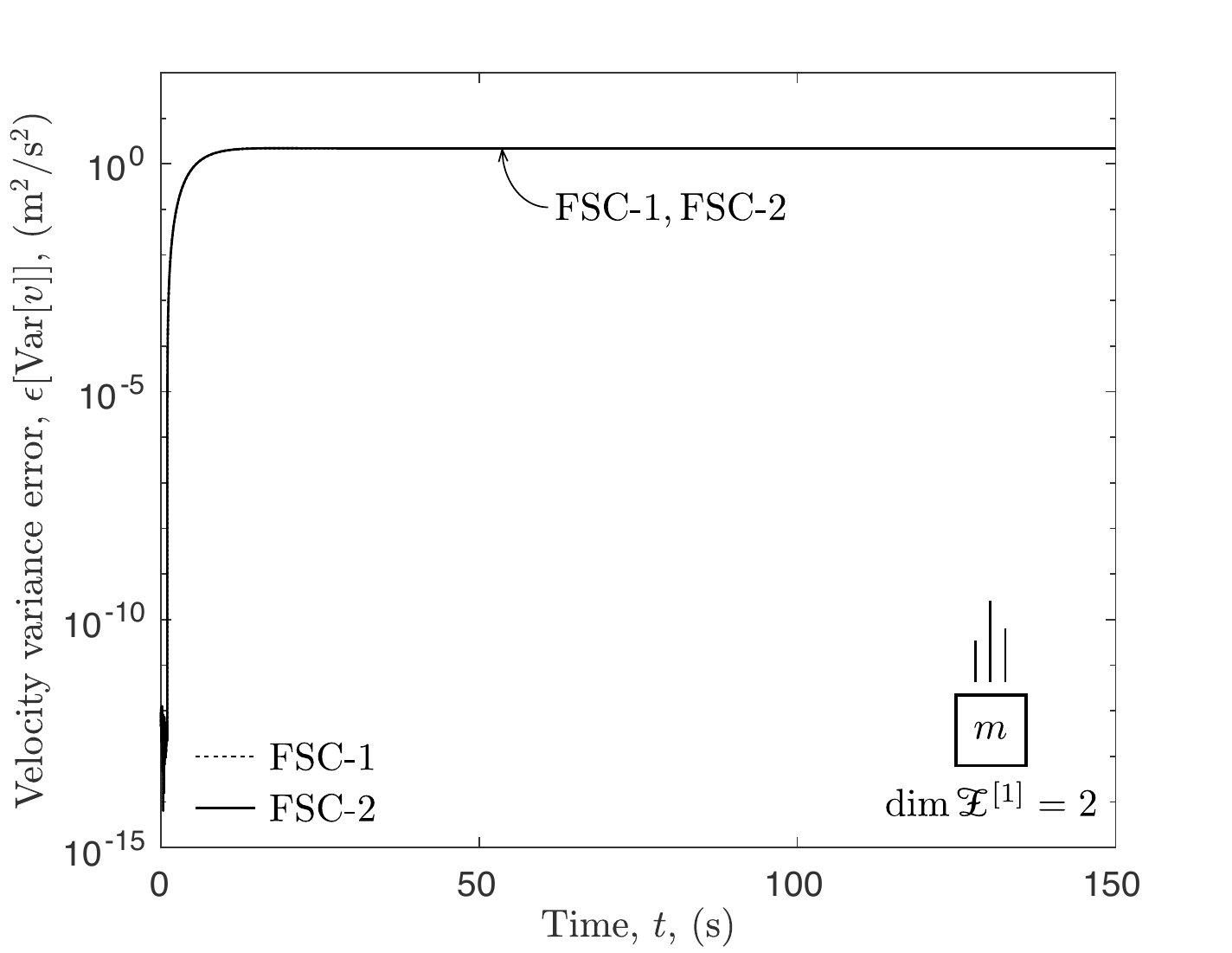}
\caption{Variance error for $\mathscr{Z}^{[1]}$}
\label{fig2System1_Uniform_FSC_Vel_Var_2_Error}
\end{subfigure}\quad
\begin{subfigure}[b]{0.495\textwidth}
\includegraphics[width=\textwidth]{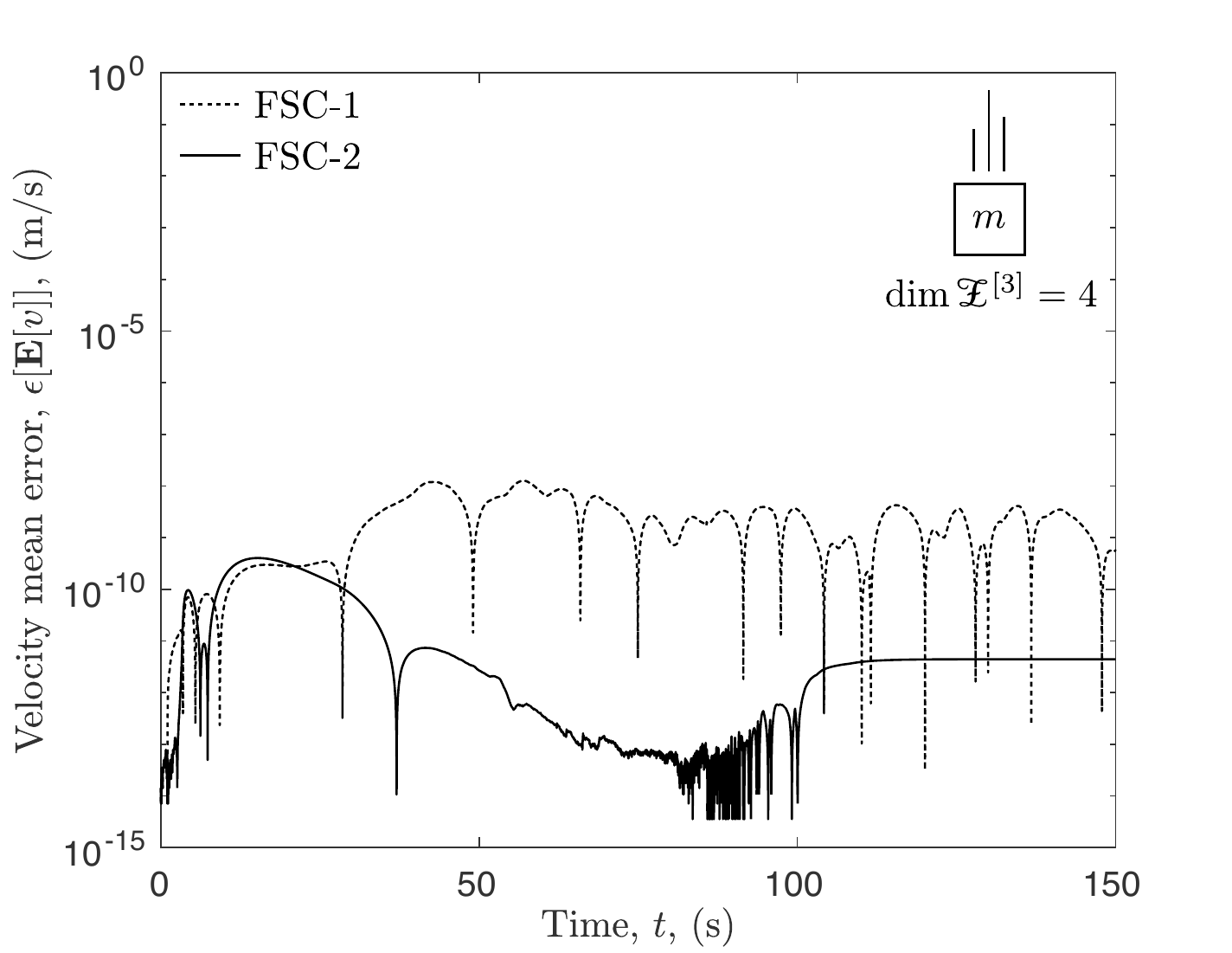}
\caption{Mean error for $\mathscr{Z}^{[3]}$}
\label{fig2System1_Uniform_FSC_Vel_Mean_4_Error}
\end{subfigure}\hfill
\begin{subfigure}[b]{0.495\textwidth}
\includegraphics[width=\textwidth]{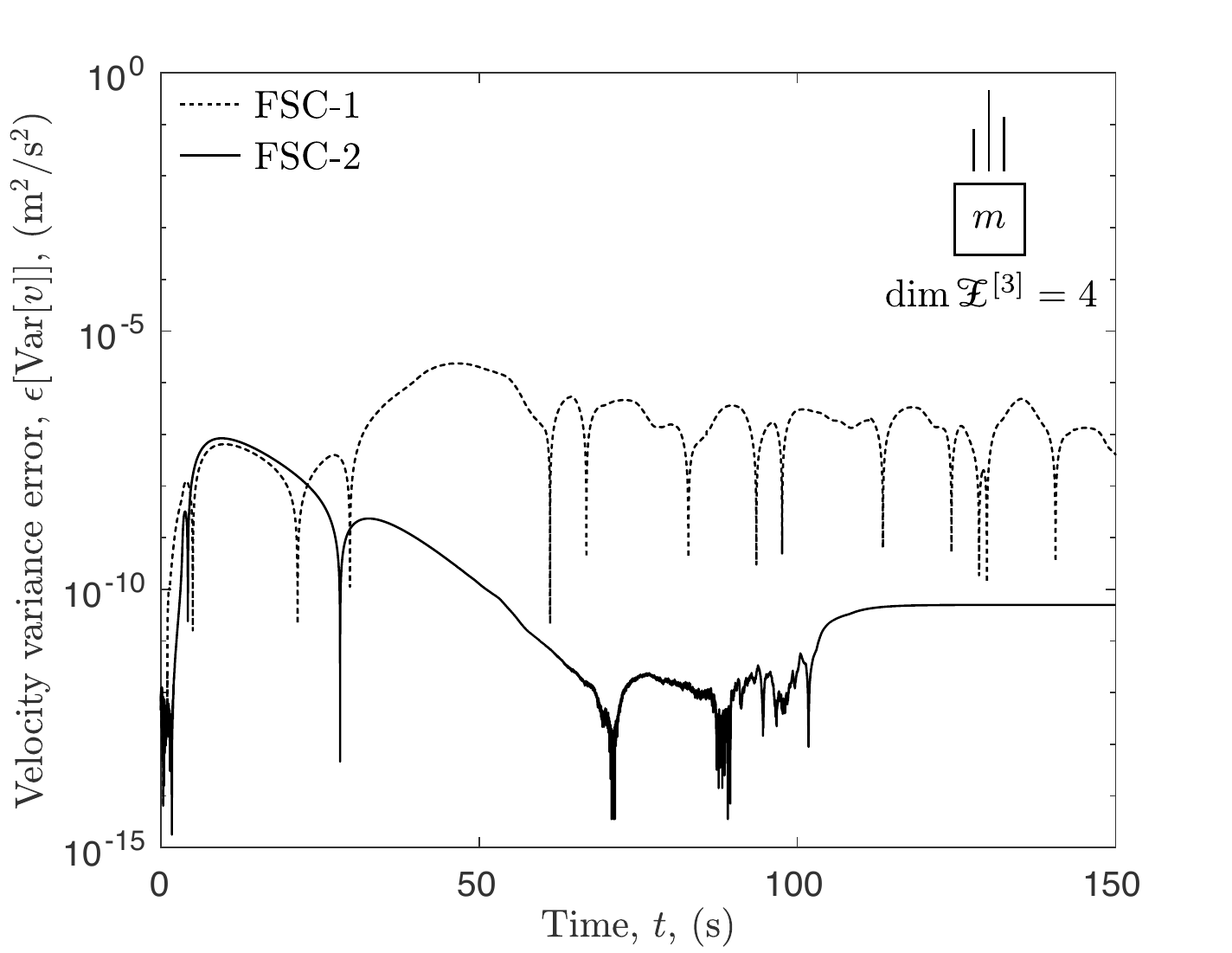}
\caption{Variance error for $\mathscr{Z}^{[3]}$}
\label{fig2System1_Uniform_FSC_Vel_Var_4_Error}
\end{subfigure}\quad
\begin{subfigure}[b]{0.495\textwidth}
\includegraphics[width=\textwidth]{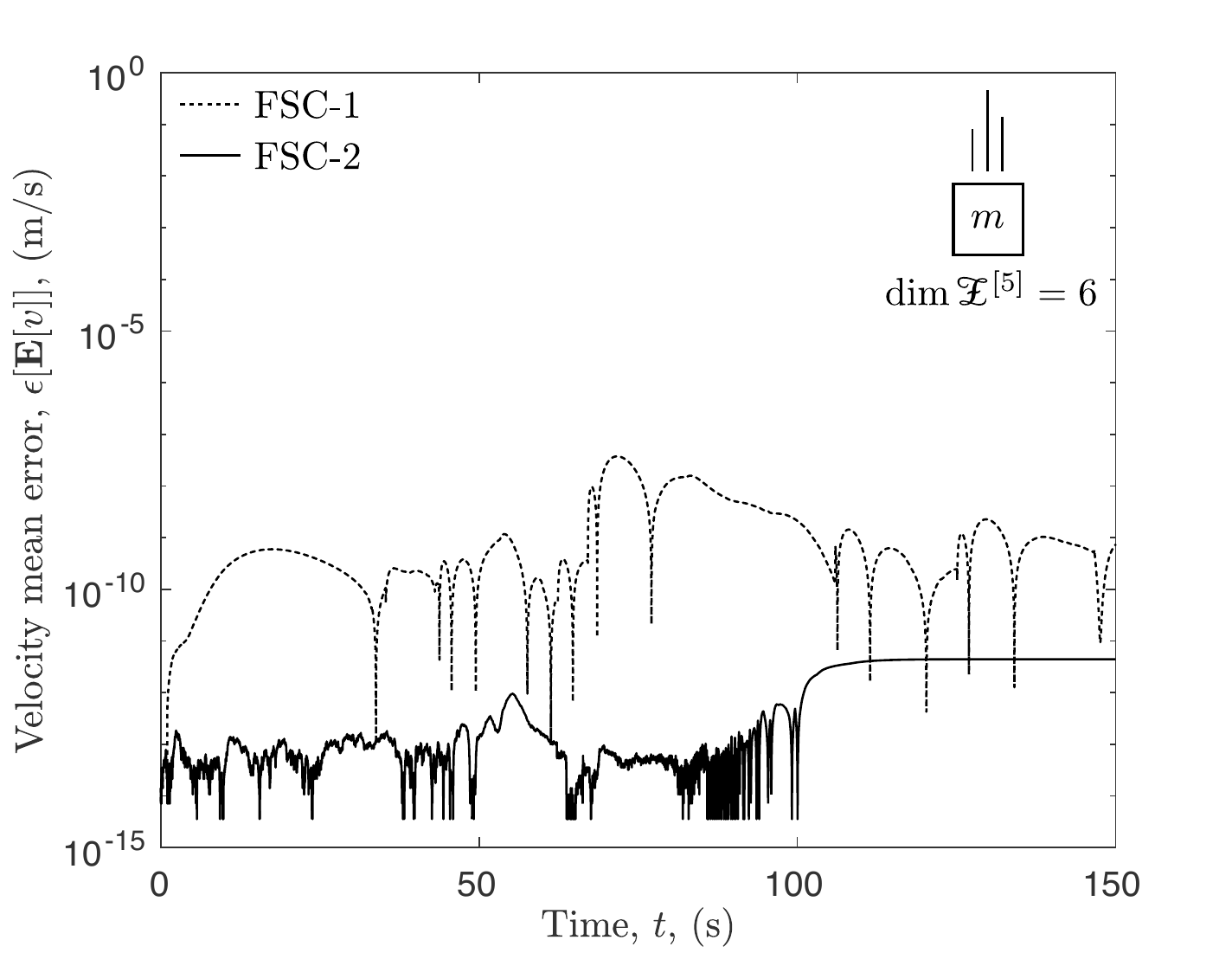}
\caption{Mean error for $\mathscr{Z}^{[5]}$}
\label{fig2System1_Uniform_FSC_Vel_Mean_6_Error}
\end{subfigure}\hfill
\begin{subfigure}[b]{0.495\textwidth}
\includegraphics[width=\textwidth]{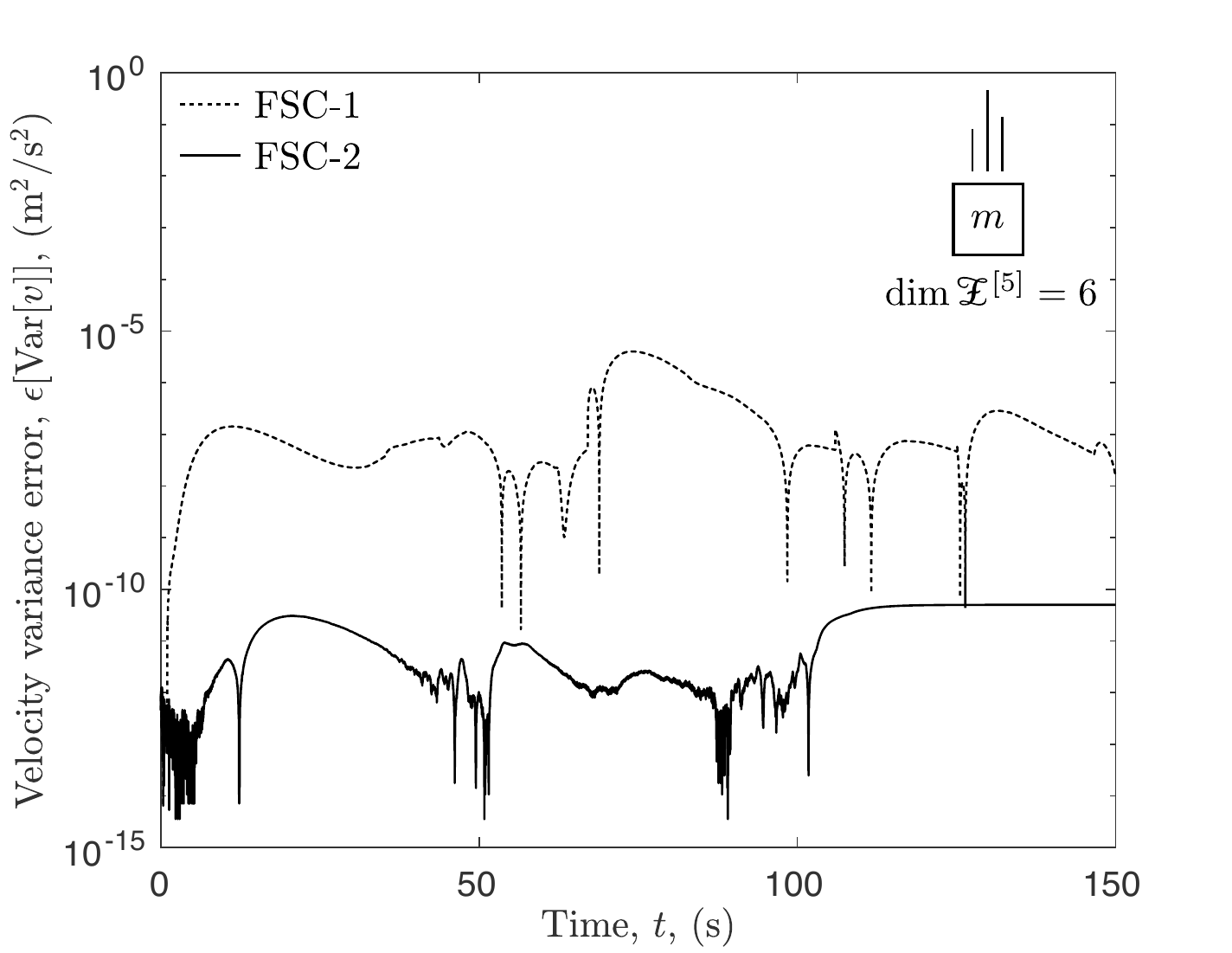}
\caption{Variance error for $\mathscr{Z}^{[5]}$}
\label{fig2System1_Uniform_FSC_Vel_Var_6_Error}
\end{subfigure}
\caption{\emph{Problem 1} --- Local error evolution of $\mathbf{E}[v]$ and $\mathrm{Var}[v]$ for different $p$-discretization levels of RFS and for $\mu\sim\mathrm{Uniform}$}
\label{fig2System1_Uniform_FSC_Vel_Error}
\end{figure}

% SYSTEM-21 [2/6]
\begin{figure}
\centering
\begin{subfigure}[b]{0.495\textwidth}
\includegraphics[width=\textwidth]{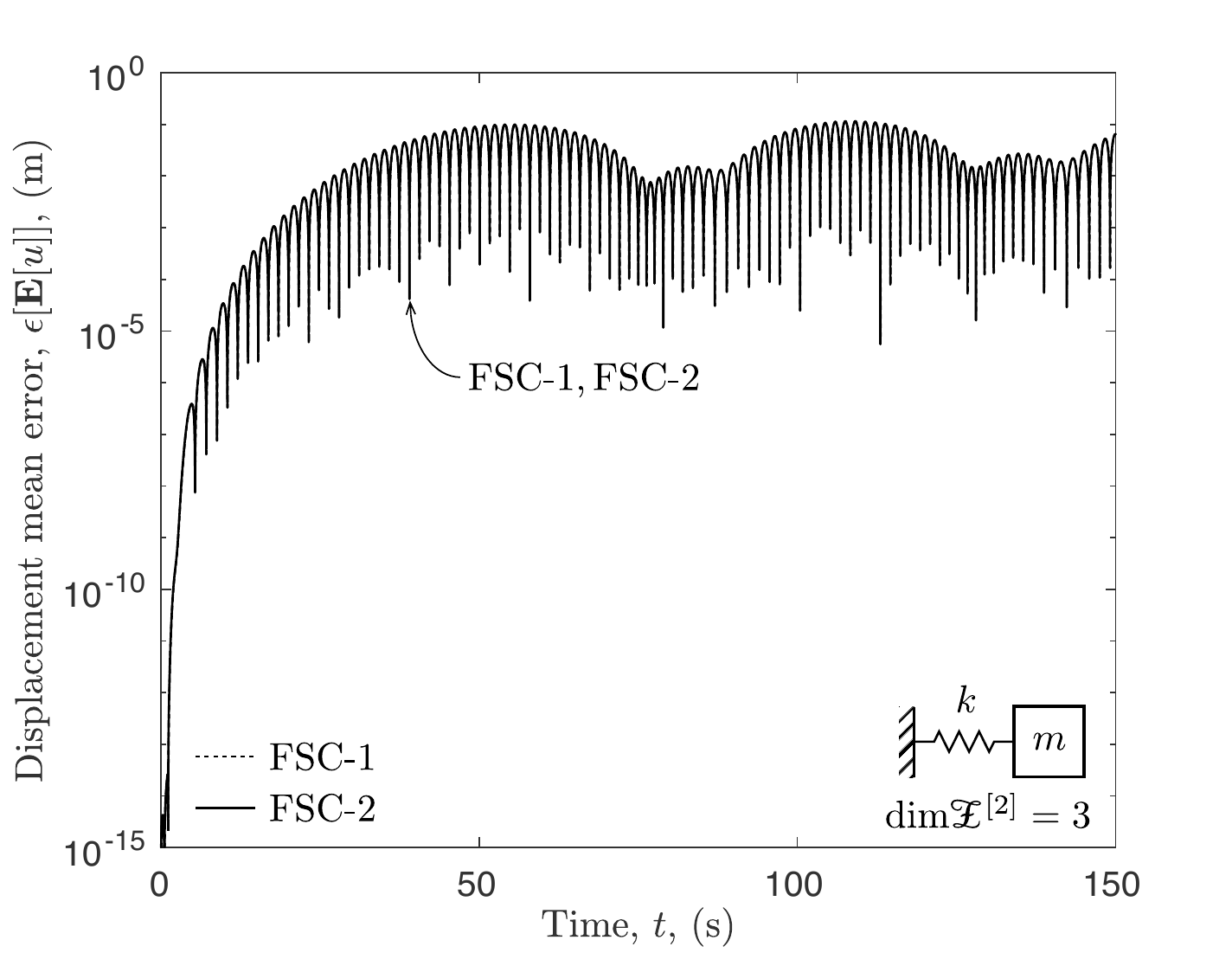}
\caption{Mean error for $\mathscr{Z}^{[2]}$}
\label{fig2System21_Uniform_FSC_Disp_Mean_3_Error}
\end{subfigure}\hfill
\begin{subfigure}[b]{0.495\textwidth}
\includegraphics[width=\textwidth]{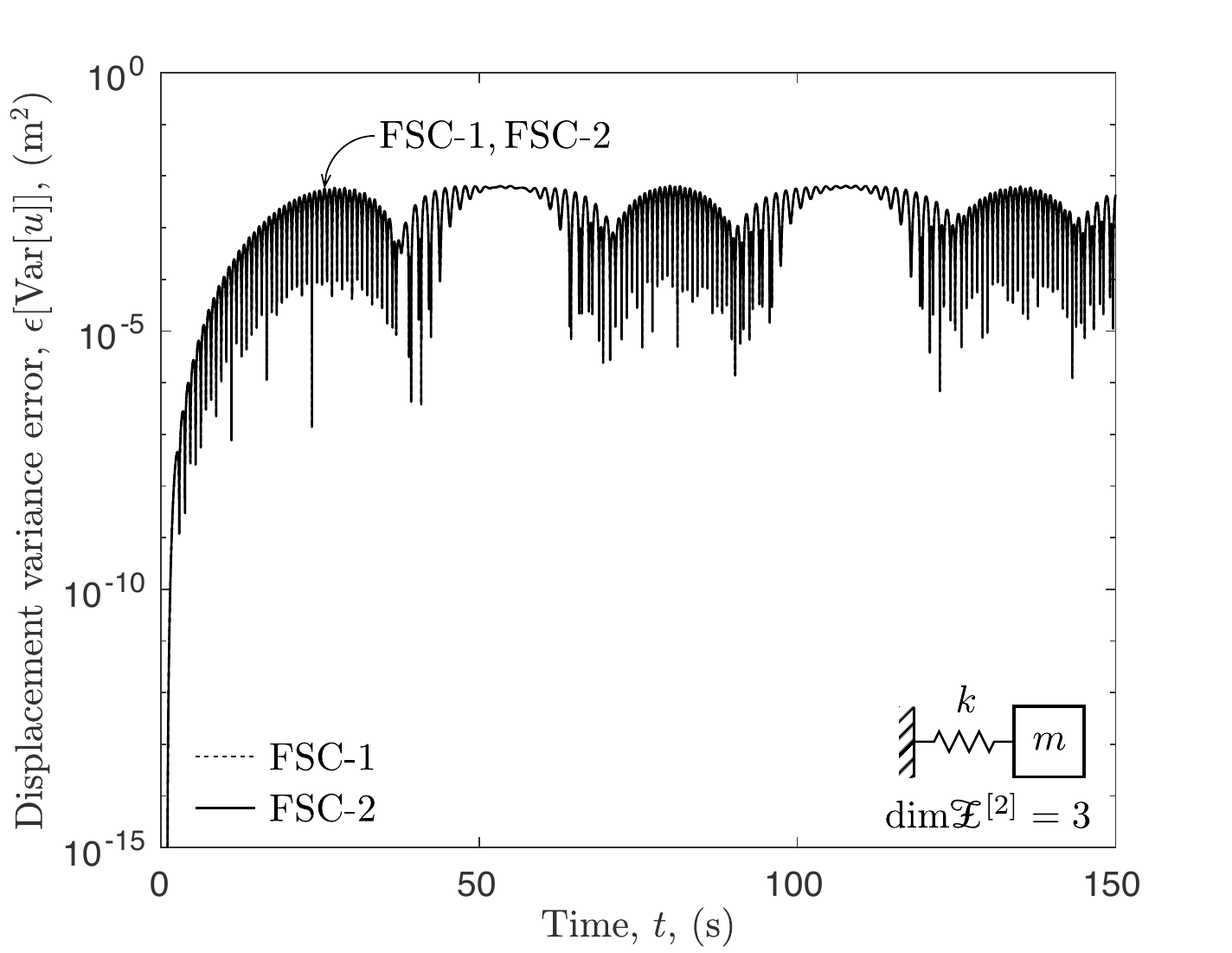}
\caption{Variance error for $\mathscr{Z}^{[2]}$}
\label{fig2System21_Uniform_FSC_Disp_Var_3_Error}
\end{subfigure}\quad
\begin{subfigure}[b]{0.495\textwidth}
\includegraphics[width=\textwidth]{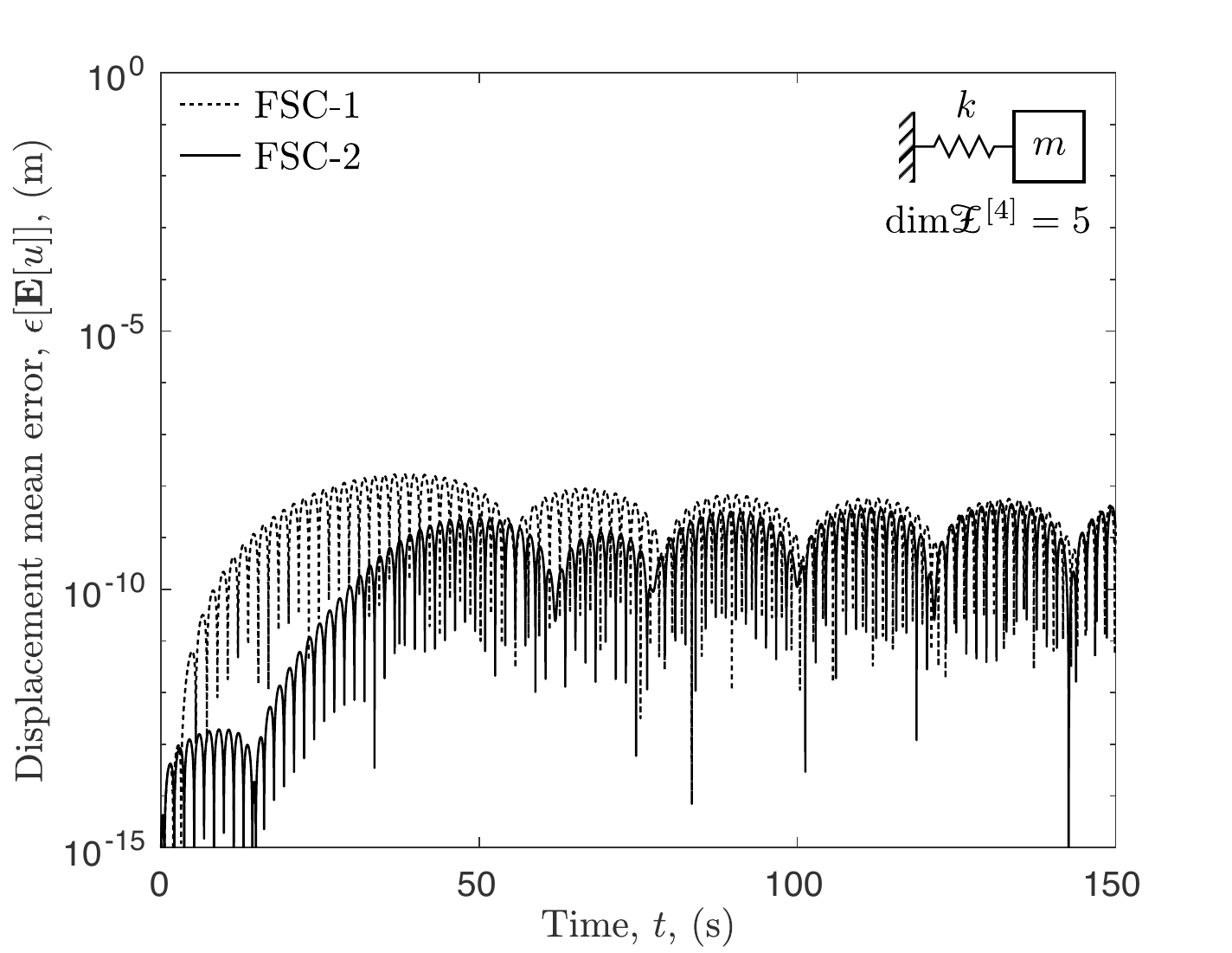}
\caption{Mean error for $\mathscr{Z}^{[4]}$}
\label{fig2System21_Uniform_FSC_Disp_Mean_5_Error}
\end{subfigure}\hfill
\begin{subfigure}[b]{0.495\textwidth}
\includegraphics[width=\textwidth]{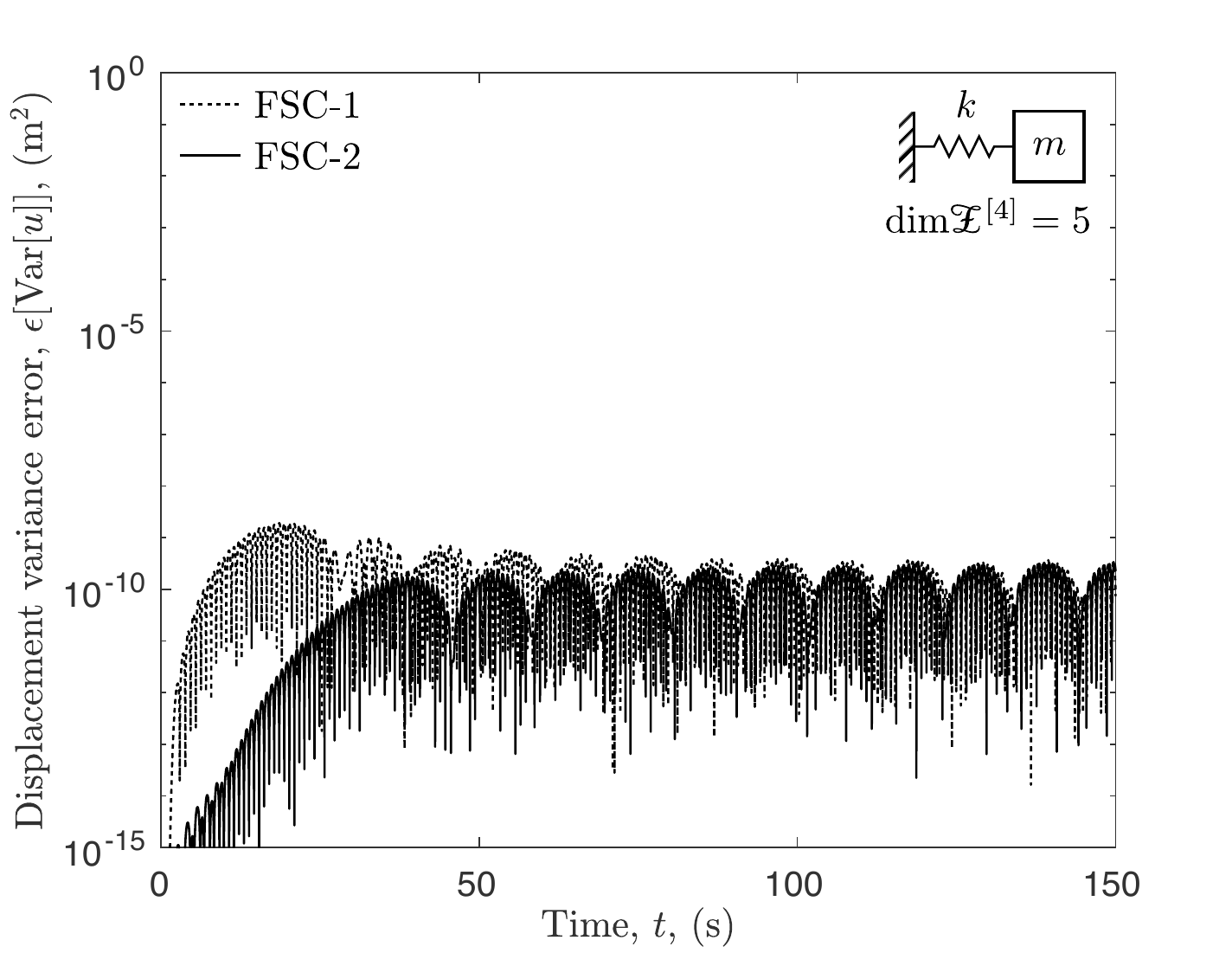}
\caption{Variance error for $\mathscr{Z}^{[4]}$}
\label{fig2System21_Uniform_FSC_Disp_Var_5_Error}
\end{subfigure}\quad
\begin{subfigure}[b]{0.495\textwidth}
\includegraphics[width=\textwidth]{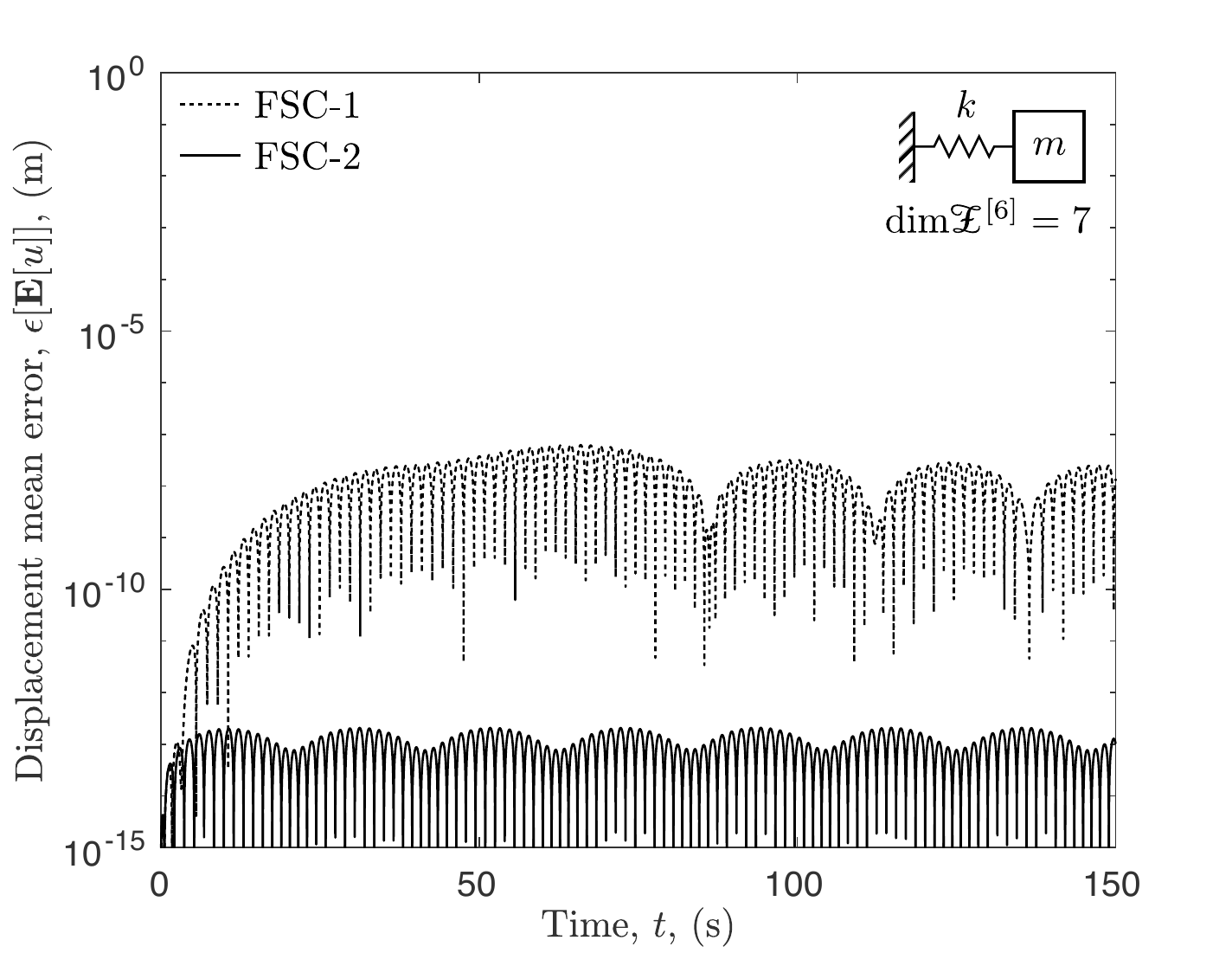}
\caption{Mean error for $\mathscr{Z}^{[6]}$}
\label{fig2System21_Uniform_FSC_Disp_Mean_7_Error}
\end{subfigure}\hfill
\begin{subfigure}[b]{0.495\textwidth}
\includegraphics[width=\textwidth]{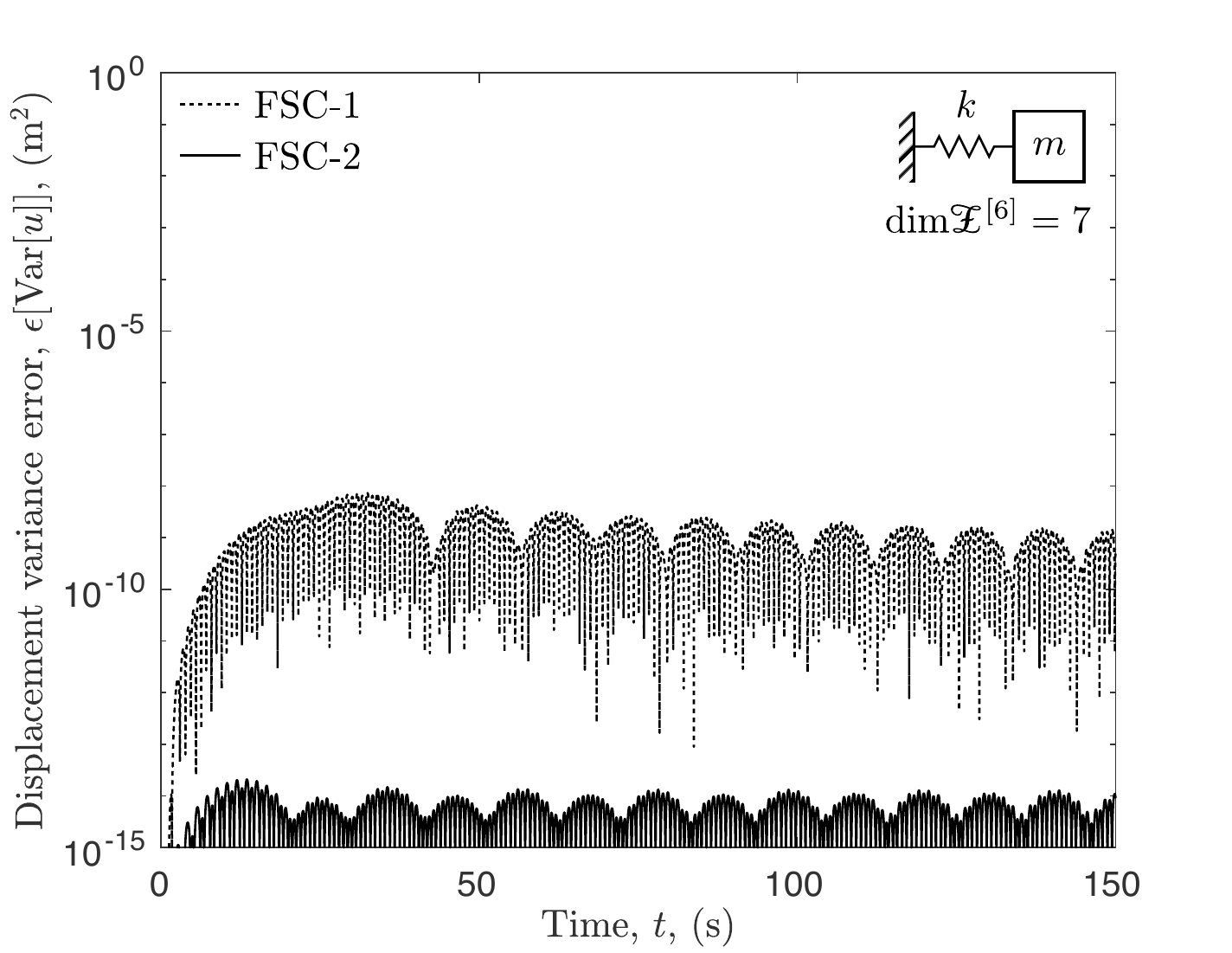}
\caption{Variance error for $\mathscr{Z}^{[6]}$}
\label{fig2System21_Uniform_FSC_Disp_Var_7_Error}
\end{subfigure}
\caption{\emph{Problem 2} --- Local error evolution of $\mathbf{E}[u]$ and $\mathrm{Var}[u]$ for different $p$-discretization levels of RFS and for $\mu\sim\mathrm{Uniform}$}
\label{fig2System21_Uniform_FSC_Disp_Error}
\end{figure}

% SYSTEM-22 [2/3]
\begin{figure}
\centering
\begin{subfigure}[b]{0.495\textwidth}
\includegraphics[width=\textwidth]{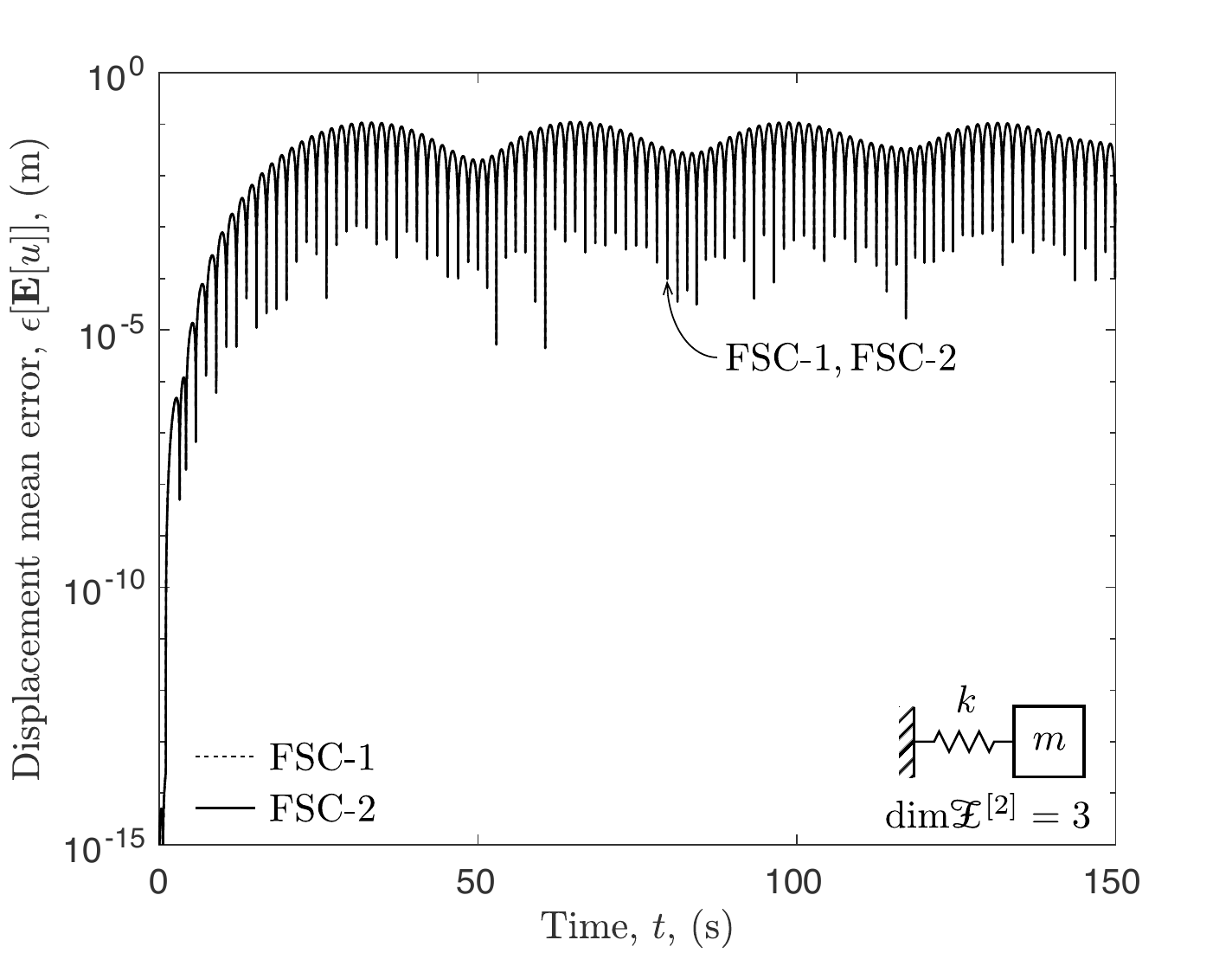}
\caption{Mean error for $\mathscr{Z}^{[2]}$}
\label{fig2System22_UniformUniform_FSC_Disp_Mean_3_Error}
\end{subfigure}\hfill
\begin{subfigure}[b]{0.495\textwidth}
\includegraphics[width=\textwidth]{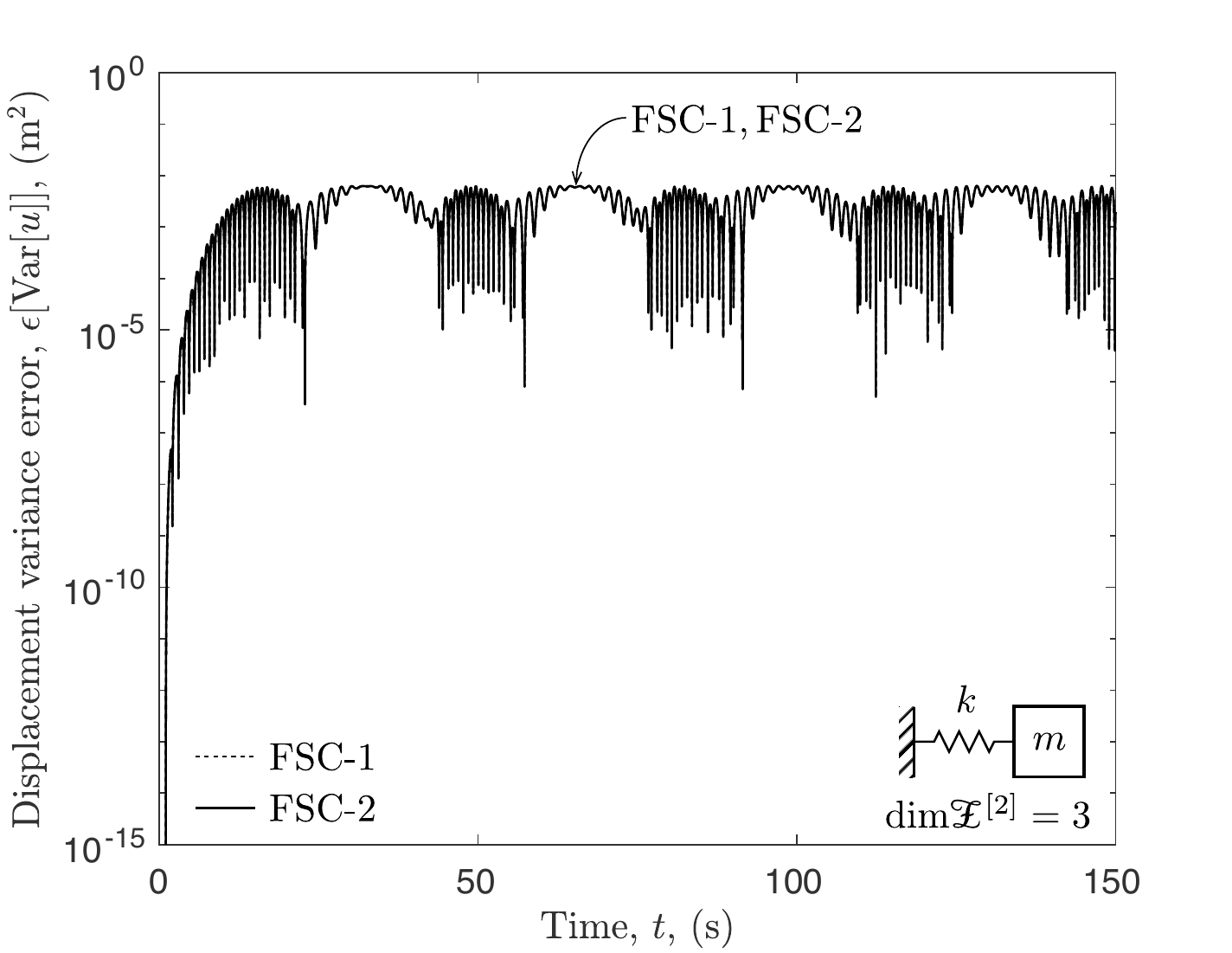}
\caption{Variance error for $\mathscr{Z}^{[2]}$}
\label{fig2System22_UniformUniform_FSC_Disp_Var_3_Error}
\end{subfigure}\quad
\begin{subfigure}[b]{0.495\textwidth}
\includegraphics[width=\textwidth]{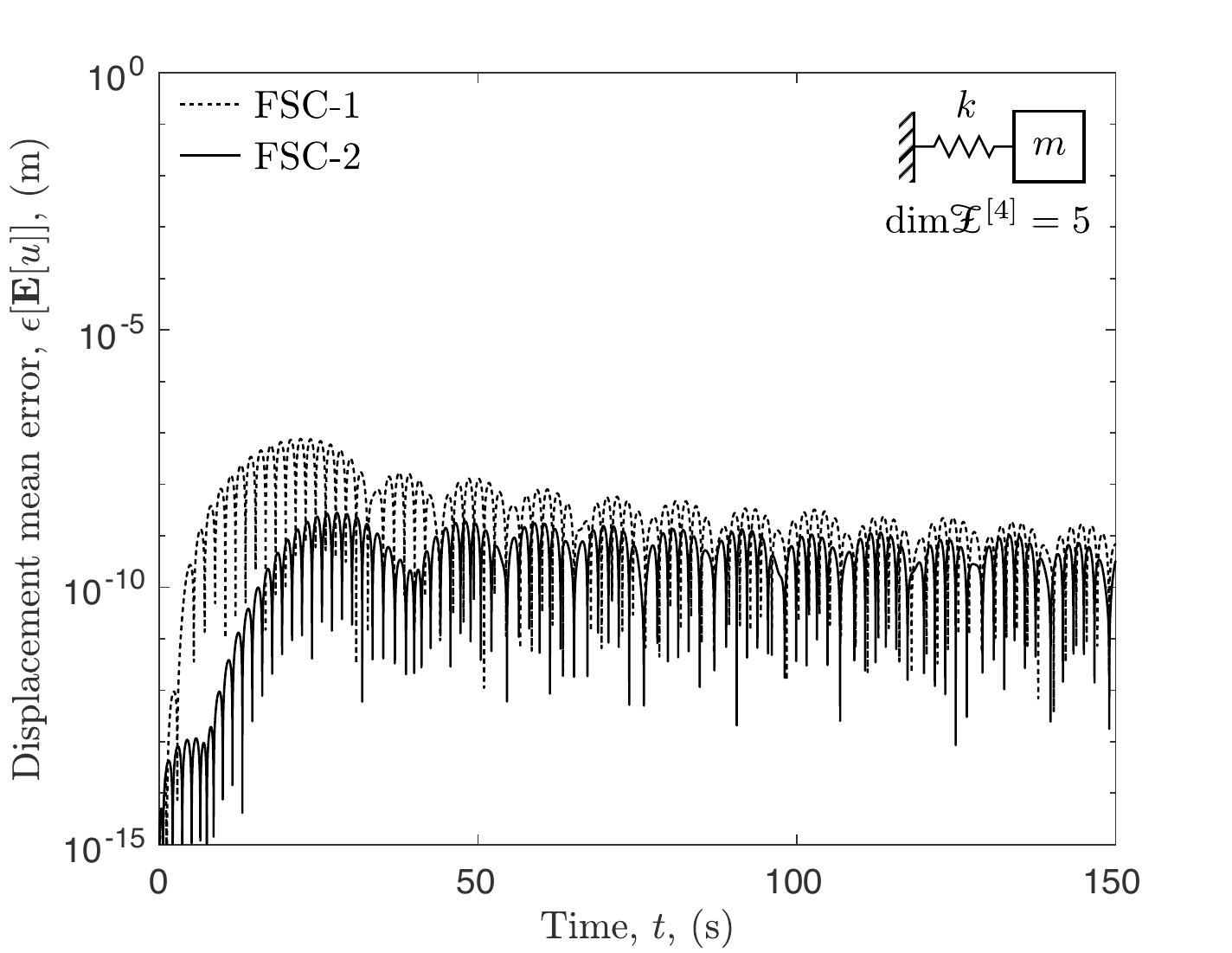}
\caption{Mean error for $\mathscr{Z}^{[4]}$}
\label{fig2System22_UniformUniform_FSC_Disp_Mean_5_Error}
\end{subfigure}\hfill
\begin{subfigure}[b]{0.495\textwidth}
\includegraphics[width=\textwidth]{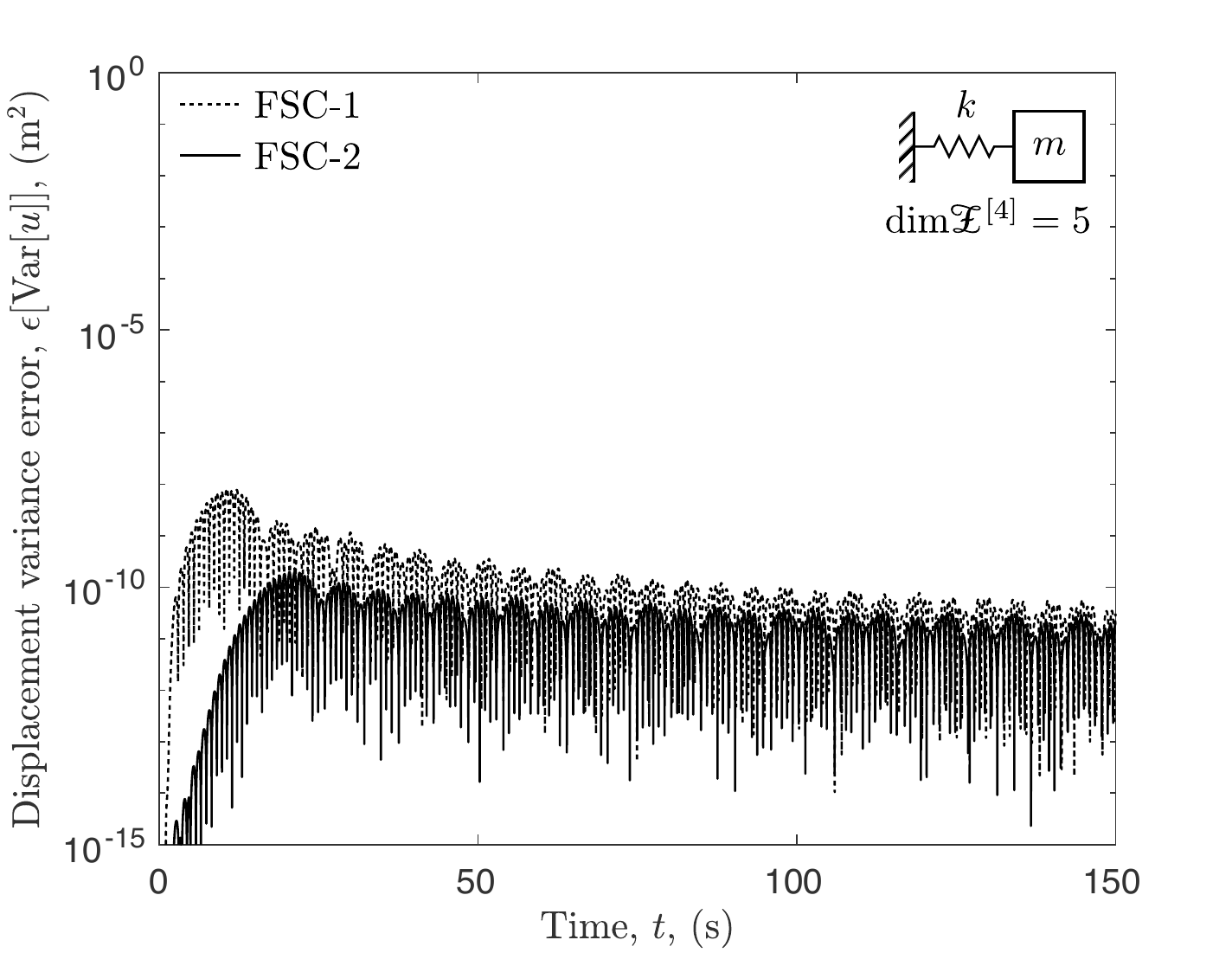}
\caption{Variance error for $\mathscr{Z}^{[4]}$}
\label{fig2System22_UniformUniform_FSC_Disp_Var_5_Error}
\end{subfigure}\quad
\begin{subfigure}[b]{0.495\textwidth}
\includegraphics[width=\textwidth]{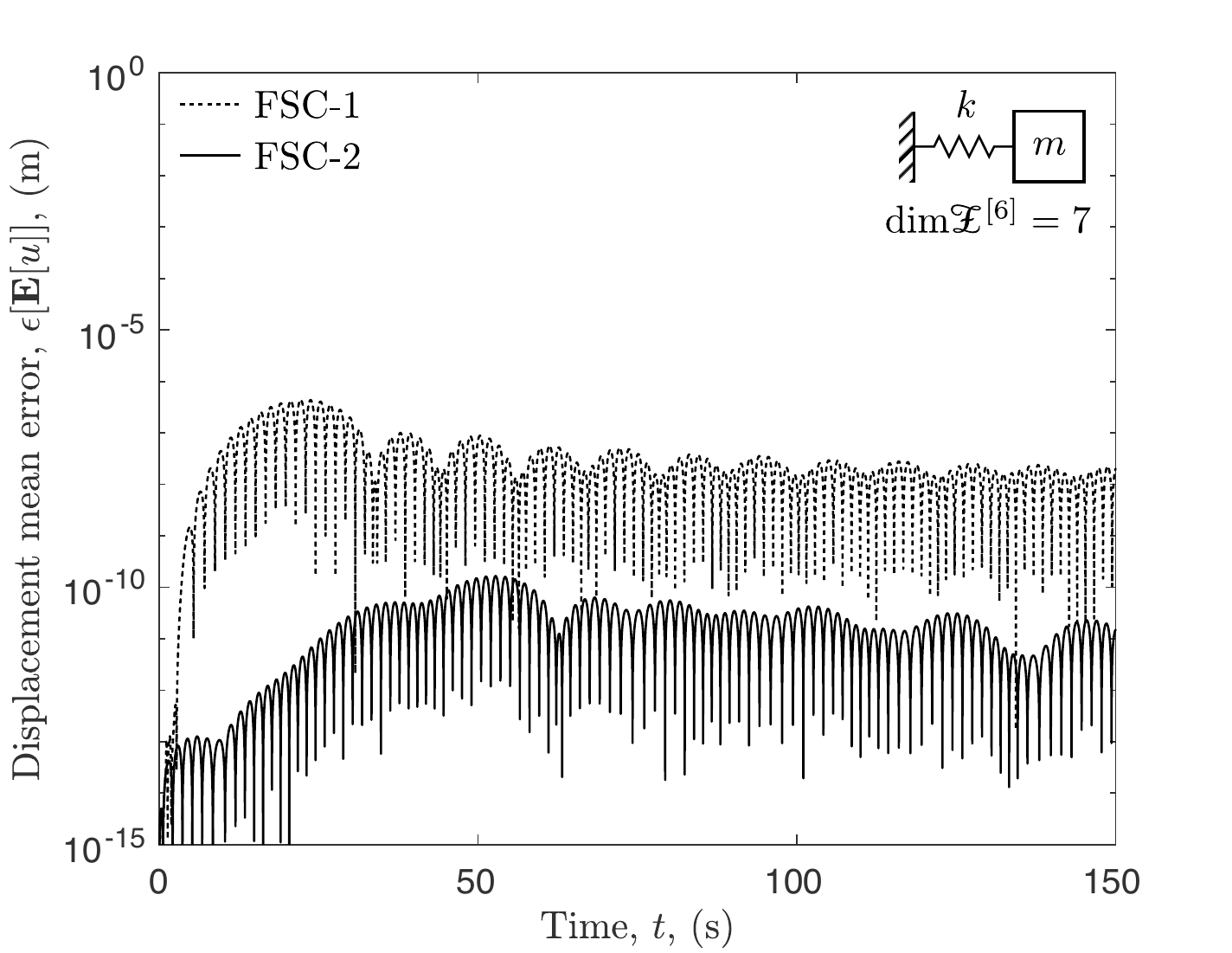}
\caption{Mean error for $\mathscr{Z}^{[6]}$}
\label{fig2System22_UniformUniform_FSC_Disp_Mean_7_Error}
\end{subfigure}\hfill
\begin{subfigure}[b]{0.495\textwidth}
\includegraphics[width=\textwidth]{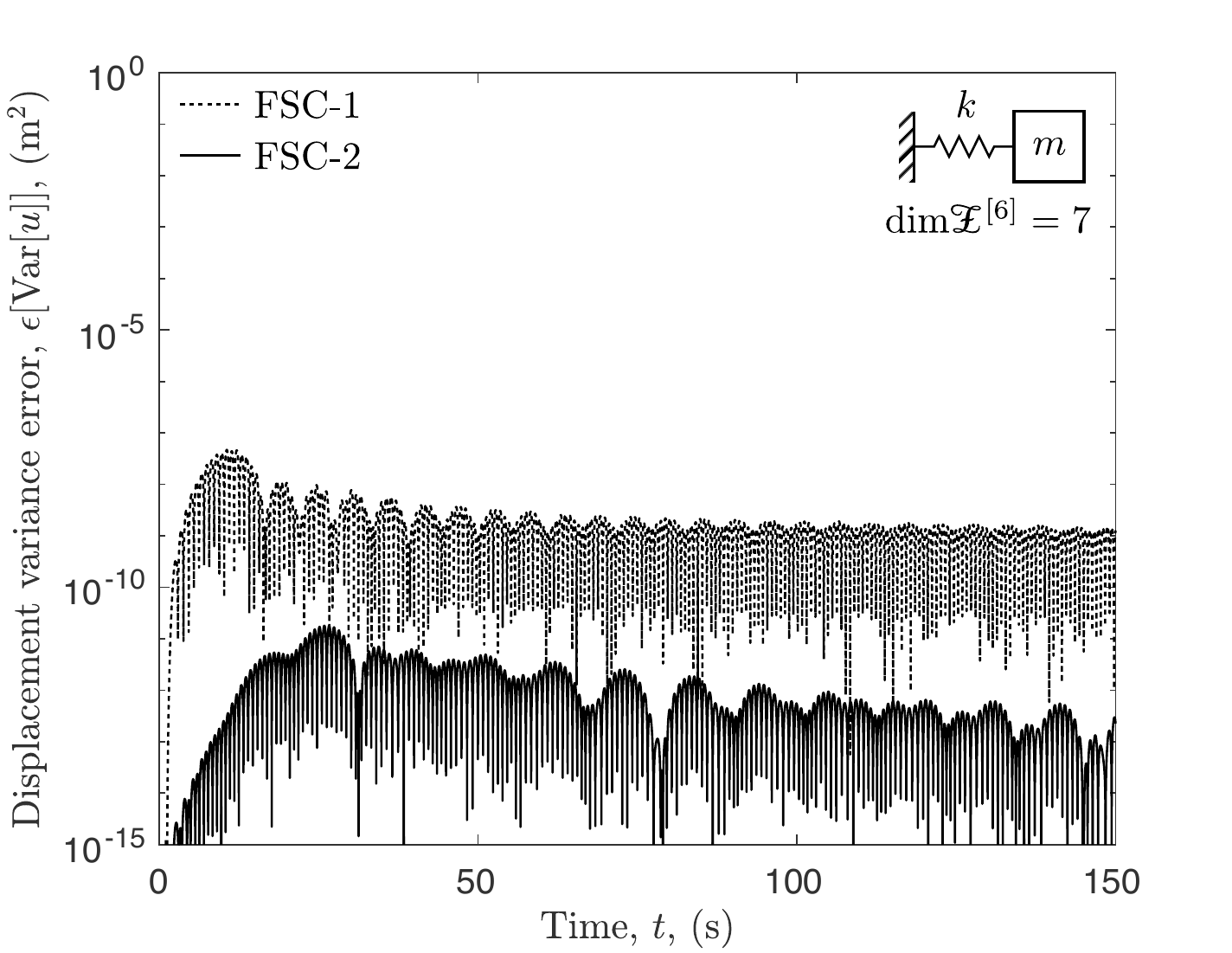}
\caption{Variance error for $\mathscr{Z}^{[6]}$}
\label{fig2System22_UniformUniform_FSC_Disp_Var_7_Error}
\end{subfigure}
\caption{\emph{Problem 3} --- Local error evolution of $\mathbf{E}[u]$ and $\mathrm{Var}[u]$ for different $p$-discretization levels of RFS and for $\mu\sim\mathrm{Uniform}\otimes\mathrm{Uniform}$}
\label{fig2System22_UniformUniform_FSC_Disp_Error}
\end{figure}

Figs.~\ref{fig2System1_FSC_Vel_GlobalError} to \ref{fig2System4_FSC_Jerk_GlobalError} present the convergence of global errors as a function of the number of basis vectors.
Included in these figures are the cases when the random parameters take different probability distributions as specified in Sections \ref{sec2NumExa10} to \ref{sec2NumExa50}.
In general, exponential convergence to the solution is achieved when $n+1$, $n+2$ and $n+3$ basis vectors are used.
However, in the case of using $n+4$ basis vectors, the accuracy of the results does not improve noticeably for FSC-1 but it does for FSC-2.
In fact, a significant difference between the two approaches can be discerned after using $n+4$ basis vectors.
Moreover, as it can be deduced from the results, that there is no gain in employing $n+5$ basis vectors in the simulations because it does not lead to an increase in the accuracy of the solution.
It is interesting to point out that in all figures, the results from FSC-1 and FSC-2 happen to be indistinguishable from each other whenever $n+1$ or $n+2$ basis vectors are used in the simulations.
This is in contrast to Fig.~\ref{fig2System1_FSC_Vel_GlobalError} which shows that exponential convergence to the solution cannot be attained if the number of basis vectors is increased from $n+2$ to $n+3$ (i.e.~from 3 to 4).
This peculiar result can be explained by noting that the system's response is non-oscillatory, which makes the track of the RFS deviate continuously as the simulation proceeds.
Figs.~\ref{fig2System21_FSC_Disp_GlobalError} and \ref{fig2System22_FSC_Disp_GlobalError} further show that when the probability space is one-dimensional (Problem 2), better results are obtained than when it is two-dimensional (Problem 3).
This is because more quadrature points are needed in Problem 3 to achieve the same level of accuracy.
We also note from Fig.~\ref{fig2System22_FSC_Disp_GlobalError} that albeit the probability space is two-dimensional, a few numbers of basis vectors are needed to obtain accurate results.
Remarkably, using only 5 or 6 basis vectors for FSC-2 already produces a global error of approximately $10^{-10}$.
Finally, Fig.~\ref{fig2System4_FSC_Jerk_GlobalError} indicates that when the response is unbounded (e.g.~when $\xi$ is assumed normally distributed), FSC-2 has the ability to control better the error propagation of the solution as the simulation proceeds.

% SYSTEM-1 [3/3]
\begin{figure}
\centering
\begin{subfigure}[b]{0.495\textwidth}
\includegraphics[width=\textwidth]{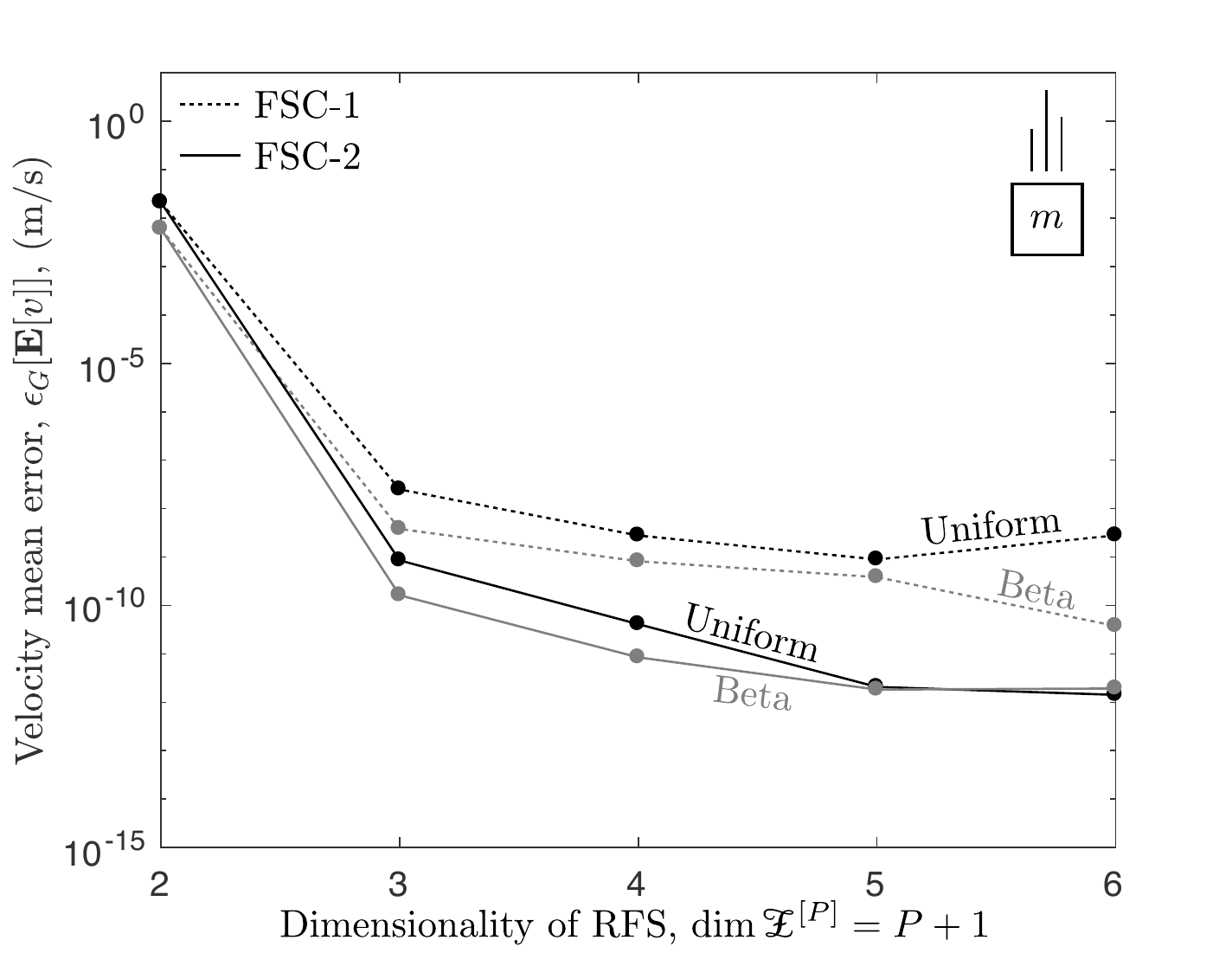}
\caption{Mean error}
\label{fig2System1_FSC_Vel_Mean_GlobalError}
\end{subfigure}\hfill
\begin{subfigure}[b]{0.495\textwidth}
\includegraphics[width=\textwidth]{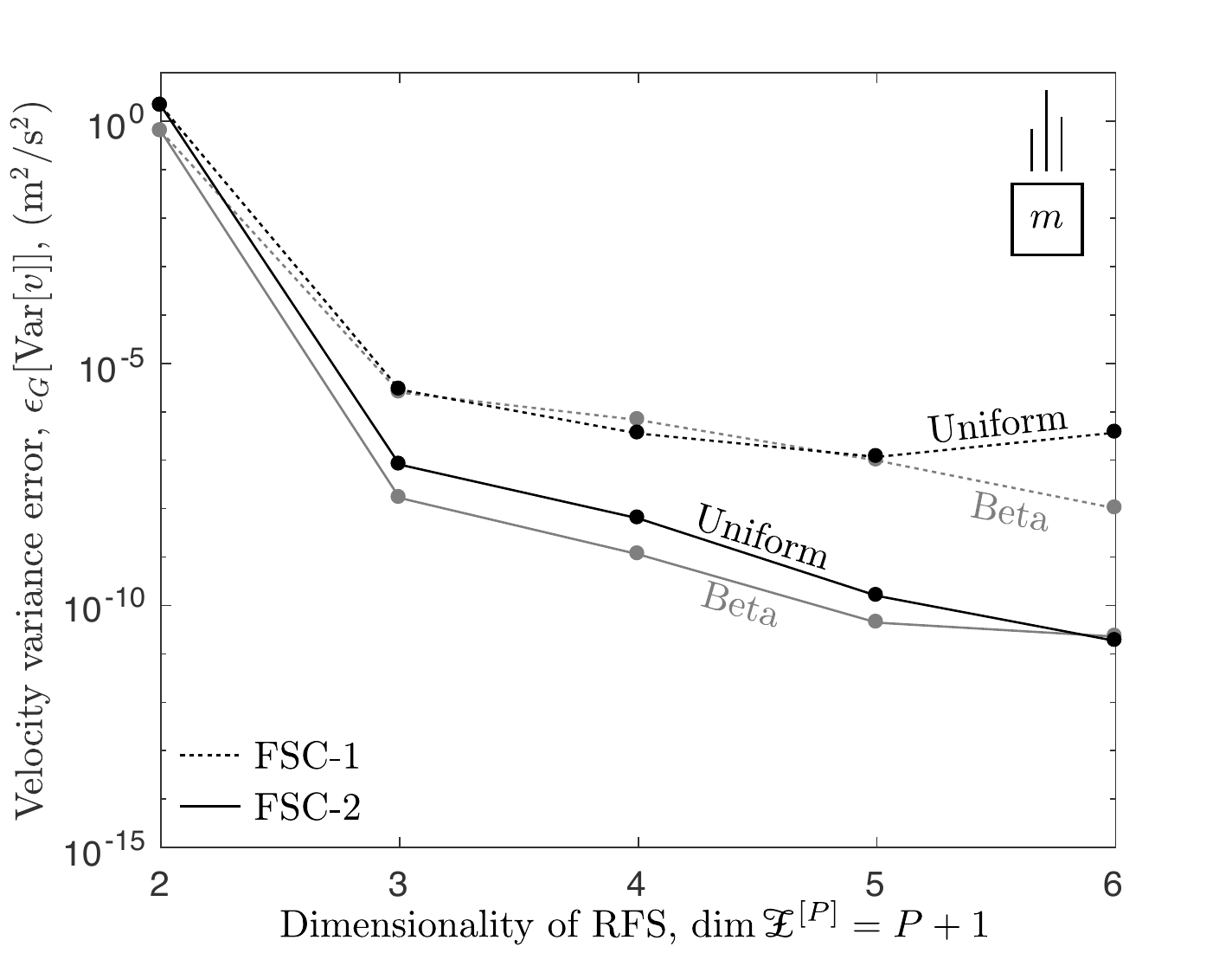}
\caption{Variance error}
\label{fig2System1_FSC_Vel_Var_GlobalError}
\end{subfigure}
\caption{\emph{Problem 1} --- Global error of $\mathbf{E}[v]$ and $\mathrm{Var}[v]$ for different $p$-discretization levels of RFS}
\label{fig2System1_FSC_Vel_GlobalError}
\end{figure}

% SYSTEM-21 [3/6]
\begin{figure}
\centering
\begin{subfigure}[b]{0.495\textwidth}
\includegraphics[width=\textwidth]{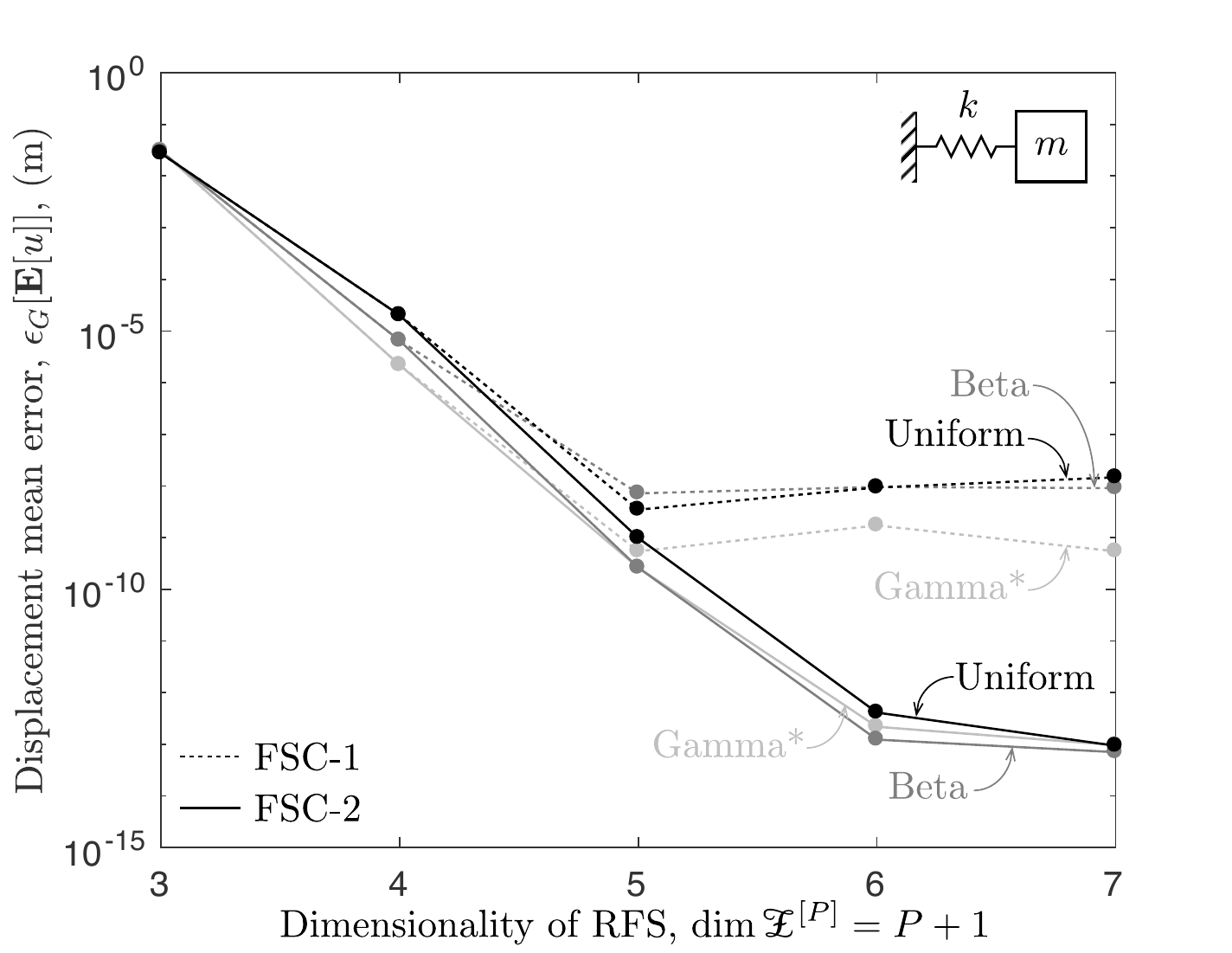}
\caption{Mean error}
\label{fig2System21_FSC_Disp_Mean_GlobalError}
\end{subfigure}\hfill
\begin{subfigure}[b]{0.495\textwidth}
\includegraphics[width=\textwidth]{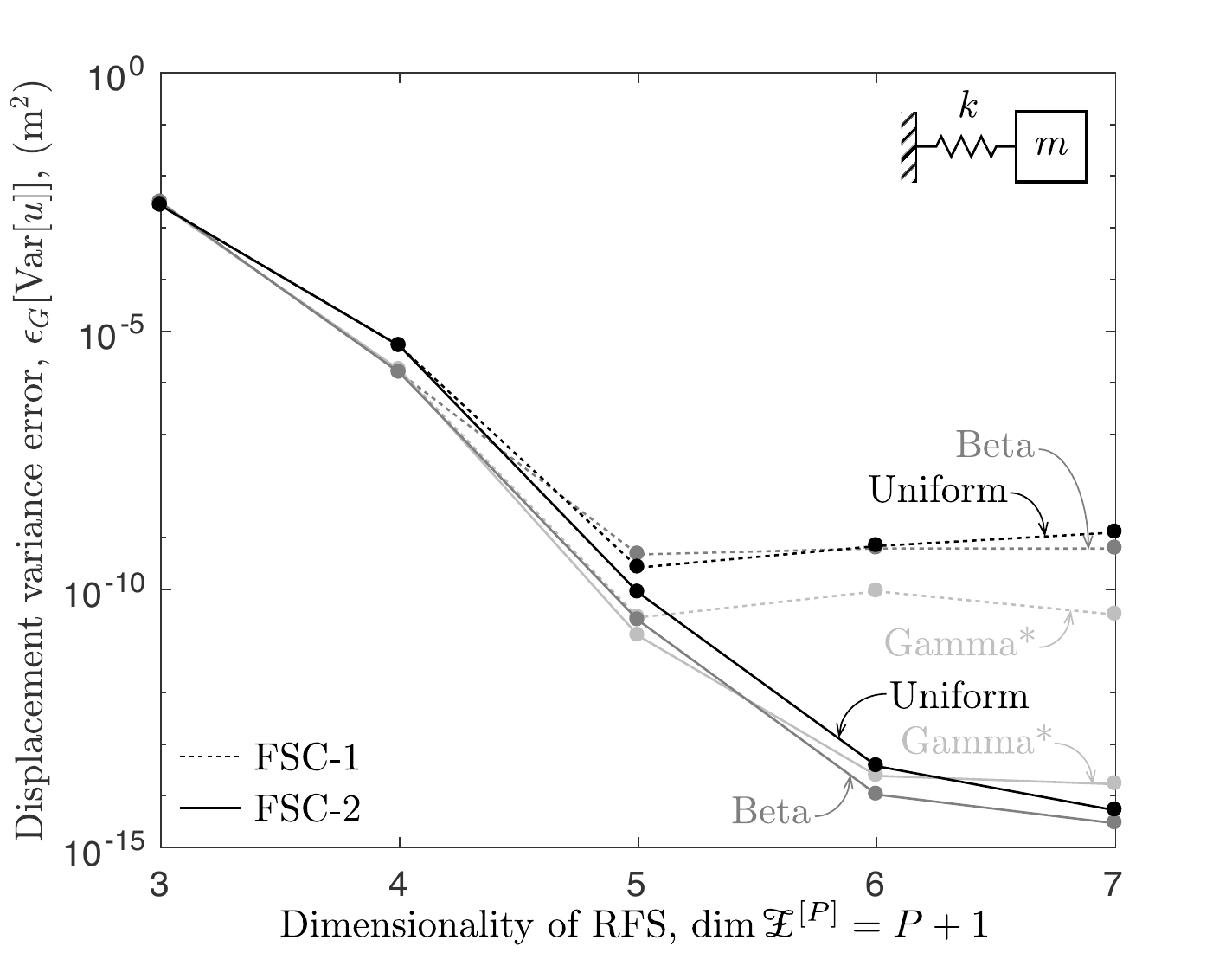}
\caption{Variance error}
\label{fig2System21_FSC_Disp_Var_GlobalError}
\end{subfigure}
\caption{\emph{Problem 2} --- Global error of $\mathbf{E}[u]$ and $\mathrm{Var}[u]$ for different $p$-discretization levels of RFS}
\label{fig2System21_FSC_Disp_GlobalError}
\end{figure}

% SYSTEM-22 [3/3]
\begin{figure}
\centering
\begin{subfigure}[b]{0.495\textwidth}
\includegraphics[width=\textwidth]{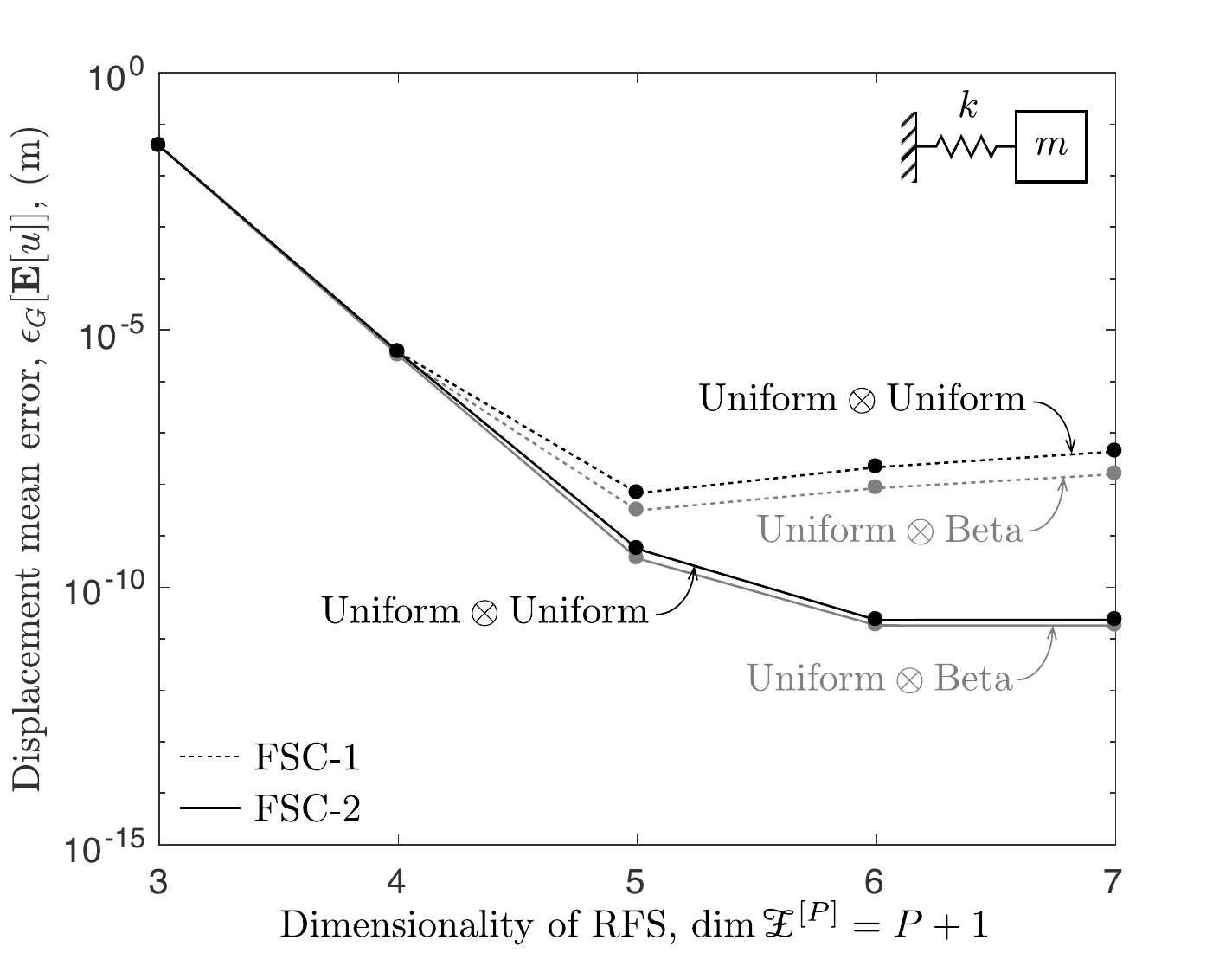}
\caption{Mean error}
\label{fig2System22_FSC_Disp_Mean_GlobalError}
\end{subfigure}\hfill
\begin{subfigure}[b]{0.495\textwidth}
\includegraphics[width=\textwidth]{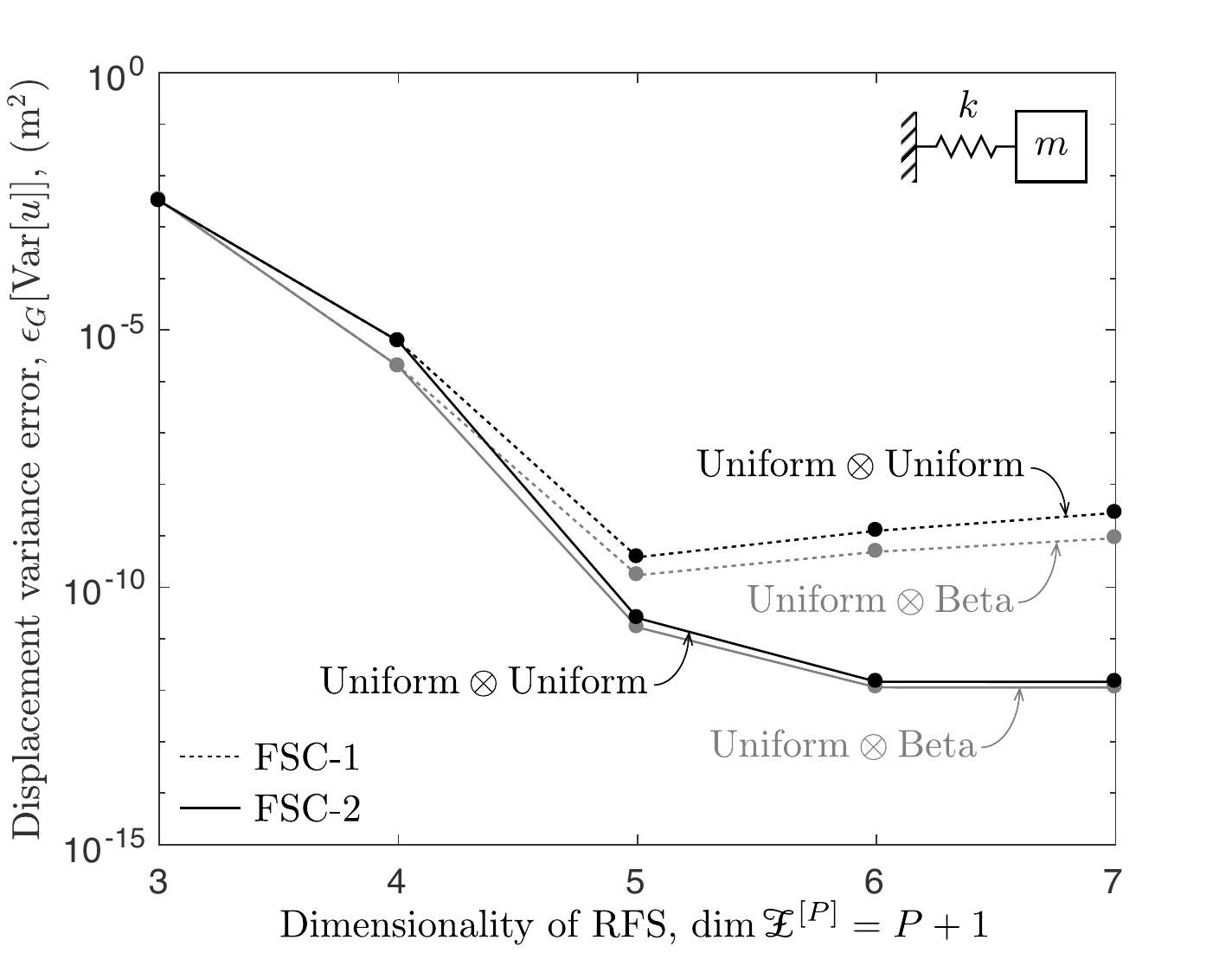}
\caption{Variance error}
\label{fig2System22_FSC_Disp_Var_GlobalError}
\end{subfigure}
\caption{\emph{Problem 3} --- Global error of $\mathbf{E}[u]$ and $\mathrm{Var}[u]$ for different $p$-discretization levels of RFS}
\label{fig2System22_FSC_Disp_GlobalError}
\end{figure}

% SYSTEM-3 [2/2]
\begin{figure}
\centering
\begin{subfigure}[b]{0.495\textwidth}
\includegraphics[width=\textwidth]{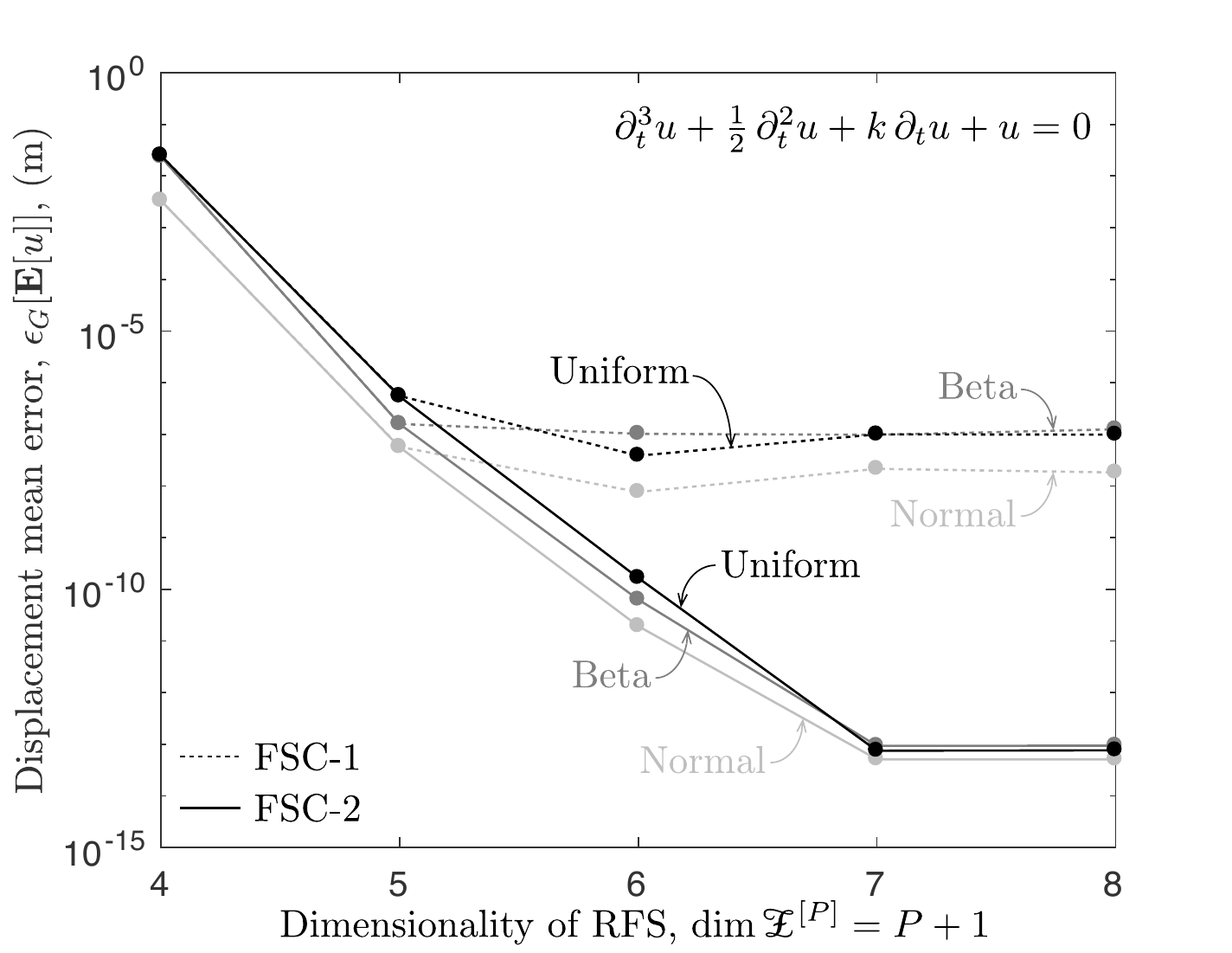}
\caption{Mean error}
\label{fig2System3_FSC_Disp_Mean_GlobalError}
\end{subfigure}\hfill
\begin{subfigure}[b]{0.495\textwidth}
\includegraphics[width=\textwidth]{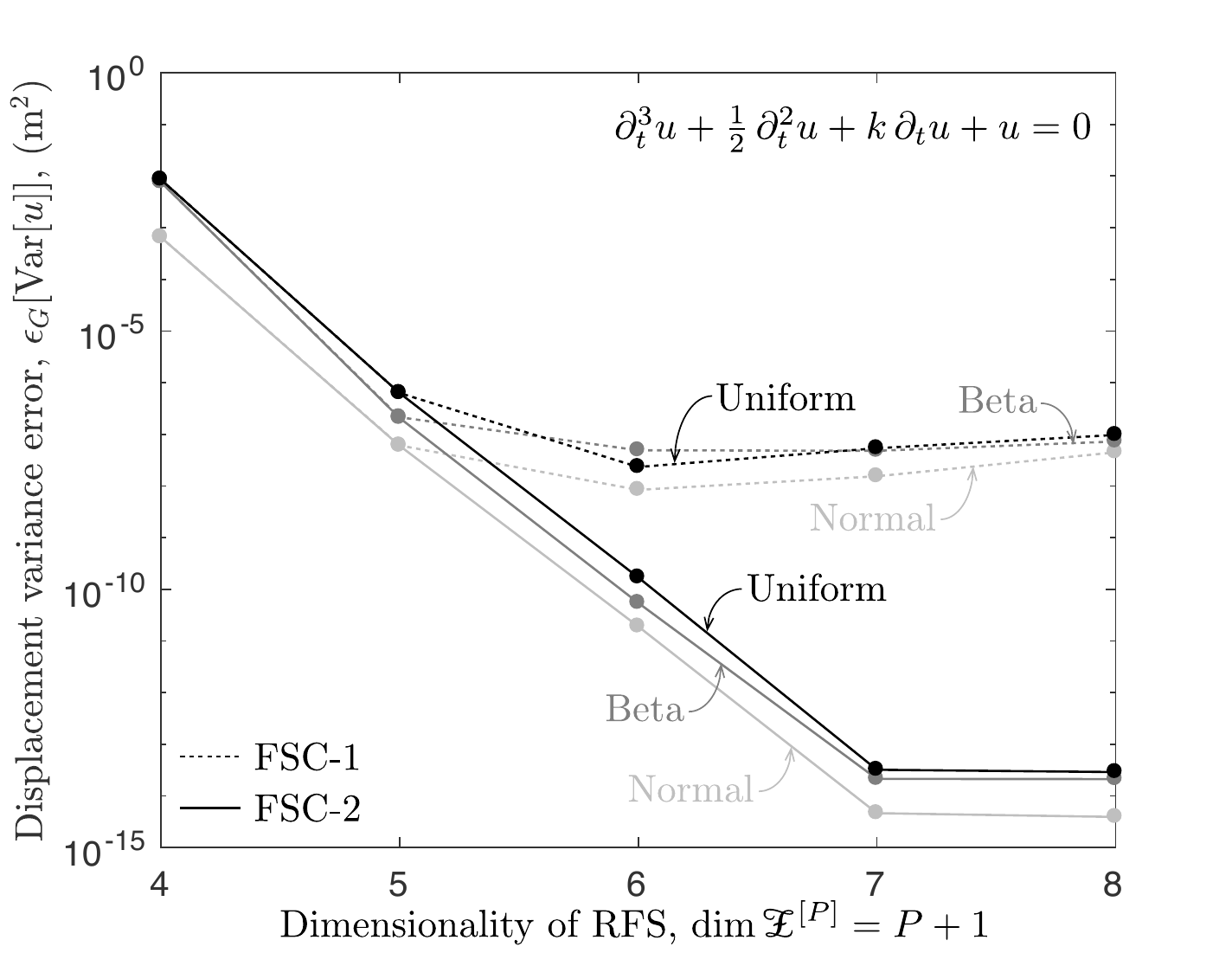}
\caption{Variance error}
\label{fig2System3_FSC_Disp_Var_GlobalError}
\end{subfigure}
\caption{\emph{Problem 4} --- Global error of $\mathbf{E}[u]$ and $\mathrm{Var}[u]$ for different $p$-discretization levels of RFS}
\label{fig2System3_FSC_Disp_GlobalError}
\end{figure}

% SYSTEM-4 [2/2]
\begin{figure}
\centering
\begin{subfigure}[b]{0.495\textwidth}
\includegraphics[width=\textwidth]{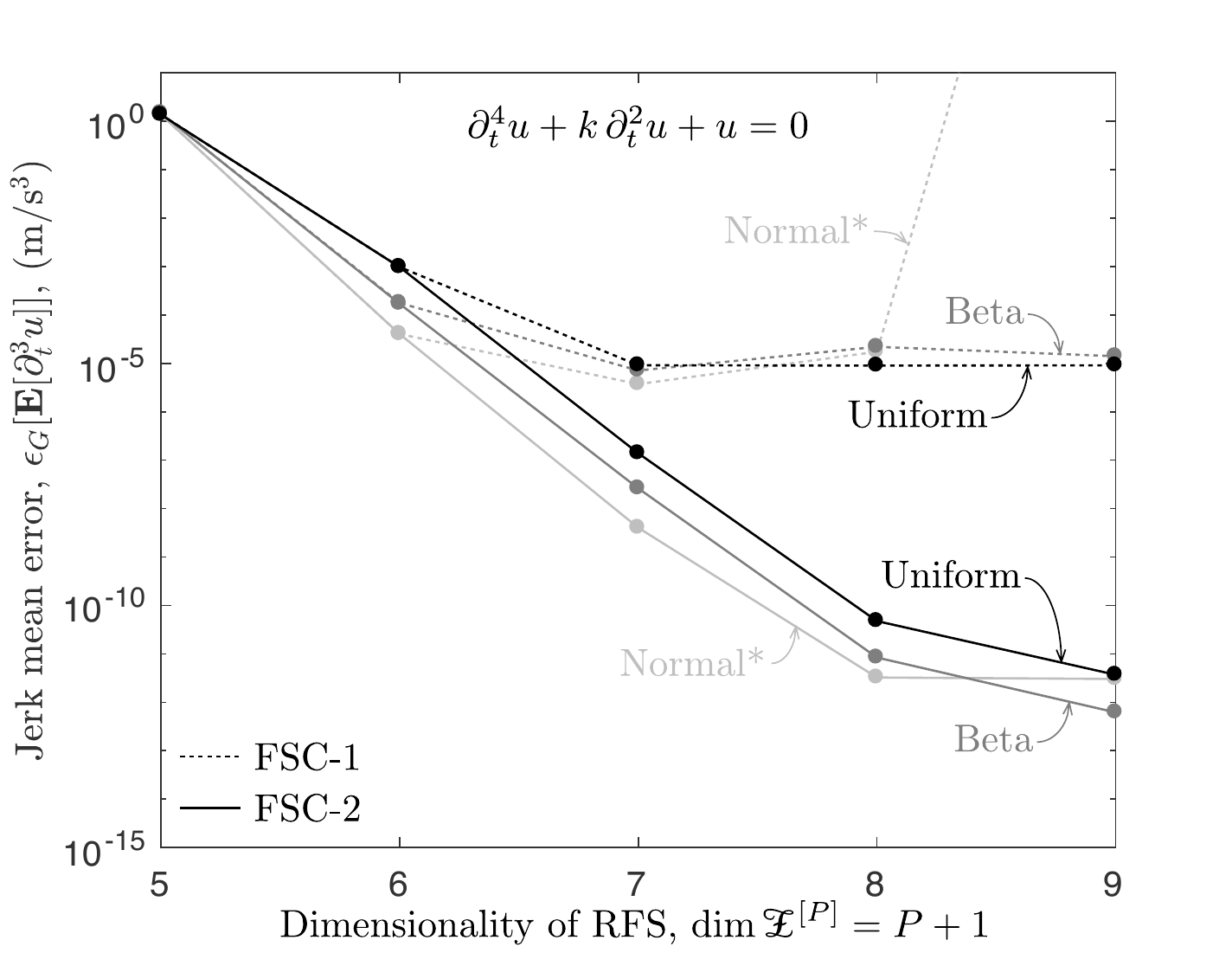}
\caption{Mean error}
\label{fig2System4_FSC_Jerk_Mean_GlobalError}
\end{subfigure}\hfill
\begin{subfigure}[b]{0.495\textwidth}
\includegraphics[width=\textwidth]{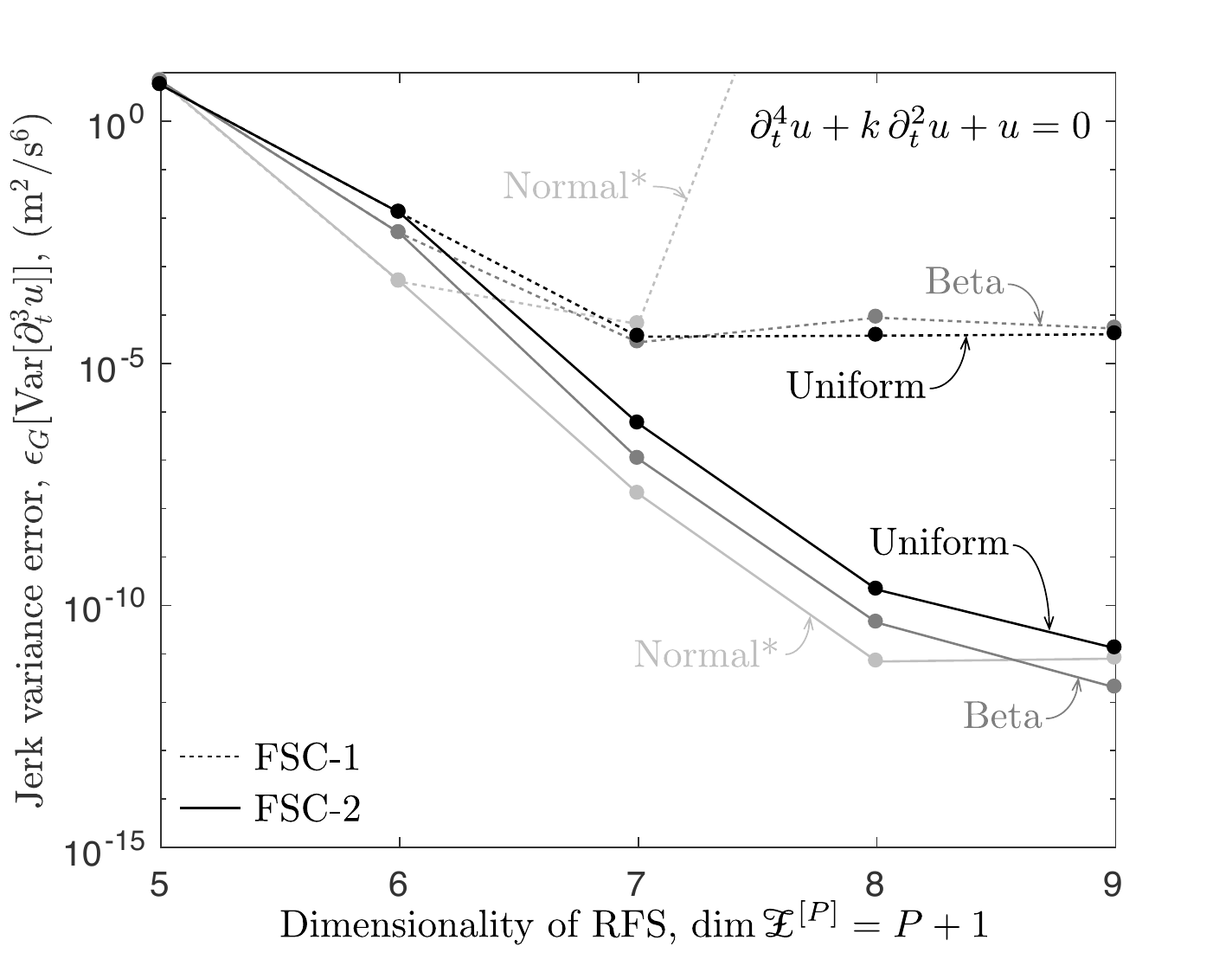}
\caption{Variance error}
\label{fig2System4_FSC_Jerk_Var_GlobalError}
\end{subfigure}
\caption{\emph{Problem 5} --- Global error of $\mathbf{E}[\partial_t^3u]$ and $\mathrm{Var}[\partial_t^3u]$ for different $p$-discretization levels of RFS}
\label{fig2System4_FSC_Jerk_GlobalError}
\end{figure}

Fig.~\ref{fig2System21_Uniform_FSC_Disp_GlobalError} presents the convergence of global errors for Problem 2 as a function of the number of basis vectors and for different time-step sizes.
The goal of this figure is to show the implications of increasing the time-step size used by default ($\Delta t=0.001$ s) in regard to the accuracy of the results.
Though here we only depict the case when $\xi$ is uniformly distributed, similar trends are obtained when other distributions are used.
In particular, this figure shows that the discretization of the temporal function space plays an important role when it comes to the FSC-2 approach but not when it comes to the FSC-1 approach.
The reason for this is that in FSC-1 the errors coming from the RFS discretization are substantially larger than those coming from the discretization of the temporal function space.
This leads to the perception that decreasing the time-step size in FSC-1 does not have a direct effect on improving the accuracy of the results derived from the RFS discretization.
Furthermore, we observe that more accurate results are obtained with FSC-2 if the time-step size used for the simulation is progressively decreased.

% SYSTEM-21 [4/6]
\begin{figure}
\centering
\begin{subfigure}[b]{0.495\textwidth}
\includegraphics[width=\textwidth]{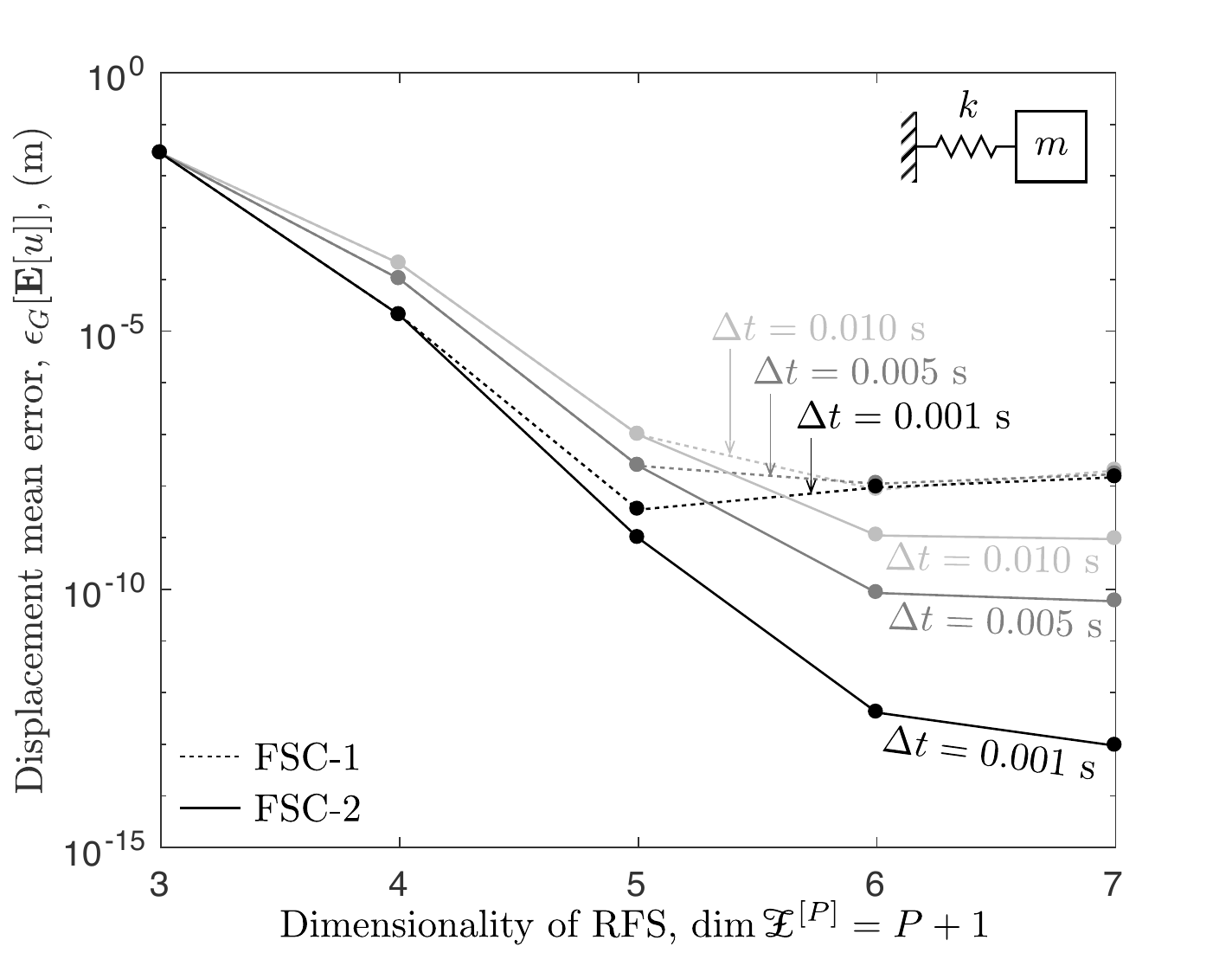}
\caption{Mean error}
\label{fig2System21_Uniform_FSC_Disp_Mean_GlobalError}
\end{subfigure}\hfill
\begin{subfigure}[b]{0.495\textwidth}
\includegraphics[width=\textwidth]{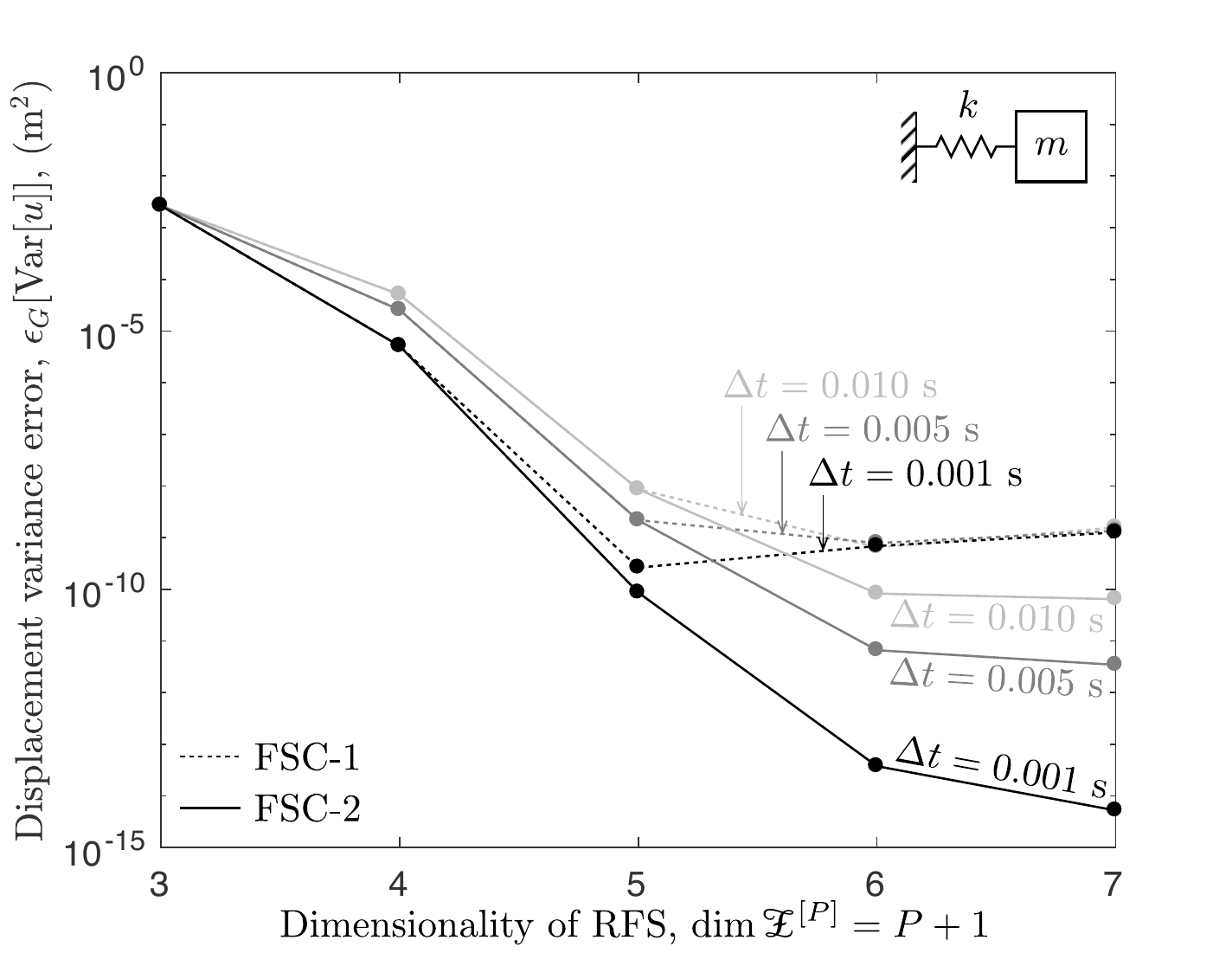}
\caption{Variance error}
\label{fig2System21_Uniform_FSC_Disp_Var_GlobalError}
\end{subfigure}
\caption{\emph{Problem 2} --- Global error of $\mathbf{E}[u]$ and $\mathrm{Var}[u]$ for different $p$-discretization levels of RFS and time-step sizes ($\mu\sim\mathrm{Uniform}$)}
\label{fig2System21_Uniform_FSC_Disp_GlobalError}
\end{figure}

Fig.~\ref{fig2System21_Uniform_FSC_Disp_GlobalError_QuadPoints} depicts the convergence of global errors for Problem 2 as a function of the number of quadrature points utilized to estimate the inner products that appear when using the spectral approach.
We see that when 6 basis vectors are used to perform the simulation, FSC-2 produces a more accurate result than FSC-1 if the number of quadrature points is sufficiently large.
However, no discernible differences between the two FSC approaches are observed if 4 basis vectors are used.
In fact, when 10 quadrature points (or even 20 quadrature points for the case of the variance) are utilized, the global error does not improve as a function of the number of basis vectors used or the FSC approach chosen.
This is because the global error is in this case dominated by the error coming from the Gaussian quadrature rule rather than by the errors coming from the discretization of the random function space and the transfer of the probability information.

% SYSTEM-21 [5/6]
\begin{figure}
\centering
\begin{subfigure}[b]{0.495\textwidth}
\includegraphics[width=\textwidth]{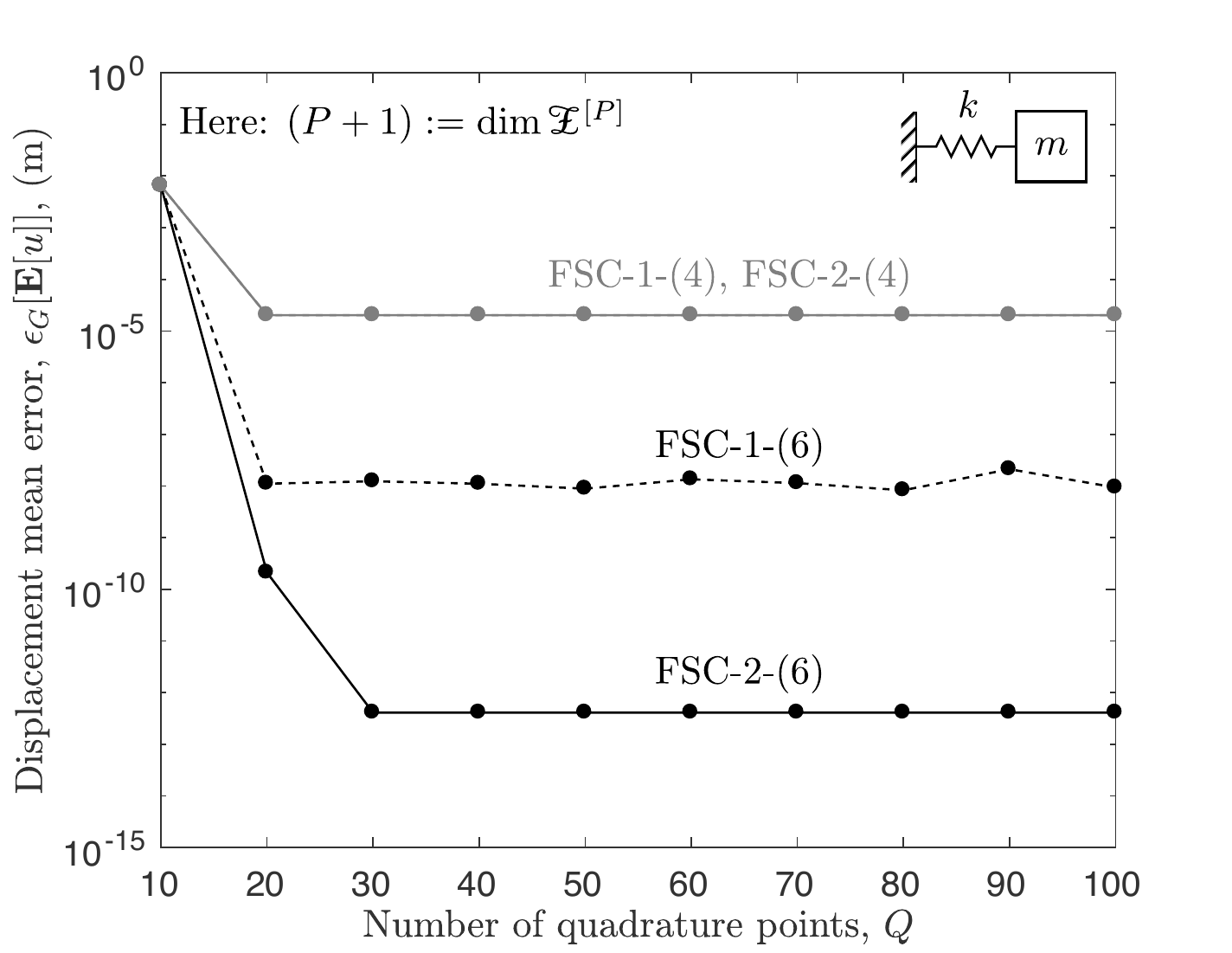}
\caption{Mean error}
\label{fig2System21_Uniform_FSC_Disp_Mean_GlobalError_QuadPoints}
\end{subfigure}\hfill
\begin{subfigure}[b]{0.495\textwidth}
\includegraphics[width=\textwidth]{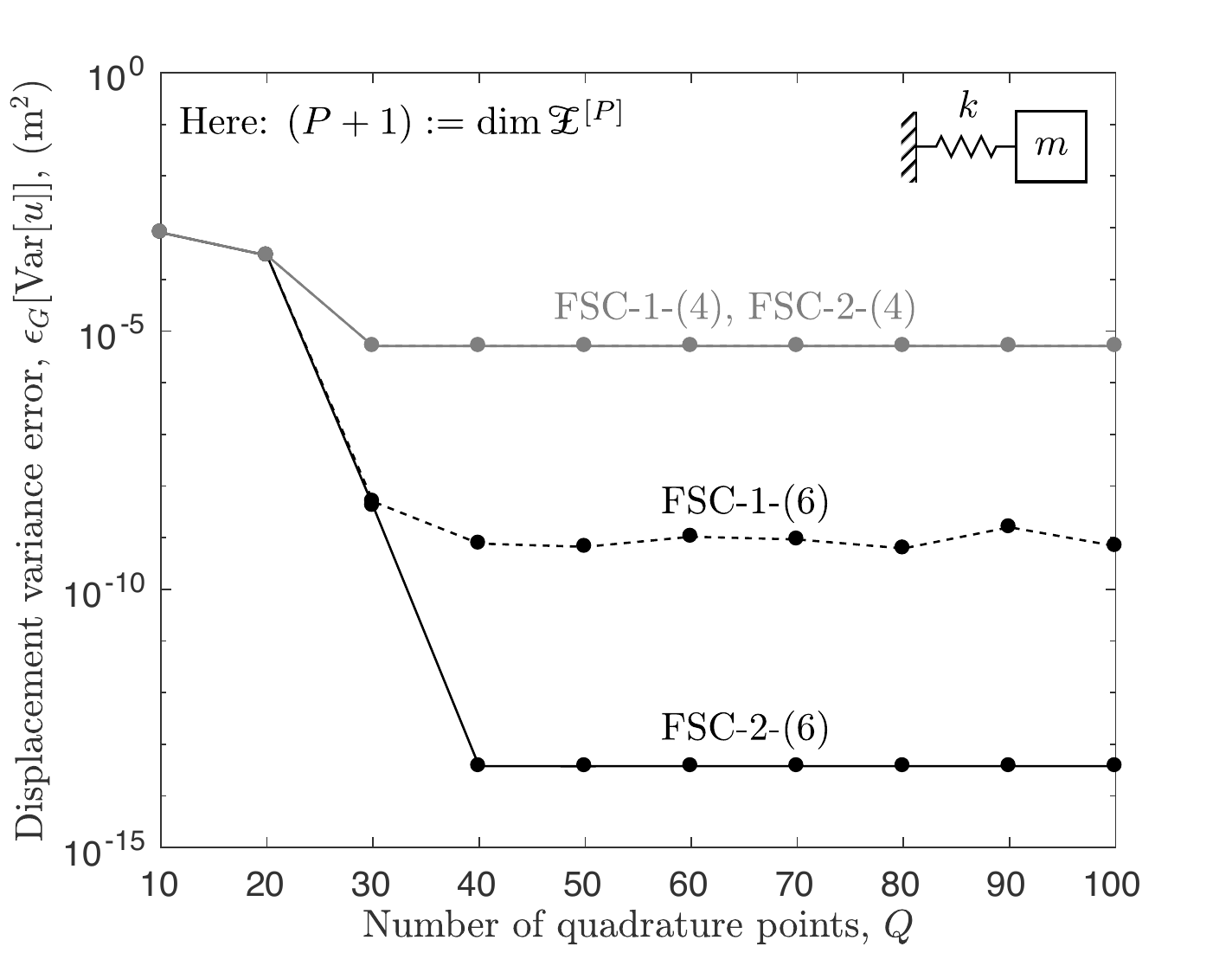}
\caption{Variance error}
\label{fig2System21_Uniform_FSC_Disp_Var_GlobalError_QuadPoints}
\end{subfigure}
\caption{\emph{Problem 2} --- Global error of $\mathbf{E}[u]$ and $\mathrm{Var}[u]$ as a function of the number of quadrature points employed to estimate the inner products ($\mu\sim\mathrm{Uniform}$)}
\label{fig2System21_Uniform_FSC_Disp_GlobalError_QuadPoints}
\end{figure}

Finally, Fig.~\ref{fig2System21_Uniform_ComputationalCost_Disp} plots the global errors as a function of the computational cost associated with FSC-1, FSC-2 and mTD-gPC.
This cost is expressed here in terms of the wall-clock time taken to complete the simulation.
Labels $P2Q0$, $P2Q1$ and $P2Q2$ are defined in Ref.~\cite{heuveline2014hybrid} (Pg.~45), and they correspond to the cases when 6, 12 and 18 basis vectors are employed in the simulations, respectively.
Some conclusions can be deduced from this figure.
First, with respect to achieving a similar level of error, both FSC approaches run much faster than mTD-gPC.
In particular, if a global error of approximately $10^{-8}$ is desired for the simulation, FSC runs about 6 times faster than mTD-gPC.
This outcome can be explained by noticing that mTD-gPC requires more than twice the number of basis vectors than FSC.
Second, for the same number of basis vectors, the FSC method is able to produce results that are at least 6 orders of magnitude more accurate than mTD-gPC.
For example, if the FSC-2 approach is employed with 6 basis vectors, the results are 11 orders of magnitude more accurate than the mTD-gPC counterpart.
Therefore, the FSC methods are not only superior in terms of computational efficiency than mTD-gPC, but they also have the ability to encode the probability information a lot better as the simulation proceeds.
However, it is important to note that by increasing the number of basis vectors from 5 to 7 for FSC-1 and 12 to 18 for mTD-gPC, the results worsen noticeably by an order of magnitude or so.
This is primarily due to the limited precision of the machine and the fact that the probability information is being transferred in the mean-square sense.
Moreover, Fig.~\ref{fig2System21_Uniform_ComputationalCost_Disp} also reveals that in general FSC-2 runs slightly faster than FSC-1.
In fact, the more basis vectors we use, the higher this difference in speed is.
Finally, we also observe that when 7 basis vectors are used, FSC-2 is 5 orders of magnitude more accurate than FSC-1.

% SYSTEM-21 [6/6]
\begin{figure}
\centering
\begin{subfigure}[b]{0.495\textwidth}
\includegraphics[width=\textwidth]{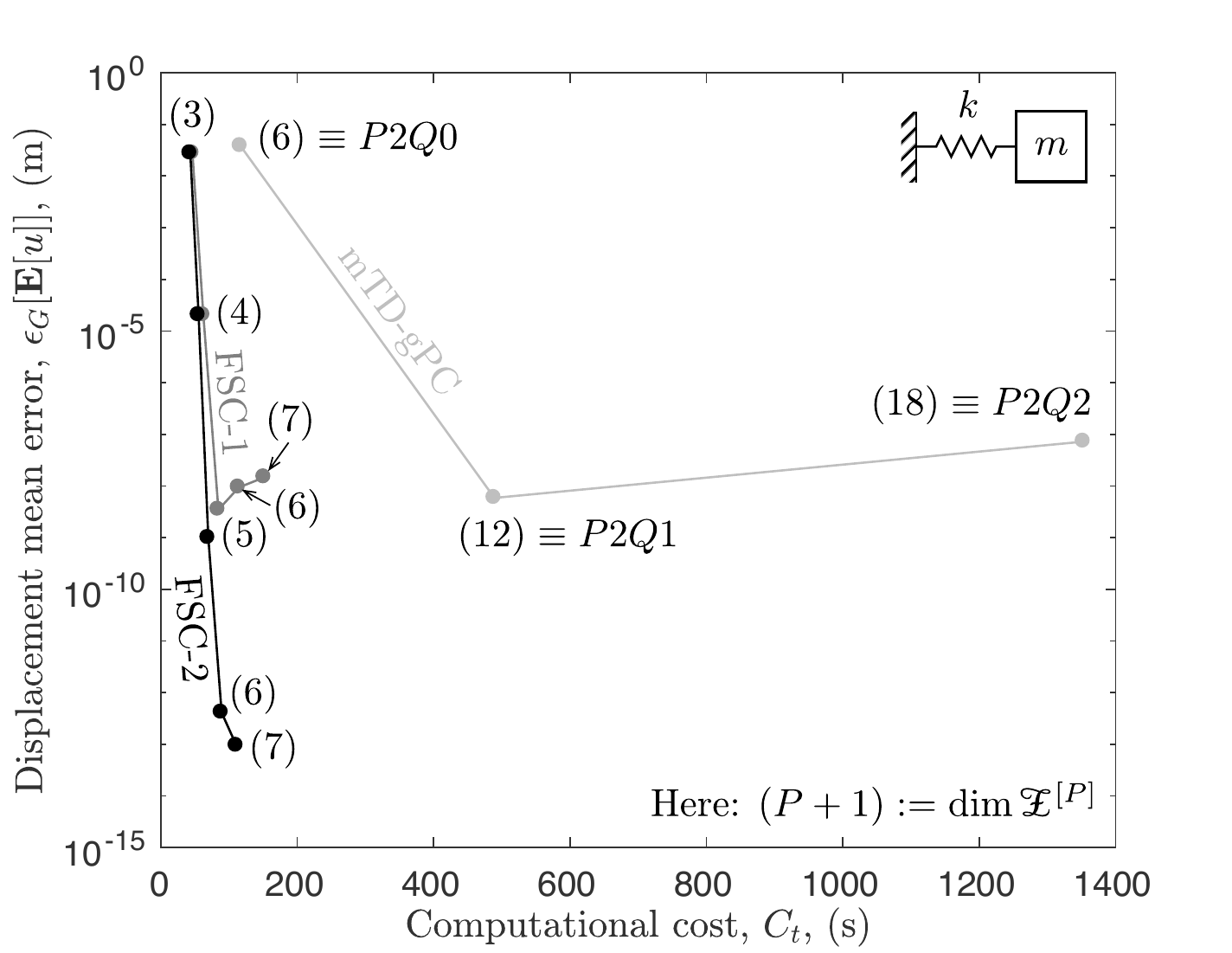}
\caption{Mean error}
\label{fig2System21_Uniform_ComputationalCost_Disp_Mean}
\end{subfigure}\hfill
\begin{subfigure}[b]{0.495\textwidth}
\includegraphics[width=\textwidth]{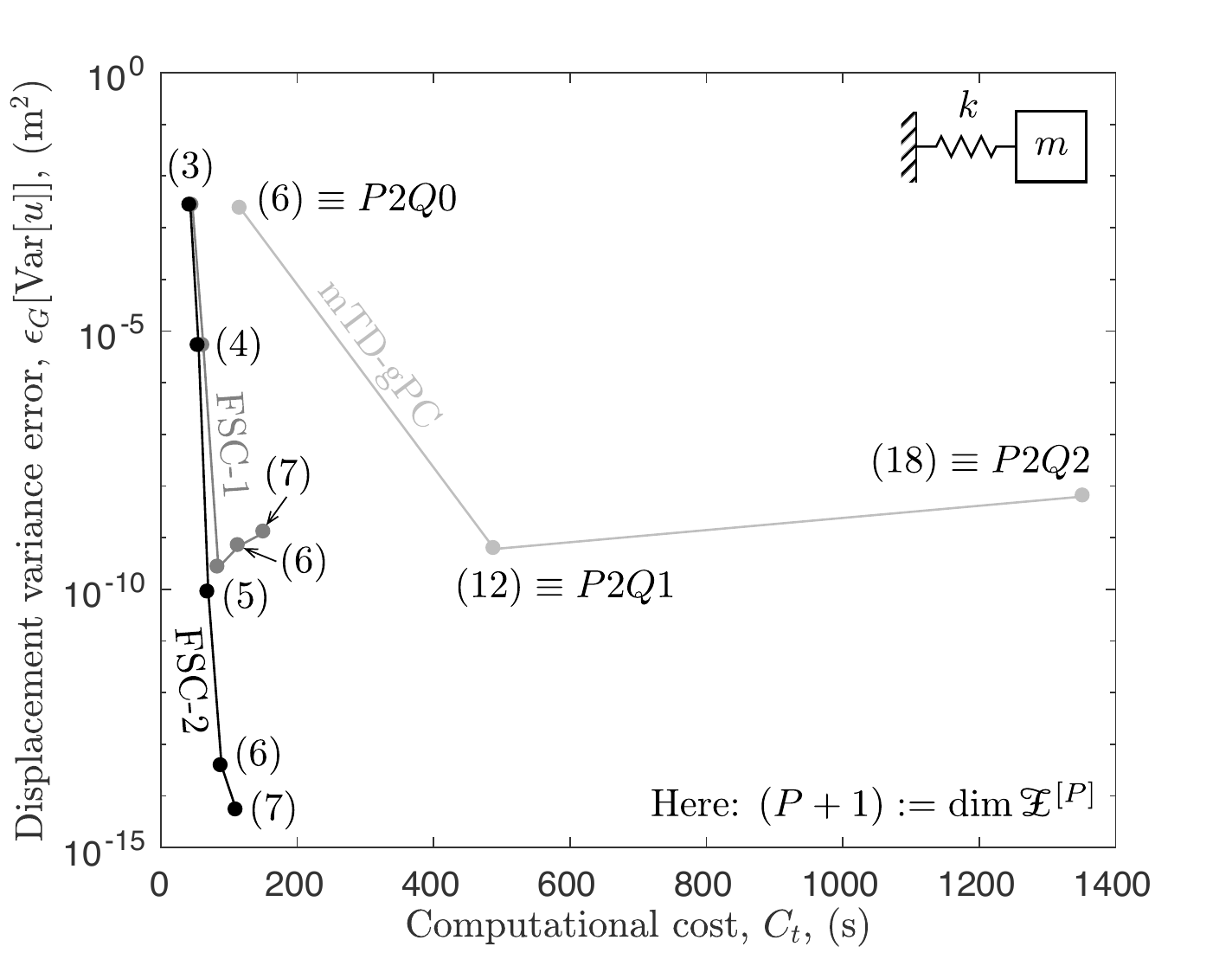}
\caption{Variance error}
\label{fig2System21_Uniform_ComputationalCost_Disp_Var}
\end{subfigure}
\caption{\emph{Problem 2} --- Global error of $\mathbf{E}[u]$ and $\mathrm{Var}[u]$ versus computational cost for different $p$-discretization levels of RFS and for $\mu\sim\mathrm{Uniform}$}
\label{fig2System21_Uniform_ComputationalCost_Disp}
\end{figure}

\subsection{Numerical results for the nonlinear system}

% SYSTEM-5 [1/2]
\begin{figure}
\centering
\begin{subfigure}[b]{0.495\textwidth}
\includegraphics[width=\textwidth]{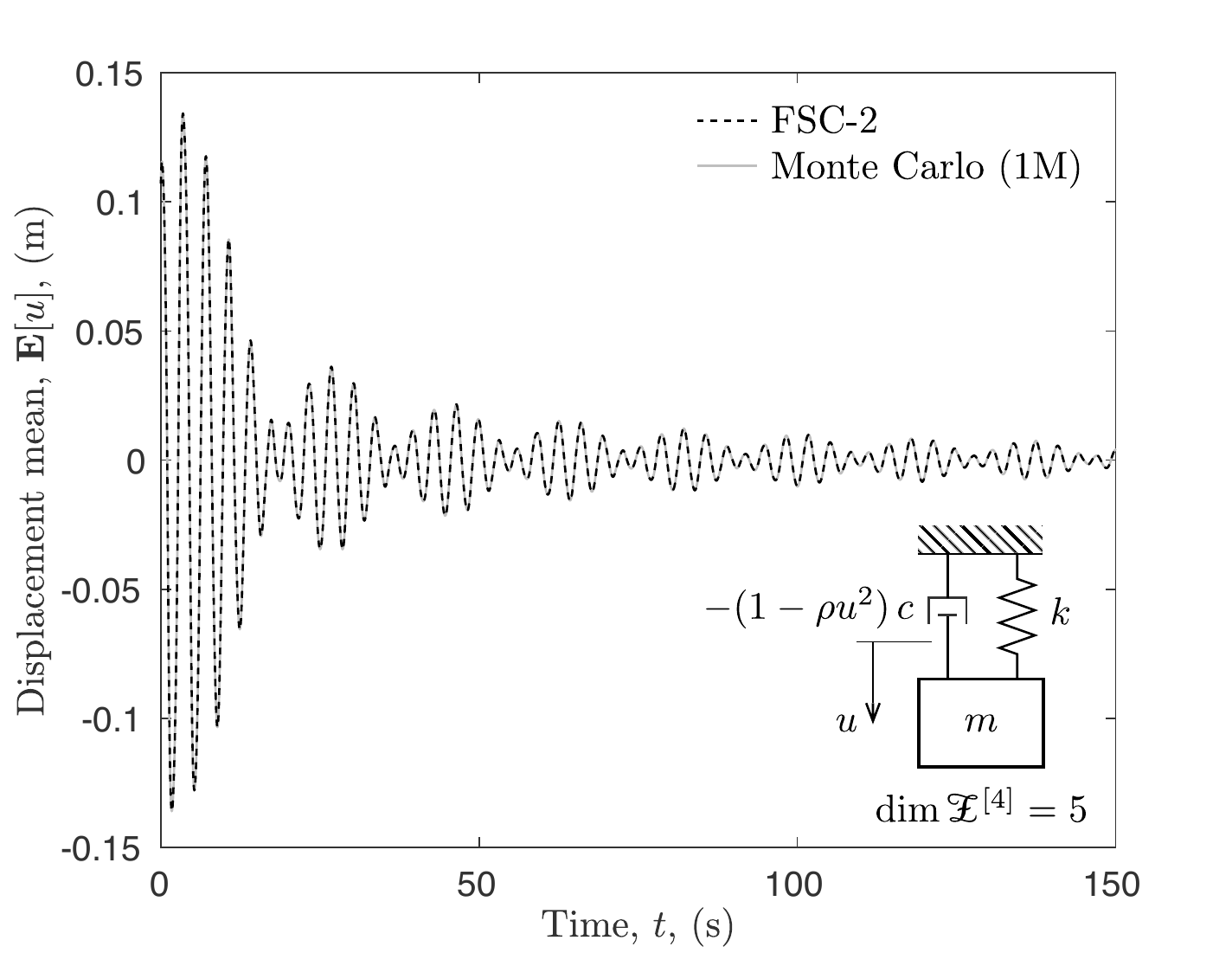}
\caption{Mean}
\label{fig2System5_UniformBeta_FSC_Disp_Mean_5}
\end{subfigure}\hfill
\begin{subfigure}[b]{0.495\textwidth}
\includegraphics[width=\textwidth]{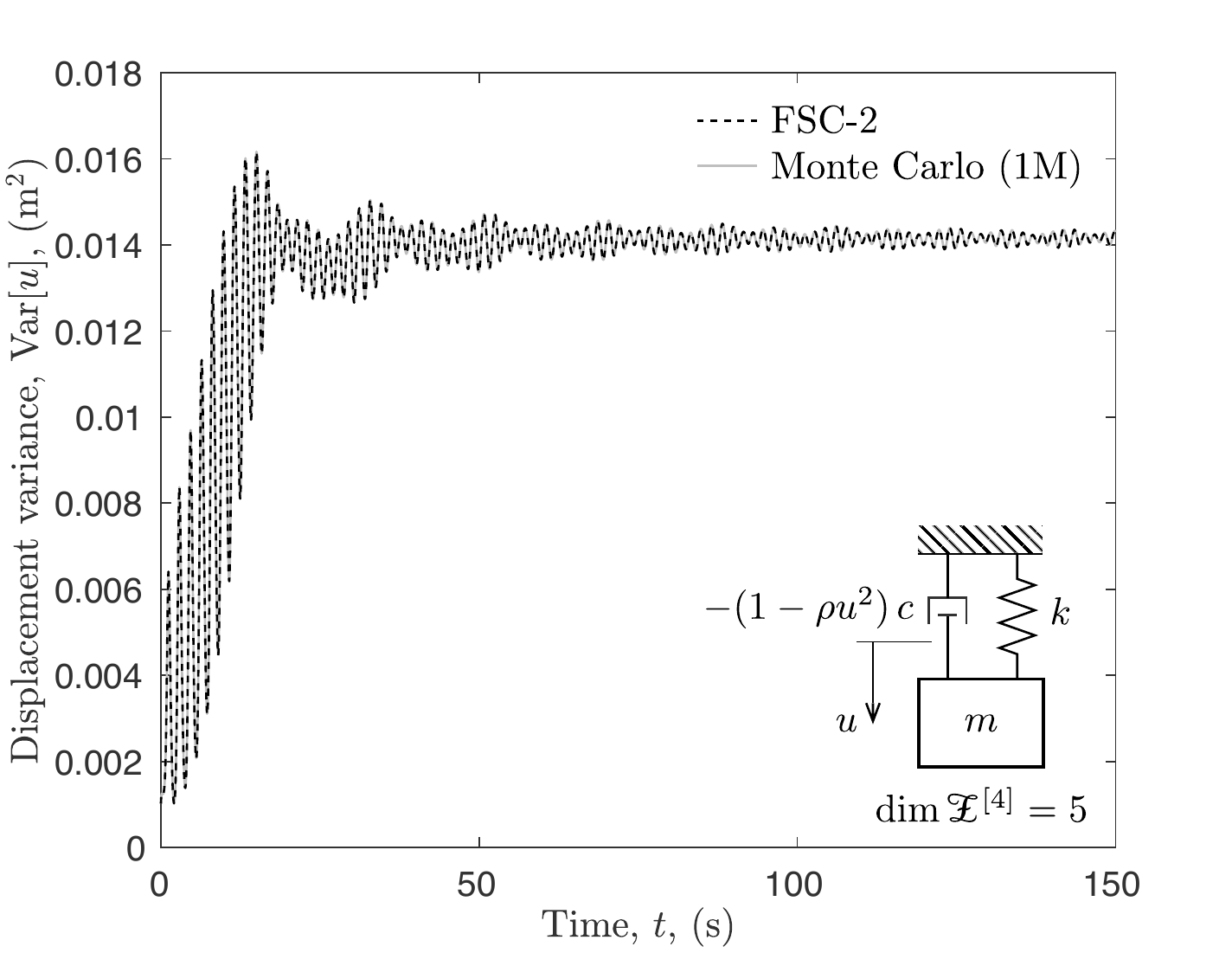}
\caption{Variance}
\label{fig2System5_UniformBeta_FSC_Disp_Var_5}
\end{subfigure}
\caption{\emph{Problem 6 (the Van-der-Pol oscillator)} --- Evolution of $\mathbf{E}[u]$ and $\mathrm{Var}[u]$ for the case when the $p$-discretization level of RFS is $\mathscr{Z}^{[4]}$ and $\mu\sim\mathrm{Uniform}\otimes\mathrm{Beta}$}
\label{fig2System5_UniformBeta_FSC_Disp_5}
\end{figure}

% SYSTEM-5 [2/2]
\begin{figure}
\centering
\begin{subfigure}[b]{0.495\textwidth}
\includegraphics[width=\textwidth]{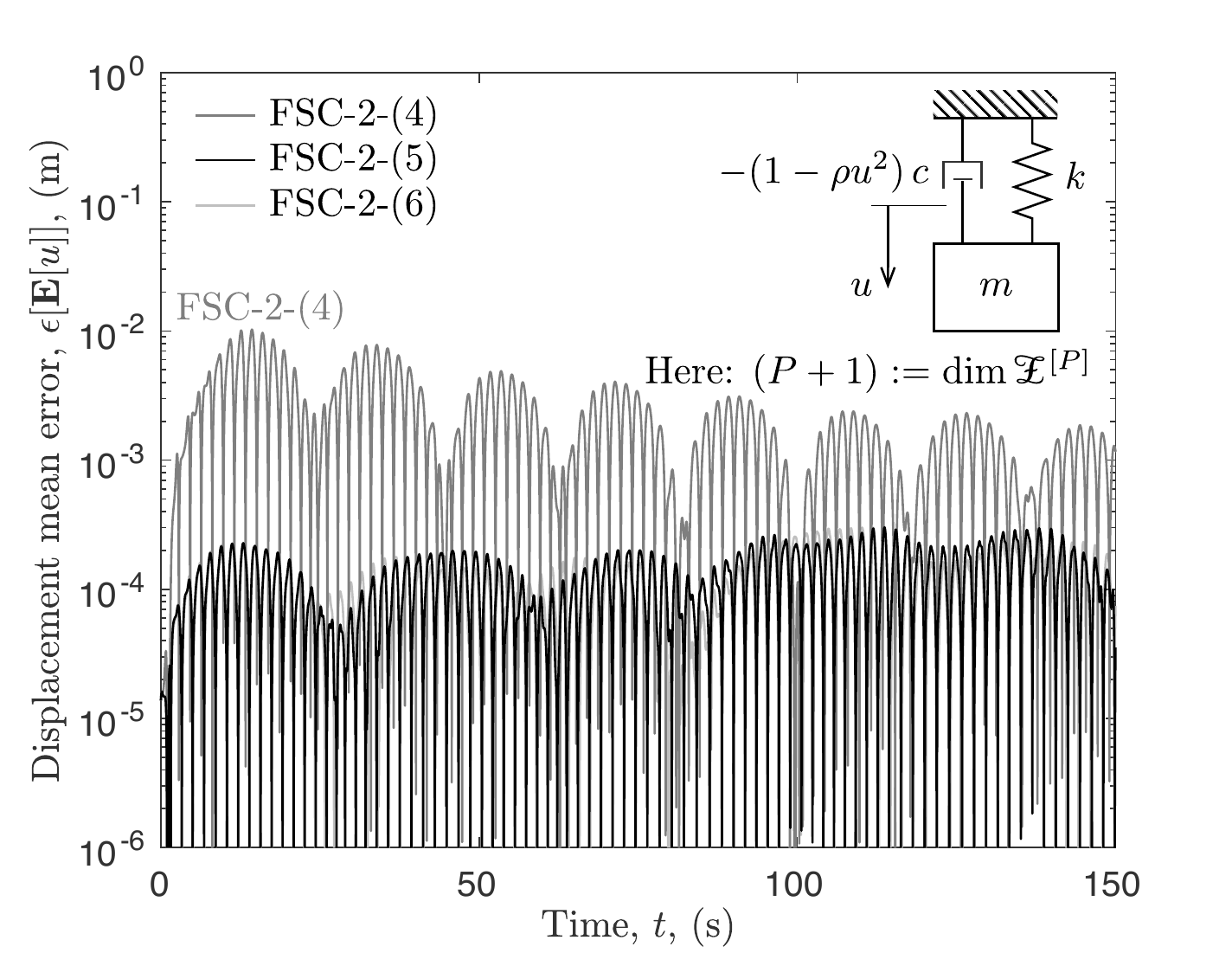}
\caption{Mean error}
\label{fig2System5_UniformBeta_FSC_Disp_Mean_Error}
\end{subfigure}\hfill
\begin{subfigure}[b]{0.495\textwidth}
\includegraphics[width=\textwidth]{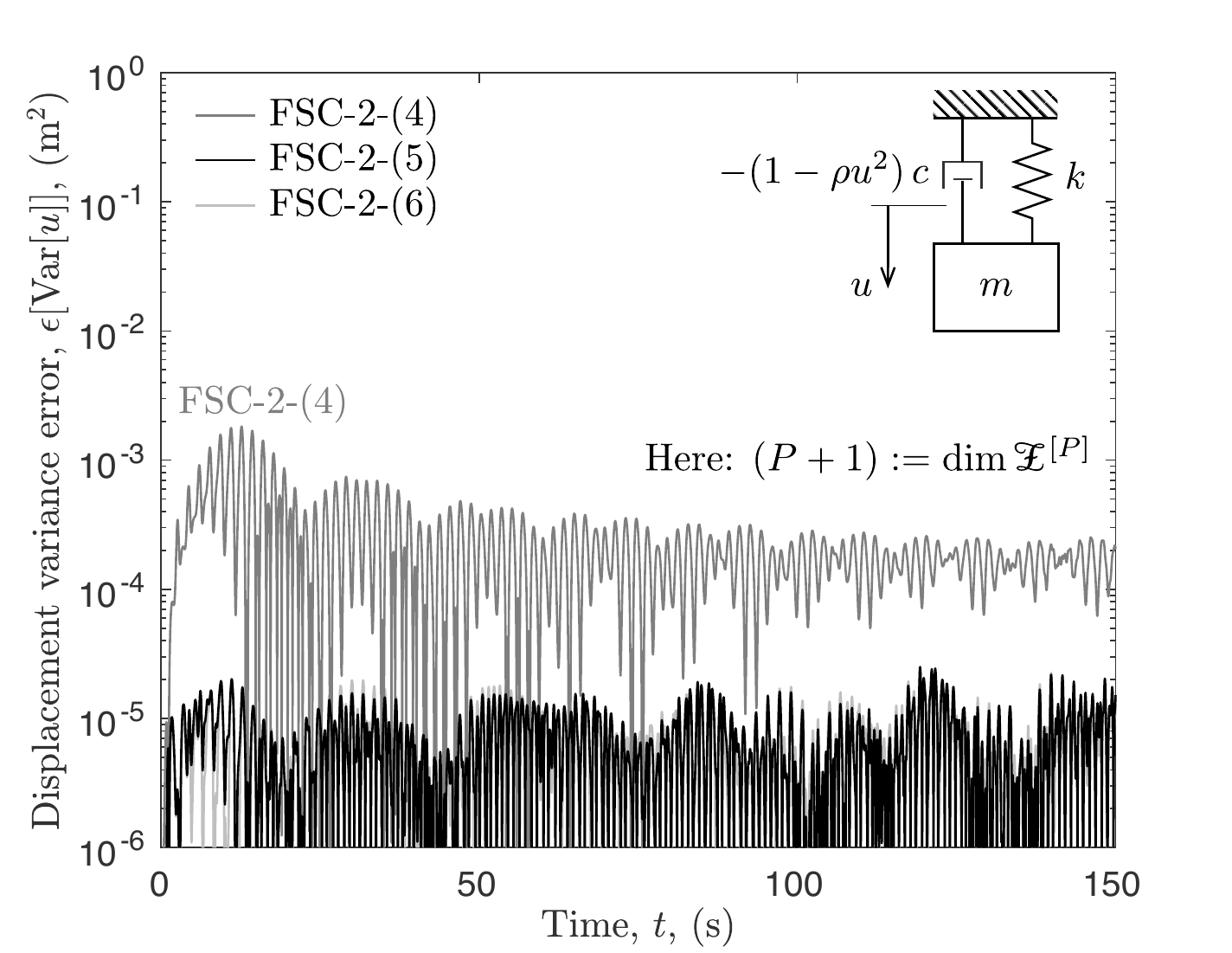}
\caption{Variance error}
\label{fig2System5_UniformBeta_FSC_Disp_Var_Error}
\end{subfigure}
\caption{\emph{Problem 6 (the Van-der-Pol oscillator)} --- Local error evolution of $\mathbf{E}[u]$ and $\mathrm{Var}[u]$ for different $p$-discretization levels of RFS with respect to the 1-million Monte Carlo simulation ($\mu\sim\mathrm{Uniform}\otimes\mathrm{Beta}$)}
\label{fig2System5_UniformBeta_FSC_Disp_Error}
\end{figure}

For this section we use the RK4 method with $\Delta t=0.005$ s to speed up the simulations.
Fig.~\ref{fig2System5_UniformBeta_FSC_Disp_5} depicts the evolution of the mean and variance of the oscillator's displacement using FSC-2 and a Monte Carlo simulation with one million realizations.
This figure shows that by using only 5 basis vectors, FSC-2 has the ability to reproduce the Monte Carlo response with high fidelity.
This is quite a remarkable result, since not only the problem is highly nonlinear over the temporal-random space but it also features a two-dimensional probability space. 

Fig.~\ref{fig2System5_UniformBeta_FSC_Disp_Error} plots the local errors in mean and variance of the oscillator's displacement.
We observe that by using 4 basis vectors, an error of $10^{-2}$ and $10^{-3}$ can be achieved for the mean and variance, respectively.
If more basis vectors are used, say 5 or 6, the corresponding errors cut down two orders of magnitude.
This suggests that 5 basis vectors are sufficient to reproduce a Monte Carlo simulation with one million realizations to a comparable level of accuracy.

\section{A parametric, high-dimensional stochastic problem}\label{sec2StoPro5Dim}

In this section, we show that the FSC method does not suffer from the curse of dimensionality at the random-function-space level by solving a parametric, high-dimensional problem.
Further, we show that by using the FSC method in conjunction with Monte Carlo integration to compute the inner products, one can overcome the curse of dimensionality at the random-space level as well---effectively eliminating the curse of dimensionality altogether.
For this, we investigate the following problem.

\subsection{Problem statement}

Find the displacement of the system $u:\mathfrak{T}\times\Xi\to\mathbb{R}$ in $\mathscr{U}$, such that ($\mu$-a.e.):
\begin{subequations}\label{appeq2StoPro5Dim100}
\begin{align}
\ddot{u}+ku=f&\qquad\text{on $\mathfrak{T}\times\Xi$}\label{appeq2StoPro5Dim100a}\\
\big\{u(0,\cdot\,)=\mathscr{u},\,\dot{u}(0,\cdot\,)=\mathscr{v}\big\}&\qquad\text{on $\{0\}\times\Xi$},\label{appeq2StoPro5Dim100b}
\end{align}
\end{subequations}
where $k,f:\mathfrak{T}\times\Xi\to\mathbb{R}$ and $\mathscr{u},\mathscr{v}:\Xi\to\mathbb{R}$ are random variables given by:
\begin{gather*}
k(t,\xi)=\tfrac{1}{2400}\big(\xi^1+40\big)\big(\xi^6+\xi^7+40\big)\big(3-\exp(-\tfrac{1}{77}(\xi^2+7)t)\big),\\
f(t,\xi)=\tfrac{1}{1200}\big(\xi^3+7\big)\big(\xi^8+40\big)\big((\xi^9+\xi^{10})\sin(\tfrac{1}{7}\pi t)+3\big),\\
\mathscr{u}(\xi)=\tfrac{1}{7}(\xi^4+8)\quad\text{and}\quad\mathscr{v}(\xi)=\tfrac{1}{8}(\xi^5+7).
\end{gather*}

In this problem we assume the following probability distribution: $\mathrm{Beta}^{\otimes 3}\otimes\mathrm{Uniform}^{\otimes d-3}\sim\xi\in\Xi=[-1,1]^d$.
For the beta distribution, we take $(\alpha,\beta)=(2,5)$ as the parameters of the distribution.
Since the random space is a parametric $d$-dimensional space, here we study specifically the cases when $d=5,7,10$.
In particular, when $d=5$, we take $\xi^6=\xi^7=\xi^8=\xi^9=\xi^{10}=0$, and when $d=7$, we put $\xi^8=\xi^9=\xi^{10}=0$.
We recall that $\xi=(\xi^1,\ldots,\xi^d)$ for a $d$-dimensional space.
The spectral discretization of this problem is derived in detail in Appendix \ref{appsec2StoPro5Dim}.

\subsection{Discussion on numerical results}

To run all the simulations, we employ the RK4 method with a time-step size of 0.01 s.
In Table \ref{apptab2StoPro5Dim1000} we have provided the non-orthogonalized version of the random basis for use within the FSC scheme.
Because the random space is in this case high-dimensional, the inner products are computed with a Monte Carlo integration using $10^5$ quadrature points sampled from the random domain.
For reference, each FSC simulation is compared below against a Monte Carlo simulation with one million realizations to determine if convergence was achieved for the FSC-2 scheme.

In Fig.~\ref{fig2System6_Beta3UniformX_FSC4_MC1e5_Disp} we plot the evolution of the mean and variance of the system's displacement for the three stochastic problems considered in this section.
Two observations are in order.
First, for the smallest RFS that one can construct using the FSC method (i.e.~$\mathscr{Z}^{[3]}$ for this problem), the FSC solution is capable of reproducing reasonably well the Monte Carlo solution.
Second, as the dimensionality of the random space is increased, we see that there is no need to increase the dimensionality of the RFS to achieve good results.
This is different from other spectral methods such as gPC or TD-gPC, which would have required a lot more basis vectors to span the RFS.
For example, if a total-order tensor product were utilized to solve the 10-dimensional problem, expression \eqref{eq2SetNot1030A} would have indicated the need of using 66 basis vectors to enrich the random basis with quadratic functions.
However, as we see in Fig.~\ref{fig2System6_Beta3UniformX_FSC4_MC1e5_Disp},
only 4 basis vectors are needed in the simulations with FSC to achieve solutions that are nearly indistinguishable from the Monte Carlo counterpart.
This proves---at least from a numerical standpoint---our assertion that the FSC method does not suffer from the curse of dimensionality at the random-function-space level.

% SYSTEM-6 [1/2]
\begin{figure}
\centering
\begin{subfigure}[b]{0.495\textwidth}
\includegraphics[width=\textwidth]{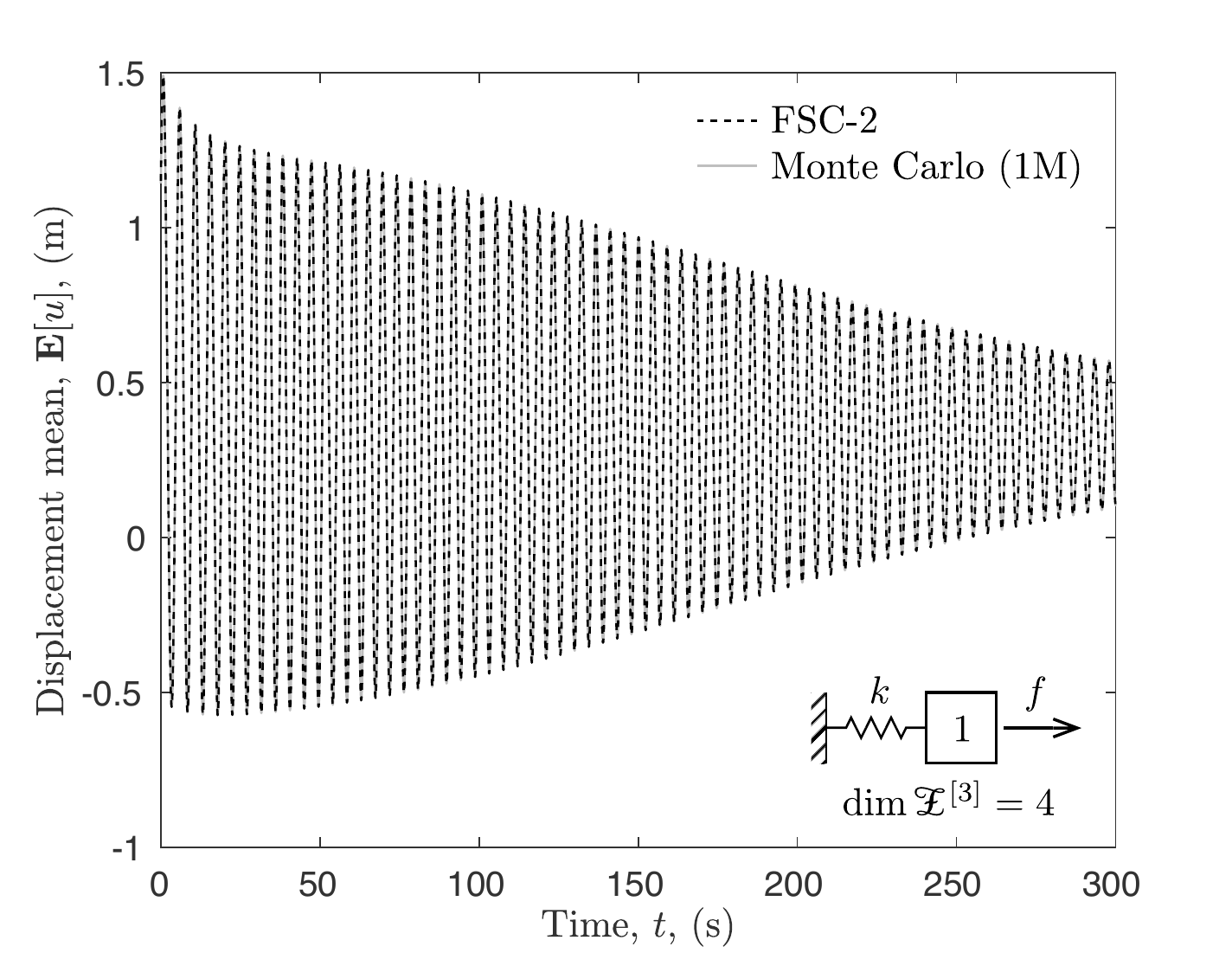}
\caption{Mean for $d=5$}
\label{fig2System6_Beta3Uniform2_FSC4_MC1e5_Disp_Mean}
\end{subfigure}\hfill
\begin{subfigure}[b]{0.495\textwidth}
\includegraphics[width=\textwidth]{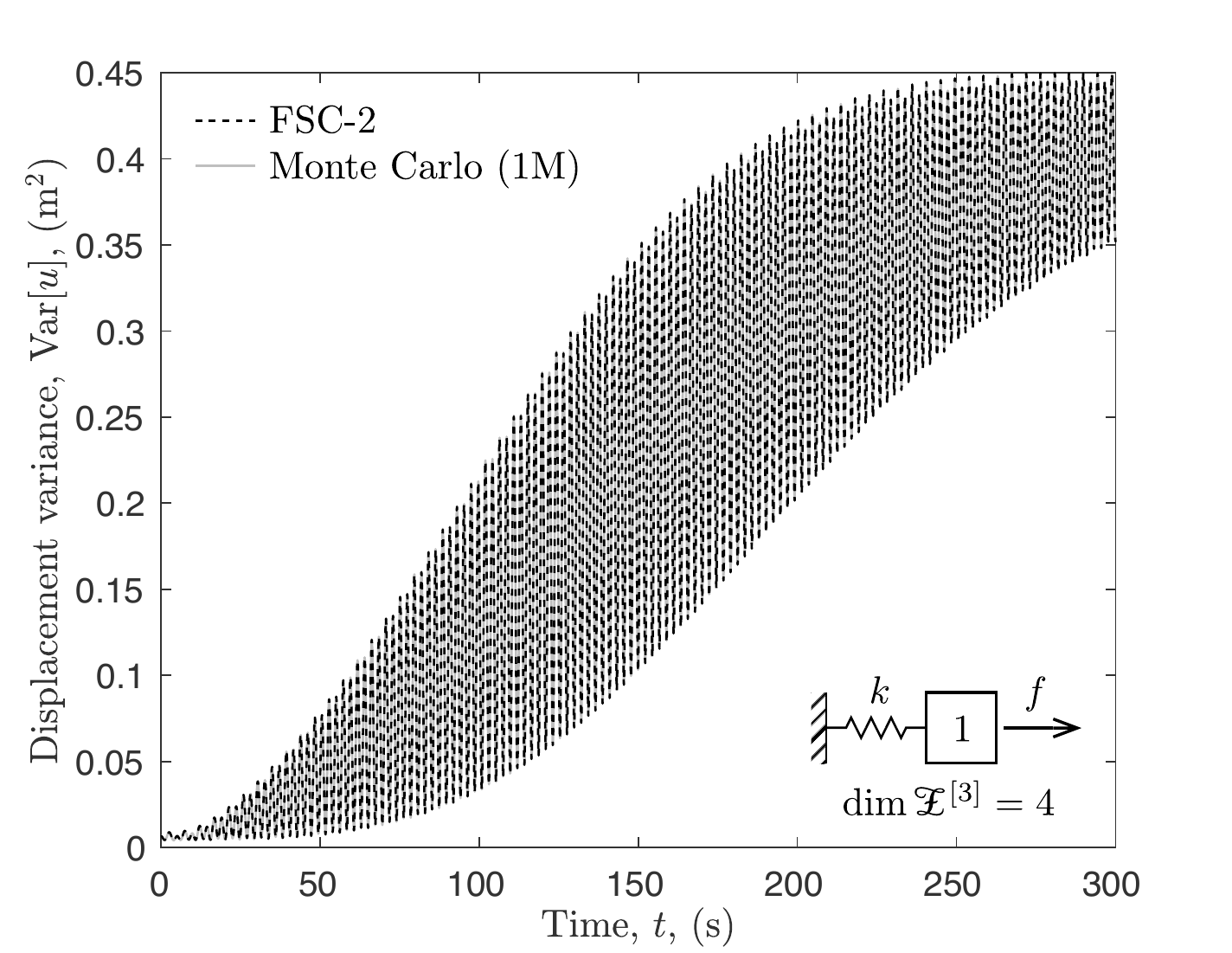}
\caption{Variance for $d=5$}
\label{fig2System6_Beta3Uniform2_FSC4_MC1e5_Disp_Var}
\end{subfigure}\quad
\begin{subfigure}[b]{0.495\textwidth}
\includegraphics[width=\textwidth]{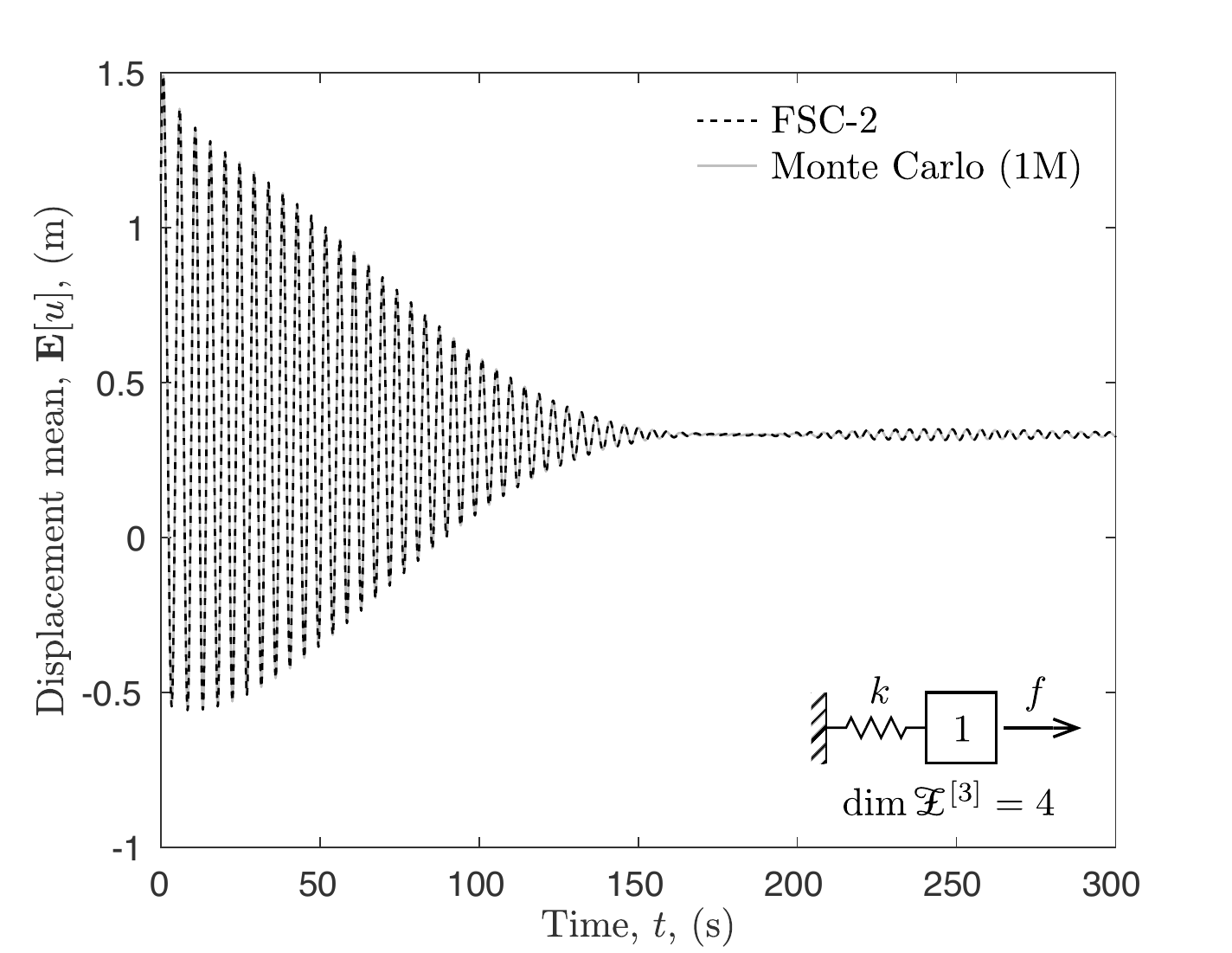}
\caption{Mean for $d=7$}
\label{fig2System6_Beta3Uniform4_FSC4_MC1e5_Disp_Mean}
\end{subfigure}\hfill
\begin{subfigure}[b]{0.495\textwidth}
\includegraphics[width=\textwidth]{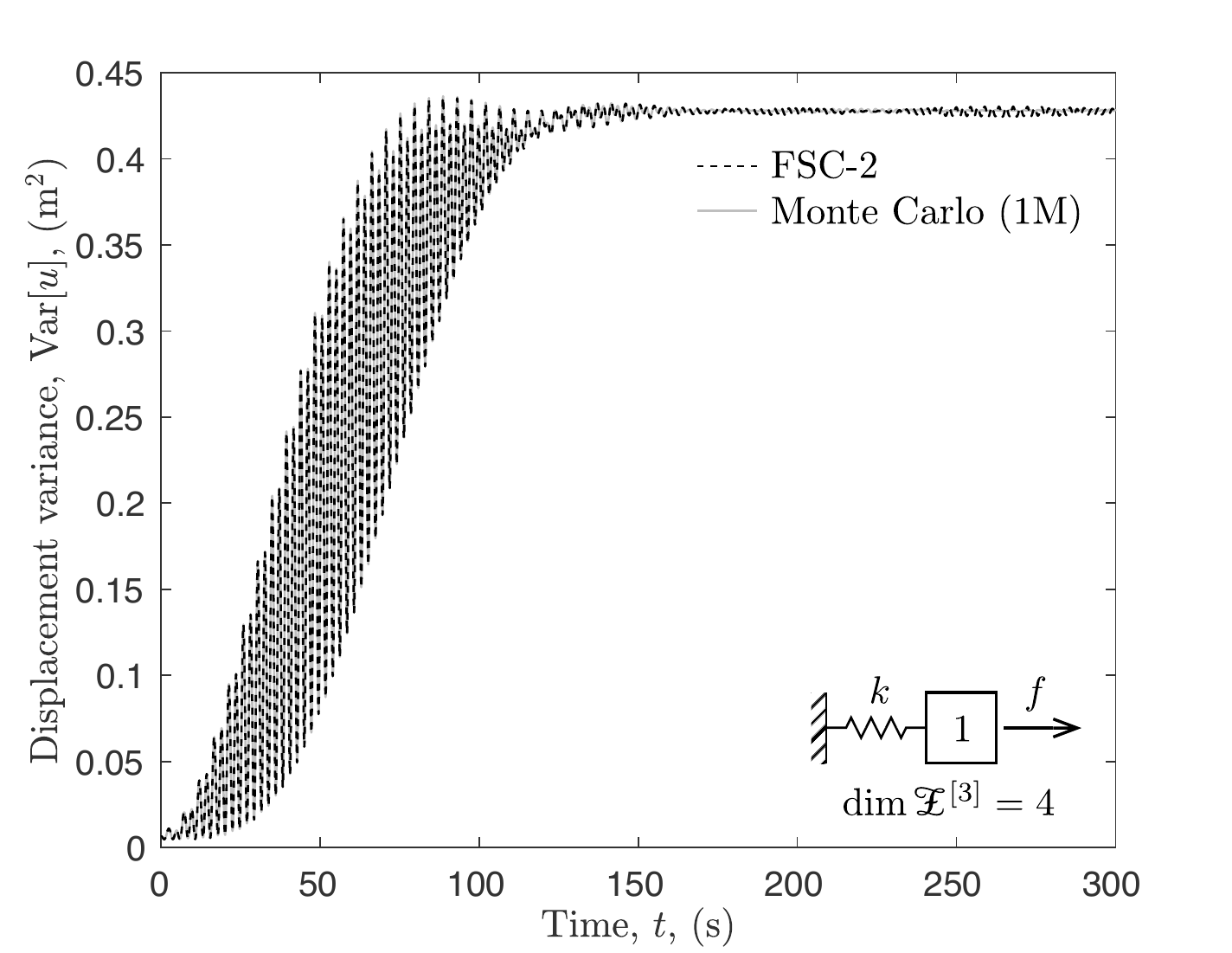}
\caption{Variance for $d=7$}
\label{fig2System6_Beta3Uniform4_FSC4_MC1e5_Disp_Var}
\end{subfigure}\quad
\begin{subfigure}[b]{0.495\textwidth}
\includegraphics[width=\textwidth]{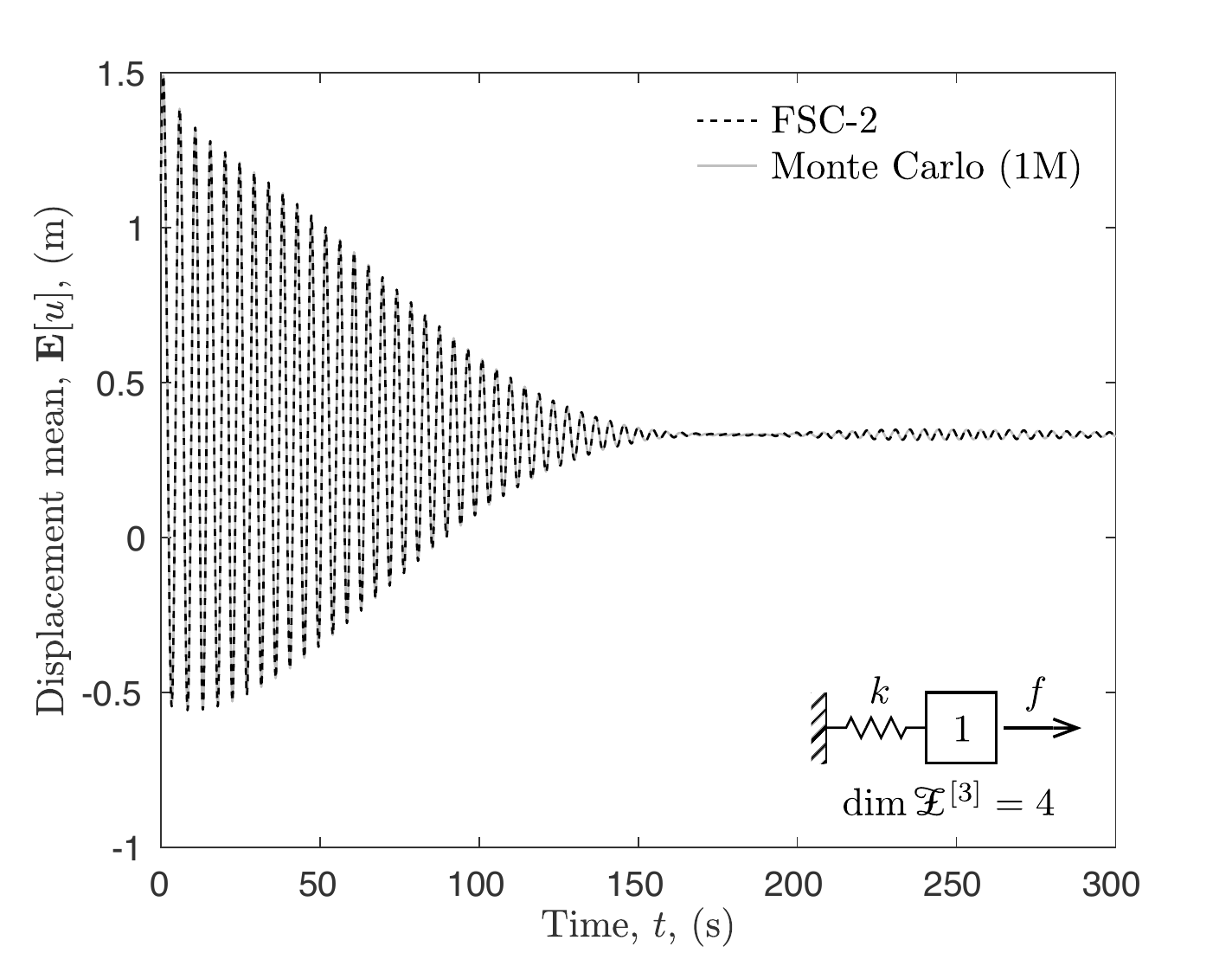}
\caption{Mean for $d=10$}
\label{fig2System6_Beta3Uniform7_FSC4_MC1e5_Disp_Mean}
\end{subfigure}\hfill
\begin{subfigure}[b]{0.495\textwidth}
\includegraphics[width=\textwidth]{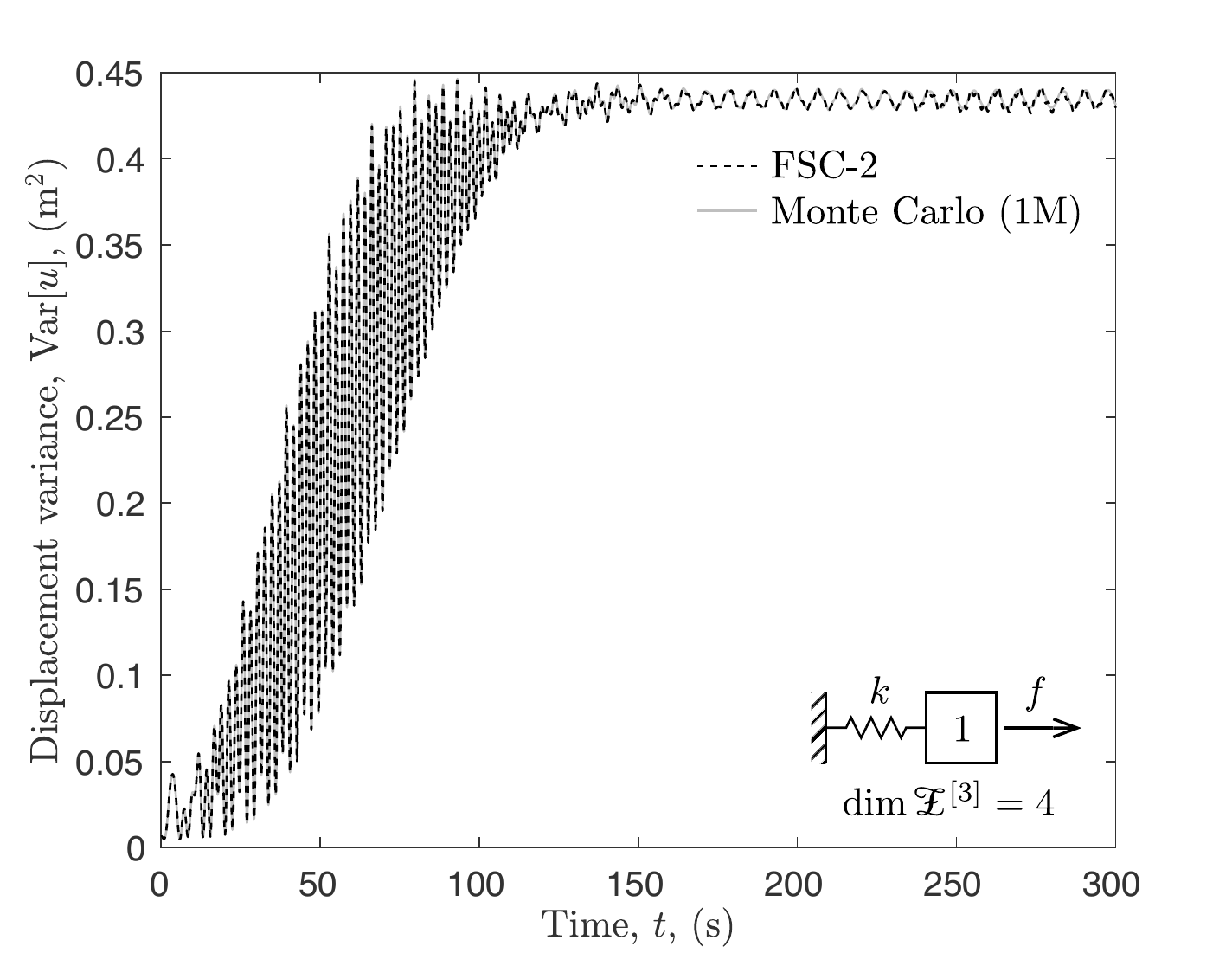}
\caption{Variance for $d=10$}
\label{fig2System6_Beta3Uniform7_FSC4_MC1e5_Disp_Var}
\end{subfigure}
\caption{Evolution of $\mathbf{E}[u]$ and $\mathrm{Var}[u]$ for the case when the $p$-discretization level of RFS is $\mathscr{Z}^{[3]}$ and $\mu\sim\mathrm{Beta}^{\otimes 3}\otimes\mathrm{Uniform}^{\otimes d-3}$}
\label{fig2System6_Beta3UniformX_FSC4_MC1e5_Disp}
\end{figure}

To study now how the number of quadrature points affects the estimation of the inner products with a Monte Carlo integration---and consequently, the FSC results---, we run the FSC-2 simulations using five realizations of $10^2$, $10^3$, $10^4$ and $10^5$ quadrature points.
The resulting solutions are then compared against a reference solution obtained from performing a Monte Carlo simulation with one million realizations.
In Fig.~\ref{fig2System6_FSC4_Disp} we present this study for the three stochastic problems considered in this section by computing the global errors obtained from the mean and variance of the system's displacement.
The first observation that we can make is that the variability of the five realizations is in general higher when using fewer quadrature points.
This is not surprising since in Monte Carlo integration is well recognized that the fewer the number of quadrature points, the more variation in the results is expected from different realizations of quadrature points.
A second observation is that the accuracy of the results improves as the number of quadrature points increases.
This is once again expected for it is a direct consequence of the law of large numbers in probability theory.
This parametric, high-dimensional stochastic problem has therefore shown that when the dimensionality of the probability space is high, the inner products can be computed with a Monte Carlo integration technique without producing adverse effects, such as non-convergence issues, in the FSC scheme.

From this section it follows that if we combine the FSC method with a Monte Carlo integration technique, the curse of dimensionality can be fully annihilated at both random levels, namely: at the random-space level and at the random-function-space level.
This makes the FSC method more suitable for solving higher-dimensional stochastic problems with the spectral approach.

% SYSTEM-6 [2/2]
\begin{figure}
\centering
\begin{subfigure}[b]{0.495\textwidth}
\includegraphics[width=\textwidth]{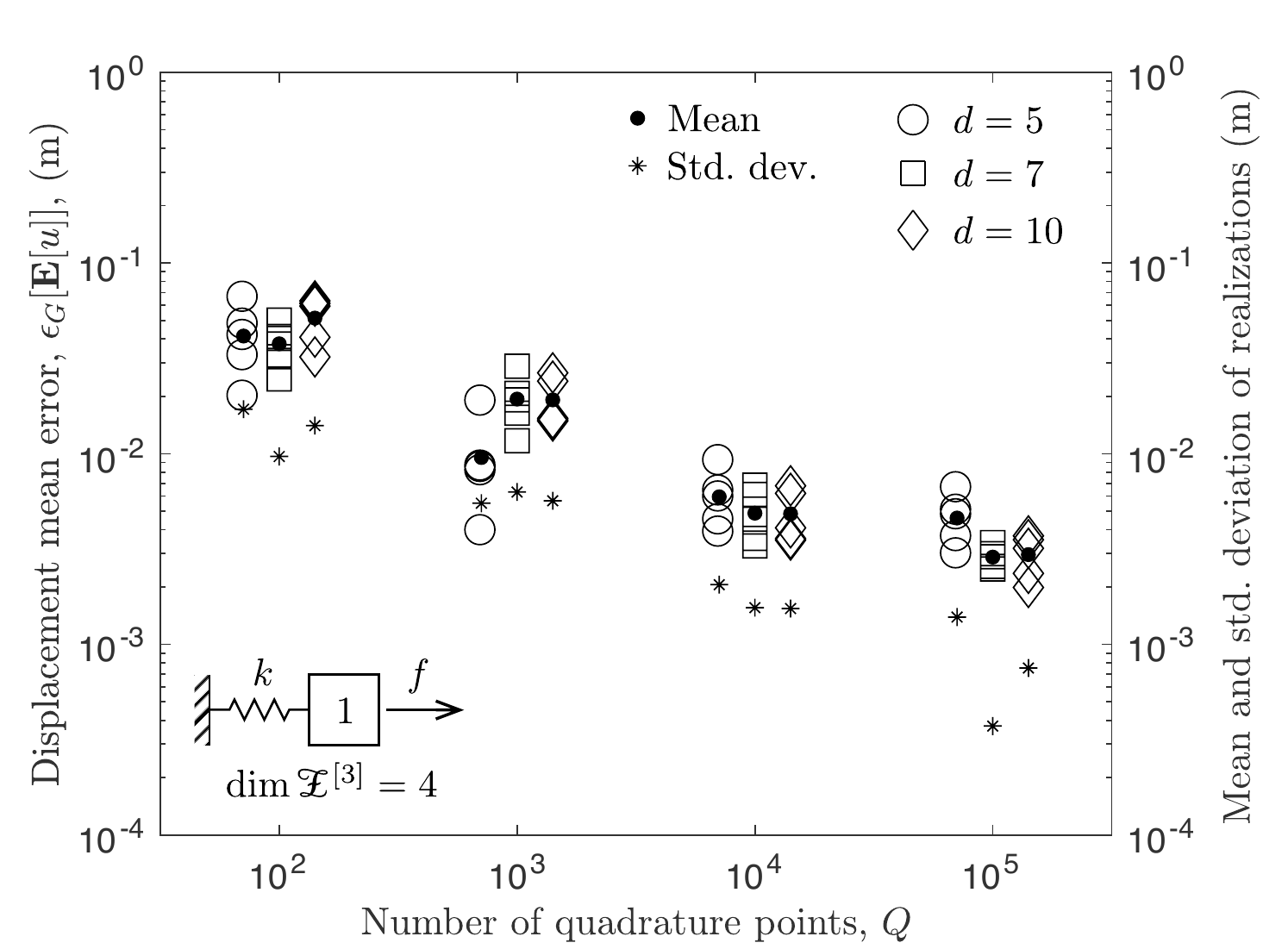}
\caption{Mean error}
\label{fig2System6_FSC4_Disp_Mean}
\end{subfigure}\hfill
\begin{subfigure}[b]{0.495\textwidth}
\includegraphics[width=\textwidth]{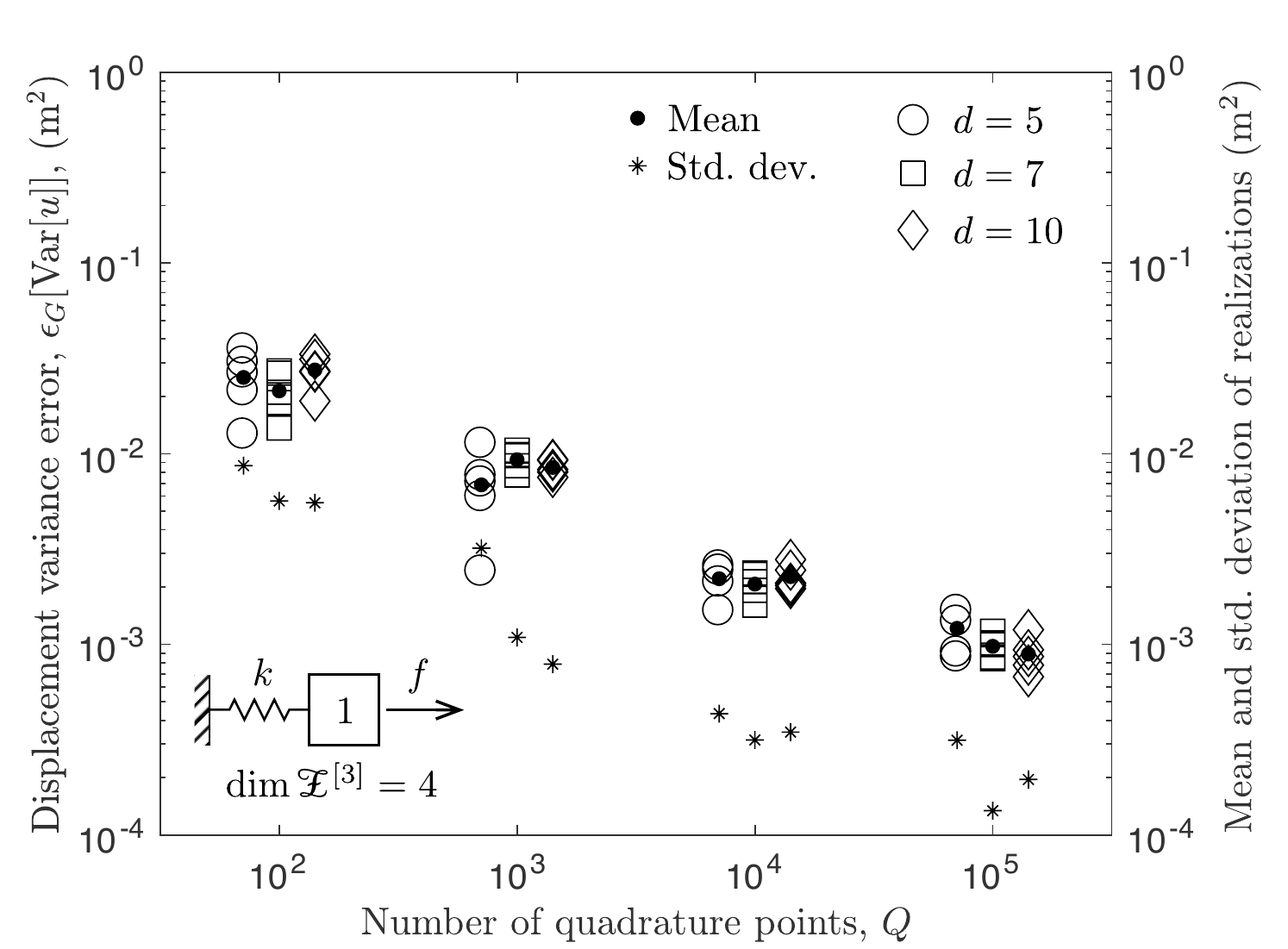}
\caption{Variance error}
\label{fig2System6_FSC4_Disp_Var}
\end{subfigure}
\caption{Global error of $\mathbf{E}[u]$ and $\mathrm{Var}[u]$ as a function of the number of quadrature points employed to estimate the inner products with a Monte Carlo integration ($\mu\sim\mathrm{Beta}^{\otimes 3}\otimes\mathrm{Uniform}^{\otimes d-3}$)}
\label{fig2System6_FSC4_Disp}
\end{figure}

\section{Conclusion}\label{sec2con}

A novel numerical method called the \emph{flow-driven spectral chaos} (FSC) is proposed in this paper to quantify uncertainties in the long-time response of stochastic dynamical systems.
In the FSC method, we use the concept of enriched stochastic flow maps to track the evolution of the stochastic part of the solution space efficiently in time.
This track is motivated by the fact that the solution of a stochastic ODE and its probability distribution change significantly as a function of time.
(It is well-known that when the state of a system is pushed one-time step forward in the simulation, the random basis loses unavoidably its optimality.)
Thus, to resolve this long-time integration issue, we span the stochastic part of the solution space via the time derivatives of the solution at the current time for use within the time step of the simulation.
This new way of approaching the problem follows upon noting that a Taylor-series expansion can decompose a stochastic process into an infinite series of functions in the form of a product of a temporal function and a random function (this is of course provided that the process is assumed analytic on the temporal domain).
The random functions thus generated are subsequently orthogonalized to serve as a random basis for the solution space.
To transfer the probability information at any instant of time, two approaches are developed herein, FSC-1 and FSC-2.
The first approach (FSC-1) enforces the probability information to be transferred in the mean-square sense, whereas the second approach (FSC-2) ensures that the probability information is transferred exactly.
As discussed in Section \ref{sec2DisNumRes}, the FSC-2 approach has not only the ability to produce results that are more accurate than FSC-1, but also the ability to transfer the probability information faster than FSC-1.
This is especially true if the order of the ODE is low (provided we keep the dimensionality of the random function space fixed).
Therefore, we suggest using the FSC-2 approach when referring to the FSC method.

We have shown that the FSC method is insensitive to the curse of dimensionality at the random-function-space level.
This is because in practice the stochastic flow map of a system is chosen to be finite-accurate, which allows the aforementioned Taylor-series expansion to be truncated up to a specific order.
This is in contrast to other methods such as gPC \cite{xiu2002wiener} and TD-gPC \cite{gerritsma2010time}, which use the concept of polynomial chaos and tensor products to find a suitable random basis for the solution space, and which are known to suffer from the curse of dimensionality to some extent.
This curse of dimensionality has been regarded as a fundamental issue in methods based on the spectral approach since the introduction of the PC method back in 1938.
This paper has addressed this fundamental issue at the random-function-space level.

The FSC method has been applied to six representative problems in this paper.
The first five deal with systems governed by a linear stochastic ODE, while the last one is a system governed by a nonlinear stochastic ODE (the Van-der-Pol oscillator).
These ODEs were selected to range from first to fourth order so that the performance of the FSC method could be investigated more thoroughly.
Based on our findings, we can conclude that FSC outperforms TD-gPC in both accuracy and computational efficiency for solving stochastic dynamical systems with complex physics.
Furthermore, in Section \ref{sec2StoPro5Dim} we solved three high-dimensional stochastic problems to demonstrate that the curse of dimensionality can be overcome at both, the random-function-space level and the random-space level, by using the FSC method together with Monte Carlo integration to compute the inner products.

\begin{appendices}
\section{The Van-der-Pol oscillator}\label{appsec2DerVanderPolOsc}

\subsection{Discretization of the random function space}

The random function space for the Van-der-Pol oscillator described in Section \ref{sec2NumExa60} is discretized here for sake of reference.
Let $\mathscr{Z}^{[P]}$ be a finite subspace of $\mathscr{Z}$.
Then, it is clear that the oscillator's displacement can be represented in $\mathscr{Z}^{[P]}$ with:
\begin{equation}\label{appeq2VanderPol100}
u(t,\xi)\approx u^{[P]}(t,\xi)=\sum_{j=0}^P u^j(t)\,\Psi_j(\xi)\equiv u^j(t)\,\Psi_j(\xi).
\end{equation}

Similarly, let $\tilde{\mathscr{Z}}$ be a finite subspace of $\mathscr{Z}$ to represent the stochastic-input space of the system \emph{exactly}.
This subspace is defined as $\tilde{\mathscr{Z}}=\mathrm{span}\{\tilde{\Psi}_m\}_{m=0}^2$, where $\tilde{\Psi}_0\equiv1$, $\tilde{\Psi}_1=c-\mathbf{E}[c]$ and $\tilde{\Psi}_2=\mathscr{u}-\mathbf{E}[\mathscr{u}]$.
Hence, we have:
\begin{equation}\label{appeq2VanderPol200}
c(\xi)=\sum_{m=0}^2 c^m\,\tilde{\Psi}_m(\xi)\equiv c^m\,\tilde{\Psi}_m(\xi)\quad\text{and}\quad
\mathscr{u}(\xi)=\sum_{m=0}^2\mathscr{u}^{\!m}\,\tilde{\Psi}_m(\xi)\equiv\mathscr{u}^{\!m}\,\tilde{\Psi}_m(\xi),
\end{equation}
whence $c^0=\mathbf{E}[c]$, $c^1=1$, $c^2=0$, $\mathscr{u}^{\!0}=\mathbf{E}[\mathscr{u}]$, $\mathscr{u}^{\!1}=0$ and $\mathscr{u}^{\!2}=1$ are the coefficients of the finite series.

\begin{remark}
In this problem, $\mathrm{dim}\,\tilde{\mathscr{Z}}=3$ because $c$ and $\mathscr{u}$ are the only independent random variables in the model's input.
In a more general setting, $\mathrm{dim}\,\tilde{\mathscr{Z}}=\tilde{N}+1$, where $\tilde{N}$ represents the number of independent random variables specified in input $x$ of mathematical model $\boldsymbol{\mathcal{M}}[u]$.
\end{remark}

Replacing \eqref{appeq2VanderPol100} and \eqref{appeq2VanderPol200} into \eqref{eq2NumExa500}, and then projecting onto $\mathscr{Z}^{[P]}$, yields a system of $P+1$ second-order ordinary differential equations in the variable $t$ (with initial conditions):
\begin{subequations}\label{appeq2VanderPol300}
\begin{align}
m\,\mathbb{M}\indices{^i_{j000}}\ddot{u}^j-\mathbb{M}\indices{^i_{j00m}}\dot{u}^jc^m+\rho\,\mathbb{M}\indices{^i_{jklm}}\dot{u}^ju^ku^lc^m+k\,\mathbb{M}\indices{^i_{j000}}u^j=0&\qquad\text{on $\mathfrak{T}$}\label{appeq2VanderPol300a}\\
\big\{\mathbb{M}\indices{^i_{j000}}\,u^j(0)=\mathbb{M}\indices{^i_{000m}}\mathscr{u}^{\!m},\,\mathbb{M}\indices{^i_{j000}}\,\dot{u}^j(0)=2\,\mathbb{M}\indices{^i_{000m}}\mathscr{u}^{\!m}-0.10\,\mathbb{M}\indices{^i_{0000}}\big\}&\qquad\text{on $\{0\}$,}\label{appeq2VanderPol300b}
\end{align}
\end{subequations}
where $\mathbb{M}=\mathbb{M}\indices{^i_{jklm}}\Psi_i\otimes\Psi^j\otimes\Psi^k\otimes\Psi^l\otimes\tilde{\Psi}^m:{\mathscr{Z}^{[P]}}'\times\mathscr{Z}^{[P]}\times\mathscr{Z}^{[P]}\times\mathscr{Z}^{[P]}\times\tilde{\mathscr{Z}}\to\mathbb{R}$ is the discretized \emph{random tensor}\footnote{This tensor is also known as \emph{multiplication tensor} in the literature, see e.g.~\cite{le2010spectral,sullivan2015introduction}.} for the whole dynamical system (assumed to exist) given by (a summation sign is implied over every repeated index)
\begin{equation*}
\mathbb{M}[\alpha,w,x,y,z]=\mathbb{M}\indices{^i_{jklm}}\,\alpha_i\,w^j\,x^k\,y^l\,z^m,
\end{equation*}
whence $i,j,k,l\in\{0,1,\ldots,P\}$, $m\in\{0,1,2\}$, and
\begin{equation*}
\mathbb{M}\indices{^i_{jklm}}=\frac{\langle\Psi_i,\Psi_j\Psi_k\Psi_l\tilde{\Psi}_m\rangle}{\langle\Psi_i,\Psi_i\rangle}.
\end{equation*}
This $\mathbb{M}$ is a tensor of type $(1,4)$ and symmetric in the indices $j$, $k$ and $l$.

However, since $\mathbb{M}\indices{^i_{j000}}$ is nothing but $\delta\indices{^i_j}$ (the Kronecker delta), system \eqref{appeq2VanderPol300} can be restated as follows:
\begin{align*}
m\ddot{u}^i-\mathbb{M}\indices{^i_{j00m}}\dot{u}^jc^m+\rho\,\mathbb{M}\indices{^i_{jklm}}\dot{u}^ju^ku^lc^m+ku^i=0&\qquad\text{on $\mathfrak{T}$}\\
\big\{u^i(0)=\mathbb{M}\indices{^i_{000m}}\mathscr{u}^{\!m},\,\dot{u}^i(0)=2\,\mathbb{M}\indices{^i_{000m}}\mathscr{u}^{\!m}-0.10\,\delta\indices{^i_0}\big\}&\qquad\text{on $\{0\}$.}
\end{align*}

\subsection{Random basis for use in the FSC scheme}

Table \ref{apptab2RanBasTimInt1000} presents the non-orthogonalized version of the random basis that we use in the simulations with FSC over the region $\mathfrak{R}_i=\mathrm{cl}(\mathfrak{T}_i)\times\Xi$.

% TABLE_BEGIN
\makeatletter
\def\tabcaption{Definition of $\tau$-functions for a single-degree-of-freedom system subjected to free vibration with $k\sim\mathrm{Uniform}$ in $[k_a,k_b]$}
\def\tablabel{apptab1SDOF1000}
\@ifundefined{puformat}%
{
\begin{table}
\centering\small
\caption{Non-orthogonalized version of the random basis used for the Van-der-Pol oscillator (Section \ref{sec2NumExa60})}
\label{apptab2RanBasTimInt1000}
\begin{tabular}{@{}p{\textwidth}@{}}
\toprule\\[-5ex]
\begin{equation*}
\Phi_{0.i}(\xi):=1
\end{equation*}\\[-3.25ex]
\midrule\\[-5ex]
\begin{equation*}
\Phi_{1.i}(\xi):=\hat{\varphi}^1(M)(0,\hat{s}(t_i,\xi))= u_{.i}(t_i,\xi)=u_{.i-1}(t_i,\xi)
\end{equation*}\\[-3.25ex]
\midrule\\[-5ex]
\begin{equation*}
\Phi_{2.i}(\xi):=\hat{\varphi}^2(M)(0,\hat{s}(t_i,\xi))= \dot{u}_{.i}(t_i,\xi)=\dot{u}_{.i-1}(t_i,\xi)
\end{equation*}\\[-3.25ex]
\midrule\\[-4.5ex]
\begin{equation*}
\Phi_{3.i}(\xi):=\hat{\varphi}^3(M)(0,\hat{s}(t_i,\xi))=\partial_t^2u_{.i}(t_i,\xi)=\frac{1}{m}\big((1-\rho\,u{^2}_{.i}(t_i,\xi))\,c(\xi)\,\dot{u}_{.i}(t_i,\xi)-k\,u_{.i}(t_i,\xi)\big)
\end{equation*}\\[-2ex]
\midrule\\[-7.5ex]
\begin{multline*}
\Phi_{4.i}(\xi):=\hat{\varphi}^4(M)(0,\hat{s}(t_i,\xi))=\partial_t^3u_{.i}(t_i,\xi)=\\
\frac{1}{m}\big({-2\rho}\,c(\xi)\,u_{.i}(t_i,\xi)\,\dot{u}{^2}_{.i}(t_i,\xi)+(1-\rho\,u{^2}_{.i}(t_i,\xi))\,c(\xi)\,\partial_t^2{u}_{.i}(t_i,\xi)-k\,\dot{u}_{.i}(t_i,\xi)\big)
\end{multline*}\\[-3.25ex]
\midrule\\[-7.5ex]
\begin{multline*}
\Phi_{5.i}(\xi):=\hat{\varphi}^5(M)(0,\hat{s}(t_i,\xi))=\partial_t^4u_{.i}(t_i,\xi)=\frac{1}{m}\big({-2\rho}\,c(\xi)\,\dot{u}{^3}_{.i}(t_i,\xi)\\
-6\rho\,c(\xi)\,u_{.i}(t_i,\xi)\,\dot{u}_{.i}(t_i,\xi)\,\partial_t^2u_{.i}(t_i,\xi)+(1-\rho\,u{^2}_{.i}(t_i,\xi))\,c(\xi)\,\partial_t^3{u}_{.i}(t_i,\xi)-k\,\partial_t^2u_{.i}(t_i,\xi)\big)
\end{multline*}\\[-3ex]
\bottomrule
\end{tabular}
\end{table}
}%
{
\begin{table}[p!]
\centering\small
\caption{Non-orthogonalized version of the random basis used for the Van-der-Pol oscillator (Section \ref{sec2NumExa60})}
\label{apptab2RanBasTimInt1000}
\begin{tabular}{@{}p{\textwidth}@{}}
\toprule\\[-8.5ex]
\begin{equation*}
\Phi_{0.i}(\xi):=1
\end{equation*}\\[-5ex]
\midrule\\[-8.5ex]
\begin{equation*}
\Phi_{1.i}(\xi):=\hat{\varphi}^1(M)(0,\hat{s}(t_i,\xi))= u_{.i}(t_i,\xi)=u_{.i-1}(t_i,\xi)
\end{equation*}\\[-5ex]
\midrule\\[-8.5ex]
\begin{equation*}
\Phi_{2.i}(\xi):=\hat{\varphi}^2(M)(0,\hat{s}(t_i,\xi))= \dot{u}_{.i}(t_i,\xi)=\dot{u}_{.i-1}(t_i,\xi)
\end{equation*}\\[-5ex]
\midrule\\[-9.5ex]
\begin{equation*}
\Phi_{3.i}(\xi):=\hat{\varphi}^3(M)(0,\hat{s}(t_i,\xi))=\partial_t^2u_{.i}(t_i,\xi)=\frac{1}{m}\big((1-\rho\,u{^2}_{.i}(t_i,\xi))\,c(\xi)\,\dot{u}_{.i}(t_i,\xi)-k\,u_{.i}(t_i,\xi)\big)
\end{equation*}\\[-5ex]
\midrule\\[-11ex]
\begin{multline*}
\Phi_{4.i}(\xi):=\hat{\varphi}^4(M)(0,\hat{s}(t_i,\xi))=\partial_t^3u_{.i}(t_i,\xi)=\\
\frac{1}{m}\big({-2\rho}\,c(\xi)\,u_{.i}(t_i,\xi)\,\dot{u}{^2}_{.i}(t_i,\xi)+(1-\rho\,u{^2}_{.i}(t_i,\xi))\,c(\xi)\,\partial_t^2{u}_{.i}(t_i,\xi)-k\,\dot{u}_{.i}(t_i,\xi)\big)
\end{multline*}\\[-5ex]
\midrule\\[-10ex]
\begin{multline*}
\Phi_{5.i}(\xi):=\hat{\varphi}^5(M)(0,\hat{s}(t_i,\xi))=\partial_t^4u_{.i}(t_i,\xi)=\frac{1}{m}\big({-2\rho}\,c(\xi)\,\dot{u}{^3}_{.i}(t_i,\xi)\\
-6\rho\,c(\xi)\,u_{.i}(t_i,\xi)\,\dot{u}_{.i}(t_i,\xi)\,\partial_t^2u_{.i}(t_i,\xi)+(1-\rho\,u{^2}_{.i}(t_i,\xi))\,c(\xi)\,\partial_t^3{u}_{.i}(t_i,\xi)-k\,\partial_t^2u_{.i}(t_i,\xi)\big)
\end{multline*}\\[-5ex]
\bottomrule
\end{tabular}
\end{table}
}
\makeatother
% TABLE_END

\section{A parametric, high-dimensional stochastic problem}\label{appsec2StoPro5Dim}

\subsection{Discretization of the random function space}

The random function space for the parametric, high-dimensional problem described in Section \ref{sec2StoPro5Dim} is discretized here.
This time, however, because there is at least one random variable enclosed in the argument of $\exp(\,\cdot\,)$, deriving a single random tensor (as we did in Appendix \ref{appsec2DerVanderPolOsc}) to represent the entire stochasticity of the system at hand is not feasible.
Instead, we derive below a collection of random tensors for each of the random variables that appear in \eqref{appeq2StoPro5Dim100} separately, namely: $\boldsymbol{k}(t)$, $\boldsymbol{f}(t)$, $\boldsymbol{\mathscr{u}}$ and $\boldsymbol{\mathscr{v}}$.

As in Appendix \ref{appsec2DerVanderPolOsc}, we take $\mathscr{Z}^{[P]}$ to be a finite subspace of $\mathscr{Z}$, and the solution representation for \eqref{appeq2StoPro5Dim100} as that stipulated by \eqref{appeq2VanderPol100}.
Therefore, replacing \eqref{appeq2VanderPol100} into \eqref{appeq2StoPro5Dim100}, and then projecting onto $\mathscr{Z}^{[P]}$ gives
\begin{subequations}\label{appeq2StoPro5Dim300}
\begin{align}
\ddot{u}^i+k\indices{^i_j} u^j=f^i&\qquad\text{on $\mathfrak{T}$}\label{appeq2StoPro5Dim300a}\\
\big\{u^i(0)=\mathscr{u}^i,\,\dot{u}^i(0)=\mathscr{v}^i\big\}&\qquad\text{on $\{0\}$,}\label{appeq2StoPro5Dim300b}
\end{align}
\end{subequations}
where $i,j\in\{0,\ldots,P\}$ (summation sign implied over repeated index $j$), and
\begin{gather*}
k\indices{^i_j}(t)=\langle\Psi_i,k(t,\cdot\,)\,\Psi_j\rangle/\langle\Psi_i,\Psi_i\rangle,\quad
f^i(t)=\langle\Psi_i,f(t,\cdot\,)\rangle/\langle\Psi_i,\Psi_i\rangle,\\
\mathscr{u}^i=\langle\Psi_i,\mathscr{u}\rangle/\langle\Psi_i,\Psi_i\rangle\quad\text{and}\quad
\mathscr{v}^i=\langle\Psi_i,\mathscr{v}\rangle/\langle\Psi_i,\Psi_i\rangle.
\end{gather*}

\begin{remark}
Note that the discretized random tensor associated with $k$ is of type $(1,1)$, i.e.~$\boldsymbol{k}(t)=k\indices{^i_j}(t)\,\Psi_i\otimes\Psi^j:{\mathscr{Z}^{[P]}}'\times\mathscr{Z}^{[P]}\to\mathbb{R}$ (assumed to exist), whereas those associated with $f$, $\mathscr{u}$ and $\mathscr{v}$ are of type $(1,0)$, i.e.~$\boldsymbol{f}(t)=f^i(t)\,\Psi_i,\boldsymbol{\mathscr{u}}=\mathscr{u}^i\,\Psi_i,\boldsymbol{\mathscr{v}}=\mathscr{v}^i\,\Psi_i:{\mathscr{Z}^{[P]}}'\to\mathbb{R}$.
Because of this, the last three random tensors can also be regarded as random vectors in $\mathscr{Z}^{[P]}$, and so we have the following identification: $\boldsymbol{f}(t)\mapsto f(t,\cdot\,)$, $\boldsymbol{\mathscr{u}}\mapsto\mathscr{u}$ and $\boldsymbol{\mathscr{v}}\mapsto\mathscr{v}$.
\end{remark}

\subsection{Random basis for use in the FSC scheme}

Table \ref{apptab2StoPro5Dim1000} presents the non-orthogonalized version of the random basis that we use in the simulations with FSC over the region $\mathfrak{R}_i=\mathrm{cl}(\mathfrak{T}_i)\times\Xi$.

% TABLE_BEGIN
\makeatletter
\def\tabcaption{Definition of $\tau$-functions for a single-degree-of-freedom system subjected to free vibration with $k\sim\mathrm{Uniform}$ in $[k_a,k_b]$}
\def\tablabel{apptab1SDOF1000}
\@ifundefined{puformat}%
{
\begin{table}
\centering\small
\caption{Non-orthogonalized version of the random basis used for the $d$-dimensional stochastic problem (Section \ref{sec2StoPro5Dim})}
\label{apptab2StoPro5Dim1000}
\begin{tabular}{@{}p{\textwidth}@{}}
\toprule\\[-4.5ex]
\begin{equation*}
\Phi_{0.i}(\xi):=1
\end{equation*}\\[-3ex]
\midrule\\[-4.5ex]
\begin{equation*}
\Phi_{1.i}(\xi):=\hat{\varphi}^1(M)(0,\hat{s}(t_i,\xi))= u_{.i}(t_i,\xi)=u_{.i-1}(t_i,\xi)
\end{equation*}\\[-3ex]
\midrule\\[-4.5ex]
\begin{equation*}
\Phi_{2.i}(\xi):=\hat{\varphi}^2(M)(0,\hat{s}(t_i,\xi))= \dot{u}_{.i}(t_i,\xi)=\dot{u}_{.i-1}(t_i,\xi)
\end{equation*}\\[-3ex]
\midrule\\[-4.5ex]
\begin{equation*}
\Phi_{3.i}(\xi):=\hat{\varphi}^3(M)(0,\hat{s}(t_i,\xi))=\partial_t^2u_{.i}(t_i,\xi)=f(\xi)-k(t_i,\xi)\,u_{.i}(t_i,\xi)
\end{equation*}\\[-3ex]
\midrule\\[-4.5ex]
\begin{equation*}
\Phi_{4.i}(\xi):=\hat{\varphi}^4(M)(0,\hat{s}(t_i,\xi))=\partial_t^3u_{.i}(t_i,\xi)=\partial_t f(t_i,\xi)-\partial_t k(t_i,\xi)\,u_{.i}(t_i,\xi)-k(t_i,\xi)\,\dot{u}_{.i}(t_i,\xi)
\end{equation*}\\[-3ex]
\midrule\\[-7.5ex]
\begin{multline*}
\Phi_{5.i}(\xi):=\hat{\varphi}^5(M)(0,\hat{s}(t_i,\xi))=\partial_t^4u_{.i}(t_i,\xi)=\partial_t^2 f(t_i,\xi)-\partial_t^2 k(t_i,\xi)\,u_{.i}(t_i,\xi)\\
-2\,\partial_t k(t_i,\xi)\,\dot{u}_{.i}(t_i,\xi)-k(t_i,\xi)\,\partial_t^2 u_{.i}(t_i,\xi)
\end{multline*}\\[-3ex]
\bottomrule
\end{tabular}
\end{table}
}%
{
\begin{table}[p!]
\centering\small
\caption{Non-orthogonalized version of the random basis used for the $d$-dimensional stochastic problem (Section \ref{sec2StoPro5Dim})}
\label{apptab2StoPro5Dim1000}
\begin{tabular}{@{}p{\textwidth}@{}}
\toprule\\[-8.5ex]
\begin{equation*}
\Phi_{0.i}(\xi):=1
\end{equation*}\\[-5ex]
\midrule\\[-8.5ex]
\begin{equation*}
\Phi_{1.i}(\xi):=\hat{\varphi}^1(M)(0,\hat{s}(t_i,\xi))= u_{.i}(t_i,\xi)=u_{.i-1}(t_i,\xi)
\end{equation*}\\[-5ex]
\midrule\\[-8.5ex]
\begin{equation*}
\Phi_{2.i}(\xi):=\hat{\varphi}^2(M)(0,\hat{s}(t_i,\xi))= \dot{u}_{.i}(t_i,\xi)=\dot{u}_{.i-1}(t_i,\xi)
\end{equation*}\\[-5ex]
\midrule\\[-8.5ex]
\begin{equation*}
\Phi_{3.i}(\xi):=\hat{\varphi}^3(M)(0,\hat{s}(t_i,\xi))=\partial_t^2u_{.i}(t_i,\xi)=f(\xi)-k(t_i,\xi)\,u_{.i}(t_i,\xi)
\end{equation*}\\[-5ex]
\midrule\\[-10ex]
\begin{equation*}
\Phi_{4.i}(\xi):=\hat{\varphi}^4(M)(0,\hat{s}(t_i,\xi))=\partial_t^3u_{.i}(t_i,\xi)=\partial_t f(t_i,\xi)-\partial_t k(t_i,\xi)\,u_{.i}(t_i,\xi)-k(t_i,\xi)\,\dot{u}_{.i}(t_i,\xi)
\end{equation*}\\[-5ex]
\midrule\\[-11ex]
\begin{multline*}
\Phi_{5.i}(\xi):=\hat{\varphi}^5(M)(0,\hat{s}(t_i,\xi))=\partial_t^4u_{.i}(t_i,\xi)=\partial_t^2 f(t_i,\xi)-\partial_t^2 k(t_i,\xi)\,u_{.i}(t_i,\xi)\\
-2\,\partial_t k(t_i,\xi)\,\dot{u}_{.i}(t_i,\xi)-k(t_i,\xi)\,\partial_t^2 u_{.i}(t_i,\xi)
\end{multline*}\\[-5ex]
\bottomrule
\end{tabular}
\end{table}
}
\makeatother
% TABLE_END

\section{Time-complexity analysis for Theorem \ref{thm2paper100} and the traditional Gram-Schmidt process}\label{appsec2TimComAnaThe1}

In this last section we derive the number of elementary operations\footnote{By elementary operations we mean: addition, subtraction, multiplication and division on the real numbers.} that we need to perform in order to obtain an orthogonal basis from a set of linearly independent random functions.
We do this for Theorem \ref{thm2paper100} and the traditional Gram-Schmidt process for sake of comparison.
For both analyses, we assume right away that $\Psi_0\equiv1$ and $\langle\Psi_0,\Psi_0\rangle=1$.
Therefore, for the purpose of this section, the basis to orthogonalize takes the form: $\{\Phi_j\}_{j=1}^P$, and the resulting orthogonalized basis the form: $\{\Psi_j\}_{j=0}^P$ with $\Psi_0\equiv1$.

Below, $Q$ denotes the number of quadrature points used to estimate the inner products with a quadrature rule such as the Gaussian quadrature rule.

\paragraph{Theorem \ref{thm2paper100}}

In Table \ref{apptab2TimComAnaThe11100} we have disaggregated the number of elementary operations into the different steps involved in Theorem \ref{thm2paper100}.
If we assume that both the expectation vector and the covariance matrix are known beforehand, the number of operations needed in Theorem \ref{thm2paper100} is
\begin{equation}\label{appeq2TimComAnaThe1100}
N_\mathrm{op}=\sum_{i=3}^5 N_{\mathrm{op},i}=
\begin{cases}
0 & \text{for $P=0$}\\
P(P+1)Q+\tfrac{1}{30}P^5+\tfrac{1}{6}P^4-\tfrac{1}{3}P^3+\tfrac{1}{3}P^2-\tfrac{6}{5}P+1 & \text{for $P\geq1$}.
\end{cases}
\end{equation}
Otherwise, the number of operations needed is
\begin{equation}\label{appeq2TimComAnaThe1110}
N_\mathrm{op}=\sum_{i=1}^5 N_{\mathrm{op},i}=
\begin{cases}
0 & \text{for $P=0$}\\
\tfrac{7}{2}P(P+\tfrac{11}{7})Q+\tfrac{1}{30}P^5+\tfrac{1}{6}P^4-\tfrac{1}{3}P^3-\tfrac{1}{6}P^2-\tfrac{27}{10}P+1 & \text{for $P\geq1$}.
\end{cases}
\end{equation}

% TABLE_BEGIN
\makeatletter
\def\tabcaption{Number of elementary operations involved in each step of Theorem \ref{thm2paper100} to get $\{\Phi_j\}_{j=1}^P\mapsto\{\Psi_j\}_{j=0}^P$}
\def\tabpath{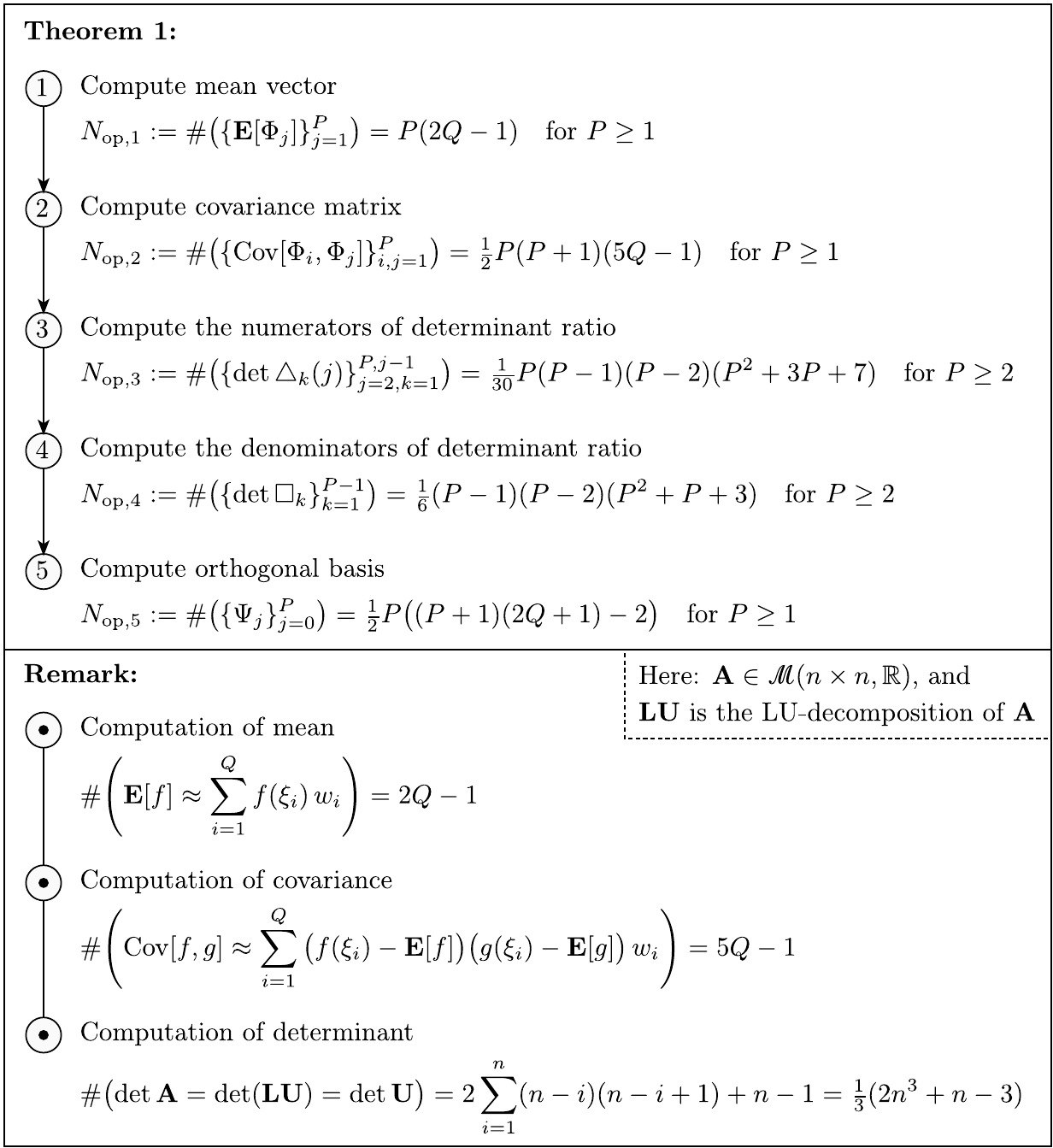}
\def\tablabel{apptab2TimComAnaThe11100}

\@ifundefined{puformat}%
{
\begin{table}
\centering
\caption{\tabcaption}
\label{\tablabel}
\includegraphics[]{\tabpath}
\end{table}
}%
{
\begin{table}[p!]
\centering
\caption{\tabcaption}
\label{\tablabel}
\includegraphics[scale=1.2]{\tabpath}
\end{table}
}
\makeatother
% TABLE_END

\paragraph{Traditional Gram-Schmidt process}

As with Theorem \ref{thm2paper100}, in Table \ref{apptab2TimComAnaThe11200} we have disaggregated into steps the number of elementary operations needed in the traditional Gram-Schmidt process (from \eqref{eq2GraSch1200}), yielding
\begin{equation}\label{appeq2TimComAnaThe1200}
N'_\mathrm{op}=\sum_{i=1}^3 N'_{\mathrm{op},i}=
\begin{cases}
0 & \text{for $P=0$}\\
\tfrac{5}{2}(P^2+\tfrac{9}{5}P-\tfrac{6}{5})Q-2P+1 & \text{for $P\geq1$}.
\end{cases}
\end{equation}

% TABLE_BEGIN
\makeatletter
\def\tabcaption{Number of elementary operations involved in each step of the traditional Gram-Schmidt process to get $\{\Phi_j\}_{j=1}^P\mapsto\{\Psi_j\}_{j=0}^P$}
\def\tabpath{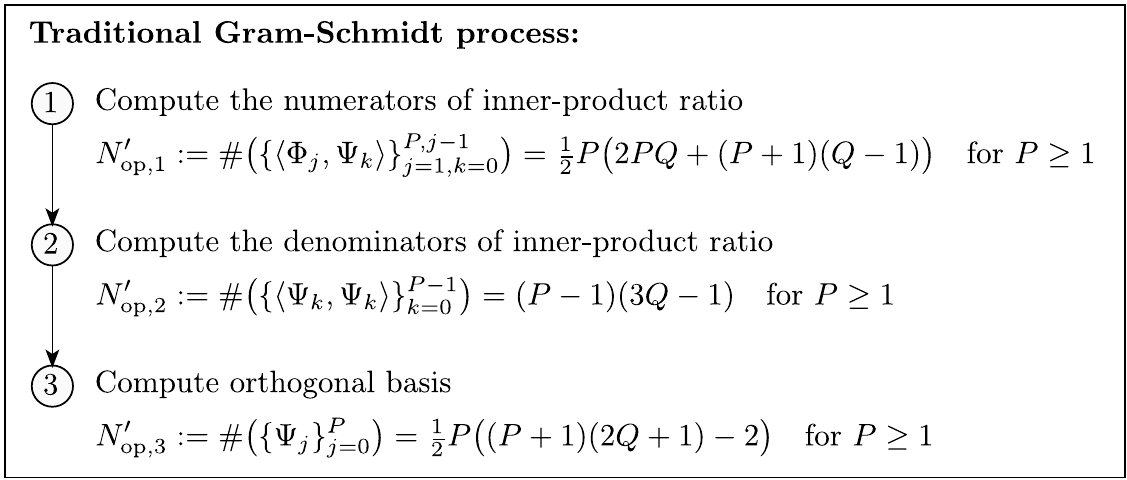}
\def\tablabel{apptab2TimComAnaThe11200}

\@ifundefined{puformat}%
{
\begin{table}
\centering
\caption{\tabcaption}
\label{\tablabel}
\includegraphics[]{\tabpath}
\end{table}
}%
{
\begin{table}[p!]
\centering
\caption{\tabcaption}
\label{\tablabel}
\includegraphics[scale=1.2]{\tabpath}
\end{table}
}
\makeatother
% TABLE_END

\paragraph{Performance comparison} 

Combining the result obtained in \eqref{appeq2TimComAnaThe1100} with that in \eqref{appeq2TimComAnaThe1200} produces the plot depicted in Fig.~\ref{fig2TimeComplexityAnalysis}.
From this figure we can see that as the number of quadrature points increases, the time complexity of Theorem \ref{thm2paper100} decreases relative to that of the traditional Gram-Schmidt process.
In other words, when the number of quadrature points is sufficiently large, Theorem \ref{thm2paper100} surpasses in efficiency the traditional Gram-Schmidt process.
In particular, if $P=3,4$, Theorem \ref{thm2paper100} is up to 2.75 times faster.
However, if the expectation vector or the covariance matrix are not known beforehand, Theorem \ref{thm2paper100} is slower than the traditional Gram-Schmidt process, because the $Q$-coefficient in \eqref{appeq2TimComAnaThe1110} is always larger than that in \eqref{appeq2TimComAnaThe1200} for all values of $P$.

% FIGURE_BEGIN
\makeatletter
\def\figcaption{Time-complexity analysis for Theorem \ref{thm2paper100} and the traditional Gram-Schmidt process}
\def\figpath{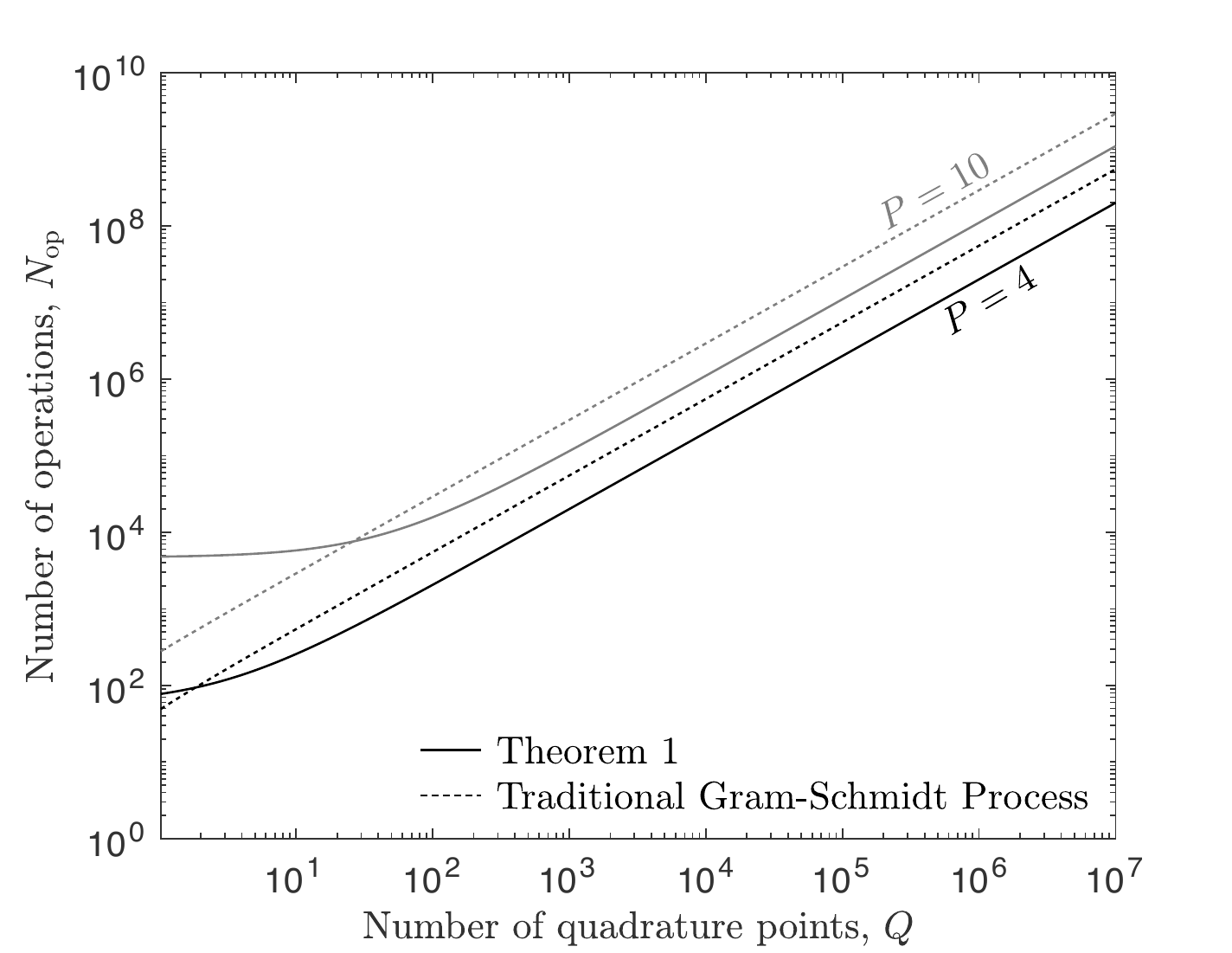}
\def\figlabel{fig2TimeComplexityAnalysis}
\@ifundefined{puformat}%
{
\begin{figure}
\centering
\includegraphics[width=0.495\textwidth]{\figpath}
\caption{\figcaption}
\label{\figlabel}
\end{figure}
}%
{
\begin{figure}[p!]
\centering
\includegraphics[width=0.594\textwidth]{\figpath}
\caption{\figcaption}
\label{\figlabel}
\end{figure}
}
\makeatother
% FIGURE_END

\end{appendices}

\section*{Acknowledgements}
The first author gratefully acknowledges the scholarship granted by Colciencias and Atl\'antico Department (Colombia) under Call 673 to pursue a Ph.D.~degree in Structural Engineering at Purdue University, West Lafayette, IN.
The authors also gratefully acknowledge the support of the National Science Foundation (CNS-1136075, DMS-1555072, DMS-1736364, CMMI-1634832, and CMMI-1560834), Brookhaven National Laboratory subcontract 382247, ARO/MURI grant W911NF-15-1-0562, and U.S.~Department of Energy (DOE) Office of Science Advanced Scientific Computing Research program DE-SC0021142.

\section*{Credit author statement}
{\bf Hugo Esquivel:} Conceptualization, Data curation, Formal analysis, Investigation, Methodology, Resources, Software, Validation, Visualization, Writing--`original draft', Writing--`review and editing'. {\bf Arun Prakash:} Funding acquisition, Project administration, Resources, Supervision, Writing--`review and editing'. {\bf Guang Lin:} Funding acquisition, Project administration, Resources, Supervision, Writing--`review and editing'.

%\section*{References}
\bibliography{references2}

\begin{thebibliography}{10}
\expandafter\ifx\csname url\endcsname\relax
  \def\url#1{\texttt{#1}}\fi
\expandafter\ifx\csname urlprefix\endcsname\relax\def\urlprefix{URL }\fi
\expandafter\ifx\csname href\endcsname\relax
  \def\href#1#2{#2} \def\path#1{#1}\fi

\bibitem{kroese2013handbook}
D.~P. Kroese, T.~Taimre, Z.~I. Botev, Handbook of {M}onte {C}arlo methods, Vol.
  706, John Wiley \& Sons, 2013.

\bibitem{fishman2013monte}
G.~Fishman, {M}onte {C}arlo: Concepts, algorithms, and applications, Springer
  Science \& Business Media, 2013.

\bibitem{rubinstein2016simulation}
R.~Y. Rubinstein, D.~P. Kroese, Simulation and the {M}onte {C}arlo method,
  Vol.~10, John Wiley \& Sons, 2016.

\bibitem{babuvska2007stochastic}
I.~Babu{\v{s}}ka, F.~Nobile, R.~Tempone, A stochastic collocation method for
  elliptic partial differential equations with random input data, SIAM Journal
  on Numerical Analysis 45~(3) (2007) 1005--1034.

\bibitem{nobile2008sparse}
F.~Nobile, R.~Tempone, C.~G. Webster, A sparse grid stochastic collocation
  method for partial differential equations with random input data, SIAM
  Journal on Numerical Analysis 46~(5) (2008) 2309--2345.

\bibitem{foo2008multi}
J.~Foo, X.~Wan, G.~E. Karniadakis, The multi-element probabilistic collocation
  method ({ME-PCM}): Error analysis and applications, Journal of Computational
  Physics 227~(22) (2008) 9572--9595.

\bibitem{foo2010multi}
J.~Foo, G.~E. Karniadakis, Multi-element probabilistic collocation method in
  high dimensions, Journal of Computational Physics 229~(5) (2010) 1536--1557.

\bibitem{teckentrup2015multilevel}
A.~L. Teckentrup, P.~Jantsch, C.~G. Webster, M.~Gunzburger, A multilevel
  stochastic collocation method for partial differential equations with random
  input data, SIAM/ASA Journal on Uncertainty Quantification 3~(1) (2015)
  1046--1074.

\bibitem{collins1969eigenvalue}
J.~D. Collins, W.~T. Thomson, The eigenvalue problem for structural systems
  with statistical properties, AIAA journal 7~(4) (1969) 642--648.

\bibitem{liu1986probabilistic}
W.~K. Liu, T.~Belytschko, A.~Mani, Probabilistic finite elements for nonlinear
  structural dynamics, Computer Methods in Applied Mechanics and Engineering
  56~(1) (1986) 61--81.

\bibitem{liu1986random}
W.~K. Liu, T.~Belytschko, A.~Mani, Random field finite elements, International
  journal for numerical methods in engineering 23~(10) (1986) 1831--1845.

\bibitem{kleiber1992stochastic}
M.~Kleiber, T.~D. Hien, The stochastic finite element method: Basic
  perturbation technique and computer implementation, Wiley, 1992.

\bibitem{shinozuka1988response}
M.~Shinozuka, G.~Deodatis, Response variability of stochastic finite element
  systems, Journal of Engineering Mechanics 114~(3) (1988) 499--519.

\bibitem{yamazaki1988neumann}
F.~Yamazaki, M.~Shinozuka, G.~Dasgupta, {N}eumann expansion for stochastic
  finite element analysis, Journal of Engineering Mechanics 114~(8) (1988)
  1335--1354.

\bibitem{deodatis1991weightedI}
G.~Deodatis, Weighted integral method. {I}: Stochastic stiffness matrix,
  Journal of Engineering Mechanics 117~(8) (1991) 1851--1864.

\bibitem{deodatis1991weightedII}
G.~Deodatis, M.~Shinozuka, Weighted integral method. {II}: Response variability
  and reliability, Journal of Engineering Mechanics 117~(8) (1991) 1865--1877.

\bibitem{xiu2010numerical}
D.~Xiu, Numerical Methods for Stochastic Computations: A Spectral Method
  Approach, Princeton University Press, 2010.

\bibitem{le2010spectral}
O.~P. Le~Ma{\^\i}tre, O.~M. Knio, Spectral Methods for Uncertainty
  Quantification: With Applications to Computational Fluid Dynamics, Springer
  Science \& Business Media, 2010.

\bibitem{sullivan2015introduction}
T.~J. Sullivan, Introduction to Uncertainty Quantification, Vol.~63, Springer,
  2015.

\bibitem{wiener1938homogeneous}
N.~Wiener, The homogeneous chaos, American Journal of Mathematics 60~(4) (1938)
  897--936.

\bibitem{wiener1943discrete}
N.~Wiener, A.~Wintner, The discrete chaos, American Journal of Mathematics
  65~(2) (1943) 279--298.

\bibitem{wiener1958nonlinear}
N.~Wiener, Nonlinear problems in random theory, MIT press, 1958.

\bibitem{volterra1913leccons}
V.~Volterra, M.~Tomassetti, F.~Zarlatti, Le{\c{c}}ons sur les {\'e}quations
  int{\'e}grales et les {\'e}quations int{\'e}gro-diff{\'e}rentielles,
  Gauthier-Villars Paris, 1913.

\bibitem{cameron1947orthogonal}
R.~H. Cameron, W.~T. Martin, The orthogonal development of non-linear
  functionals in series of {F}ourier-{H}ermite functionals, Annals of
  Mathematics (1947) 385--392.

\bibitem{lucor2001spectral}
D.~Lucor, D.~Xiu, G.~Karniadakis, Spectral representations of uncertainty in
  simulations: Algorithms and applications, in: Proceedings of the
  International Conference on Spectral and High Order Methods (ICOSAHOM-01),
  Uppsala, Sweden, 2001.

\bibitem{xiu2002wiener}
D.~Xiu, G.~E. Karniadakis, The {W}iener-{A}skey polynomial chaos for stochastic
  differential equations, SIAM journal on scientific computing 24~(2) (2002)
  619--644.

\bibitem{field2003new}
R.~Field, M.~Grigoriu, A new perspective on polynomial chaos, Computational
  Stochastic Mechanics, Spanos \& Deodatis eds, Millpress (2003) 199--205.

\bibitem{ito1951multiple}
K.~It{\^o}, Multiple {W}iener integral, Journal of the Mathematical Society of
  Japan 3~(1) (1951) 157--169.

\bibitem{ogura1972orthogonal}
H.~Ogura, Orthogonal functionals of the {P}oisson process, IEEE Transactions on
  Information Theory 18~(4) (1972) 473--481.

\bibitem{segall1976orthogonal}
A.~Segall, T.~Kailath, Orthogonal functionals of independent-increment
  processes, IEEE Transactions on Information Theory 22~(3) (1976) 287--298.

\bibitem{schetzen1981nonlinear}
M.~Schetzen, Nonlinear system modeling based on the {W}iener theory,
  Proceedings of the IEEE 69~(12) (1981) 1557--1573.

\bibitem{engel1982multiple}
D.~D. Engel, The multiple stochastic integral, Vol. 265, American Mathematical
  Soc., 1982.

\bibitem{ghanem1990polynomial}
R.~Ghanem, P.~D. Spanos, Polynomial chaos in stochastic finite elements,
  Journal of Applied Mechanics 57~(1) (1990) 197--202.

\bibitem{ghanem1991stochastic}
R.~G. Ghanem, P.~D. Spanos, Stochastic Finite Elements: A Spectral Approach,
  Springer-Verlag New York, 1991.

\bibitem{ghanem1993stochastic}
R.~Ghanem, P.~D. Spanos, A stochastic {G}alerkin expansion for nonlinear random
  vibration analysis, Probabilistic Engineering Mechanics 8~(3-4) (1993)
  255--264.

\bibitem{ghanem1999stochastic}
R.~Ghanem, Stochastic finite elements with multiple random non-{G}aussian
  properties, Journal of Engineering Mechanics 125~(1) (1999) 26--40.

\bibitem{knio2001stochastic}
O.~M. Knio, H.~N. Najm, R.~G. Ghanem, et~al., A stochastic projection method
  for fluid flow. {I}: Basic formulation, Journal of computational Physics
  173~(2) (2001) 481--511.

\bibitem{le2002stochastic}
O.~P. Le~Ma{\i}tre, M.~T. Reagan, H.~N. Najm, R.~G. Ghanem, O.~M. Knio, A
  stochastic projection method for fluid flow. {II}: Random process, Journal of
  computational Physics 181~(1) (2002) 9--44.

\bibitem{xiu2003modeling}
D.~Xiu, G.~E. Karniadakis, Modeling uncertainty in flow simulations via
  generalized polynomial chaos, Journal of computational physics 187~(1) (2003)
  137--167.

\bibitem{asokan2005using}
B.~V. Asokan, N.~Zabaras, Using stochastic analysis to capture unstable
  equilibrium in natural convection, Journal of Computational Physics 208~(1)
  (2005) 134--153.

\bibitem{knio2006uncertainty}
O.~Knio, O.~Le~Maitre, Uncertainty propagation in {CFD} using polynomial chaos
  decomposition, Fluid Dynamics Research 38~(9) (2006) 616.

\bibitem{najm2009uncertainty}
H.~N. Najm, Uncertainty quantification and polynomial chaos techniques in
  computational fluid dynamics, Annual review of fluid mechanics 41 (2009)
  35--52.

\bibitem{wan2005adaptive}
X.~Wan, G.~E. Karniadakis, An adaptive multi-element generalized polynomial
  chaos method for stochastic differential equations, Journal of Computational
  Physics 209~(2) (2005) 617--642.

\bibitem{wan2006beyond}
X.~Wan, G.~E. Karniadakis, Beyond {W}iener-{A}skey expansions: Handling
  arbitrary {PDFs}, Journal of Scientific Computing 27~(1-3) (2006) 455--464.

\bibitem{wan2006multi}
X.~Wan, G.~E. Karniadakis, Multi-element generalized polynomial chaos for
  arbitrary probability measures, SIAM Journal on Scientific Computing 28~(3)
  (2006) 901--928.

\bibitem{ma2009adaptive}
X.~Ma, N.~Zabaras, An adaptive hierarchical sparse grid collocation algorithm
  for the solution of stochastic differential equations, Journal of
  Computational Physics 228~(8) (2009) 3084--3113.

\bibitem{jakeman2013minimal}
J.~D. Jakeman, A.~Narayan, D.~Xiu, Minimal multi-element stochastic collocation
  for uncertainty quantification of discontinuous functions, Journal of
  Computational Physics 242 (2013) 790--808.

\bibitem{wan2006long}
X.~Wan, G.~E. Karniadakis, Long-term behavior of polynomial chaos in stochastic
  flow simulations, Computer methods in applied mechanics and engineering
  195~(41-43) (2006) 5582--5596.

\bibitem{agarwal2009domain}
N.~Agarwal, N.~R. Aluru, A domain adaptive stochastic collocation approach for
  analysis of {MEMS} under uncertainties, Journal of Computational Physics
  228~(20) (2009) 7662--7688.

\bibitem{kewlani2009multi}
G.~Kewlani, K.~Iagnemma, A multi-element generalized polynomial chaos approach
  to analysis of mobile robot dynamics under uncertainty, in: 2009 IEEE/RSJ
  International Conference on Intelligent Robots and Systems, IEEE, 2009, pp.
  1177--1182.

\bibitem{prempraneerach2010uncertainty}
P.~Prempraneerach, F.~S. Hover, M.~S. Triantafyllou, G.~E. Karniadakis,
  Uncertainty quantification in simulations of power systems: Multi-element
  polynomial chaos methods, Reliability Engineering \& System Safety 95~(6)
  (2010) 632--646.

\bibitem{oladyshkin2011concept}
S.~Oladyshkin, H.~Class, R.~Helmig, W.~Nowak, A concept for data-driven
  uncertainty quantification and its application to carbon dioxide storage in
  geological formations, Advances in Water Resources 34~(11) (2011) 1508--1518.

\bibitem{kewlani2012polynomial}
G.~Kewlani, J.~Crawford, K.~Iagnemma, A polynomial chaos approach to the
  analysis of vehicle dynamics under uncertainty, Vehicle system dynamics
  50~(5) (2012) 749--774.

\bibitem{sapsis2009dynamically}
T.~P. Sapsis, P.~F. Lermusiaux, Dynamically orthogonal field equations for
  continuous stochastic dynamical systems, Physica D: Nonlinear Phenomena
  238~(23) (2009) 2347--2360.

\bibitem{papoulis1965random}
A.~Papoulis, Probability, Random Variables and Stochastic Processes,
  McGraw-Hill, 1965.

\bibitem{holmes2012turbulence}
P.~Holmes, J.~L. Lumley, G.~Berkooz, C.~W. Rowley, Turbulence, coherent
  structures, dynamical systems and symmetry, Cambridge university press, 2012.

\bibitem{choi2013convergence}
M.~Choi, T.~P. Sapsis, G.~E. Karniadakis, A convergence study for {SPDEs} using
  combined polynomial chaos and dynamically-orthogonal schemes, Journal of
  Computational Physics 245 (2013) 281--301.

\bibitem{ueckermann2013numerical}
M.~Ueckermann, P.~F. Lermusiaux, T.~P. Sapsis, Numerical schemes for
  dynamically orthogonal equations of stochastic fluid and ocean flows, Journal
  of Computational Physics 233 (2013) 272--294.

\bibitem{cheng2013dynamicallyI}
M.~Cheng, T.~Y. Hou, Z.~Zhang, A dynamically bi-orthogonal method for
  time-dependent stochastic partial differential equations. {I}: Derivation and
  algorithms, Journal of Computational Physics 242 (2013) 843--868.

\bibitem{cheng2013dynamicallyII}
M.~Cheng, T.~Y. Hou, Z.~Zhang, A dynamically bi-orthogonal method for
  time-dependent stochastic partial differential equations. {II}: Adaptivity
  and generalizations, Journal of Computational Physics 242 (2013) 753--776.

\bibitem{musharbash2015error}
E.~Musharbash, F.~Nobile, T.~Zhou, Error analysis of the dynamically orthogonal
  approximation of time dependent random {PDEs}, SIAM Journal on Scientific
  Computing 37~(2) (2015) A776--A810.

\bibitem{gerritsma2010time}
M.~Gerritsma, J.-B. Van~der Steen, P.~Vos, G.~Karniadakis, Time-dependent
  generalized polynomial chaos, Journal of Computational Physics 229~(22)
  (2010) 8333--8363.

\bibitem{heuveline2014hybrid}
V.~Heuveline, M.~Schick, A hybrid generalized polynomial chaos method for
  stochastic dynamical systems, International Journal for Uncertainty
  Quantification 4~(1) (2014).

\bibitem{luchtenburg2014long}
D.~M. Luchtenburg, S.~L. Brunton, C.~W. Rowley, Long-time uncertainty
  propagation using generalized polynomial chaos and flow map composition,
  Journal of Computational Physics 274 (2014) 783--802.

\bibitem{ozen2016dynamical}
H.~C. Ozen, G.~Bal, Dynamical polynomial chaos expansions and long time
  evolution of differential equations with random forcing, SIAM/ASA Journal on
  Uncertainty Quantification 4~(1) (2016) 609--635.

\bibitem{davis2007methods}
P.~J. Davis, P.~Rabinowitz, Methods of numerical integration, Courier
  Corporation, 2007.

\bibitem{novak1996high}
E.~Novak, K.~Ritter, High dimensional integration of smooth functions over
  cubes, Numerische Mathematik 75~(1) (1996) 79--97.

\bibitem{bungartz2004sparse}
H.-J. Bungartz, M.~Griebel, Sparse grids, Acta numerica 13 (2004) 147--269.

\bibitem{leobacher2014introduction}
G.~Leobacher, F.~Pillichshammer, Introduction to quasi-{M}onte {C}arlo
  integration and applications, Springer, 2014.

\bibitem{rudin1987real}
W.~Rudin, Real and complex analysis, McGraw-Hill, 1987.

\bibitem{cheney2010linear}
W.~Cheney, D.~Kincaid, Linear algebra: Theory and applications, 2nd Edition,
  Jones \& Bartlett Learning, 2010.

\bibitem{smolyak1963quadrature}
S.~A. Smolyak, Quadrature and interpolation formulas for tensor products of
  certain classes of functions, in: Doklady Akademii Nauk, Vol. 148, Russian
  Academy of Sciences, 1963, pp. 1042--1045.

\bibitem{butcher1987numerical}
J.~C. Butcher, J.~Butcher, The numerical analysis of ordinary differential
  equations: {R}unge-{K}utta and general linear methods, Vol. 512, Wiley New
  York, 1987.

\bibitem{matlabr2016b}
{T}he~{M}athWorks {I}nc., \href{www.mathworks.com}{{MATLAB}} (2016).
\newline\urlprefix\url{www.mathworks.com}

\end{thebibliography}

\clearpage

\section*{Errata}
This updated version of the article fixes a typo found on Page 5\footnote{Also on Page 5 of the journal article.}.
\begin{quote}
Throughout this paper, we assume that the components of the $d$-tuple random variable $\xi=(\xi^1,\ldots,\xi^d)$ are \xout{pairwise} mutually independent and that the random domain $\Xi$ is a hypercube of $d$ dimensions obtained by performing a $d$-fold Cartesian product of intervals $\bar{\Xi}_i:=\xi^i(\Omega)$.
\end{quote}

\end{document}